\newcommand{\Q}{\mathbb{Q}}
\newcommand{\Z}{\mathbb{Z}}
\newcommand{\C}{\mathbb{C}}
\newcommand{\R}{\mathbb{R}}
\newcommand{\m}{\longrightarrow}
\newcommand{\N}{\mathbb{N}_0}
\DeclareMathOperator{\im}{im}
\newcommand{\cat}[1]{\mathsf{#1}}
\newcommand{\mr}[1]{{\rm #1}}
\newtheorem{theorem}{Theorem}[section]
\newtheorem{lemma}[theorem]{Lemma}
\newtheorem{proposition}[theorem]{Proposition}
\newtheorem{corollary}[theorem]{Corollary}
\newtheorem{conjecture}[theorem]{Conjecture}
\theoremstyle{definition}
\newtheorem{remark}[theorem]{Remark}
\newtheorem{definition}[theorem]{Definition}
\newtheorem{example}[theorem]{Example}
\author{Alexander Kupers}
\email{kupers@stanford.edu}
\address{Department of Mathematics, Stanford University, 450 Serra Mall, 94305, Stanford, CA, USA}
\thanks{Alexander Kupers is supported by a William R. Hewlett Stanford Graduate Fellowship, Department of Mathematics, Stanford University, and was partially supported by NSF grant DMS-1105058.}
\author{Jeremy Miller}
\email{jeremykmiller@purdue.edu}
\address{Department of Mathematics, Purdue University, 150 N. University Street, 47907-2067, West Lafayette, IN, USA}
\title{Homological stability for topological chiral homology of completions}
\date{\today}
\begin{document}

\begin{abstract}By proving that several new complexes of embedded disks are highly connected, we obtain several new homological stability results. Our main result is homological stability for topological chiral homology on an open manifold with coefficients in certain partial framed $E_n$-algebras. Using this, we prove a special case of a conjecture of Vakil and Wood on homological stability for complements of closures of particular strata in the symmetric powers of an open manifold and we prove that the bounded symmetric powers of closed manifolds satisfy homological stability rationally.\end{abstract}

\maketitle

\tableofcontents

\section{Introduction} In this paper we prove a generalization of homological stability for configuration spaces. Let $M$ be a manifold and let $C_k(M)$ denote the configuration space of $k$ distinct unordered particles in $M$. If $M$ is open, then there is a map $t: C_k(M) \to C_{k+1}(M)$ adding a particle near infinity; homological stability for configuration spaces says that this map is an isomorphism in homology in a range tending to infinity with $k$.

Our generalization involves certain configuration spaces with summable labels. For example, if $A$ is a commutative monoid, then we consider spaces of particles in $M$ with labels in $A$ topologized such that if the particles collide we add their labels. However, such a construction makes sense in a more general setting; the labels only need to have the structure of a so-called \emph{framed $E_n$-algebra} \cite{M,Ge}. The analogous construction of the labeled configuration space is then called \emph{topological chiral homology} \cite{Lu} and is denoted $\int_MA$. It is also known as \emph{factorization homology} \cite{Fr2} or \emph{configuration spaces of particles with summable labels} \cite{Sa}. 

We will consider framed $E_n$-algebras $A$ with $\pi_0 (A) = \N$ that are ``generated by finitely many components,'' a notion made precise using \emph{completions} of framed $E_n$-algebras. If $\pi_0(A)=\N$ the connected components of $\int_M A$ are in bijection with $\N$ for connected $M$ and we denote the $k$th component by $\smash{\int^k_M} A$. When $M$ is open, there is again a stabilization map 
\[t: \int^k_M A \to \int^{k+1}_MA\]
The main result of this paper is that for $A$ generated by finitely many components, this map induces an isomorphism in homology in a range tending to infinity with $k$.

\subsection{Topological chiral homology} We start by introducing the spaces we are interested in, which are defined using topological chiral homology.

Topological chiral homology is a homology theory for $n$-dimensional manifolds. When discussing topological chiral homology, one needs to fix a background symmetric monoidal $(\infty,1)$-category like chain complexes or spectra. In this paper, this category will always be taken to be $\cat{Top}$, the category of topological spaces with Cartesian product. 

Like singular homology of a space depends on a choice of abelian group, the topological chiral homology of a manifold depends on choice of framed $E_n$-algebra. A framed $E_n$-algebra\footnote{The term framed $E_n$-algebra is not completely standard as other authors use it to mean an algebra over an operad homotopy equivalent to the semi-direct product of the little $n$-disks operad with the group $SL_n(\R)$. We want to use $GL_n(\R)$ to be able to prove results for non-orientable manifolds.} is by definition an algebra over an operad homotopy equivalent to the semi-direct product of the little $n$-disks operad with the group $GL_n(\R)$ \cite{M,GJ,Sa}. Given a framed $E_n$-algebra $A$, topological chiral homology gives a functor from the category of smooth $n$-dimensional manifolds (with embeddings as morphisms) to $\cat{Top}$:
\[M \mapsto \int_M A.\] 
We call $\int_M A$ the \textit{topological chiral homology of $M$ with coefficients in $A$}. 

\begin{figure}[t]
\begin{center}
\includegraphics[width=11.5cm]{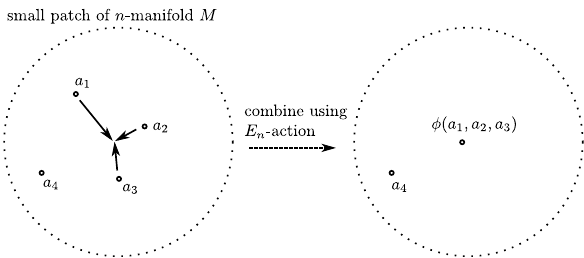}
\end{center}
\caption{An intuitive picture of $\int_M A$ is as a space of particles in $M$ with labels in $A$, topologized such that if the particles collide we get the new labels using the framed $E_n$-action on the labels.}
\label{figtchintuitive}
\end{figure}

Intuitively the topological chiral homology of a manifold $M$ with coefficients in a framed $E_n$-algebra $A$ is the space of particles in $M$ labeled by elements of $A$, topologized in such a way that when several particles collide their labels combine using the framed $E_n$ action (see Figure \ref{figtchintuitive}). The different ways to make this intuition precise lead to different models of topological chiral homology. In the model that is most convenient for our proofs, for example, one keeps track during the collisions of the relative directions and speeds at which the particles collide. In $n$ dimensions such data can be made into a framed $E_n$-operad \cite{Sa}, known as the \emph{framed Fulton-MacPherson operad}. 

One justification for thinking of topological chiral homology as a homology theory are the generalized Eilenberg-Steenrod axioms described in \cite{Fr2}. Another is given by the fact that one can recover ordinary homology from topological chiral homology as follows. Note that abelian groups are examples of framed $E_n$-algebras. For $A$ a discrete abelian group, a variant of the Dold-Thom theorem proves that $\pi_*(\int_M A)=H_*(M;A)$ \cite{DT,K3,Fr2}. From this point of view, topological chiral homology generalizes homology with coefficients in abelian groups to homology with coefficients in framed $E_n$-algebras.

\subsection{Completions of partial algebras}

Topological chiral homology can also be defined for partial framed $E_n$-algebras $P$. A \emph{partial algebra} over an operad is like an actual algebra over that operad except only some of the operadic compositions are defined. In this case one can think of topological chiral homology of $M$ with coefficients in a partial framed $E_n$-algebra $P$ as particles in $M$ labeled by elements of $P$ that are only allowed to collide if that multiplication is defined. The resulting space is denoted $\int_M P$.

Any partial framed $E_n$-algebra can be completed to form an actual framed $E_n$-algebra. One can do this by either taking the \emph{completion} $\bar{P}$ to be the adjoint to the inclusion of actual algebras into partial algebras, or by setting $\bar{P} = \int_{D^n} P$ (see Proposition 5.16 of \cite{Sa}\footnote{The theorem numbering in \cite{Sa} does not agree with that of the arXiv version. However, in all theorems and definitions that we cite, the numbering in the published version is one greater than the numbering in the arXiv version.}). These constructions are homotopy equivalent and furthermore the topological chiral homology of $\bar{P}$ is homotopy equivalent to the topological chiral homology of $P$. It is these spaces that will be the main object of study in this paper.

There are several important examples of framed $E_n$-algebras obtained as such completions. Any strictly commutative monoid is a framed $E_n$-algebra for all $n$. Let $\N$ denote the commutative monoid of non-negative integers with addition. A completion of $\{1\} \subset \N$ as a framed $E_n$-algebra is the space of points in a disk $D^n$ labeled by $1$ and these are not allowed to collide since no multiplication is defined in the partial monoid $\{1\}$. The connected components of this space are known as the \emph{configuration spaces of distinct unordered particles in the disk}, each of them given by 
\[C_k(D^n)=((D^n)^k \backslash \Delta_{f})/\mathfrak S_k\] for some $k \geq 1$. Here $\mathfrak S_k$ is the symmetric group on $k$ letters acting by permuting the terms and $\Delta_{f}$ is the fat diagonal $\{(m_1,\ldots,m_k)\,|\,m_i = m_j \text{ for some $i \neq j$}\}$. 

For $c$ a natural number, let $\{1,\ldots,c\}$ denote the partial abelian monoid induced by viewing $\{1,\ldots,c\}$ as a subset of $\N$, viewed as monoid with addition. That is, one is only allowed to add numbers if their sum is less than or equal to $c$. In that case, the partial monoid operation agrees with addition of natural numbers. Completing $\{1,\ldots,c\}$ gives a space of points labeled by a number between 1 and $c$, which we call the \textit{charge}, and these labeled points are allowed to collide if their total charge is less than or equal to $c$, in which case the charges simply add. This is exactly a disjoint union of bounded symmetric powers $\mr{Sym}^{\leq c}_k(D^n)$, with $\mr{Sym}^{\leq c}_k(D^n)$ denoting the subspace of $\mr{Sym}_k(D^n)=(D^n)^k/\mathfrak S_k$ where no more than $c$ points coincide.  More generally taking as a partial algebra $\sqcup_{1 \leq i \leq c} X^i/\mathfrak S_i$ for $X$ a space gives labeled configuration spaces in the case $c=1$ or labeled bounded symmetric powers in the case of general $c$.

\subsection{Homological stability}\label{subsechomstobintroduction} Homological stability is the phenomenon that for many naturally defined sequences of spaces $X_k$, there are maps $t : X_k \to X_{k+1}$ which induce isomorphisms in homology in a range tending to infinity as $k$ tends to infinity. Classical examples of spaces exhibiting homological stability include symmetric powers $\mr{Sym}_k(M)=M^k/\mathfrak S_k$ \cite{Srod} and configuration spaces $C_k(M)$, as long as $M$ is a connected manifold which is the interior of a manifold with non-empty boundary (we call this condition \emph{admitting a boundary}). Interpolating between these two examples are the bounded symmetric powers $\mr{Sym}^{\leq c}_k(M) \subset \mr{Sym}_k(M)$ which consist of all configurations where at most $c$ points can coincide. More recently, Yamaguchi in \cite{Y} proved homological stability for the spaces $\mr{Sym}^{\leq c}_k(M)$ when $M$ is a punctured orientable surface. In these three examples the maps which induce the isomorphisms are so-called stabilization maps, ``bringing a particle in from infinity'' (see Definition \ref{defstabilizationmap}).

As mentioned in the previous subsection, all three of these examples -- the symmetric powers $\mr{Sym}_k(M)$, configuration spaces $C_k(M)$ and bounded symmetric powers $\mr{Sym}_k^{\leq c}(M)$ are examples of topological chiral homology. Furthermore, the latter two are naturally the topological chiral homology of completions of partial framed $E_n$-algebras. The first two of these framed $E_n$-algebras were previously known to have the property that their topological chiral homology has homological stability, at least for the class of connected manifolds admitting boundary. 

We generalize this situation as follows. Let $P$ be a partial framed $E_n$-algebra with $\pi_0(P)= \{1,\ldots, c\}$ (see Remark \ref{partialpi0}). For $M$ connected, $\pi_0(\int_M P)$ will then be in bijection with the non-negative integers and we denote the $k$th component by $\smash{\int_M^k P}$. The goal of this paper is to prove that the spaces $\smash{\int_M^k P}$ have homological stability whenever $M$ admits boundary (i.e. is the interior of a manifold with non-empty boundary) and is not one-dimensional. This uses a stabilization map $t: \int^k_M P \to \int^{k+1}_M P$ which depends on a choice of manifold with boundary $\bar{M}$ with interior $M$ and an embedding $D^{n-1} \hookrightarrow \partial \bar{M}$. Note that up to homotopy, the stabilization map only depends on a choice of end of the manifold.

\begin{theorem}\label{thmmain} Let $M$ be a connected manifold of dimension $\geq 2$, oriented if it is of dimension 2, admitting a boundary. The stabilization map 
\[t: \int^k_M P \to \int^{k+1}_M P\]
induces an isomorphism  on homology in degrees $* < \lfloor \frac{k}{2c} \rfloor$ and a surjection in degree $* = \lfloor \frac{k}{2c} \rfloor$.\end{theorem}

This is proven for $\dim M \geq 3$ in Theorem \ref{thmmaind3} and for $\dim M = 2$ in Theorem \ref{thmmaind2}. There can be no such result in dimension 1: take $P$ to be the partial framed $E_n$-algebra given by a circle in charge 1 and no operations defined except the identity. Then $\int_{(0,1)}^k P \simeq (S^1)^k$ and this does not satisfy homological stability. It is true for non-orientable connected surfaces admitting a boundary in a slightly worse range, see Corollary \ref{corunorientedstability}. We also prove a generalization to topological chiral homology of a partial algebra with a set of \emph{exceptional particles}, see Theorem \ref{thmmainexcep}.

The slope $\frac{1}{2c}$ may not be optimal. It is optimal for $c=1$: for configuration spaces of particles in $\R^2$, the explicit description of its mod $2$ homology in Theorem III.A.1 of \cite{CLM} gives a slope of $\frac{1}{2}$. If $c>1$, the slope we prove differs from the optimal slope by a factor of at most $2$. To see this, consider the completion of the partial framed $E_n$-algebra with $\pi_0 = \{1,\ldots,c\}$ given by a point in charge $<c$ and a circle in charge $c$. This cannot have a stability range with a higher slope than $\frac{1}{c}$. 

By the examples in Subsection \ref{subsechomstobintroduction}, applications include homological stability theorems for configuration spaces, bounded symmetric powers and labeled variations of these.

\subsection{Complements of closures in symmetric powers} One of the motivations for this paper was Conjecture F of \cite{VW}, a particular instance of which is implied by our main theorem. The conjecture, motivated by computations using motivic $\zeta$-functions, concerns rational homological stability for certain subspaces of the symmetric powers of a manifold.

To state this conjecture, we fix some notation. Any point $x=\{x_1, \ldots x_k\} \in \mr{Sym}_k(M)$ determines a partition of the number $k$ by recording the multiplicity of each point $x_i$ appearing in $x$. For example, $(1,2,2,5-i) \in \mr{Sym}_4(\C)$ corresponds to the partition $1+1+2$. 

\begin{definition}Let $\lambda$ be a partition of $k$. Let $\mr{Sym}_\lambda(M)$ denote the subspace of $\mr{Sym}_k(M)$ whose corresponding partition is $\lambda$. We define $W_\lambda(M)$ to be the complement in $\mr{Sym}_k(M)$ of the closure of $\mr{Sym}_\lambda(M)$.\end{definition}

Given a partition $\lambda=(a_1 + \ldots + a_n)$ of $k$, let $1^j \lambda$ be the partition $(1 + 1 + \ldots + 1 + a_1 + \ldots + a_n)$ of $j+k$ obtained by adding $j$ ones to $\lambda$. After observing that a similar stability result holds in the Grothendieck ring of varieties (Theorem 1.30a of \cite{VW}), Vakil and Wood made the following conjecture (Conjecture F of \cite{VW}):

\begin{conjecture}[Vakil-Wood] \label{conjf}
For all partitions $\lambda$ and irreducible smooth complex varieties $X$, the groups $H_i(W_{1^j \lambda}(X);\Q)$ are independent of $j$ for $j$ sufficiently large compared to $i$.  
\end{conjecture}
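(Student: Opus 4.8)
\emph{Proof proposal (for the strata we treat).}
The plan is to recognize $W_{1^j\lambda}(X)$ as the connected component $\int^{j+|\lambda|}_{M}P$ of the topological chiral homology of an explicit partial framed $E_n$-algebra $P$ on the underlying complex manifold $M=X(\C)$, and then to invoke Theorem \ref{thmmain} (or its exceptional-particle strengthening, Theorem \ref{thmmainexcep}). The first step is purely combinatorial. Since colliding points have their multiplicities added, a configuration in $Sym_k(M)$ lies in $\overline{Sym_\mu(M)}$ exactly when its partition is a coarsening of $\mu$, i.e.\ is obtained from $\mu$ by merging parts. Hence $W_{1^j\lambda}(M)$ is the open subspace of $Sym_{j+|\lambda|}(M)$ consisting of configurations whose partition is \emph{not} a coarsening of $1^j\lambda$. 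For $\lambda=(m)$ a single part one checks directly that a partition of $j+m$ is a coarsening of $1^j(m)$ if and only if it has a part $\geq m$, so $W_{1^j(m)}(M)=Sym^{\leq m-1}_{j+m}(M)$, a bounded symmetric power; for $\lambda$ with all parts equal to $m$ one obtains similarly the locus of configurations that are ``bounded'' away from a controlled number of heavier points.

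Next I would feed this into the identification of bounded symmetric powers with topological chiral homology recalled above: $Sym^{\leq c}_k(M)\cong\int^k_M\bar P$, where $\bar P$ is the completion of the partial monoid $\{1,\dots,c\}$ (equivalently, of the partial framed $E_n$-algebra given by one point in each charge $1,\dots,c$). For $\lambda=(m)$ this yields $W_{1^j(m)}(M)\cong\int^{j+m}_M\bar P$ with $c=m-1$, and for the remaining strata in our family the ``bounded with a bounded number of heavier points'' description is precisely topological chiral homology with an allowed set of exceptional particles, so $W_{1^j\lambda}(M)\cong\int^{j+|\lambda|}_MP$ in the sense of Theorem \ref{thmmainexcep}. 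Under all of these identifications, the map $W_{1^j\lambda}(M)\to W_{1^{j+1}\lambda}(M)$ that brings a simple point in from infinity corresponds exactly to the stabilization map $t\colon\int^k_MP\to\int^{k+1}_MP$.

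Now let $X$ be an irreducible smooth complex variety, so that $M=X(\C)$ is a connected manifold of dimension $2\dim_\C X\geq 2$, canonically oriented as a complex manifold (which is what Theorem \ref{thmmain} requires in dimension $2$). If $X$ is not projective, then $M$ is the interior of a compact manifold with boundary -- resolve a compactification of $X$ so that the complement becomes a normal-crossings divisor -- so Theorem \ref{thmmain}, respectively Theorem \ref{thmmainexcep}, applies and shows that $t_*$ is an isomorphism on $H_i(W_{1^j\lambda}(X))$ once $i<\lfloor\tfrac{j+|\lambda|}{2c}\rfloor$. Since this bound tends to infinity with $j$, the groups $H_i(W_{1^j\lambda}(X))$ -- with $\Q$, or indeed arbitrary, coefficients -- are independent of $j$ for $j$ large compared to $i$, which is Conjecture \ref{conjf} in these cases. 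When $X$ is projective the stabilization map is unavailable; at least for $\lambda=(m)$ one can still deduce the same $j$-independence \emph{rationally} from the homological stability of bounded symmetric powers of closed manifolds proved elsewhere in this paper.

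Essentially all of the real content sits in the earlier sections -- showing that the relevant complexes of embedded disks are highly connected and deducing Theorems \ref{thmmain} and \ref{thmmainexcep} -- so the only step I expect to require genuine care here is the combinatorial one: determining exactly which partitions $\lambda$ admit a ``bounded plus finitely many exceptional particles'' description. This is a question about the poset of coarsenings of $1^j\lambda$, and it is precisely where the restriction to a special case of Conjecture \ref{conjf} is forced, since partitions with parts of several distinct sizes lead to unions of such loci, which are not evidently a single topological chiral homology. A secondary subtlety is the projective case, where the comparison between $Sym^{\leq c}_k$ of a closed manifold and of the open case must be made rationally rather than through an honest ``point at infinity'' map.
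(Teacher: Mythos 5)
Your proposal is an outline of what the paper actually proves: Conjecture~\ref{conjf} as stated remains open, and the paper only establishes it for $\lambda=(c+1)^{m+1}$ on manifolds admitting boundary (Corollary~\ref{corravimelanie}) and, rationally, for $\lambda=c+1$ on closed manifolds (Theorem~\ref{thmtruncstab}). Your route---identify $W_{1^j\lambda}(M)$ with a component of topological chiral homology with an exceptional collection, observe that the ``bring a point in from infinity'' map agrees with the stabilization map, and apply Theorem~\ref{thmmainexcep} (respectively the closed-manifold analysis via Theorem~\ref{thmscanisoclosed} and Section~\ref{secrathomstabtrunc})---is the paper's route, so the approaches are the same.

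The step you explicitly defer, determining exactly which partitions admit a ``bounded plus finitely many exceptional particles'' description, is where the real work sits, and I would push on it harder than you do. A partition $\mu$ of $j+(m+1)(c+1)$ is a coarsening of $1^j(c+1)^{m+1}$ if and only if
\begin{equation*}
\sum_{p>c}\left\lfloor\frac{p}{c+1}\right\rfloor \;\geq\; m+1,
\end{equation*}
the sum running over the parts of $\mu$ exceeding $c$. This is \emph{not} the condition that the parts $>c$ total at least $(m+1)(c+1)$, which is what the $\Lambda$ appearing in the paper's proof of Corollary~\ref{corravimelanie} encodes. For instance, with $c=1$, $m=2$, $j=0$ the partition $(3,3)$ has $\lfloor 3/2\rfloor+\lfloor 3/2\rfloor = 2 < 3$, so it is not a coarsening of $(2,2,2)$ and hence lies in $W_{(2,2,2)}(M)$, yet its two parts $>1$ total $6=(m+1)(c+1)$ and so are excluded by that $\Lambda$ (read as multisets; reading it as genuine sets instead wrongly admits $(2,2,2)$ itself). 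The correct exceptional collection is the finite set of multisets $S$ of charges in $\{c+1,\dots,(m+1)(c+1)-1\}$ satisfying $\sum_{q\in S}\lfloor q/(c+1)\rfloor\leq m$; with that choice one does get $\int^{j+(m+1)(c+1),\Lambda}_M P = W_{1^j\lambda}(M)$, and the rest of your argument goes through, modulo a recomputed numerical range (you also dropped the $|\Lambda|$-shift from Theorem~\ref{thmmainexcep} when quoting the range). A minor point: ``admits a boundary'' does not require the bordification to be compact, so no resolution of singularities is needed to get a compact model---any choice of manifold with boundary with interior $X(\C)$ suffices.
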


Topologists might generalize this to a conjecture regarding the rational homology of $W_{1^j\,\lambda}(M)$ for $M$ a manifold and it is this generalization that we will consider in this paper. Note that $W_{1^j2}(M)=C_{j+2}(M)$ and more generally $W_{1^jc+1}(M)=\mr{Sym}_{j+c+1}^{\leq c}(M)$. Using Theorem \ref{thmmainexcep} we prove Conjecture \ref{conjf} for a larger class of partitions. Let $m^i$ denote the partition of the number $mi$ into $i$ sets of size $m$.

\begin{corollary}\label{corravimelanie}Let $M$ be a connected manifold of dimension $\geq 2$, oriented if it is of dimension 2, admitting a boundary and let $\lambda = (c+1)^{m+1}$. Then the stabilization map
\[t: W_{1^k \lambda}(M) \to W_{1^{k+1} \lambda}(M)\]
induces an isomorphism on homology in degrees $*\leq \lfloor \frac{k-(m+1)(c+1)+1}{2c} \rfloor$.\end{corollary}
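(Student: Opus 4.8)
The plan is first to give an explicit description of $W_{1^j\lambda}(M)$ as a bounded symmetric power carrying a controlled number of high-multiplicity points. Set $N=j+(m+1)(c+1)$. A point $x\in Sym_N(M)$ lies in the closure of $Sym_{1^j\lambda}(M)$ precisely when its multiplicity partition $\nu=(\nu_1,\nu_2,\dots)$ is coarser than $1^j\lambda=(1^j,(c+1)^{m+1})$; since $M$ is connected and of positive dimension, this holds if and only if the $m+1$ parts equal to $c+1$ can be absorbed into the $\nu_i$, that is, if and only if $\sum_i\lfloor\nu_i/(c+1)\rfloor\geq m+1$. Hence
\[W_{1^j\lambda}(M)=\Bigl\{\,x\in Sym_N(M)\ :\ \textstyle\sum_i\lfloor\nu_i(x)/(c+1)\rfloor\leq m\,\Bigr\},\]
which for $m=0$ recovers the identity $W_{1^j(c+1)}(M)=Sym^{\leq c}_N(M)$ recorded above.

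The second and main step is to realize this space as a component of topological chiral homology with exceptional particles, so that Theorem \ref{thmmainexcep} applies. Start from the partial framed $E_n$-algebra with $\pi_0=\{1,\dots,c\}$ obtained by completing the partial monoid $\{1,\dots,c\}$ (where $a+b$ is defined when $a+b\leq c$), and adjoin finitely many exceptional particle types, one for each charge $\ell$ with $c<\ell\leq m(c+1)+c$, the type of charge $\ell$ having \emph{weight} $\lfloor\ell/(c+1)\rfloor$; declare an operadic composition (collision) of labeled particles to be defined exactly when the total weight of the exceptional particles of the resulting configuration is at most $m$. Since $\lfloor a/(c+1)\rfloor+\lfloor b/(c+1)\rfloor\leq\lfloor(a+b)/(c+1)\rfloor$ and $\lfloor(a+s)/(c+1)\rfloor\geq\lfloor a/(c+1)\rfloor$ for $0\leq s\leq c$, the total weight is non-decreasing under collisions, so this does define a partial framed $E_n$-algebra $P$ of the kind treated in Theorem \ref{thmmainexcep}. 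Using the particle model of topological chiral homology, and the fact (see the discussion after Proposition 5.16 of \cite{Sa}) that passing to the completion $\bar{P}$ leaves $\int_M\bar{P}$ unchanged, one then identifies the component $\int_M^N\bar{P}$ with $W_{1^j\lambda}(M)$, compatibly with the ``bring in a particle from infinity'' maps on both sides.

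Finally I would invoke Theorem \ref{thmmainexcep} for $P$ and translate its conclusion. The hypotheses on $M$ are exactly those of that theorem. Under the identification above the stabilization map of $\int_M\bar{P}$ corresponds to $t\colon W_{1^k\lambda}(M)\to W_{1^{k+1}\lambda}(M)$, and $W_{1^k\lambda}(M)$ is the component $\int_M^N\bar{P}$ with $N=k+(m+1)(c+1)$; tracking how the stability range of Theorem \ref{thmmainexcep} depends on this index and on the bounded exceptional data then produces the claimed range $*\leq\lfloor\frac{k-(m+1)(c+1)+1}{2c}\rfloor$.

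I expect the middle step to be the main obstacle: one must set up the exceptional-particle structure so that its topological chiral homology is, up to homotopy, the subspace $W_{1^j\lambda}(M)\subset Sym_N(M)$ exactly, and verify that $P$ meets the hypotheses of Theorem \ref{thmmainexcep} --- in particular that the exceptional data is of the required finite type and that the rule ``a collision is defined iff the output has total exceptional weight at most $m$'' is compatible with the framed operadic compositions. Once this identification and the matching of stabilization maps are in place, the combinatorial description in the first step and the range bookkeeping in the last are routine.
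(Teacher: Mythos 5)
Your overall strategy matches the paper's: realize $W_{1^j\lambda}(M)$ as a degree-component of topological chiral homology constrained by an exceptional collection, apply Theorem \ref{thmmainexcep}, and transport the stability range. Your combinatorial characterization of $W_{1^j\lambda}(M)$ as $\{x : \sum_i \lfloor\nu_i(x)/(c+1)\rfloor\leq m\}$ is correct, and your exceptional-collection datum ``multisets of charges $\ell_i>c$ with $\sum_i\lfloor\ell_i/(c+1)\rfloor\leq m$'' is exactly the condition that makes the identification $\int_M^{N,\Lambda}P=W_{1^j\lambda}(M)$ (with $N=j+(m+1)(c+1)$) go through. Interestingly, this is \emph{not} the $\Lambda$ the paper writes down: there $\Lambda$ consists of exceptional charge-sets of total charge $<(m+1)(c+1)$. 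For $m\geq 2$ these two conditions genuinely disagree; e.g.\ with $c=1$, $m=2$ the multiset $\{3,3\}$ has weight $1+1=2\leq m$ but total charge $6\not<(m+1)(c+1)=6$, and the corresponding partition $(3,3,1^j)$ does lie in $W_{1^j(2,2,2)}(M)$ since it is not a coarsening of $(1^j,2,2,2)$. So your choice of $\Lambda$ is the one that actually identifies the correct subspace, while the paper's, read literally, does not for $m\geq 2$.

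Two things are missing to close the argument. First, you need the transfer argument (the paper cites \cite{Do} and \cite{N}) to upgrade the iso range from $*<\lfloor\cdot\rfloor$ with a surjection at equality, which is all Theorem \ref{thmmainexcep} gives, to the non-strict $*\leq\lfloor\cdot\rfloor$ that the corollary asserts. Second, you should actually do the range bookkeeping rather than assert it produces the stated bound. Theorem \ref{thmmainexcep} is formulated in terms of $|\Lambda|$, the maximal total charge among the allowed exceptional multisets. For your $\Lambda$ one computes $|\Lambda|=m(2c+1)$ (achieved by $m$ particles each of charge $2c+1$), so with component index $N=k+(m+1)(c+1)$ the theorem gives stability for $*<\lfloor\frac{N-|\Lambda|}{2c}\rfloor=\lfloor\frac{k-(m-1)c+1}{2c}\rfloor$, which is at least as strong as the stated range $\lfloor\frac{k-(m+1)(c+1)+1}{2c}\rfloor$ but not equal to it. You should also sort out whether you want the weight restriction encoded as a restriction $\Lambda$ on top of the truncated partial monoid $P=\{1,\dots,(m+1)(c+1)-1\}$ (the framework Theorem \ref{thmmainexcep} actually uses) or built into the operadic composition of $P$ as you describe; the former is cleaner and avoids having to re-verify the hypotheses of the theorem for a more exotic partial algebra.
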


Note that this result is true with integral coefficients, not just rational coefficients. For rational coefficients and $\lambda=c+1$, we will also prove Conjecture \ref{conjf} when $M$ is closed.

\subsection{Stable homology and closed manifolds} Every time one proves a homological stability result, the obvious follow-up problem is to determine the stable homology. In the case of topological chiral homology, a description of the stable homology precedes the stability result.

The technique used to determine the stable homology is known as \emph{scanning}. It was previously used in the context of configuration spaces by Segal \cite{Se3} and McDuff \cite{Mc1} and in the context of bounded symmetric powers by Kallel \cite{K2} and Yamaguchi \cite{Y}. Any framed $E_n$-algebra $A$ has an $n$-fold classifying space denoted $B^n A$ \cite{M}, which is unique up to weak homotopy equivalence. When the monoid $\pi_0(A)$ is actually a group, this $n$-fold classifying space can be viewed as an $n$-fold delooping. That is, $B^n A$ is a $(n-1)$-connected space whose $n$-fold based loop space is homotopy equivalent to $A$. If we apply this $n$-fold classifying space  procedure locally on an $n$ dimensional manifold $M$, we get a locally trivial fiber bundle $B^{TM} A$ with fiber $B^n A$ \cite{Sa}. The \emph{scanning map} is a map: 
\[s:\int_M A \to \Gamma_M(B^{TM} A)\]
where $\Gamma_M(B^{TM} A)$ is the space of compactly supported sections of the bundle $B^{TM} A$ over $M$, i.e. sections that send the complement of a compact subset of $M$ to the base point of $B^n A$. See Definition \ref{defscan} for the definition of the scanning map. 

%The existence of such a map is not hard to deduce if one uses a particular model for the $n$-fold delooping of an $E_n$-algebra. %There is a notion of relative topological chiral homology and $B^n A \simeq \int_{(D^n,\partial D^n)} A$. Given a point $m \in M$, %pick a disk $D_m$ around it (this can be chosen continuously using a Riemannian metric on $M$ and the exponential map). There is %a natural map $\int_M A \to \int_{(M,M \backslash int \, D_m)} A = \int_{(D_m,\partial D_m)} A$. The collection of all such maps for %all $m \in M$ assemble to give the scanning map $s$. Also see \cite{Mi2} for a description of the scanning map based on the %scanning map for configuration spaces \cite{Mc1} and May's two-sided bar construction \cite{M}.

 %Indeed, if $A$ is a discrete abelian group, then we already noted that $\pi_i(\int_M A) = H_i(M;A)$. If $M$ is orientable, then the bundle $B^{TM} A$ is trivial and since Eilenberg-MacLane spaces represent cohomology, we can identify the homotopy groups as $\pi_i(\Gamma_M(B^{TM} A)) = H^{n-i}_c(M;A)$. Thus, the fact that the scanning map is a weak equivalence (non-abelian Poincar\'e duality) gives ordinary Poincar\'e duality by taking homotopy groups in the case that $A$ is a discrete abelian group. 

The scanning map is a weak homotopy equivalence when $\pi_0(A)$ is a group \cite{Sa} \cite{Lu}, a result known as \emph{non-abelian Poincar\'e duality}. However, the scanning map is also interesting when $\pi_0(A)$ is not a group. In \cite{Mi2}, it was shown that when $M$ admits boundary the mapping telescope of $\int_M A$ under the stabilization maps is homology equivalent to $\Gamma_M(B^{TM} A)$, see Theorem \ref{thmscanning}. By substituting $A = \bar{P}$, the completion of the partial framed $E_n$-algebra $P$ and using the results of this paper, we conclude that the scanning map $s:\int^k_M P \to \Gamma^k_M(B^{TM} P)$ is a homology equivalence in a range tending to infinity with $k$. Here the superscript $k$ denotes the subspace of degree $k$ sections. We thus deduce the following corollary.

\begin{corollary}\label{stabrange} Suppose $M$ admits boundary and $\pi_0(P)=\{1,\ldots,c\}$. Then the scanning map induces an isomorphism between $H_*(\int^k_M P)$ and $H_*(\Gamma_M^k(B^{TM} P))$ for $* < \lfloor \frac{k}{2c} \rfloor$ and a surjection for $* \leq \lfloor \frac{k}{2c} \rfloor$.\end{corollary}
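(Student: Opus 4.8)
The plan is to deduce Corollary \ref{stabrange} by combining the homological stability statement of Theorem \ref{thmmain} with the scanning result quoted as Theorem \ref{thmscanning}. First I would recall the precise statement of Theorem \ref{thmscanning}: for $M$ admitting a boundary and $A$ a framed $E_n$-algebra, the mapping telescope $\mathrm{hocolim}(\int^k_M A \xrightarrow{t} \int^{k+1}_M A \xrightarrow{t} \cdots)$ is homology equivalent to the corresponding component $\Gamma^\infty_M(B^{TM}A)$ of the section space (the telescope of the section spaces under adding sections supported near infinity), and moreover the scanning map is compatible with the stabilization maps on both sides, so that the colimit of the scanning maps $s\colon \int^k_M A \to \Gamma^k_M(B^{TM}A)$ realizes this homology equivalence. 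Applying this with $A=\bar P$ and using that $\int_M\bar P\simeq\int_M P$ componentwise (from the discussion of completions, Proposition 5.16 of \cite{Sa}), we obtain that $\mathrm{hocolim}_k \int^k_M P \to \mathrm{hocolim}_k \Gamma^k_M(B^{TM}P)$ is a homology isomorphism.

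Next I would set up the two-sided comparison. On the left-hand side, Theorem \ref{thmmain} tells us $t_*\colon H_*(\int^k_M P)\to H_*(\int^{k+1}_M P)$ is an isomorphism for $*<\lfloor k/2c\rfloor$ and a surjection at $*=\lfloor k/2c\rfloor$; hence for any fixed degree $d$, the map $H_d(\int^k_M P)\to H_d(\mathrm{hocolim}_k\int^k_M P)$ is an isomorphism once $k$ is large enough that $d<\lfloor k/2c\rfloor$, i.e. once $k\geq 2c(d+1)$, and in fact (reading the ranges carefully) as soon as $d<\lfloor k/2c\rfloor$ the map into the telescope is already an isomorphism. On the right-hand side, I need the analogous stability for the section spaces: $H_*(\Gamma^k_M(B^{TM}P))\to H_*(\Gamma^{k+1}_M(B^{TM}P))$ should also stabilize. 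The cleanest way to get this is to note that stability for the section spaces follows formally from stability for $\int^k_M P$ together with the homology equivalence on telescopes — or alternatively one can invoke that section spaces of this type satisfy homological stability by a scanning/group-completion argument; but the safest route is: since the scanning maps commute with stabilization, a diagram chase in the commutative ladder whose rows are $H_d(\int^k_M P)\to H_d(\int^{k+1}_M P)\to\cdots$ and $H_d(\Gamma^k_M)\to H_d(\Gamma^{k+1}_M)\to\cdots$, with vertical maps $s_*$ and with the induced map on colimits an isomorphism, combined with left-hand eventual constancy, forces right-hand eventual constancy as well.

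Then I would run the standard "zig-zag into the telescope" argument. Fix a degree $d$ and a $k$ with $d<\lfloor k/2c\rfloor$. In the commutative square
\[
\begin{CD}
H_d(\int^k_M P) @>{s_*}>> H_d(\Gamma^k_M(B^{TM}P)) \\
@VVV @VVV \\
H_d(\mathrm{hocolim}_k\int^k_M P) @>{\cong}>> H_d(\mathrm{hocolim}_k\Gamma^k_M(B^{TM}P))
\end{CD}
\]
the bottom arrow is an isomorphism by Theorem \ref{thmscanning}; the left vertical arrow is an isomorphism by Theorem \ref{thmmain} (the telescope is attained by degree $k$ in degree $d$ when $d<\lfloor k/2c\rfloor$); and the right vertical arrow is an isomorphism by the right-hand stability established in the previous paragraph. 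Commutativity then yields that $s_*\colon H_d(\int^k_M P)\to H_d(\Gamma^k_M(B^{TM}P))$ is an isomorphism, which is exactly the claimed statement for $*=d<\lfloor k/2c\rfloor$.

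The main obstacle is the right vertical arrow, i.e. establishing homological stability for the section spaces $\Gamma^k_M(B^{TM}P)$ in the correct range, since \cite{Mi2} a priori only gives the statement after passing to the telescope and not a stability range for the sections themselves. There are two ways to handle this. The soft way, which I would adopt, avoids proving a range for the sections at all: one only needs that the right-hand maps become isomorphisms eventually (no rate needed), which follows purely formally from the isomorphism on telescopes together with the fact that $\{H_d(\Gamma^k_M)\}_k$ is a sequence of finitely generated abelian groups — actually even that is unnecessary, since the comparison square above shows that for each $d$ and each sufficiently large $k$ the map $s_*$ is an isomorphism, and then the fact that $s$ commutes with stabilization propagates this isomorphism to all larger $k$ in the same degree, which is all the corollary asserts. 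So in fact the argument reduces to: (i) $s$ commutes with $t$ on both sides; (ii) $s$ induces an iso on telescopes (Theorem \ref{thmscanning}); (iii) $t_*$ on the left is an iso in the stated range (Theorem \ref{thmmain}). Carefully bookkeeping the quantifier "for $*<\lfloor k/2c\rfloor$" across the ladder is the only genuinely delicate point, and that is bounded by the same quantifier appearing on the left, so the range in Theorem \ref{thmmain} transfers without loss.
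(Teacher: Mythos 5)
Your overall strategy — combine Theorem~\ref{thmmain} with Theorem~\ref{thmscanning} via a commutative ladder under the scanning map and stabilization — is the right one, and is close to what the paper does in Proposition~\ref{propscanningopen}. But the proposal has a genuine gap, and it is exactly at the place you flag as ``the main obstacle.'' You need the right vertical arrow $H_d(\Gamma^k_M(B^{TM}P)) \to H_d(\mathrm{hocolim}_T\Gamma^j_M(B^{TM}P))$ to be an isomorphism whenever $d < \lfloor k/2c\rfloor$, and neither of your proposed ``soft'' arguments actually delivers this. Knowing that the top row of a ladder stabilizes and that the induced map on colimits is an isomorphism does \emph{not} formally force the bottom row to stabilize: for instance take the top row to be constant at $\mathbb{Z}$, the bottom row to be $B_k = \mathbb{Z}/k! \oplus \mathbb{Z}$ with transition maps $(a,b)\mapsto(0,b)$, and vertical maps $b\mapsto(0,b)$; then the colimits agree but the bottom transition maps are never injective. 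The finiteness of the groups does not rescue this, and your alternate phrasing (``the comparison square shows... and then this propagates'') is circular, since the comparison square requires the right vertical arrow to be an iso in the first place. Moreover, even if you did know the section-space sequence stabilizes eventually, you would have no control on \emph{when}, so the claimed range $* < \lfloor k/2c\rfloor$ would not follow.

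The paper's actual argument bypasses all of this by observing one additional, decisive fact that you did not use: the stabilization map $T\colon \Gamma^k_M(B^{TM}P)\to\Gamma^{k+1}_M(B^{TM}P)$ on section spaces is a \emph{homotopy equivalence} for every $k$ (this is recorded right after Definition~\ref{defstabilizationmapsections}, and is used explicitly to replace $\mathrm{hocolim}_T \Gamma^j_M(B^{TM}P)$ by a fixed $\Gamma^k_M(B^{TM}P)$). With this, the bottom row of your ladder is constant, so the right vertical arrow into the telescope is an isomorphism for \emph{all} $k$, and the range is controlled entirely by the left-hand side: exactly $* < \lfloor k/2c\rfloor$ for the isomorphism (and one more degree for a surjection). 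If you add this observation, the remainder of your diagram argument goes through as written.
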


Using the theory of homology fibrations, one can extend the above result to the case that $M$ is closed at the cost of making the range slightly worse (see Theorem \ref{thmscanisoclosed}). Since the homology groups of $\Gamma_M(B^{TM} P)$ are often computable, this gives a method of calculating the homology of $\int_M P$ in a range. 

This theorem is also the way we approach rational homological stability for bounded symmetric powers of closed manifolds. For $M$ closed, one cannot define stabilization maps as there is no direction from which to bring in a point. To prove that the homology of $\int^k_M P$ agrees with that of $\int^{k+1}_M P$ in a range, it suffices to prove that the corresponding components of $\Gamma_M(B^{TM} P)$ are homology equivalent. In the case $P=\{1,\ldots c\}$, we can this if we work with rational coefficients, using bundle automorphisms which only exist after rational localization. In particular, we prove the following theorem, which establishes Conjecture \ref{conjf} for $\lambda=c+1$ and \emph{all} manifolds $M$, not just those admitting boundary.

\begin{theorem}\label{thmtruncstab}
Let $i < \min( \lfloor \frac{k-c}{2c} \rfloor,  \lfloor \frac{j-c}{2c} \rfloor )$ and assume either $n$ is odd or $j,k \neq \chi(M)/2$. Then we have that 
\[H_i(\mr{Sym}_j^{\leq c}(M);\Q) \cong H_i(\mr{Sym}_{k}^{\leq c}(M);\Q)\]
\end{theorem}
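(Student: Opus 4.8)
The strategy is to transport the statement through the scanning map into a question about section spaces, where, after rationalization, the group of fibrewise self-equivalences is large enough to move between almost all components. Set $P=\{1,\dots,c\}$, so that $Sym^{\le c}_m(M)=\int^m_M P$ and the completion is $\bar P\simeq\coprod_{m\ge 0}Sym^{\le c}_m(D^n)$. By Corollary~\ref{stabrange} together with its closed-manifold refinement, Theorem~\ref{thmscanisoclosed}, the scanning map induces isomorphisms $H_i(\int^m_M P;\Q)\cong H_i(\Gamma^m_M(B^{TM}P);\Q)$ for $i\le\lfloor\frac{m-c}{2c}\rfloor$. Hence, for $i$ in the range asserted in the theorem, it is enough to show that the degree-$j$ and degree-$k$ components of $\Gamma_M(B^{TM}P)$ have isomorphic rational homology under the stated hypotheses on $j$, $k$ and $n$.

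Next I would analyse the fibre $F=B^n\bar P$. It is $(n-1)$-connected with $\pi_n F\cong\Z$, the group completion of $\pi_0(\bar P)=\N$; for $c=1$ one has $F\simeq S^n$, and in the limiting case of ordinary symmetric powers $F\simeq K(\Z,n)$. The dichotomy that matters is between $n$ odd and $n$ even. When $n$ is odd, $F$ is rationally an Eilenberg--MacLane space $K(\Q,n)$, so $(B^{TM}P)_\Q$ is a bundle of rational topological abelian groups over $M$; its space of compactly supported sections is then a rational topological abelian group, all of whose path components are homotopy equivalent, and moreover $\chi(M)=0$, so no value of $j,k$ is excluded and the theorem holds in this case. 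When $n$ is even, $F_\Q$ carries an additional rational homotopy class (as it does for $S^n_\Q$), and the point is that the rational homotopy type of the degree-$d$ component of $\Gamma_M((B^{TM}P)_\Q)$ depends on $d$ only through the combination $2d-\chi(M)$: the ``$2d$'' comes from linearizing the even-dimensional relation in the minimal model of $F$ around a section of degree $d$, and the twist of the section space by the Euler class $e(TM)$ contributes the $\chi(M)$.

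Then I would exploit the rational self-equivalences of $(B^{TM}P)_\Q$. After rationalization, $F$ acquires fibrewise self-equivalences of arbitrary nonzero degree $\lambda\in\Q^{\times}$, which are not available integrally; these are the ``bundle automorphisms which only exist after rational localization''. Applying them fibrewise to $(B^{TM}P)_\Q$ rescales the invariant $2d-\chi(M)$ by $\lambda$, hence interchanges, up to rational homotopy equivalence, all components of $\Gamma_M((B^{TM}P)_\Q)$ on which $2d\ne\chi(M)$. In view of the first paragraph this yields $H_i(Sym^{\le c}_j(M);\Q)\cong H_i(Sym^{\le c}_k(M);\Q)$ in the required range whenever $j,k\ne\chi(M)/2$, completing the proof.

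The main obstacle is the second step: pinning down enough of the rational homotopy type of $B^n\bar P$ for general $c$ to see that the degree-$d$ component of the section space is governed by $2d-\chi(M)$, and then constructing the fibrewise rational self-equivalences carefully over $M$ --- for $\chi(M)\ne 0$ the analogous integral (pointed) maps do not globalize, which is precisely why rationality is essential here. A subsidiary but necessary point is checking that scanning identifies $\int^k_M P$ with the degree-$k$ component in a normalization for which the one exceptional component is $d=\chi(M)/2$; this is where the factor of $\tfrac12$ in the statement comes from, and it is consistent with the genuine instability of $Sym^{\le c}_{\chi(M)/2}(M)$ that prevents the hypothesis from being removed.
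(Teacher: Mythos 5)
Your strategy is the paper's: reduce via the scanning map (Theorem~\ref{thmscanisoclosed}) to comparing components of a section space, fiberwise rationalize, observe that for odd $n$ the fiber is rationally $K(\Q,n)$ so all components coincide (and $\chi(M)=0$), and for even $n$ exploit fiberwise self-equivalences of arbitrary nonzero rational degree to interchange all components except the one at $\chi(M)/2$. You have correctly identified the normalization issue, why rationality is forced, and why $\chi(M)/2$ is the obstructed value. That is the right skeleton and it matches the paper.

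The gap --- which you flag yourself as ``the main obstacle'' but do not close --- is genuine and is where most of the work in the paper lives. To make the even-dimensional step precise you need four facts that are not reducible to general nonsense: (1) a computation of $H^*(Sym_c(S^n);\Q)\cong\Q[\iota_n]/(\iota_n^{c+1})$ and of the minimal model $(\Q[a_n,b_{(c+1)n-1}],\ d b=a^{c+1})$ of $X_n := Sym_c(S^n)_\Q$; (2) using that minimal model to show $F_\lambda(X_n,X_n)$ is $(2n-2)$-connected for $\lambda\neq 0$ (the paper's Proposition~\ref{maphighlyconnected}, via identifying $X_n$ as the fiber of a $k$-invariant $K(\Q,n)\to K(\Q,(c+1)n)$ and an obstruction/evaluation-map computation following Haefliger); (3) the consequent triviality of $Sym_c(\dot{T}M)_\Q$ over $M$, together with the \emph{compatibility} of that trivialization with the one for $\dot{T}M_\Q$ coming from Bendersky--Miller so that the degree-$k$ section component is identified with degree-$(k-\chi(M)/2)$ maps (Lemma~\ref{chiovertwo}); and (4) the existence and uniqueness of the fiberwise degree-$d$ self-map $f_d$ (again by the connectivity from step (2)), with the explicit computation that it sends a degree-$k$ section to one of degree $dk-(1-d)\chi(M)/2$. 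Without (2) in particular, neither the triviality of the rationalized bundle nor the existence of $f_d$ over all of $M$ is available, and your phrase ``linearizing the even-dimensional relation in the minimal model around a section of degree $d$'' does not by itself pin down the degree formula --- that comes from the trivialization in (3). One more small point: your odd-$n$ argument that the section space is a rational topological abelian group is a plausible alternative to the paper's citation of Bendersky--Miller, but one should check the structure group genuinely acts by homomorphisms of $K(\Q,n)$; the paper sidesteps this by citing the existing result directly.
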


\subsection{Structure of the paper} 

The organization of the paper is as follows. In Section \ref{sectch} we review Salvatore's definition of topological chiral homology based on the Fulton-MacPherson operad and recall scanning theorems from \cite{Sa} and \cite{Mi2}. We also give a variation of this definition for partial algebras. In Section \ref{seccomplexes} we describe several simplicial complexes and prove they are highly connected. This will be used in Section \ref{sechomstab} to prove homological stability for topological chiral homology of manifolds admitting boundary with coefficients in completions of partial framed $E_n$-algebras. We then generalize this to other subspaces of the topological chiral homology, generalizing those appearing in Conjecture F of \cite{VW}, here Conjecture \ref{conjf}. In Section \ref{secscanningequiv} we discuss the scanning map for topological chiral homology of closed manifolds, which we use in Section \ref{secrathomstabtrunc} to prove rational homological stability for bounded symmetric powers of closed manifolds.

\subsection{Related subsequent work} After the first version of this manuscript was made publicly available, Conjecture \ref{conjf} was completely proven by the authors and  TriThang Tran in \cite{KMT}. The techniques of that paper are different than those employed here. In the open manifold case, it uses an argument involving compactly supported cohomology following the ideas of Arnol'd and Segal. In the closed manifold case, instead of using scanning and localization, it uses a resolution by punctures argument from \cite{RW}. The homological stability range established in \cite{KMT} is better than the one obtained here, but it only works rationally.

Another related paper written subsequently is \cite{KMCell}. Among many other results, that paper gives an improved integral homological stability range for bounded symmetric powers of open orientable manifolds. The techniques of \cite{KMCell} are different from those employed here, resolving $E_n$-algebras by so-called $E_n$-cells.

\subsection{Acknowledgments} We would like to thank Martin Bendersky, S\o ren Galatius, Martin Palmer, Ravi Vakil, Melanie Matchett Wood and the referee for many useful conversations and comments.

%start to add

\section{Definition of topological chiral homology of partial algebras} \label{sectch}

In this section we define topological chiral homology. Usually one considers smooth manifolds -- as will be the case in this paper -- but the theory also applies to PL or topological manifolds. Topological chiral homology is also known as factorization homology. When one works with framed $E_n$-algebras in spaces, another model of topological chiral is given by Salvatore's configuration spaces of particles with summable labels introduced in \cite{Sa}. Topological chiral homology has applications to higher dimensional string topology \cite{ginottradlerzeinalian2}, quantum field theory \cite{CG} and the geometric Langlands program \cite{Gat}. References for topological chiral homology include \cite{An, Fr1, Fr2, ginottradlerzeinalian, Lu, Sa}. 

In order to work with general $n$-dimensional manifolds, we need to use framed $E_n$-algebras as coefficients. However, if one restricts attention to parallelized $n$-dimensional manifolds, the coefficients require less structure and one can define topological chiral homology using $E_n$-algebras instead. More generally, one can work with manifolds with any tangential structure if one uses algebras over the appropriate modification of $E_n$. The main theorem is true for these variations, as well as for PL and topological manifolds. We also believe that these results hold for the locally constant cosheaves of framed $E_n$-algebras considered in \cite{Lu}.

%There are many different models of topological chiral homology (see e.g. \cite{Sa}, \cite{Lu}, \cite{Fr1}), many of which are known %to be equivalent.

In this section we describe Salvatore's model of topological chiral homology \cite{Sa}. It is convenient for our purposes since it is easy to relate to classical particle spaces such as bounded symmetric powers. It also has the advantage that no higher categorical constructions are needed, which limits the amount of background necessary, but of course limits its generality. We will also give a variation of Salvatore's model for topological chiral homology of partial algebras $P$ with $\pi_0 (P) = \{1,\ldots,c\}$, which requires a detailed discussion in Subsection \ref{subsecalttch} of the geometry of the Fulton-MacPherson compactification.

\subsection{Operads} Before we define topological chiral homology, we briefly review operads and their partial algebras. An \emph{operad} is an object that encodes a particular type of algebraic structure, essentially those consisting of $n$-ary operations with relations that do not involve repetitions of terms. Operads and their algebras were first introduced by May in \cite{M} to study iterated loop spaces. In this paper the term operad always means symmetric operad in the category $\cat{Top}$ of topological spaces with a unit element in $\mathcal O(1)$ denoted by $1$.

This means that our operads will consist of a sequence of spaces $\{\mathcal O(k)\}_{k \geq 1}$ together with the following data: (i) a unit element $1 \in \mathcal O(1)$, (ii) an action of the symmetric group $\mathfrak S_k$ on $\mathcal O(k)$, (iii) a collection of operad composition maps $\mathcal O(k) \times \prod_{i=1}^k \mathcal O(j_i) \to \mathcal O(\sum_{i=1}^k j_i)$ that are associative, compatible with the unit element and compatible with the symmetric group actions. Now we recall the definition of the \emph{monad} in spaces associated to an operad, which is really the object we will be working with.

\begin{definition}
For $\mathcal O$ an operad, let $\mathbf O : \cat{Top} \to \cat{Top}$ be the functor which sends a space $X$ to 
\[\mathbf O X =\bigsqcup_{k \geq 1} \mathcal O(k) \times_{\mathfrak S_k} X^k.\] Here $\mathcal O(k)$ is the $k$th space of the operad and $\mathfrak S_k$ is the symmetric group on $k$ letters. \end{definition}

%The relation $\sim$ is generated by $(f;x_0,x_2 \ldots x_k) \sim (f';x_2 \ldots x_k) $ with $f' = f \circ (0,1, \ldots, 1)$.

This functor sends a space to the free $\mathcal O$-algebra on the space. The operad composition and unit of the operad  $\mathcal O$ respectively give natural transformations $\mathbf O \mathbf O \to \mathbf O$ and $\mr{id} \to \mathbf O$. These natural transformations make $\mathbf O$ into a monad (a monoid in the category of functors).

We now recall the definition of a partial algebra over an operad. To motivate this definition we first review algebras over an operad. One definition of an algebra $A$ over an operad $\mathcal O$ is a space $A$ together with a map $\mathcal O(k) \times A^k \to A$ compatible with the operad composition, symmetric group actions and the unit. Equivalently one can define it using the monad $\mathbf O$, which is the definition we will use in this paper.

\begin{definition}Let $\mathcal O$ be an operad with associated monad $\mathbf O$. An \emph{$\mathcal O$-algebra structure} on a space $A$ is a map $m: \mathbf O A \to A$ making the following diagrams commute:
\[\xymatrix{A \ar[r]^(.4){\{1\} \times \mr{id}} \ar[rd]_{\mr{id}} & \mathbf O A \ar[d]^m & & \ar[d] \mathbf O \mathbf O A \ar[r] & \mathbf O A \ar[d]^m \\
 & A & & \mathbf O A \ar[r]_m & A.}\] The two maps $\mathbf O \mathbf O A \to \mathbf O A$ in the diagram above are induced by the monad multiplication natural transformation $\mathbf O \mathbf O \to \mathbf O$ and the algebra composition map $m: \mathbf O A \to A$ respectively.\end{definition}

The definition of a partial algebra over $\mathcal O$ is now straightforward. It is essentially the same as an algebra, except that the map $m$ only needs to be defined on a subspace of $\mathbf O A$.

\begin{definition}
Let $\mathcal O$ be an operad with associated monad $\mathbf O$. A \emph{partial $\mathcal O$-algebra structure} on a space $P$ is the data of a subspace $Comp \subset \mathbf O P$ containing $ \{1\} \times P$, and a map $m: Comp \to P$ making the following diagrams commute: \[\xymatrix{P \ar[r]^(.4){\{1\} \times \mr{id}} \ar[rd]_{\mr{id}} & Comp \ar[d]^m & & \ar[d] Comp_2 \ar[r] & Comp \ar[d]^m \\
 & P & & Comp \ar[r]_m & P}\]  Here $Comp_2 \subset \mathbf O^2 P$ is the inverse image of $Comp$ under the monad multiplication map $\mathbf O^2 P \to \mathbf O P$. Moreover, we require that the subspace $Comp_2$ is also equal to the inverse image of $Comp$ under the map $\mr{id} \times m: \mathbf O Comp \m \mathbf O P$. The two maps $Comp_2 \to Comp$ in the diagram above are induced by the monad multiplication natural transformation $\mathbf O \mathbf O \to \mathbf O$ and the partial algebra compositions $m: Comp \to P$ respectively.
\end{definition}

One can complete partial algebras into an actual algebra using the following construction.

\begin{definition}
Let $\mathcal O$ be an operad and $P$ be a partial $\mathcal O$-algebra. Let the \emph{completion} $\bar P$ be the coequalizer of the two natural maps: \[ \mathbf O Comp \rightrightarrows \mathbf O P.\] Here the first map is induced by the map $m:Comp \m P$ and the second is induced by the natural transformation $\mathbf O \mathbf O \m \mathbf O$.
\end{definition}

Note that $\bar P$ is an $\mathcal O$-algebra with a natural map $P \to \bar P$. In fact, $\bar P$ is the initial object the category of $\mathcal O$-algebras with a map from $P$ respecting the partial $\mathcal O$-algebra structure. See the end of Subsection \ref{subsecalttch} for examples of completions of partial algebras. 

\begin{remark}
This completion should be viewed as a strict completion, in contrast to the homotopy completion used in \cite{Mi3} and \cite{KMCell}. We believe that these two completion procedures yield homotopic algebras under suitable cofibrancy assumptions.  Using Proposition 5.16 of \cite{Sa} and the arguments of Section 3.3 of \cite{Mi3}, one can show that the two completions are homotopic in the case that $\mathcal O$ is the unbased Fulton-MacPherson operad.  This uses that the  unbased Fulton-MacPherson operad is cofibrant, Corollary 3.8 of \cite{Sa}.
\end{remark}

\begin{example}
Note that $\{1,\ldots,c\}$ is a partial commutative monoid and hence a partial algebra over both the associative operad and the commutative operad. Its completion as an algebra over either operad is isomorphic to $\N$. See Example \ref{examplecompletionEn} for a description of its completion as a partial algebra over other operads.

\end{example}

\begin{remark}\label{partialpi0}
For $A$ an $\mathcal O$-algebra, we have that $\pi_0(A)$ is naturally a $\pi_0(\mathcal O)$-algebra. However, for $P$ a partial $\mathcal O$-algebra, there is no natural partial $\pi_0(\mathcal O)$-algebra action on $\pi_0(P)$. This can be remedied by adding in the following assumption. We say that the partial algebra structure on $P$ respects path components if for all $o,o' \in \mathcal O(k)$ in the same path component and all $p_i,p_i' \in P$ in the same path component, $(o;p_1,\ldots,p_k) \in  Comp$ implies that $(o';p_1',\ldots,p_k') \in  Comp$. It is clear that if the partial algebra structure on $P$ respects path components, then $\pi_0(P)$ carries a natural $\pi_0(\mathcal O)$-algebra structure. In the case of $E_n$ or framed $E_n$-operads, this makes $\pi_0(P)$ a partial monoid. If $n>1$, then this partial monoid is commutative. Whenever we say $\pi_0(P)=W$ for some partial $\pi_0(\mathcal O)$-algebra $W$, we implicitly assume that the partial algebra structure on $P$ respects path components.
\end{remark}

Next we recall the definition of a right functor over an operad.

\begin{definition}Let $\mathcal O$ be an operad. A \emph{right $\mathcal O$-algebra structure} on a functor $\mathbf R: \cat{Top} \to \cat{Top}$ is a pair of natural transformations $\iota: \mathbf R \to  \mathbf R \mathbf O$ and  $\mu: \mathbf R \mathbf O \to \mathbf R$ making the following diagrams commute: \[\xymatrix{\mathbf R \ar[r]^(.4){\iota} \ar[rd]_{\mr{id}} & \mathbf R \mathbf O \ar[d]^{\mu} & & \ar[d] \mathbf R \mathbf O \mathbf O \ar[r] & \mathbf R \mathbf O \ar[d]^{\mu} \\
 & \mathbf R & &  \mathbf R \mathbf O  \ar[r]_{\mu} & \mathbf R.}\] The two natural transformations $\mathbf R \mathbf R \mathbf O  \to \mathbf R \mathbf O $ in the diagram above are induced by monad multiplication natural transformation $\mathbf O \mathbf O \to \mathbf O$ and $\mu: \mathbf R \mathbf O \to \mathbf R$ respectively.\end{definition}

\subsection{Topological chiral homology of framed $E_n$-algebras} \label{subsectchdef}
We will now describe the Fulton-MacPherson operad and Salvatore's model of topological chiral homology. This involves certain compactifications of ordered configuration spaces. See \cite{Sa}, \cite{Mi2} and \cite{sinha} for a more detailed treatment of the topics in this subsection. From now on, $M$ will denote an $n$-dimensional manifold.

\begin{definition}The \emph{configuration space of $k$ distinct ordered particles in $M$}  is defined to be 
\[\tilde{C}_k(M) = M^k \backslash \Delta_f\]
where $\Delta_f=\{(m_1,\ldots ,m_k) \in M^k | m_i=m_j \text{ for some }i \neq j \}$ is the fat diagonal.\end{definition}

Pick a proper embedding $M \subset \R^m$. Then there is a map
\[J: \tilde{C}_k(M) \to M^k \times (S^{m-1})^{\binom{k}{2}} \times [0,\infty]^{\binom{k}{3}}\]
given on a configuration $(m_1,\ldots,m_k)$ by a product of the following maps: \begin{itemize}
\item the inclusion into the product $M^k$, 
\item for all $i<j$ the maps $(m_i,m_j) \mapsto u_{ij} = \frac{m_i-m_j}{|m_i-m_j|}$,
\item for all $i<j<l$ the maps $(m_i,m_j,m_l) \mapsto d_{ijl} = \frac{|m_i-m_j|}{|m_i-m_l|}$.
\end{itemize} 
Using this we can define the Fulton-MacPherson configuration space. This is the definition given in \cite{sinha}.

\begin{definition}The \emph{Fulton-MacPherson configuration space $C_{FM}(M)(k)$ of $k$ distinct ordered particles in $M$} is the closure of the image of $J$.
\end{definition}

This space is independent up to homeomorphism of the choice of embedding. It can alternatively be constructed as the closure of the map $\tilde{C}_k(M) \to \prod_{S \subset [k]} \mr{Bl}_{\Delta_s} M^S$ where $\mr{Bl}_{\Delta_s}$ is the real blow-up along the skinny diagonal. This is the construction used in \cite{Sa}.

In Subsection \ref{subsecalttch} we will give more details about the geometry of $C_{FM}(M)(k)$, but we will give a summary of the required results now. The map $J$ induces an inclusion $J:\tilde{C}_k(M) \to C_{FM}(M)(k)$. When $M$ is compact, so is $C_{FM}(M)(k)$. In general one can think of the Fulton-MacPherson configuration space as a partial compactification of $\tilde{C}_k(M)$. In fact, $C_{FM}(M)(k)$ is a manifold with corners whose interior is diffeomorphic to $\tilde{C}_k(M)$ and thus homotopy equivalent to $\tilde{C}_k(M)$.  There is also a natural map 
$L:C_{FM}(M)(k) \to M^k$ induced by the inclusion $\tilde{C}_k(M) \to M^k$. The map $L$ forgets all of the extra data from the blow ups and records only the ``macroscopic location'' of the points in $C_{FM}(M)(k)$. The action of $\mathfrak S_k$ on $\tilde{C}_k(M)$ by permuting the points extends to $C_{FM}(M)(k)$. 

One can think of the Fulton-MacPherson configuration space as a space of macroscopic configurations in $M$, in each point of which can live a microscopic configuration up to translation and dilation, etc. See Figure \ref{figfultonmacpherson} for an example.

\begin{figure}[t]
\begin{center}
\includegraphics[width=11.5cm]{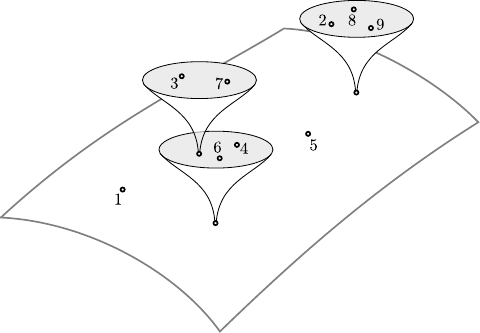}
\end{center}
\caption{A local picture of the Fulton-MacPherson compactification.}
\label{figfultonmacpherson}
\end{figure}

\begin{definition}
Let $F_n(k)$ denote the subspace of $C_{FM}(\R^n)(k)$ of points macroscopically located at the origin. 
\end{definition}

These spaces assemble to form an operad called the Fulton-MacPherson operad, originally introduced by Getzler and Jones in \cite{GJ}. See Proposition 3.5 of \cite{Sa} for the definition of the operad structure on $F_n$, given by grafting trees of microscopic configurations, and Proposition 4.9 of \cite{Sa} for proof that $F_n$ is an $E_n$-operad, by comparing it to the little $n$-disks operad $D_n$ using the Boardman-Vogt resolution $WD_n$ \cite{BV}.

Since we are interested in topological chiral homology of manifolds that are not necessarily parallelizable, we will need to consider the framed version of this operad. First we recall the definition of the semi-direct product of a group $G$ and an operad $\mathcal{O}$ in $G$-spaces (an operad in spaces where each space has a $G$-action and all structure maps are $G$-equivariant).

\begin{definition}\label{defsemidirectoperad}
Let $G$ be a group and let $\mathcal{O}$ be an operad in $G$-spaces. Define $\mathcal O \rtimes G$ to be the topological operad with $(\mathcal O \rtimes G)(k)=\mathcal O(k) \times G^k$ and composition map $\tilde m$ given by: $$\tilde m((o,g_1, \ldots g_k);(o_1,g^1_1, \ldots g^{m_1}_1), \ldots,(o_k,g^1_k, \ldots g^{m_k}_k)) =$$ $$( m(o;g_1 o_1, \ldots, g_k o_k),g_1g_1^1,\ldots, g_kg_k^{m_k})$$ with $m$ the composition map of the operad $\mathcal O$. 
\end{definition}

Now we can define the framed Fulton-MacPherson operad and the framed Fulton-MacPherson configuration space.

\begin{definition}
Define the \emph{framed Fulton-MacPherson operad} $fF_n$ as $F_n \rtimes GL_n(\R)$ with the action of $GL_n(\R)$ on $F_n(k)$ induced by the action of $GL_n(\R)$ on $\R^n$. 
\end{definition}

See Definition 5.3 of \cite{Sa} for more details on this action.

\begin{definition}
Let $P \to M$ be the frame bundle of the tangent bundle, i.e. principal $GL_n(\R)$-bundle associated to the tangent bundle of an $n$-dimensional manifold $M$ by pulling back the universal one from $BO(n)$. Define the \emph{framed Fulton-MacPherson configuration space of $k$ ordered points in $M$}, denoted $fC_{FM}(M)(k)$, to be the pullback of the diagram \[ \xymatrix{fC_{FM}(M)(k) \ar[r] \ar[d] & P^k \ar[d]\\
C_{FM}(M)(k) \ar[r]_L & M^k}\]
 with $L$ the macroscopic location map. Note that the symmetric group action on $M^k$ induces an action on $fC_{FM}(M)(k)$.
\end{definition}

\begin{definition}
Let $\mathbf{ fC(M)}: \cat{Top} \to \cat{Top}$ denote the functor 
\[X \mapsto \mathbf{ fC(M)} X =\bigsqcup_{k \geq 1}  f C_{FM}(M)(k) \times_{\mathfrak S_k} X^k.\] 
\end{definition}

See Proposition 4.5 of \cite{Sa} for a description of a right $fF_n$-functor structure on $\mathbf{fC(M)}$. We can now define topological chiral homology.

\begin{definition}\label{deftch} For $M$ a smooth $n$-dimensional manifold and $P$ a partial $fF_n$-algebra, let $\int_M P$ denote the coequalizer of the two natural maps: \[ \mathbf{ fC(M)} Comp \rightrightarrows \mathbf{ fC(M)} P.\] Here the first map is induced by the partial algebra structure and the second is induced by the right module structure. 
\end{definition}

The space $\int_M P$ is called the topological chiral homology of $M$ with coefficients in $P$. There are many other models of topological chiral homology. If one uses a framed $E_n$-operad which is not cofibrant, then one should use a homotopy invariant construction to define topological chiral homology, such as the simplicial spaces considered in \cite{Mi2}. The following follows directly from the definitions:

\begin{proposition}We have a homeomorphism:
\[\int_M \bar{P} \cong \int_M P.\]
\end{proposition}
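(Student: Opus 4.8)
The plan is to unwind both sides of the claimed homeomorphism into colimit diagrams and show these diagrams are isomorphic. Recall that $\int_M P$ is by definition the coequalizer of the pair of maps $\mathbf{fC(M)}\, Comp \rightrightarrows \mathbf{fC(M)}\, P$, one coming from the right $fF_n$-functor structure map $\mu$ on $\mathbf{fC(M)}$ and the other from $\mathbf{fC(M)}$ applied to the partial composition $m : Comp \to P$. On the other hand $\bar P$ is defined as the coequalizer of $\mathbf{fF_n}\, Comp \rightrightarrows \mathbf{fF_n}\, P$, where I write $\mathbf{fF_n}$ for the free $fF_n$-algebra monad; and $\int_M \bar P$ is the coequalizer of $\mathbf{fC(M)}\, \overline{Comp} \rightrightarrows \mathbf{fC(M)}\, \bar P$, where $\overline{Comp}$ is the "$Comp$" subspace attached to the completed algebra $\bar P$ --- which for an honest algebra is all of $\mathbf{fF_n}\,\bar P$ (every operadic composition is defined). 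So $\int_M \bar P$ is the coequalizer of $\mathbf{fC(M)}\,\mathbf{fF_n}\,\bar P \rightrightarrows \mathbf{fC(M)}\,\bar P$, i.e. the (reflexive) coequalizer presenting $\int_M\bar P$ as a coend $\mathbf{fC(M)} \otimes_{fF_n} \bar P$.

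The first step is therefore purely formal: both $\int_M P$ and $\int_M \bar P$ are colimits of diagrams built from the functor $\mathbf{fC(M)}$, the monad $\mathbf{fF_n}$, the partial algebra $P$ with its $Comp$, and the algebra $\bar P$. I would assemble the relevant category of elements / simplicial bar construction: $\int_M \bar P$ is the colimit of the two-sided bar construction $B(\mathbf{fC(M)}, \mathbf{fF_n}, \bar P)$, and $\bar P$ is itself $B(\mathbf{fF_n}, \mathbf{fF_n}, P\text{ along }Comp)$ in an appropriate sense; splicing these and using that $\mathbf{fC(M)}$ is a right module over the monad produces a presentation of $\int_M \bar P$ by generators $\mathbf{fC(M)}\,\mathbf{fF_n}\,P$ and relations. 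Second, I would identify this with the coequalizer defining $\int_M P$: a point of $\mathbf{fC(M)}\,\mathbf{fF_n}\,P$ is a framed FM-configuration of points in $M$ each labeled by an element of the free algebra $\mathbf{fF_n}\,P$, i.e. labeled by a formal $fF_n$-operation applied to a tuple from $P$; using the right $fF_n$-action $\mu$ this reorganizes to a single FM-configuration labeled by elements of $P$, which is exactly a point of $\mathbf{fC(M)}\,P$, and one checks the two quotient relations match up, one relation absorbing the free-algebra structure (the definition of $\bar P$) into the FM-operad action and the other reducing to the partial composition $m$ on $Comp$.

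Concretely, the cleanest route is: (1) note $\overline{Comp}$ for the honest algebra $\bar P$ is all of $\mathbf{fF_n}\,\bar P$, so $\int_M\bar P = \mathrm{coeq}(\mathbf{fC(M)}\,\mathbf{fF_n}\,\bar P \rightrightarrows \mathbf{fC(M)}\,\bar P)$; (2) this coequalizer is reflexive and exhibits $\int_M\bar P$ as $\mathbf{fC(M)} \otimes_{\mathbf{fF_n}} \bar P$, the value of the right module $\mathbf{fC(M)}$ on the algebra $\bar P$; (3) since $\bar P$ is itself a (reflexive) coequalizer $\mathrm{coeq}(\mathbf{fF_n}\,Comp \rightrightarrows \mathbf{fF_n}\,P)$ of $\mathbf{fF_n}$-algebras, and the relative tensor product $\mathbf{fC(M)} \otimes_{\mathbf{fF_n}} (-)$ preserves reflexive coequalizers of algebras (it is a colimit), we get $\int_M \bar P \cong \mathrm{coeq}\big(\mathbf{fC(M)} \otimes_{\mathbf{fF_n}} \mathbf{fF_n}\,Comp \rightrightarrows \mathbf{fC(M)} \otimes_{\mathbf{fF_n}} \mathbf{fF_n}\,P\big) \cong \mathrm{coeq}(\mathbf{fC(M)}\,Comp \rightrightarrows \mathbf{fC(M)}\,P) = \int_M P$, where the last isomorphism uses that $\mathbf{fC(M)} \otimes_{\mathbf{fF_n}} \mathbf{fF_n}\,X \cong \mathbf{fC(M)}\,X$ naturally (the free-algebra case of the relative tensor product) and that this identification carries the two structure maps for the $Comp$-coequalizer to the two maps in Definition \ref{deftch}. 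I expect the main obstacle to be purely bookkeeping: verifying carefully that the subspace $\overline{Comp}$ associated to the completed algebra really is the whole of $\mathbf{fF_n}\,\bar P$ and that under the relative-tensor manipulations the two parallel maps are transported to precisely the maps $\mu$ and $\mathbf{fC(M)}(m)$ of Definition \ref{deftch}, rather than anything genuinely hard --- this is why the statement is asserted to "follow directly from the definitions." One should also record that all coequalizers involved are reflexive so that the colimit-interchange in step (3) is valid, and that $\mathbf{fC(M)}$ being a right $fF_n$-functor is exactly what makes $\mathbf{fC(M)} \otimes_{\mathbf{fF_n}} (-)$ well defined and colimit-preserving.
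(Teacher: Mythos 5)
Your argument is correct, and since the paper supplies no written proof beyond the remark that the homeomorphism ``follows directly from the definitions,'' your relative-tensor-product and monadic-cancellation unwinding is exactly the natural way to make that remark precise. The one point you flag as needing recording --- that $\mathbf{fF_n}\,Comp\rightrightarrows\mathbf{fF_n}\,P$ is reflexive, so the coequalizer computed in spaces agrees with that in $fF_n$-algebras and is preserved under $\mathbf{fC(M)}\otimes_{\mathbf{fF_n}}(-)$ --- does hold: the common section is induced by the inclusion $\{1\}\times P\subset Comp$, and both composites are the identity by the monad unit law on one side and the triangle identity $m\circ(\{1\}\times\mathrm{id})=\mathrm{id}$ for partial algebras on the other.
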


When $M$ is connected, there is an isomorphism $\pi: \pi_0\left(\int_M P\right) \cong \pi_0(\bar P)$ given by composing the labels of all of the particles.

\begin{definition}Given $\alpha \in \pi_0(\bar P)$, define $\int_M^\alpha P$ to be the path component that is mapped to $\alpha$ by $\pi$.\end{definition}

For $N \subset M$, one can also define a right $fF_n$-functor $\mathbf{ f C(M,N)}$. 

\begin{definition}

Let $\mathbf{f C(M,N)}: \cat{Top} \to \cat{Top}$ be the functor which sends a space $X$ to: 
\[ \mathbf{fC(M,N) }X =  \mathbf{fC(M)} X/{\sim}.\] Here $\sim$ is the relation which identifies configurations which are equal after removing all points macroscopically located in $N$.

\end{definition}

See Definition 6.1 of \cite{Sa} for more detail. Intuitively, this is the configuration space of points in $M$ which vanish if they enter $N$. Using this, we can also define $\int_{(M,N)} P$ for a manifold relative to a subspace as the coequalizer of the two natural maps: \[ \mathbf{ fC(M,N)} Comp \rightrightarrows \mathbf{ fC(M,N)} P.\] 

\begin{remark}This construction was generalized in \cite{AFT}. From their perspective, the above definition uses the fact that all framed $E_n$-algebras in the category of spaces are canonically augmented. The space $\int_{(M,N)} P$ should be viewed as $\int_{(M,N)} (P,\mr{pt})$ with $\mr{pt}$ the trivial algebra. With extra conditions on $N$, one can define spaces $\int_{(M,N)} (P,Q)$ for more general algebraic objects $Q$. For example, if $N$ is the boundary of $M$, one can take the pair $(P,Q)$ to be an algebra over a variant of the swiss cheese operad of \cite{V}. \end{remark}

\subsection{Alternative models of topological chiral homology for partial algebras with $\pi_0 = \{1,\ldots,c\}$} \label{subsecalttch} In this section we will give a deformation retract $\int_{M,\leq c} P$ of $\int_M P$ for $P$ a partial $fF_n$-algebra satisfying $\pi_0 (P) = \{1,\ldots,c\}$. This is the subspace where each macroscopic location has charge $\leq c$. To define the charge of a macroscopic location, we note that there is a map $\int_M P \to \int_M \{1,\ldots,c\}$ and then for each macroscopic location of $x \in \int_M P$ we take the sum of the labels in $\{1,\ldots,c\}$ of the microscopic configurations associated to that macroscopic location.

To give the deformation retraction, we need to discuss the geometry of the Fulton-MacPherson compactification. See \cite{sinha} for an exposition of these topics, from which we borrow the notation.

To each point in $C_{FM}(M)(k)$ we can associate a rooted tree with leaves $\{1,\ldots,k\}$. Remember that a point in $C_{FM}(M)(k)$ is described by a collection of $m_i \in M$, $u_{ij} \in S^{m-1}$ and $d_{ijk} \in [0,\infty]$.  A rooted tree with leaves $\{1,\ldots,k\}$ can be given by telling you which leaves lie above which others: we say that the two leaves $i,j$ lie above $k$ if $d_{ijk} = 0$. This makes sense, because $d_{ijk}$ is zero if $i$ and $j$ are infinitely much closer than $i$ to $k$ (and hence $j$ to $k$). In particular, if the points $m_i$ are all disjoint, then we have $d_{ijk} = \frac{|m_i-m_j|}{|m_i-m_k|}$. There is a unique minimal tree with leaves labeled by the set $\{1,\ldots,k\}$ consistent with this data. We define the tree of $x$ to be this minimal tree, except when all $m_i$ are equal, in which case we add a new root vertex. See Figure \ref{figfmtree} for the tree associated to the element depicted in Figure \ref{figfultonmacpherson}.

\begin{figure}[t]
\begin{center}
\includegraphics[width=3.5cm]{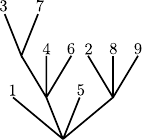}
\end{center}
\caption{The tree associated to the element of the Fulton-MacPherson compactification given in Figure \ref{figfultonmacpherson}, if one assumes that there are no particles outside of the depicted part.}
\label{figfmtree}
\end{figure}

It turns out that $C_{FM}(M)(k)$ is a manifold with corners, whose boundary strata $C_T$ are indexed by these trees, with the stratum corresponding to $T'$ in the closure of $T$ if $T$ can be obtained from $T'$ by contracting edges.

We can give a chart containing a tubular neighborhood of each stratum $C_T(M)$. To do this we define the space $D_T(M)$, which will turn out to be diffeomorphic to $C_T(M)$. If $e$ is an edge going to the root of $T$, we let $IC_e(M)$ be space given by a point $m \in M$ and the product over the vertices $v$ lying over $e$ of the quotients $\tilde{C}_{val(v)-1}(T_m M)/{\sim}$ of labeled configuration spaces by translation and dilation. We then define $D_T(M)$ as the subspace of $\prod_e IC_e(M)$ lying over $\tilde{C}_{\# e}(M)$. An element $y$ of $D_T(M)$ is given by a collection of $y^v_e$, where $v$ runs over the set $V$ of all vertices and $e$ over the set $E(v)$ of edges going into $v$. For the edges $e'$ going into the root vertex $r$ we have $y^r_{e'} \in M$, all of which have to be disjoint. For all edges $e'$ going into another vertex $v$ we have $y^v_{e'}$ in some tangent space to $M$, all of which are disjoint for fixed vertex. This makes precise our description of the Fulton-MacPherson compactification as a macroscopic configuration, labeled with microscopic configurations, etc; for $e'$ edges going into the root the $y^r_{e'}$ form the macroscopic configuration, and for each other edge $e'$ the $y^v_{e'}$ form a microscopic configuration.

As an example, a point in $D_T(M)$ for $M$ as in Figure \ref{figfmtree} consists of  (i) a labeled configuration of four points $m_1$, $m_{v_1}$, $m_{v_2}$, $m_5$ in $M$, (ii) a labeled configuration of three points $n_{v_3}$, $n_4$, $n_6$ in $T_{m_{v_1}}M$ up to translation and dilation, (iii) a labeled configuration of three points $n'_2$, $n'_8$, $n'_9$ in $T_{m_{v_2}}M$ up to translation and dilation, and (iv) a labeled configuration of two points $n''_3$ and $n''_7$ in $T_{n_{v_3}}T_{m_{v_1}}M$ up to translation and dilation.

Let us restrict our attention to the case $M = \R^n$ now, for ease of notation. Then we set $N_T$ to be subspace of $D_T(\R^n) \times [0,1)^{V_\mr{int}}$ (where $V_\mr{int}$ is the set of the internal vertices, i.e. vertices that are not equal to the root or leaves) of those $(y,(t_v))$ satisfying $t_v < r(y)$, with $r(y)$ is the function defined by \[\frac{r(y)}{1-r(y)} =  \frac{1}{3} \min_{\stackrel{v \in V }{e,e' \in e(V)}}\{d(m^v_e,m^v_{e'})\}.\] To each element $y$ and vertex $w$ we set the real number $s_w$ to be the product of all $t_v$ for $v$ in the path down to the root (the empty product is $1$).

We will now give a map $\nu_T: N_T(\R^n) \to C_{FM}(\R^n)(k)$, consisting of components $\eta_T: N_T(\R^n) \to (\R^n)^k$, $f_{ij}: N_T(\R^n) \to S^{n-1}$ and $g_{ijk}:  N_T(\R^n) \to [0,\infty]$. These are given by
\begin{enumerate}[(i)]
\item The map $\eta_T$ is most important to us. We first inductively define points $y_v(y) \in \R^n$ for each of the vertices by setting $y_{r}(y) = 0$ for the root $r$ and for all other vertices $v$ defining $y_v(y) = s_w m^w_e + y_w(y)$, where $e$ is the edge going down from $v$ and ending at some $w$. Then $\eta_T(y)$ is given by the $y_i(x)$'s for $i \in \{1,\ldots,k\}$ the leaves. Intuitively this is a ``nested solar system''  construction.
\item For each pair $i,j$, let $T_w$ be the tree obtained as the join of $i$ and $j$ in $T$ (this is the smallest tree containing $i$ and $j$) which has some root $w$. We then set $f_{ij} = \pi_{ij} \circ \eta_{T_w} \circ \rho_w$, where $\rho_w: N_T(\R^n) \to N_{T_w}(\R^n)$ is the map that projects onto the vertices in $T_w$, sets $t_w = 1$ and includes $\tilde{C}_{\mr{val}(w)-1}(\R^n)/\sim$ into $\tilde{C}_{\mr{val}(w)-1}(\R^n)$ as the configurations with average location $0$ and radius $1$.
\item For each triple $i,j,k$ we set $g_{ijk}$ to be $\pi_{ijk} \circ \eta_{T_w} \circ \rho_w$, where $T_w$ is now the join of the vertices labeled by $i,j,k$.
\end{enumerate} 

\begin{theorem}[Sinha] The maps $\nu_T: N_T(\R^n) \to C_{FM}(\R^n)(k)$ are charts covering the manifold with corners $C_{FM}(\R^n)(k)$. These charts are compatible with the $\mathfrak S_k$-action and $D_T(\R^n) \times \{0\}$ gets mapped diffeomorphically onto the stratum $C_T(\R^n)$.\end{theorem}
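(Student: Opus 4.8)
The plan is to verify directly that each $\nu_T$ is a smooth open embedding onto a neighborhood of the stratum $C_T(\R^n)$, carrying $D_T(\R^n)\times\{0\}$ diffeomorphically onto $C_T(\R^n)$, and then to check that the images of the $\nu_T$ cover $C_{FM}(\R^n)(k)$ and that the collection is $\mathfrak S_k$-equivariant. Throughout I take for granted, as recalled above, that $C_{FM}(\R^n)(k)$ is a manifold with corners whose boundary strata $C_T$ are indexed by trees, with $C_{T'}$ in the closure of $C_T$ exactly when $T'$ is a contraction of $T$; the content of the theorem is to produce explicit charts adapted to this stratification.

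First I would check that $\nu_T$ is well defined and smooth. The constraint $t_v<r(y)$ in the definition of $N_T(\R^n)$ is precisely what makes the ``nested solar system'' construction embedded: replacing each microscopic displacement $m^w_e$ by $s_w m^w_e$ shrinks the cluster above $w$ by the factor $s_w\le\prod_u t_u$, and by the defining inequality for $r(y)$ the resulting leaf points $y_i(y)$ stay pairwise distinct whenever all $t_v>0$. Hence on the open locus $U_T\subset N_T(\R^n)$ where all $t_v>0$, the map $\eta_T$ lands in $\tilde C_k(\R^n)=(\R^n)^k\setminus\Delta_f$ and is visibly smooth (a rational formula with non-vanishing denominators), while on $\{t_v=0\}$ the same formulas extend smoothly; the auxiliary components $f_{ij},g_{ijk}$ are smooth because $\rho_w$, $\eta_{T_w}$ and the projections $\pi_{ij},\pi_{ijk}$ are. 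To see that $\nu_T$ lands in $C_{FM}(\R^n)(k)$ I would prove the identity $\nu_T=J\circ\eta_T$ on $U_T$: unwinding the recursion $y_v=s_w m^w_e+y_w$ shows that the relative direction of $y_i(y)$ and $y_j(y)$ is obtained by peeling off the common-ancestor scales and therefore equals $\pi_{ij}\circ\eta_{T_w}\circ\rho_w=f_{ij}(y)$, and likewise for the distance ratios; since $U_T$ is dense and $J(\tilde C_k(\R^n))\subset C_{FM}(\R^n)(k)$, which is closed, this gives $\nu_T(N_T(\R^n))\subset C_{FM}(\R^n)(k)$.

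Next I would establish injectivity and identify the image. On $U_T$ the inverse of $\nu_T=J\circ\eta_T$ is built by induction on the height of $T$: the macroscopic points $y^r_{e'}$ are read off from the macroscopic location map $L$, and then, cluster by cluster, one recovers the total scale $s_w$ from the diameter of the corresponding sub-configuration, divides out the ancestor $t$'s to get $t_w$, and recenters and rescales to recover the microscopic configuration $y^v_{e'}$ up to translation and dilation; this exhibits $\nu_T|_{U_T}$ as a diffeomorphism onto an open subset of $\tilde C_k(\R^n)$. Letting a set $S$ of internal $t_v$'s go to $0$ sends the corresponding clusters to infinite relative separation, and the limit of $(\eta_T,f_{ij},g_{ijk})$ is precisely the point of the contracted stratum $C_{T/S}(\R^n)$ with the given macroscopic and microscopic data; in particular $\nu_T(D_T(\R^n)\times\{0\})=C_T(\R^n)$, and the same reconstruction (with no scales left to undo) shows this restriction is a continuous bijection onto $C_T(\R^n)$ which, in local corner charts on $C_{FM}(\R^n)(k)$, is smooth with smooth inverse, hence a diffeomorphism. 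Combining these facts, $\nu_T$ is a smooth continuous bijection onto its image that restricts to diffeomorphisms on each stratum, and source and target have the same dimension; the inverse function theorem for manifolds with corners (equivalently, invariance of domain applied stratum by stratum together with smoothness of the charts) then upgrades $\nu_T$ to a diffeomorphism onto an open neighborhood of $C_T(\R^n)$.

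It remains to check covering and equivariance. Every $x\in C_{FM}(\R^n)(k)$ has an associated tree $T_x$ and lies in $C_{T_x}(\R^n)=\nu_{T_x}(D_{T_x}(\R^n)\times\{0\})\subset\nu_{T_x}(N_{T_x}(\R^n))$, so the charts cover; and for $\sigma\in\mathfrak S_k$, relabeling the leaves of $T$ by $\sigma$ gives an isomorphism $N_T(\R^n)\cong N_{\sigma\cdot T}(\R^n)$ permuting the leaf-indexed coordinates, with respect to which $\eta$, $f_{ij}$ and $g_{ijk}$ are manifestly natural, so $\nu_{\sigma\cdot T}\circ\sigma=\sigma\circ\nu_T$. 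I expect the main obstacle to be the identity $\nu_T|_{U_T}=J\circ\eta_T|_{U_T}$ together with its limiting version at $t_v=0$: verifying by unwinding the solar-system recursion that the directions $u_{ij}$ and ratios $d_{ijk}$ of the reconstructed configuration are exactly $f_{ij}$ and $g_{ijk}$, and that the inequality $t_v<r(y)$ really keeps every cluster embedded, is the bookkeeping-heavy core of the argument; once that is in hand, everything else is formal given the manifold-with-corners structure of $C_{FM}(\R^n)(k)$.
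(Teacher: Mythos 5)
The paper does not prove this statement at all: it is stated as a theorem of Sinha and cited to \cite{sinha} without argument, since the authors only need the existence of these charts as an input to Proposition \ref{prophomotopyfmcleqc}. So there is no proof in the paper for your attempt to be compared against; I can only assess the attempt on its own merits and against the general shape of Sinha's argument, which does proceed by showing that the explicit parametrizations $\nu_T$ are smooth open embeddings restricting to identifications of the strata.

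Your overall strategy is the right one, and the discussion of why $\nu_T = J\circ\eta_T$ on the open locus $U_T$, the density argument for landing in $C_{FM}(\R^n)(k)$, the stratum-by-stratum reconstruction of the inverse, equivariance, and covering are all sound in outline. However, there is a genuine gap in the step you call ``the inverse function theorem for manifolds with corners (equivalently, invariance of domain applied stratum by stratum together with smoothness of the charts).'' A smooth injective map between manifolds with corners of the same dimension that restricts to a diffeomorphism on each stratum, and is a local diffeomorphism on the open dense stratum, need not be a local diffeomorphism at corner points: the map $[0,1)\to[0,1)$, $x\mapsto x^2$, is smooth, injective, a diffeomorphism on the stratum $\{0\}$ and on the open stratum $(0,1)$, yet has vanishing differential at $0$ and so is not an open embedding near the corner. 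To close the argument one must actually compute $D\nu_T$ at points with some $t_v = 0$ and check it is an isomorphism (equivalently, that the normal derivatives in the $t_v$-directions are nonzero and transverse to the stratum). This is the key analytic content; it is doable from the explicit formulas for $\eta_T$, $f_{ij}$, $g_{ijk}$ (the $t_v$-dependence is essentially linear once the ancestor scales are peeled off), but it is not a formal consequence of the stratum-wise facts you establish, and you should not expect invariance of domain to substitute for it. Beyond this, your proposal also leans on the manifold-with-corners structure of $C_{FM}(\R^n)(k)$ as an a priori fact; the paper does assert it beforehand so your reading is consistent, but one should be aware that in Sinha's development that structure is itself established via these very charts, so a complete treatment either constructs the atlas directly or carefully separates the two statements.
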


We are not interested in $C_{FM}(\R^n)(k)$ by itself, but the space $C_{FM}(\R^n)(k) \times_{\mathfrak S_k} P^k$ obtained by labeling each leaf with a point of $P$ and its eventual quotient $\int_M^k P$. Note the construction of the tree with leaves $\{1,\ldots,k\}$ associated to an element of $C_{FM}(\R^n)(k)$ can be adapted to this setting. It will associate to each element of $C_{FM}(\R^n)(k) \times_{\mathfrak S_k} P^k$ an equivalence class of trees with leaves labeled by elements $\pi_0 (P) = \{1,\ldots,c\}$ up to the diagonal action by $\mathfrak S_k$.

We are interested in the subspace of $C_{FM}(\R^n)(k) \times_{\mathfrak S_k} P^k$ such that no macroscopic location has charge $>c$, i.e. the sum of the labels of $\pi_0 (P)$ attached to the leaves over each root edge is at most $c$. Let $\left(C_{FM}(\R^n)(k) \times_{\mathfrak S_k} P^k\right)_{\leq c}$ denote this subspace. 

\begin{proposition}\label{prophomotopyfmcleqc} There is a homotopy $H_t$ of maps 
\[C_{FM}(\R^n)(k) \times_{\mathfrak S_k} P^k \to C_{FM}(\R^n)(k) \times_{\mathfrak S_k} P^k\] 
with the following properties
\begin{itemize}
\item $H_0 = \mr{id}$, 
\item $H_1$  has image in $\left(C_{FM}(\R^n)(k) \times_{\mathfrak S_k} P^k\right)_{\leq c}$,
\item $H_t(\left(C_{FM}(\R^n)(k) \times_{\mathfrak S_k} P^k\right)_{\leq c}) \subset \left(C_{FM}(\R^n)(k) \times_{\mathfrak S_k} P^k\right)_{\leq c}$ for all $t \in [0,1]$,
\item $H_t$ keeps unchanged the microscopic configurations of charge $\leq c$.
\end{itemize} \end{proposition}

\begin{proof}We will consider the space $C_{FM}(\R^n)(k) \times P^k$ instead of its $\mathfrak S_k$-quotient $C_{FM}(\R^n)(k) \times_{\mathfrak S_k} P^k$, and work equivariantly. 

We denote the component of $P$ corresponding to $i \in \{1,\ldots,c\}$ by $P_i$ and note that $C_{FM}(\R^n)(k) \times P^k$ is a disjoint union over all maps $\kappa: \{1,\ldots,k\} \to \{1,\ldots,c\}$ of spaces $C_{FM}(\R^n)(k) \times P_{\kappa(1)} \times \ldots \times P_{\kappa(k)}$. We will construct for each $\kappa$ a homotopy $H^\kappa_t$ of self maps of $C_{FM}(\R^n)(k) \times P_{\kappa(1)} \times \ldots \times P_{\kappa(k)}$ having the desired properties. These homotopies will be $\mathfrak S_k$-equivariant, in the sense that for $\sigma \in \mathfrak S_k$ we have that $H^{\kappa \circ \sigma}_t = H^\kappa_t \circ \hat{\sigma}$, where $\hat{\sigma}$ is the diagonal action of $\mathfrak S_k$ on $C_{FM}(\R^n)(k) \times P^k$. Combining these homotopies for all $\kappa$ gives a $\mathfrak S_k$-equivariant deformation retraction, which induces $H_t$ upon taking the quotient.

As for any manifold with corners, for each $T$ there is a neighborhood of $U_T$ of $D_T(M)$ in $N_T(M)$ such that $U_T$ is disjoint from $U_{T'}$ if $T'$ corresponds to another stratum of the same codimension. By taking the intersection of the $\mathfrak S_k$-translates, we can assume that the neighborhoods $U_T$ are $\mathfrak S_k$-equivariant for the action of $\mathfrak S_k$ on $C_{FM}(\R^n)(k)$, in the sense that $\sigma(U_T) = U_{\sigma(T)}$, where $\sigma(T)$ is equal to $T$ with $\sigma$ applied to its leaves.

For each codimension $K \geq 0$, it suffices to construct a homotopy $H^{\kappa,K}_t$ that makes the ``required changes'' on the subspace of $C_{FM}(\R^n)(k) \times P_{\kappa(1)} \times \ldots \times P_{\kappa(k)}$ corresponding to codimension $K$ strata of $C_{FM}(\R^n)(k)$. To do this we construct $H^{\kappa,T}_t$ for each chart $\nu_T: N_T(\R^n) \to C_{FM}(\R^n)(k)$ of codimension $K$ and each $\kappa$. They will have the following properties: \begin{enumerate}[(i)]
\item each of these is supported in $U_T \times P_{\kappa(1)} \times \ldots \times P_{\kappa(k)}$,
\item removes macroscopic locations in the stratum with charge $>c$,
\item  keeps unchanged microscopic configuration of charge $\leq c$, 
\item is $\mathfrak S_k$-equivariant
\end{enumerate} Then $H^{\kappa}_t$ will be the composition of $H^{\kappa,K}_t$ for all $K$, starting with $K=0$ (though in that case the homotopy will be constant). 

For each of the charts, our homotopy $H^{\kappa,T}_t$ on  $U_T \times P_{\kappa(1)} \times \ldots \times P_{\kappa(k)}$ will have as essential property that the $t_v$ of all non-root vertices having charge at least $c+1$ above them becomes non-zero, while keeping the other $t_{v'}$ fixed. If by induction the composition of the homotopies $H^{\kappa',T'}_t$ for $T'$ of codimension $K'<K$ strata give us a homotopy that makes (ii) and (iii) hold on the complement of the codimension $\geq K$ strata, then this will guarantee that additionally composing with $H^{\kappa,T}_t$ will give a homotopy satisfying properties (i) and (ii) in the union of the complement of the codimension $\geq K$ strata with the neighborhood $U_T$ of the codimension $K$ stratum corresponding to $T$. 

To construct this homotopy $H^{\kappa,T}$ we first pick continuous functions $\upsilon_T: N_T(\R^n) \to [0,1]$ which (i') are non-zero on the subspace of $N_T(\R^n)$ with at least one $t_v$ equal to zero, (ii') go to zero as one approaches $\partial U_T$ and (iii') are equivariant for the $\mathfrak S_k$-action on $C_{FM}(\R^n)(k)$. This can be done by picking a continuous function for each $T$ having the properties (i) and (ii) and averaging under the $\mathfrak S_k$-action. Our map $H^{\kappa,T}_t$ will be constant on the $y \in D_T(\R^n)$ and $(p_1,\ldots,p_k) \in P_{\kappa(1)} \times \cdots \times P_{\kappa(k)}$, but will change some of the $t_v$. In particular, it does not change $t_v$ unless $v$ is a non-root vertex with charge strictly bigger than $c$ above it. In that case $H^{\kappa,T}_t$ will send $t_v$ to $t_v + t \upsilon_T(y,(t_v))$. It is easy to check that this has the desired properties.\end{proof}

Recall that $\int_{M,\leq c}P \subset \int_M P$ is the subspace consisting of those elements where each macroscopic location has charge $\leq c$.

\begin{corollary}\label{corleqcweq} If $\pi_0 (P) = \{1,\ldots,c\}$, the following inclusion is a homotopy equivalence 
\[\int_{M,\leq c}P \hookrightarrow \int_M P\]
\label{twoModelsOfTCH}
\end{corollary}

\begin{proof}Note that the homotopy of Proposition \ref{prophomotopyfmcleqc} can be extended to $fC_{FM}(\R^n)(k)$ by moving the framings along the homotopy. Using  Proposition \ref{prophomotopyfmcleqc} we will construct a homotopy from the identity map of $fC_{FM}(M)(k) \times_{\mathfrak S_k} P^k$  to a map landing in $\left(fC_{FM}(M)(k) \times_{\mathfrak S_k} P^k\right)_{\leq c}$ which is homotopic to the identity map of $\left(fC_{FM}(M)(k) \times_{\mathfrak S_k} P^k\right)_{\leq c} $ and leaves the points with charge $\leq c$ unchanged
for arbitrary $M$. To do this, pick an open cover $U_i$ by charts and compactly supported functions $\eta_i: U_i \to [0,1]$ such that at least one of these functions is equal to $1$ for each $m \in M$. Now we can take the homotopy to be the composition of $H^{(i)}_{t \eta_i}$ for all $i$.

To prove the Corollary, we must check that this homotopy is compatible with the equivalence relation imposed by taking the coequalizer along the two maps $\mathbf{fC(M)}Comp \rightrightarrows \mathbf{fC(M)}P$. However, since $\pi_0 (P) = \{1,\ldots,c\}$, this follows because identifications happen along microscopic configurations of charge $\leq c$ and these are unchanged.\end{proof}

This means that we can use $\int_{M,\leq c} P$ as our model for topological chiral homology of partial algebras with $\pi_0 (P) = \{1,\ldots,c\}$ and we will always want to do so.

\begin{example} \label{examplecompletionEn} Note that when $A$ is a strictly commutative abelian monoid, $\int_M A$ is homeomorphic to the quotient of $\mr{Sym}(M \times A)$ by the relation that whenever several points are at the same location, we can compose their labels. In other words, the relation is generated by: \[ (m_1,m_2,\ldots;a_1,a_2,\ldots) \sim (m_2,\ldots;a_1+a_2,\ldots) \] whenever $m_1=m_2$. Thus for example we have that $\mr{Sym}(M) \cong \int_M  \mathbb N$.

If $P$ is a partial abelian monoid with $\pi_0(P)=\{1,\ldots,c \}$, then $\int_{M,\leq c} P$ is homeomorphic to a quotient of the subspace of $\mr{Sym}(M \times P)$ consisting of configurations of points such that whenever several points are at the same location, their labels are composable. The equivalence relation is as before. For the purposes of this paper, the most important example of a partial framed $E_n$-algebra is $\{1,\ldots,c\}$. Using Corollary \ref{twoModelsOfTCH}, we have that $\int_M \{1,\ldots,c\} \simeq \mr{Sym}_k^{\leq c}(M)$. Using Proposition 5.16 of \cite{Sa}, we see that $\overline{ \{1,\ldots,c\}} \simeq \mr{Sym}_k^{\leq c}(\R^n)$. Note that $ \{1,\ldots,c\}$ is a partial framed $E_n$-algebra for all $n$ and the homotopy type of the completion depends on which $n$ we choose. 
\end{example}

See  Corollary 5.21 of \cite{Sa} for other examples of completions of partial $E_n$-algebras and applications to spaces of holomorphic maps.

\begin{remark}
Corollary \ref{twoModelsOfTCH} is not the most general statement on could prove. Let $\int_{M,Comp} P$ denote the subspace of $\int_{M} P$ such that each macroscopic location is occupied by something composable. We believe that if $Comp \m fF_n P$ is a cofibration and $P$ is well based, then $\int_{M,Comp} P \m \int_M P$ is a homotopy equivalence. %It appears in Corollary 5.21 of \cite{Sa} that Salvatore is implicitly using a generalization of Corollary \ref{twoModelsOfTCH}.
 \end{remark}

\subsection{The scanning map and approximation theorems} \label{scanningInLim}

In this subsection we define the scanning map and recall approximation theorems from \cite{Sa} and \cite{Mi2}. Let $M$ be a smooth $n$-dimensional manifold and $P$ a partial $fF_n$-algebra. Let $D^n$ denote the standard unit disk in $\R^n$.

\begin{definition}
Let $B^n P = \int_{(D^n,S^{n-1})} P$.
\end{definition}

The following is Theorem 7.3 of \cite{Sa}.

\begin{theorem}[Salvatore] \label{thmsalvgroupcompl} $B^n P$ is an $n$-fold delooping of $P$, in the sense that $\Omega^n B^n P$ is the group completion of $\bar P$.\end{theorem}

Thus if $P$ is a $fF_n$-algebra with $\pi_0(P)$ a group, then $P \simeq \Omega^n B^n P$. We call this condition ``group-like.'' We note that there are many other models of the $n$-fold delooping of a framed $E_n$-algebra. See \cite{M} for a definition using the monadic bar construction and \cite{AF} for a definition using factorization cohomology. 

\begin{definition}
\label{BTM}
For $M$ a smooth $n$-dimensional manifold and $P$ a partial $fF_n$-algebra, $B^{TM} P$ is a fiber bundle over $M$ with fibers homeomorphic to $\int_{(D^n,S^{n-1})} P$. These fibers are glued together by viewing the fiber over $m \in M$ as $\int_{(D_m,S_m)} P$. Here $D_m$ and $S_m$ are respectively the fibers of the unit disk and sphere bundle with respect to some choice of Riemannian metric on $M$.
\end{definition}

See Page 393 of \cite{Sa} (Page 20 of the arXiv version) for more details. One can think of this as the parametrized topological chiral homology over $M$ of the relative manifold bundle $(D(M),S(M))$ over $M$ given by the disk and sphere bundles. Before we define the scanning map, we fix some notation for spaces of sections of a bundle.

\begin{definition}
Let $N \subset M$ and $E \to M$ be a bundle with fiberwise base point. Let $\Gamma_{(M,N)}(E)$ denote the space of sections $\sigma$ of the restriction of $E$ to $M\backslash N$ such that the closure of the support of $\sigma$ in $M$ is compact. Topologize this space with subspace topology as a subspace of the space $\Gamma_{M \backslash N}(E)$ of all sections of $E$ restricted to $M \backslash N$ in the compact open topology. When $N$ is empty we denote this simply by $\Gamma_M(E)$.
\end{definition}

Observe that $\Gamma_M(E)$ is the space of compactly supported sections of $E$. For $N \subset M$, there is a natural map $\pi: \int_M P \to \int_{(M,N)} P$. Using the exponential function with respect to a Riemannian metric on $M\backslash N$ that at each point has injectivity radius at least 1, we can continuously pick diffeomorphisms $\phi_m:B_1(m) \to D_m$. We can now define the scanning map. 

\begin{definition}
Let $s:\int_{(M,N)} P \to \Gamma_{(M,N)}(B^{TM} P)$ be the map which sends $\xi \in \int_{(M,N)} P$ to the section with value $\phi_{m*}(\pi(\xi))$ over $m \in M-N$. Here \[ \pi: \int_{(M,N)} P \to \int_{(M,M-B_1(m))} P=\int_{(B_1(m),\partial B_1(m))} P \] is the natural projection and \[ \phi_{m*}:\int_{(B_1(m),\partial B_1(m))} P \to \int_{(D_m,S_m)} P \] is the map induced by the diffeomorphism $\phi_m$.
\label{defscan}
\end{definition}

The following theorem was proven in Salvatore in \cite{Sa} and is known as non-abelian Poincar\'e duality. 

\begin{theorem}[Salvatore]
If $P$ is a partial $fF_n$-algebra with $\bar P$ group-like, then 
\[s:\int_M P \to \Gamma_{M}(B^{TM} P)\] is a homotopy equivalence.
\end{theorem}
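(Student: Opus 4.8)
The plan is to run the usual local-to-global argument, reducing the statement to the case $M=\R^n$, where it becomes exactly the $n$-fold delooping theorem quoted just above. First I would replace $P$ by $A:=\bar P$, which is legitimate since $\int_M P\cong\int_M\bar P$ and the fiber $\int_{(D_m,S_m)}P$ of $B^{TM}P$ depends only on $\bar P$; so from now on $A$ is a group-like $fF_n$-algebra. The structural input is that the scanning map $s_M\colon\int_M A\to\Gamma_M(B^{TM}A)$ is natural for codimension-zero embeddings of $n$-manifolds, that both its source and its target are symmetric monoidal in $M$ (sending $M\sqcup M'$ to the product of the values on $M$ and $M'$), and that both convert an increasing exhaustion $M=\bigcup_i M_i$ into the homotopy colimit of the values on the $M_i$ -- for $\int_M A$ this is immediate from the coequalizer/colimit description of Definition \ref{deftch}, and for compactly supported sections it holds because any such section is supported in some $M_i$. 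All three properties are compatible with $s$. This reduces the problem to compact $M$, and then, by inducting over a handle decomposition in which each step is realized (up to isotopy) as a collar gluing $M=M'\cup_W M''$ with $W\cong\R^{n-1}\times\R$, to two assertions: that $s_{\R^n}$ is a homotopy equivalence, and that both functors are $\otimes$-excisive for such collar gluings, compatibly with $s$ (alternatively, one can package this last point as an appeal to the classification of $\otimes$-excisive functors on $n$-manifolds, checking that $\int_{(-)}A$ and $\Gamma_{(-)}(B^{T(-)}A)$ both qualify and agree on disks).

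For the base case I would identify $\int_{\R^n}A\cong\int_{D^n}A=A$, and identify $\Gamma_{\R^n}(B^{T\R^n}A)\simeq\mathrm{Map}_*(S^n,B^nA)=\Omega^n B^nA$, the latter because $B^{T\R^n}A$ is the trivial bundle with fiber $B^nA=\int_{(D^n,S^n)}A$ and a compactly supported section over $\R^n$ is precisely a pointed map out of the one-point compactification $S^n$. Unwinding Definition \ref{defscan}, under these identifications $s_{\R^n}$ is exactly the group-completion map $A\to\Omega^n B^nA$; since $A=\bar P$ is assumed group-like, this is a homotopy equivalence by Theorem 7.3 of \cite{Sa}.

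For the inductive step I would establish that both sides are $\otimes$-excisive and that $s$ respects the excision. For $M=M'\cup_W M''$ with $W\cong\R^{n-1}\times\R$, the topological chiral homology should be exhibited as a two-sided bar construction $B\bigl(\int_{M'}A,\int_W A,\int_{M''}A\bigr)$: passing to the bounded-charge model $\int_{M,\leq c}$ of Subsection \ref{subsecalttch} and applying the deformation retract onto configurations transverse to $W$, each configuration splits canonically into its part in $M'$, its part in $M''$, and the collar's worth of data along $W$ that glues them. On the section side the matching statement is the Mayer--Vietoris gluing formula for compactly supported sections, which for a fiber that is a loop space is the corresponding bar construction of the restriction maps $\Gamma_W\leftarrow\Gamma_{M'}$ and $\Gamma_W\leftarrow\Gamma_{M''}$. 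Since $s$ is defined by reading off the local behavior of a configuration inside a unit ball around each point, it carries the first decomposition to the second; granting this, the inductive hypothesis applied to $M'$, $M''$, $W$ (and their disjoint unions with disks, handled by monoidality) forces $s_M$ to be a homotopy equivalence, completing the induction.

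I expect the main obstacle to be making the $\otimes$-excision statements for the two sides precise and matching \emph{within Salvatore's point-set model} -- in particular, checking that the transverse-configuration decomposition of $\int_M A$ genuinely assembles into a bar construction over $\int_W A$ (using the right $fF_n$-functor structure recorded in the excerpt) and that scanning is strictly compatible both with this decomposition and with the section-space Mayer--Vietoris. The base case and the monoidality/colimit bookkeeping are routine; the collar-gluing compatibility, carried out honestly with the Fulton--MacPherson combinatorics of Subsection \ref{subsecalttch}, is the technical heart of the proof.
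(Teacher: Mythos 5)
The paper does not actually prove this theorem; it is imported verbatim from Salvatore's paper \cite{Sa} as non-abelian Poincar\'e duality, so there is no in-paper argument to compare against. Your reconstruction is the standard handle-induction proof from that literature, and it is on the right track: the base case you give is correct (compactly supported sections of the trivial bundle $B^{T\R^n}A$ over $\R^n$ are pointed maps out of the one-point compactification, i.e.\ $\Omega^n B^n A$; unwinding Definition \ref{defscan} over $\R^n$ identifies the scanning map with the approximation map $A\to\Omega^n B^nA$; and Theorem 7.3 of \cite{Sa} finishes under the group-like hypothesis), and the $\otimes$-excision/bar-construction framework you describe for the inductive step is the Lurie/Ayala--Francis packaging of what Salvatore does.

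Two cautions on the details. First, you invoke the bounded-charge model $\int_{M,\leq c}$ from Subsection \ref{subsecalttch} to get transversality to $W$, but that model is only constructed for $\pi_0(P)=\{1,\dots,c\}$; a group-like $\bar P$ has $\pi_0(\bar P)$ a group, so this tool is unavailable here, and the transverse-configuration deformation retract must be carried out directly (it is still true, just not a citation to that subsection). Second, it is worth knowing that Salvatore's own execution of the inductive step is not the two-sided bar construction: over a handle attachment he exhibits the scanning map as a map from a quasifibration (on the chiral homology side) to a fibration (on the section side) and compares fibers and bases using McDuff--Segal/Dold--Thom machinery, rather than by appealing to a classification of $\otimes$-excisive functors. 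The two packagings are logically equivalent, and the quasifibration route is arguably easier to make rigorous inside the Fulton--MacPherson point-set model, which you correctly flag as the technical heart; but if you want your write-up to track \cite{Sa}, that is the difference to reconcile.
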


A version for pairs $(M,N)$ also appears in Theorem 7.6 of \cite{Sa}. We state a slight generalization of this, which is Theorem 3.14 of \cite{Mi2}.

\begin{theorem}[Salvatore, Miller] \label{pair}
Let $P$ be a partial $fF_n$-algebra and assume $\pi_0(N) \to \pi_0(M)$ is onto, then \[s:\int_{(M,N)} P \to \Gamma_{(M,N)}(B^{TM} P)\] is a homotopy equivalence. 
\end{theorem}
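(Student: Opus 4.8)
The plan is to treat scanning as a natural transformation between two functors on the category of $n$-manifolds equipped with a subspace $N$ (and codimension-$0$ embeddings), both of which satisfy a homotopy-descent principle, and then to check the statement by building $M$ up out of simple pieces. The first step is a reduction to finite type: both $\int_{(-,N)}P$ and $\Gamma_{(-,N)}(B^{T(-)}P)$ carry a nested union $M=\bigcup_i M_i$ of codimension-$0$ submanifolds (with $N_i:=N\cap M_i$, chosen cofinal in $N$) to the homotopy colimit over $i$, compatibly with $s$; so we may assume $M$ is the interior of a compact manifold, or equivalently that it has a finite handle decomposition relative to a collar of $N$.

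The second step is the gluing/Mayer--Vietoris input. Scanning is natural for codimension-$0$ embeddings, and for a decomposition $M=M'\cup_W H$ obtained by attaching a handle $H$ along a collar $W$ of part of $\partial M'$, both sides admit compatible gluing descriptions: $\Gamma_{(-,N)}(B^{T(-)}P)$ of $M$ is the homotopy pullback of its values on $M'$ and on $H$ along the common restriction to $W$, and $\int_{(-,N)}P$ satisfies the corresponding $\otimes$-excision, which in Salvatore's model one checks by hand by decomposing a labeled configuration according to where its points are macroscopically located and producing explicit collar homotopies that push configurations out of the gluing region. Consequently, if $s$ is an equivalence on $M'$, on $H$, and on $W$ (together with their intersections with $N$), it is an equivalence on $M$; iterating along the handle decomposition reduces the theorem to a list of base cases.

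The third step is those base cases: after isotopy, a Euclidean chart, a chart $\R^j\times D^{n-j}$ attached along $S^{j-1}\times D^{n-j}$, and a collar $N_0\times[0,1)$ of a piece of $N$. On any such piece whose intersection with $N$ is nonempty, the point is that collapsing (part of) the boundary turns the coefficient algebra, in the directions normal to $N$, into a delooping; a delooping is a loop space, hence group-like, so the group-like non-abelian Poincar\'e duality stated above applies verbatim, after an induction on $n$ (or on the number of collapsed normal directions) to reduce $B^{T(-)}$-coefficients to $B^n P$-coefficients. This is exactly where the hypothesis enters: surjectivity of $\pi_0(N)\to\pi_0(M)$ is what allows the build-up of $M$ to be organised so that every intermediate stage already meets $N$, so that the purely Euclidean base case with $N\cap(-)=\emptyset$ -- on which $s$ is \emph{not} an equivalence (it is only a homology equivalence after group completion) -- is never needed in isolation.

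The main obstacle I expect is the second step for $\int_{(-,N)}P$: in an $\infty$-categorical formulation $\otimes$-excision is automatic, but in Salvatore's point-set model of topological chiral homology of partial algebras one must construct the gluing equivalence explicitly, check that it descends through the coequalizer defining $\int$, and verify that it intertwines with the restriction/gluing of section spaces under scanning; the partiality of $P$ and the need to respect the $\mathfrak S_k$-actions make the bookkeeping delicate. A lighter alternative, if the precise hypotheses permit, is to deduce the statement directly from Salvatore's Theorem 7.6: neither side changes up to homotopy when $N$ is replaced by an open neighborhood of itself, and $\pi_0(N)\to\pi_0(M)$ surjective lets one enlarge $N$ to a codimension-$0$ submanifold meeting Salvatore's requirements, so that only the colimit reduction of the first step and this enlargement are needed.
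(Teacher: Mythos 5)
The paper does not prove this statement; it cites it as Theorem 3.14 of [Mi2] (a slight generalization of Theorem 7.6 of [Sa]), so there is no in-paper proof to compare with. On its own terms, your ``lighter alternative'' at the end -- exhaust $M$ and reduce to Salvatore's Theorem 7.6 after thickening $N$ -- is close to what [Mi2] actually does, and the handle-induction skeleton is in the right tradition.

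However, your second step as written has a genuine gap, not just delicate bookkeeping. The two gluing descriptions you invoke have opposite variance: for section spaces you want a homotopy \emph{pullback} (and the object one restricts to over the collar $W$ is the space of \emph{all} sections on $W$, since compact support does not restrict along an open inclusion), whereas $\otimes$-excision for $\int$ is a two-sided bar construction over the $E_1$-algebra $\int_W P$, a homotopy \emph{colimit}. Scanning over the collar $W$ alone is \emph{not} an equivalence: $W$ never meets $N$, so $\int_W P$ is not group-like, and your ``every intermediate stage already meets $N$'' safeguard does not cover $W$. Hence the inference ``if $s$ is an equivalence on $M'$, on $H$, and on $W$, it is an equivalence on $M$'' has a false hypothesis; and even if granted, one cannot compare a limit on one side with a colimit on the other without a further argument -- that comparison \emph{is} the content of non-abelian Poincar\'e duality. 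The classical way around this (Segal, McDuff, Salvatore, [Mi2]) is not strict excision but a quasi-fibration or homology-fibration ladder whose terms are all \emph{relative} topological chiral homologies, group-like by construction, with $W$ entering only as the base of a restriction fibration; the modern alternative is the Ayala--Francis framework, where both functors are shown to satisfy the same $\otimes$-excision axiom so that the shape-matching is definitional. Either fix would repair your step two; as written it does not close.
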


When $\bar P$ is not group-like and $N$ is empty, $s$ is not a homotopy equivalence. However if $M$ admits boundary, there is a procedure for completing $\int_M P$ into a space homology equivalent to $\Gamma_{M}(B^{TM} P)$. Recall that a manifold admits boundary if it is the interior of a manifold with non-empty boundary. Note that we do not require the manifold with boundary to be compact. Let $\bar M$ be a manifold with non-empty boundary $\partial \bar{M}$ that has interior $M$. Let $\chi: (0,1)^{n-1} \to \partial \bar{M}$ be an embedding and let \[\tilde{\chi}: \mr{int}(\bar M \cup_{\chi} (0,1)^{n-1} \times [0,1)) \to M \] be a diffeomorphism whose inverse is isotopic to the standard inclusion. Fix a point $m \in (0,1)^{n-1} \times (0,1)$.

\begin{definition}\label{defstabilizationmap}
For $M$, $\chi$, $\tilde{\chi}$, $m$ as above and $p \in P$, let the stabilization map \[t_p : \int_M P \to \int_M P\]
be the composition of the following two maps. Adding a point at $m$ labeled by $p$ gives a map $ \int_M P \to \int_{\mr{int}(\bar M \cup_{\partial M} \partial M \times [0,1))} P$ and the map $\tilde{\chi}$ induces a map $  \int_{\mr{int}(\bar M \cup_{\partial M} \partial M \times [0,1))} P \to \int_M P$.
\end{definition}

We call this map the stabilization map for topological chiral homology. Up to homotopy, it only depends on a choice of end of $M$ and a choice of element of $\pi_0(\bar P)$.

We can define a similar map for the space of sections. Let $\sigma_p \in \Gamma_{M}(B^{TM} P)$ be the image of empty configuration under the composition of the scanning map and the stabilization map $t_p$. Note that $\sigma_p$ is supported on $\tilde{\chi}((0,1)^{n-1} \times [0,1))$. 

\begin{definition}\label{defstabilizationmapsections}
For $M$, $\chi$, $m$ as above and $p \in P$, let the stabilization map 
\[T_p : \Gamma_{M}(B^{TM} P) \to \Gamma_{M}(B^{TM} P)\]
be the map given by the following formula: \[T_p(\sigma) = \begin{cases}
\sigma(\tilde{\chi}^{-1}(m)) & \text{ if $m \in \tilde{\chi}(M)$} \\
\sigma_p(m) & \text{ if } m \in \tilde{\chi}( (0,1)^{n-1} \times [0,1)). 
\end{cases} \] 
\end{definition}

Let $p_i$ be a sequence of elements of $P$ such that every element of $\pi_0(P)$ appears infinitely many times. Let $\mr{hocolim}_{t_p} \int_M P$ denote the mapping telescope associated to the diagram of spaces with $i$th space given by $ \int_M P$ and the map from the $i$th space to the $(i+1)$st space given by $t_{p_i}$. This can be thought of as $ \int_M P$ with the stabilization map inverted. Similarly define $\mr{hocolim}_{T_p}  \Gamma_{M}(B^{TM} P)$ by replacing $ \int_M P$ by $\Gamma_{M}(B^{TM} P)$ and $t_{p_i}$ by $T_{p_i}$. In \cite{Mi2}, the second author proved the following theorem. 

\begin{theorem}[Miller] \label{thmscanning}
If $M$ admits boundary, the scanning map induces a homology equivalence between 
\[\mr{hocolim}_{t_p} \left(\int_M P\right) \cong_{H_*} \mr{hocolim}_{T_p} \left(\Gamma_{M}(B^{TM} P)\right)\]
\end{theorem}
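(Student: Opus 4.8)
The plan is to combine non-abelian Poincar\'e duality with a scanning/group-completion argument of the type pioneered by Segal and McDuff in \cite{Se3}, \cite{Mc1}. First I would record that scanning is compatible with the stabilization maps: comparing Definition \ref{defscan} with Definitions \ref{defstabilizationmap} and \ref{defstabilizationmapsections}, using that $\sigma_p = T_p(s(\emptyset))$ and that $s$ is defined locally, one gets $s\circ t_p \simeq T_p\circ s$ and hence an induced map $\bar s\colon hocolim_{t_p}\int_M P \to hocolim_{T_p}\Gamma_M(B^{TM}P)$ of mapping telescopes. Next I would identify the target. Let $\bar P^{gp}$ be a group completion of the $fF_n$-algebra $\bar P$, so that $\bar P^{gp}\simeq\Omega^n B^n P$; since $B^n$ inverts the group-completion map, the natural map $B^{TM}P\to B^{TM}\bar P^{gp}$ is a fiberwise equivalence of bundles, so by non-abelian Poincar\'e duality in the group-like case the scanning map $\int_M\bar P^{gp}\to\Gamma_M(B^{TM}\bar P^{gp})=\Gamma_M(B^{TM}P)$ is a homotopy equivalence. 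Under it the map $T_p$ corresponds to the stabilization $t_p$ on $\int_M\bar P^{gp}$, which is a homotopy equivalence since it is the action of the element $[p]$, a unit in $\pi_0(\bar P^{gp})$; hence $hocolim_{T_p}\Gamma_M(B^{TM}P)\simeq\int_M\bar P^{gp}$ compatibly with $\bar s$. The theorem therefore reduces to showing that the natural map $hocolim_{t_p}\int_M P\to\int_M\bar P^{gp}$ (induced by $P\to\bar P^{gp}$, which factors through the telescope because it sends $t_p$ to a homotopy equivalence) is a homology isomorphism --- a ``localization'' statement for topological chiral homology.

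Second, I would prove this reduced statement by induction over a suitable exhaustion of $M$, with base case $M=\R^n$. For $M=\R^n$ one has $\int_{\R^n}P\simeq\int_{D^n}P=\bar P$ and $\int_{\R^n}\bar P^{gp}\simeq\bar P^{gp}$, and the claim becomes that the mapping telescope of $\bar P$ under multiplication by the components of $\pi_0(\bar P)$ computes the group completion $\bar P^{gp}\simeq\Omega^n B^n P$: this is the group-completion theorem applied to $\bar P$, an $E_1$-algebra since $n\geq1$, with scanning realizing the comparison map. For general $M$ admitting boundary I would exhaust $\bar M$ by codimension-$0$ submanifolds, each obtained from its predecessor by gluing on a disk or half-disk along part of the boundary (absorbing the stabilization collar into this process), and run a Mayer--Vietoris/five-lemma induction: after passing to mapping telescopes, the map ``forget the particles macroscopically located in the newly attached piece'' becomes a homology fibration whose fibers are (stabilized) topological chiral homologies of the attached piece relative to the gluing locus, and these are handled by the relative non-abelian Poincar\'e duality of Theorem \ref{pair}, since relative to a nonempty piece of the boundary they are already group-like. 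As scanning is natural for codimension-$0$ embeddings and for these gluings, the inductive step upgrades the equivalence $\bar s$ from the smaller piece to the larger one. (When $\dim M\geq2$ one could alternatively exploit that $\int_M P$ is a module over the homotopy-commutative algebra $\bar P$ via the collar and invoke a module group-completion theorem directly, but the local-to-global argument is needed anyway in dimension $1$ and is cleaner to state uniformly.)

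The main obstacle is the homology-fibration step: showing that after stabilizing, the ``forget the particles in the attached piece'' map is a homology fibration, equivalently that the monodromy acts trivially on the homology of the fiber. This is precisely where passing to the telescope (i.e. group completion) is essential and where the hypothesis that $M$ admits boundary enters; verifying it requires a careful point-set analysis, in the Fulton--MacPherson model, of how particles can be pushed across the gluing locus and off towards infinity, in the spirit of \cite{Mc1} and \cite{Se3}. The partial-algebra constraint $\pi_0(P)=\{1,\dots,c\}$ (only composable labels may collide) and the framings add bookkeeping but, working throughout with the deformation-retracted model $\int_{M,\leq c}P$ and a fixed compatible system of collars, introduce no essential new difficulty; a secondary, purely technical point is arranging the scanning map together with all restriction and gluing maps to be strictly compatible so that the telescopes and fibration sequences assemble on the nose.
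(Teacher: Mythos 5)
The paper does not prove this theorem; it is recalled verbatim from \cite{Mi2} (``In \cite{Mi2}, the second author proved the following theorem''), so there is no in-paper argument to compare against.

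That said, your strategy is the standard McDuff--Segal scanning framework and is in the spirit of the proof in \cite{Mi2}: check that scanning intertwines $t_p$ and $T_p$ to get the telescope map $\bar s$; use non-abelian Poincar\'e duality for the group-like algebra $\bar P^{gp}$ together with $B^{TM}P\simeq B^{TM}\bar P^{gp}$ to identify the target telescope with $\int_M\bar P^{gp}$; and then prove the resulting localization statement by induction over a handle-type exhaustion of $\bar M$, with $\R^n$ as the base case (group completion) and homology fibrations plus Theorem \ref{pair} driving the inductive step. You correctly flag the homology-fibration claim as the technical heart, which it is. Two small inaccuracies in the write-up: Theorem \ref{pair} does not rely on group-likeness --- what makes scanning an equivalence there is the surjectivity of $\pi_0(N)\to\pi_0(M)$, which lets charge escape to $N$ --- so the parenthetical ``since relative to a nonempty piece of the boundary they are already group-like'' should be dropped; and the base-case appeal to the group-completion theorem for the $E_1$-algebra $\bar P$ needs the usual centrality hypothesis on $\pi_0$, which here is supplied by $\pi_0(\bar P)=\N$ being commutative and acting centrally on $H_*(\bar P)$.
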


Since the maps $T_p$ are homotopy equivalences, there is a homotopy equivalence between $\Gamma_{M}(B^{TM} P)$ and the homotopy colimit $\mr{hocolim}_{T_p}  \Gamma_{M}(B^{TM} P)$.

\section{Highly connected bounded charge complexes}\label{seccomplexes} In this section we prove several technical results on the connectivity of various simplicial complexes, semisimplicial sets and semisimplicial spaces. The definitions of these complexes and the statements of the results on their connectivity are given in Subsection \ref{subsecstatres}.

\subsection{Statement of results}\label{subsecstatres} In this subsection we freely use the language of semisimplicial spaces and simplicial complexes, whose definitions we recall in Subsection \ref{subsecsimplicialmethods}. To resolve the topological chiral homology of completions, we will need to construct complexes that allow us to drag away a non-zero but not too large amount of charge to the boundary. This is done by picking collections of disjoint embedded disks connected to the boundary containing a non-zero but bounded amount of charge. 

The techniques and results are slightly different in dimension $\geq 3$ and $2$, in the following ways:
\begin{itemize}
	\item In dimension $\geq 3$, we start from a complex of injective words, while in dimension $2$ we start with an arc complex.
	\item In dimension $\geq 3$, we only need transversality, since disks connected to the boundary behave like arcs connecting their center to the boundary, while the argument in dimension $2$ is more involved.
	\item In dimension $\geq 3$, it turns out to be convenient not to topologize the embedded disks, so that one can remove one step from the argument. On other hand, in dimension $2$, it is easiest to topologize the embedded disks, so that the complex has a more direct relationship to the isotopy classes of arcs in the arc complex.
	\item In dimension $\geq 3$, all the embedded disks will be disjoint, while in dimension $2$ they are also allowed to be nested. This leads to the complex in dimension $2$ having higher connectivity, which unfortunately does not translate into a better homological stability range.
\end{itemize}

\subsubsection{Dimension $\geq 3$} We start by defining the required complex for manifolds of dimension $\geq 3$. We fix a connected manifold $M$ of dimension $n \geq 3$ that is the interior of a manifold $\bar{M}$ with boundary $\partial \bar{M}$. We also fix an embedding $\psi: (0,1)^{n-1} \to \partial \bar{M}$ disjoint from but isotopic to the  embedding $\chi$ used in Definition \ref{defstabilizationmap} of the stabilization map. We also pick a collar $\tilde{\psi}: (0,1)^{n-1} \times [0,1) \to \bar{M}$ extending $\psi$. The image of $\psi$ will be used to connect our bounded charge disks to the boundary and $\tilde{\psi}$ will be used to prescribe a standard form for embeddings. We cannot use the entire boundary as that would leave no portion of the boundary to use to define the stabilization map.

\begin{definition}Let $D^n_+$ be the upper half disk $\{(x_1,\ldots,x_n) \in \R^n\,|\,||x|| \leq 1, x_1 \geq 0\}$ and $D^{n-1} \subset D^n_+$ be the subset $\{(x_1,\ldots,x_n) \in \R^n\,|\,||x|| \leq 1, x_1 = 0\}$. A \emph{standard embedding} $\varphi: (D^n_+,D^{n-1}) \to (\bar{M},\im(\psi))$ is an embedding of manifolds with corners such that near the $D^{n-1}$ the map $\tilde{\psi}^{-1} \circ \varphi$ is equal the standard embedding of an $\epsilon$-ball around $(t,\frac{1}{2},\ldots,\frac{1}{2})$ for some $t \in (0,1)$ and $0<\epsilon<\min(\frac{1}{2},t,1-t)$.\end{definition}
%\end{enumerate}
%\begin{enumerate}[(i)]
%\item on $D^{n-1}$ the map $\psi^{-1} \circ \varphi$ is equal to the standard embedding of an $\epsilon$-ball around $(t,\frac{1}{2},\ldots,\frac{1}{2})$ for some $t \in (0,1)$ and $0<\epsilon<\min(\frac{1}{2},t,1-t)$,
%\item 

Note that the real numbers $t$ and $\epsilon$ are uniquely determined by $\varphi$.  We fix $x \in \mr{Sym}_k^{\leq c}(M)$, which can be thought of as a collection of points in $M$ labeled by elements of $\{1,\ldots,c\}$ such that the sum of these labels is $k$. The charge contained in a subset of $M$ is the number of elements of $x$ contained in it, counted with multiplicity.

\begin{definition}A standard embedding $\varphi: (D^n_+,D^{n-1}) \to (\bar{M},\im(\psi))$ is said to be \emph{of bounded charge} if $\varphi(\partial D^n_+) \cap x = \emptyset$ and the charge contained in $\varphi(\mr{int}(D^n_+))$ is between $1$ and $c$. \end{definition}

\begin{definition} We define $K_\bullet^{\delta,c}(x)$ to be the semisimplicial set with $0$-simplices bounded charge embeddings $\varphi: (D^n_+,D^{n-1}) \to (\bar{M},\im(\psi))$. The $p$-simplices are $(p+1)$-tuples $(\varphi_0,\ldots,\varphi_p)$ of disjoint bounded charge embeddings satisfying $0< t_0<t_1<\ldots<t_p<1$, where the $t_i$'s are determined by the image of $D^{n-1}$ under $\varphi_i$ (see the left part of Figure \ref{figsimplices}). The face map $d_i$ forgets the $i$th bounded charge embedding $\varphi_i$.\end{definition}

\begin{theorem}\label{thmconndiskdim3} The semisimplicial set $K_\bullet^{\delta,c}(x)$ is $(\lfloor \frac{k}{c} \rfloor-2)$-connected.\end{theorem}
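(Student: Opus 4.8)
The plan is to prove the connectivity statement by a standard "surgery on configurations" or "bad simplices" argument, of the type used by Hatcher--Wahl and Galatius--Randal-Williams, combined with a contractibility-of-arc-complex input. First I would reformulate the problem: a $0$-simplex of $K_\bullet^{\delta,c}(x)$ is essentially the data of a small upper half-disk attached to $\im(\psi)$ together with a thin "tentacle" that reaches into $M$ and scoops up between $1$ and $c$ units of charge (we can isotope the bounded-charge embedding so that it consists of a tiny half-ball near the boundary connected by an embedded tube to a ball around some of the points of $x$). Since $\dim M = n \geq 3$, two such embedded tubes with distinct boundary parameters $t_i < t_j$ can always be made disjoint by general position: embedded arcs/tubes in a manifold of dimension $\geq 3$ generically do not intersect. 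This is exactly why the hypothesis $n \geq 3$ appears and why the two-dimensional case is handled separately in Theorem~\ref{thmmaind2}.

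The main step is then an induction on $k$ (the total charge), proving: for any $x \in Sym_k^{\leq c}(M)$, the space $|K_\bullet^{\delta,c}(x)|$ is $(\lfloor \frac{k}{c}\rfloor - 2)$-connected. For the base of the induction, if $k < 2c$ then $\lfloor k/c \rfloor - 2 \leq -1$, so we only need $|K_\bullet^{\delta,c}(x)|$ to be nonempty, i.e.\ there exists a single bounded-charge disk; this holds as long as $k \geq 1$ (scoop up one point, splitting a label if necessary -- though since labels are atomic, scoop up a ball containing points of total charge in $[1,c]$, which is possible whenever $k \geq 1$). For the inductive step, suppose we are given a map $f: S^m \to |K_\bullet^{\delta,c}(x)|$ with $m \leq \lfloor k/c\rfloor - 2$; after simplicial approximation we may assume $f$ is simplicial for some triangulation of $S^m$. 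I would then choose one fixed auxiliary bounded-charge disk $\varphi_\ast$ (pushed very close to $\im(\psi)$, so its parameter $t_\ast$ is as small as we like) which is disjoint from the images of all the finitely many disks appearing in the image of $f$; this is possible by general position as above, since $n \geq 3$, and because there is enough charge left over -- here one must check $k - (\text{charge used by the at most } m+1 \text{ disks of one simplex}) \geq 1$, which follows from $m+1 \leq \lfloor k/c \rfloor - 1$ so the disks in a single simplex carry at most $(m+1)c \leq k - c < k$ units of charge. Coning off: for each simplex $\sigma = (\varphi_0, \ldots, \varphi_p)$ in the image, $(\varphi_\ast, \varphi_0, \ldots, \varphi_p)$ is again a simplex (reorder by the $t$-parameters, using $t_\ast$ small), so $f$ extends over the cone $D^{m+1}$, proving $f$ is nullhomotopic.

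The technical heart -- and the step I expect to be the main obstacle -- is making the general-position and disjointness arguments rigorous in the semisimplicial-space setting, i.e.\ verifying that the auxiliary disk $\varphi_\ast$ can be chosen to depend continuously (or at least that the cone construction can be carried out) and that the reordering by $t$-parameters genuinely produces a simplex of $K_\bullet^{\delta,c}$. One subtlety is that $K_\bullet^{\delta,c}(x)$ is a semisimplicial \emph{set} (its $p$-simplices form a set, not a space), so continuity is not an issue there, but one must be careful that the ordering condition $0 < t_0 < \cdots < t_p < 1$ is strict and that inserting $\varphi_\ast$ with $t_\ast$ smaller than all others -- and disjoint from all the $\varphi_i$ -- is always possible; disjointness from a \emph{finite} collection of disks is the key point, and finiteness is guaranteed because the image of $f: S^m \to |K_\bullet^{\delta,c}(x)|$ meets only finitely many simplices after simplicial approximation. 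A cleaner alternative, which I would actually pursue, is to cite the general framework of Hatcher--Wahl (their Proposition on connectivity of complexes with a "cone-like" structure): it suffices to exhibit, for each finite subcomplex and each vertex $\varphi$ in it, enough vertices $\varphi_\ast$ that are "compatible" (form an edge) with everything in that subcomplex, plus the nonemptiness/link conditions; the bookkeeping then reduces precisely to the charge inequality $\lfloor k/c \rfloor - 2 \geq m$ controlling how much charge can be committed along an $m$-simplex. I would also remark that the bound is presumably sharp: when $k = \ell c$ exactly, a configuration of $\ell$ well-separated clumps each of charge $c$ makes $K_\bullet^{\delta,c}(x)$ look like (a thickening of) the nerve of an antichain-free system resembling $\partial \Delta^{\ell-1}$, which is $(\ell - 2)$-connected but not $(\ell-1)$-connected.
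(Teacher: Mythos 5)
Your cone-off argument has a genuine gap, and it is precisely at the point you flag as potentially delicate. For the cone over $f$ to make sense as a map $D^{m+1}\to |K^{\delta,c}_\bullet(x)|$, the auxiliary disk $\varphi_*$ must be disjoint from \emph{every} disk appearing as a vertex of some simplex in the image of $f$, not just from the disks in any one simplex. Your charge count --- at most $(m+1)c \le k-c$ units committed --- applies only simplex-by-simplex. After simplicial approximation the image of $f$ consists of finitely many simplices, and the union of their vertex disks can easily cover all of $x$: take $c=1$, $k=4$, four pairwise disjoint disks $D_1<\cdots<D_4$ each containing one charged point, and let $f$ be the inclusion of $\partial\Delta^3 \cong S^2 \hookrightarrow |K^{\delta,1}_\bullet(x)|$ spanned by these four vertices. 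Then $m=2=\lfloor k/c\rfloor -2$ is in range, but no bounded-charge $\varphi_*$ is disjoint from all of $D_1,\dots,D_4$. (Of course this particular $f$ is nullhomotopic by a different filling --- the full $3$-simplex --- which is exactly what your argument fails to see.) Appealing to a Hatcher--Wahl ``cone-like'' criterion does not rescue this, because the complex is not conical: the link of a simplex lives over a configuration with strictly less available charge, which is why an induction is needed rather than a single cone point.

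The paper's actual proof works around this by going through the auxiliary simplicial complex $Inj^c_\circ(x)$ of bounded-charge subsets of $x$, proving it is weakly Cohen--Macaulay of the right dimension (Proposition~\ref{propboundedchargewcm}), using Lemma~\ref{lemwchinj} to extend the composite $S^i\to |K^{\delta,c}_\circ(x)|\to |Inj^c_\circ(x)|$ over $D^{i+1}$ \emph{simplexwise injectively}, and then lifting that disk back up to $|K^{\delta,c}_\circ(x)|$. The lift is where your tentacle intuition reappears, but organized via ``cofinal refinements'' of embeddings shrinking toward regular neighborhoods of embedded trees, together with general position in dimension $\ge 3$ to arrange disjointness of those trees, plus a comparison complex $\bar{K}^{\delta,c}_\circ(x)$ allowing temporarily nested disks so that bad simplices of the lift can be surged away one at a time (Lemma~\ref{lemkkbarconn}). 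The key structural ingredient you are missing is the detour through the injective-words-type complex: it is what replaces a global cone vertex (which need not exist) with a controlled, simplexwise-injective filling that can then be realized geometrically.
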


\begin{remark}\label{remdimthreetop} One can also define a topologized version of the semisimplicial set $K_\bullet^{\delta,c}(x)$, by taking the $C^\infty$-topology on the spaces of embeddings involved. The result is a semisimplicial space $K_\bullet^{c}(x)$. The techniques used to prove Theorem 4.6 of \cite{sorenoscarstability} (see also Appendix A of \cite{kuperscircles}) prove that $K_\bullet^c(x)$ has the same connectivity as $K_\bullet^{c,\delta}(x)$. This will not be used in this paper.\end{remark}

\subsubsection{Dimension 2} Next we define the required semisimplicial space for surfaces. We fix a surface $\Sigma$ with boundary $\partial \Sigma$, choose an embedding of $\psi$ of $(0,1)$ into the boundary and a finite set of points $V$ in $(0,1)$.  We also pick a collar $\tilde{\psi}: (0,1) \times [0,1) \to \Sigma$ extending $\psi$.

Let $D^2_+$ and $D^{1}$ be as before, and denote $\partial D^1$ by $D^0_R \sqcup D^0_L$, which also equals the two corner points of $D^2_+$. We also define $D^2_\vee$ to be the manifold with corners which topologically is a disk but has one corner which we denote by $D^0$. More precisely it is given by the subset of $[0,1]^2$ given by union of the disk of radius $\frac{1}{2}$ around $(\frac{1}{2},\frac{1}{2})$ and the triangle with vertices $(\frac{1}{2},0)$, $(\frac{1}{2}+\frac{\sqrt{2}}{4},\frac{1}{2}-\frac{\sqrt{2}}{4})$ and $(\frac{1}{2}-\frac{\sqrt{2}}{4},\frac{1}{2}-\frac{\sqrt{2}}{4})$. 

\begin{definition} A \emph{standard embedding $\varphi: (D^2_+,D^{1}) \to (\Sigma,\im(\psi))$ relative to $V$} is an embedding of manifolds with corners such that
\begin{enumerate}[(i)]
\item $\varphi^{-1}(\im(\psi)) = D^1$, 
\item $\varphi^{-1}(V) = D^0_R \sqcup D^0_L$,
\item near $D^1$ the map $\tilde{\psi} \circ \varphi$ is a composition of rescaling and horizontal translation. 
\end{enumerate} 

A \emph{standard embedding $\varphi: (D^2_\vee,D^0) \to (\Sigma,\im(\psi))$ relative to $V$} is an embedding of manifolds with corners such that
\begin{enumerate}[(i)]
\item $\varphi^{-1}(\im(\psi)) = \varphi^{-1}(V) = D^0$,
\item near $D^1$ the map $\tilde{\psi} \circ \varphi$ is a composition of rescaling and horizontal translation. 
\end{enumerate} \end{definition}

In dimension $2$, we say that a collection of standard embeddings is disjoint if the intersections of their boundaries lie in $\im(\psi)$ and the boundaries have distinct germs at corners. Thus they are allowed to be \emph{nested}. Any disjoint $(p+1)$-tuple of such bounded charge embedding is canonically ordered: they are lexicographically ordered by the point of $V$ that $D^0_L$ or $D^0$ hits and then by the ordering of the germs at each point of $V$.

\begin{definition}\label{defsspacedim2} The semisimplicial space $K^c_\bullet(x)$ has as space of $p$-simplices the space of $(p+1)$-tuples of disjoint bounded charge embeddings $(D^2_+,D^1) \to (\Sigma,\im(\psi))$ relative to $V$ or $(D^2_\vee,D^0) \to (\Sigma,\im(\psi))$ relative to $V$, ordered according to the prescription given above (see the right part of Figure \ref{figsimplices}). The face map $d_i$ forgets the $i$th bounded charge embedding.\end{definition}

We will prove the following theorem using the connectivity of arc complexes, the fact that the space of representatives of an isotopy class is contractible, and Smale's theorem from \cite{smaledisk}.

\begin{figure}[t]
\begin{center}
\includegraphics[width=11.5cm]{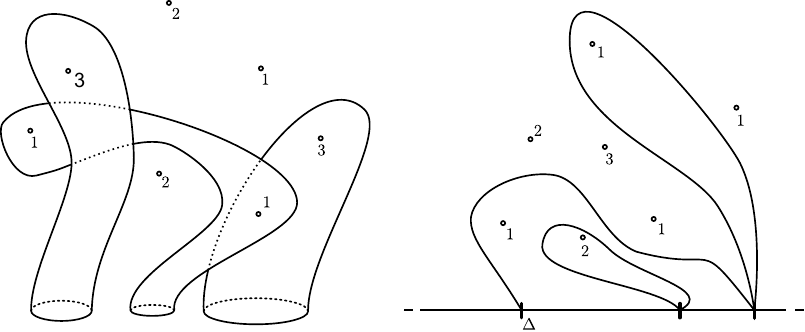}
\end{center}
\caption{On the left is an example of a 2-simplex of $K^{\delta,c}(x)$ for some 3-manifold and on the right is an example of a 2-simplex for $K^{c}(x)$ for some surface with boundary.}
\label{figsimplices}
\end{figure}

\begin{theorem}\label{thmconndiskdim2} The semisimplicial space $K_\bullet^c(x)$ is $(k-2)$-connected.\end{theorem}

\begin{remark} Note that the definition of $K_\bullet^c(x)$ in dimension 2 does not coincide with that of $K_\bullet^c(x)$ given in Remark \ref{remdimthreetop} for dimension $\geq 3$. The differences are that in dimension $2$ disks can intersect the boundary in a single point and are allowed to be nested.\end{remark}

\subsection{Background on connectivity and simplicial methods} \label{subsecsimplicialmethods} For the convenience of the reader we review simplicial methods. We start by giving the definitions of the objects involved:

\begin{enumerate}[(i)]
\item A \emph{semisimplicial set} is a functor $X_\bullet: \Delta^\mr{op}_\mr{inj} \to \cat{Set}$, where $\Delta_\mr{inj}$ is the category with objects $[p] = \{0,1,\ldots,p\}$ for $p \geq 0$ and morphisms the injective order-preserving maps. More concretely, it is a collection of sets $X_p$ of $p$-simplices for $p \geq 0$ together with face maps $d_i: X_p \to X_{p-1}$ for $0 \leq i \leq p$ satisfying the standard identities.
\item A \emph{semisimplicial space} is a functor $X'_\bullet: \Delta^\mr{op}_\mr{inj} \to \cat{Top}$, which can similarly be described as a sequence of spaces with face maps between them.
\item A \emph{simplicial complex} $Y_\circ$ is a set $Y_0$ of 0-simplices (or \emph{vertices}) together with collections $Y_p$ for $(p+1)$-element subsets of $Y_0$ called $p$-simplices, such that all $p$-elements subsets of an $p$-simplex are $(p-1)$-simplices.\end{enumerate} 

We will denote semisimplicial sets or spaces by $(-)_\bullet$ and simplicial complexes by $(-)_\circ$. The main difference between semisimplicial sets and simplicial complexes is that in the former the faces are ordered, while in the latter they are not. 

\subsubsection{Links, stars and joins} The most important of advantage of simplicial complexes is that they admit various useful constructions, in particular links and stars.

\begin{definition}Let $\sigma = \{y_0,\ldots,y_k\}$ be a $k$-simplex of $Y_\circ$.
\begin{enumerate}[(i)]
\item The \emph{link} $\mr{Link}(\sigma) \subset Y_\circ$ is the simplicial subcomplex with $m$-simplices consisting of all sets $\tau = \{y_{k+1},\ldots,y_{k+m+2}\}$ such that $\{y_0,\ldots,y_{k+m+2}\}$ is a $(k+m+1)$-simplex of $Y_\circ$.
\item The \emph{star} $\mr{Star}(\sigma) \subset Y_\circ$ is the simplicial subcomplex with $m$-simplices consisting of all sets $\tau = \{y'_0,\ldots,y'_{m}\}$ such there is a $\tilde{\tau}$ with $\tau$ and $\sigma$ both faces of $\tilde{\tau}$.
\end{enumerate}
\end{definition}

A simplicial complex is a triangulation of a PL-manifold with boundary if and only if the links and stars are spheres and disks respectively. We will use the following consequence of this.

\begin{lemma}Let $Y_\circ$ be a PL-triangulation of an $n$-dimensional manifold $M$ with boundary $\partial M$. If $\sigma$ is a $p$-simplex in the interior of $M$, then $\mr{Link}(\sigma) \cong S^{n-p-1}$ for some PL-triangulation of $S^{n-p-1}$ and similarly $\mr{Star}(\sigma) \cong D^n$.\end{lemma}

In our arguments on the connectivity of complexes we will use the join construction. The join $X * Y$ of two spaces is defined by $(X \times [0,1] \times Y)/\sim$ where $\sim$ is the equivalence relation identifying $(x,0,y) \sim (x,0,y')$ for all $y,y' \in Y$ and $(x,1,y) \sim (x',1,y)$ for all $x,x' \in X$. We will however need it for simplicial complexes.

\begin{definition}Let $Y_\circ$ and $Y'_\circ$ be simplicial complexes. The join $(Y * Y')_\circ$ has as $0$-simplices the set $Y_0 \sqcup Y'_0$. A $(k+1)$-element subset of $Y_0 \sqcup Y'_0$ forms a $p$-simplex if  it is either (i) a $p$-simplex of $Y_\circ$, (ii) a $p$-simplex of $Y'_\circ$, or (iii) a disjoint union of a $p_1$-simplex of $Y_\circ$ and a $p_2$-simplex of $Y'_\circ$ for $p_1+p_2 = p-1$. \end{definition}

The join relates the link and the star of a simplex. The next lemma follows directly form the definitions:

\begin{lemma}We have that $\mr{Star}(\sigma) \cong \mr{Link}(\sigma) * \sigma$.\end{lemma}

\subsubsection{Connectivity} We are interested in the way these various definitions interact with the notion of connectivity of a space. We start by giving several equivalent definitions of the connectivity of spaces and maps.

\begin{definition}\begin{enumerate}[(i)] \item A space is always \emph{$(-2)$-connected} and \emph{$(-1)$-connected} if and only if it is non-empty.
\item For $k \geq 0$, a space $X$ is \emph{$k$-connected} if it is non-empty, connected and $\pi_i(X,x_0) = 0$ for $1 \leq i \leq k$ and all base points $x_0 \in X$. This is equivalent to the existence of a dotted lift for $-1 \leq i \leq k$ (where $S^{-1} = \emptyset$) in diagrams
\[\xymatrix{S^i \ar[r] \ar[d] & X \\
D^{i+1} \ar@{.>}[ru] & }\]
making the triangle commute.
\item A map $f: X \to Y$ is \emph{$k$-connected} if it is a bijection on $\pi_0$ and for each base point $x_0 \in X$ the homotopy fiber $\mr{hofib}(f,x_0)$ is $(k-1)$-connected. This is equivalent to $\pi_i(f)$ being an isomorphism for $0 \leq i \leq k-1$ and a surjection for $i = k$. A third characterization of $k$-connectivity is the existence of dotted lifts  for $-1 \leq i \leq k-1$ in diagrams
\[\xymatrix{S^i \ar[r] \ar[d] & X \ar[d] \\
D^{i+1} \ar@{.>}[ru] \ar[r] & Y. }\]
making the top triangle commute and the bottom triangle commute up to homotopy rel $S^i$.
\end{enumerate}
\end{definition}

To talk about connectivity of semisimplicial sets, semisimplicial spaces or simplicial complexes, we need to convert them into spaces by \emph{geometric realization}. Let $X_\bullet$, $X'_\bullet$ and $Y_\circ$ a semisimplicial set, semisimplicial space or simplicial complex respectively, then their geometric realizations are defined as
\[|X_\bullet| := \left(\bigsqcup_p X_p \times \Delta^p\right)/{\sim} \qquad 
||X'_\bullet|| := \left(\bigsqcup_p X'_p \times \Delta^p \right)/{\sim} \qquad
|Y_\circ| := ||Y^{<}_\bullet||\]
where $\sim$ is the equivalence relation generated by $(x,d^i t) \sim (d_i x,t)$ and $Y^{<}_\bullet$ is the semisimplicial set obtained from $Y_\circ$ by picking a total order $<$ on the vertices, setting the $p$-simplices to be the $p$-simplices of $Y_\circ$ and letting $d_i$ delete the $i$th vertex in the order $<$. The space $|Y_\circ|$ is up to homeomorphism independent of the choice of total order.

We next discuss methods to prove that a simplicial object is highly-connected. Firstly, joins in general increase connectivity. The following well-known proposition quantifies this (e.g. Proposition 6.1 of \cite{wahlmcg}).

\begin{proposition}If $X_i$ is $k_i$-connected, then $X_1 * \ldots * X_k$ is $(\sum_i (k_i+2) - 2)$-connected.\end{proposition}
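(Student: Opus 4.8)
The plan is to proceed by induction on the number of factors, reducing everything to the case of two spaces. \emph{First} I would establish the base case $k=1$ trivially and then the crucial binary case: if $X$ is $p$-connected and $Y$ is $q$-connected, then $X * Y$ is $(p+q+2)$-connected. \emph{The standard way} to see this is via the homotopy pushout description of the join: $X * Y$ is the homotopy pushout of $X \leftarrow X \times Y \to Y$. Equivalently, by the Blakers--Massey theorem or a Mayer--Vietoris plus van Kampen argument, one controls the connectivity of the pushout. I would prefer the cleanest route: observe that $X * Y \simeq \Sigma(X \wedge Y)$ when $X$, $Y$ are well-pointed CW complexes (or note that reduced suspension and the smash product interact well with connectivity), so it suffices to bound the connectivity of $X \wedge Y$. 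Since $X$ is $p$-connected it is weakly equivalent to a CW complex with cells only in dimension $0$ and dimensions $\geq p+1$, and similarly $Y$; hence $X \wedge Y$ has cells only in dimension $0$ and dimensions $\geq (p+1)+(q+1) = p+q+2$, making it $(p+q+1)$-connected, and its suspension $(p+q+2)$-connected.

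\emph{Next}, for the inductive step I would use the associativity of the join, $X_1 * \cdots * X_k \cong (X_1 * \cdots * X_{k-1}) * X_k$. By the inductive hypothesis $X_1 * \cdots * X_{k-1}$ is $\left(\sum_{i=1}^{k-1}(k_i+2) - 2\right)$-connected, and $X_k$ is $k_k$-connected. Applying the binary case with $p = \sum_{i=1}^{k-1}(k_i+2) - 2$ and $q = k_k$ gives connectivity $p + q + 2 = \sum_{i=1}^{k-1}(k_i+2) - 2 + k_k + 2 = \sum_{i=1}^{k}(k_i+2) - 2$, as desired. One must also handle the edge cases where some $k_i \in \{-2, -1\}$: if some $X_i$ is empty ($k_i = -2$), then the whole join is empty, consistent with the formula reading off a connectivity of $-2$ or lower; if some $X_i$ is a point or disconnected the bookkeeping still works because the cell-structure argument is insensitive to these degenerate cases as long as one interprets ``$(-1)$-connected'' as nonempty and ``$(-2)$-connected'' as vacuous.

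\emph{The main obstacle} I anticipate is purely expository rather than mathematical: choosing a clean model for the join and for the connectivity bound that avoids technicalities about CW approximation and well-pointedness, and that gracefully handles the negative-connectivity conventions stated just above in the excerpt. I would likely cite a standard reference (e.g. Milnor's construction or a textbook such as tom Dieck's \emph{Algebraic Topology}) for the suspension description of the join and the cellular approximation of highly connected spaces, rather than reproving them, since the proposition is explicitly labeled ``well-known.'' The only genuine content is the additivity of the cell-dimension estimate under smash product, which is a one-line observation once the CW models are in hand.
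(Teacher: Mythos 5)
The paper states this proposition without proof, labelling it ``well-known,'' so there is no in-text argument to compare against. Your argument is the standard one --- model each $X_i$ by a CW complex whose non-basepoint cells live in dimensions $\geq k_i+1$, observe that smash products add minimal cell dimensions, combine $X*Y \simeq \Sigma(X \wedge Y)$ with the associativity of the join, and induct on the number of factors --- and it is correct whenever all the $X_i$ are nonempty, which is the only situation the paper ever uses.

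Your treatment of the empty-factor case, however, is not correct, and the claimed ``consistency'' points the wrong way. Under the topological join the paper defines, $X \times [0,1] \times Y/\!\sim$, a join with an empty factor is empty. Taking $X_1 = \emptyset$ (so $k_1 = -2$) and $X_2 = S^3$ (so $k_2 = 2$), the formula predicts $(k_1+2)+(k_2+2)-2 = 2$, whereas $X_1 * X_2 = \emptyset$ is only $(-2)$-connected; the statement genuinely fails rather than holding vacuously, because $\sum_i(k_i+2)-2$ need not drop to $-2$ or below when just one $k_i$ equals $-2$. (The formula does survive empty factors under the alternative convention $\emptyset * Y \cong Y$, which is what the paper's join of \emph{simplicial complexes} produces, but that is not the unreduced topological join your smash-product argument actually uses.) Since the paper never feeds an empty complex into this proposition, the slip is harmless in context, but your write-up should either exclude $k_i = -2$ explicitly or adopt the convention $\emptyset * Y = Y$ rather than assert that the formula comes out right for the empty case.
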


Let us next remark on the relationship between semisimplicial sets and simplicial complexes. Associated to a simplicial complex $Y_\circ$ there is a semisimplicial set $Y^\mr{or}_\bullet$ which has as $p$-simplices the $p$-simplices of $Y_\circ$ together with an ordering of their $p+1$ elements. The connectivity of a simplicial complex and its associated ordered semisimplicial set are closely related.

\begin{lemma}\label{lemcompsimps} We have that $|Y_\circ|$ is as highly connected as $||Y^\mr{or}_\bullet||$.
\end{lemma}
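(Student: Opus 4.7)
The plan is to exhibit $|Y_\circ|$ as a retract of $|Y^{or}_\bullet|$; since a section/retraction pair splits every $\pi_i$, this will show that the connectivity of $|Y_\circ|$ is at least that of $|Y^{or}_\bullet|$.

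First I would define a continuous surjection $\pi \colon |Y^{or}_\bullet| \to |Y_\circ|$. For each ordered $p$-simplex $\sigma = (v_0, \ldots, v_p) \in Y^{or}_p$, send the corresponding cell $\Delta^p$ into $|Y_\circ|$ by the affine map that takes the $i$-th vertex of $\Delta^p$ to $v_i$; the image lies in the $p$-cell of $|Y_\circ|$ for the underlying unordered simplex $\{v_0, \ldots, v_p\}$. These cell-maps assemble to a well-defined continuous map on the quotient $|Y^{or}_\bullet|$ because the $i$-th face map of $Y^{or}_\bullet$ drops $v_i$ and restricts the ordering, which matches the $i$-th face inclusion $\Delta^{p-1} \hookrightarrow \Delta^p$ under the affine prescription: both $(\sigma, \delta_i t)$ and $(d_i \sigma, t)$ map to $\sum_{j<i} t_j v_j + \sum_{j \geq i} t_j v_{j+1}$ in $|Y_\circ|$.

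For the section, I would fix a total order on the vertex set $Y_0$. Every simplex of $Y_\circ$ then acquires a canonical ordering of its vertices, and the collection of these canonically ordered simplices forms a sub-semisimplicial set $Y^{ord}_\bullet \subseteq Y^{or}_\bullet$ whose realization is canonically homeomorphic to $|Y_\circ|$ by the standard identification (each ordered $p$-simplex corresponds to a unique $p$-simplex of $Y_\circ$, with face maps mirroring face inclusions). Composing the inverse of this homeomorphism with the inclusion yields $s \colon |Y_\circ| \hookrightarrow |Y^{or}_\bullet|$. Since both $\pi$ and $s$ use the same affine identification on each simplex coming from the chosen total order, one reads off that $\pi \circ s = \mathrm{id}_{|Y_\circ|}$, completing the argument. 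The only step requiring genuine care is the face-map compatibility that makes $\pi$ well-defined on the quotient, but this is essentially automatic since restricting a total ordering to a subset is an intrinsic operation.
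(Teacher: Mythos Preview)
Your proof is correct and follows essentially the same approach as the paper: both construct a forgetful map $||Y^{or}_\bullet|| \to |Y_\circ|$ and a section $|Y_\circ| \to ||Y^{or}_\bullet||$ from a choice of total order on the vertices, then observe that the identity of $|Y_\circ|$ factors through $||Y^{or}_\bullet||$. You supply more detail on the face-map compatibility, but the argument is the same.
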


\begin{proof}Forgetting the ordering induces a continuous map $||Y^\mr{or}_\bullet|| \to |Y_\circ|$. Picking total ordering on the vertices of $Y_\circ$ induces a continuous map $|Y_\circ| \to ||Y^\mr{or}_\bullet||$. These factor the identity map of $|Y_\circ|$ as $|Y_\circ| \to ||Y^\mr{or}_\bullet|| \to |Y_\circ|$. If the identity map of a space factors over an $n$-connected space, it is itself $n$-connected.\end{proof}

Finally, let us sketch the two standard techniques to prove that simplicial complexes are highly connected. We hope this makes the arguments that appear later this section easier to understand, because the general outline is the same in each implementation.

The first is called a \emph{lifting argument} and involves a surjective map $Y_\circ \to Z_\circ$ with $Z_\circ$ highly-connected. Usually the vertices of $Y_\circ$ are vertices of $Z_\circ$ endowed with additional data. One considers a diagram
\[\xymatrix{S^i \ar[r] \ar[d] & |Y_\circ| \ar[d] \\
D^{i+1} \ar[r] \ar@{.>}[ur] & |Z_\circ|}\]
whose maps by simplicial approximation can be assumed to be simplicial with respect a PL-triangulation of $S^i$ and $D^{i+1}$. One wants to find a dotted lift map making the diagram commute, which involves enumerating the interior vertices in $D^{i+1}$ and inductively picking the required additional data. This is made easier if $Y_\circ$ is weakly Cohen-Macauley (see Definition \ref{defwcm}), since then the bottom horizontal map can be assumed to be simplex wise injective by a result of Galatius and Randal-Williams \cite{sorenoscarstability}, here Lemma \ref{lemwchinj}.

The second technique is called a \emph{badness argument} and involves an injective map $Y_\circ \to Z_\circ$ with $Z_\circ$ highly-connected. Usually the simplices of $Y_\circ$ are those of $Z_\circ$ satisfying some additional constraint. One considers a diagram
\[\xymatrix{S^i \ar[r] \ar[d] & |Y_\circ| \ar[d] \\
D^{i+1} \ar[r] \ar@{.>}[ur] & |Z_\circ|}\]
whose maps by simplicial approximation can be assumed to be simplicial with respect a PL-triangulation of $S^i$ and $D^{i+1}$. The goal is to replace the bottom horizontal map with a new map landing in $Y_\circ$, while remaining the same on $S^i$. To do this, one defines a notion of badness for simplices of $D^{i+1}$ with the property that if a simplex is not bad, it is mapped to $Y_\circ \subset Z_\circ$. One then tries to modify the map on bad simplices, starting with those of maximal dimension or highest badness, working one's way down until no bad simplices remain.

\subsection{Bounded charge injective collections} To prove Theorem \ref{thmconndiskdim3} we will first consider a complex of collections of charged points and then lift such collections to disks containing them. In this subsection we prove that the complex of these bounded charge injective collections is highly connected, and furthermore is weakly Cohen-Macaulay, meaning that links of simplices are also highly connected. This condition is a weakening of the condition for a simplicial complex to be a PL-manifold. The reason we care about this property is Lemma \ref{lemwchinj}.

\begin{definition}\label{defwcm} A simplicial complex $X_\circ$ is said to be \emph{weakly Cohen-Macaulay of dimension $\geq n$} if $|X_\circ|$ is $(n-1)$-connected and the link of each $p$-simplex is $(n-p-2)$-connected.\end{definition}

\begin{remark}If $Y_\circ$ is weakly Cohen-Macaulay of dimension $n$, as in Definition \ref{defwcm}, one can prove a converse of Lemma \ref{lemcompsimps}: in that case $||Y^\mr{or}_\bullet||$ is also $(n-1)$-connected. We will not use this in this paper.\end{remark}

An important example of a weakly Cohen-Macaulay complex is the complex of injective words, $\mr{Inj}_\circ(S)$. Here $S$ is a set of cardinality $\#S$ and $\mr{Inj}_\circ(S)$ is the simplicial complex with set of $p$-simplices equal to the set of subsets of $S$ of cardinality $p+1$. Since $\mr{Inj}_\circ(S)$ is isomorphic to an $(\#S-1)$-simplex, this simplicial complex is weakly Cohen-Macaulay of dimension $\#S-1$. We are interested in the following generalization of the complex of injective words:

\begin{definition}Let $S$ be a finite set with a map $f: S \to \{1,\ldots,c\}$. We define the simplicial complex $\mr{Inj}^c_{\circ}(S)$ by setting $0$-simplices to be non-empty subsets $I \subset S$ with charge $\sum_{i \in I} f(i) \leq c$. A $(p+1)$-tuple of subsets $\{I_0,\ldots,I_p\}$ forms $p$-simplex if each pair $I_j$, $I_k$ satisfies $I_j \cap I_k = \emptyset$.\end{definition}

The argument that this complex is weakly Cohen-Macaulay is essentially the same as that used by Randal-Williams in \cite{RW} to prove $\mr{Inj}^\mr{or}_\bullet(S)$, the ordered semisimplicial set associated to the complex $\mr{Inj}_\circ(S)$, is highly connected. Let $\#S$ denote the cardinality of the finite set $S$.

\begin{proposition}\label{propboundedchargewcm} The simplicial complex $\mr{Inj}^c_\circ(S)$ is weakly Cohen-Macaulay of dimension $\geq \lfloor \#S/c \rfloor-1$.\end{proposition}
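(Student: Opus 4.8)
The goal is to show that $Inj^c_\circ(S)$ is weakly Cohen-Macaulay of dimension $\geq \lfloor \#S/c\rfloor - 1$, i.e. that $|Inj^c_\circ(S)|$ is $(\lfloor \#S/c\rfloor - 2)$-connected and that the link of each $p$-simplex is $(\lfloor \#S/c\rfloor - p - 3)$-connected. The plan is to follow the strategy of Randal-Williams for the complex of injective words, adapted to the bounded-charge setting. The main point is that there is a natural ``bad simplex'' filtration — or equivalently a higher-connectivity argument via the Hatcher--Wahl / Galatius--Kupers--Randal-Williams technique — that reduces connectivity of $Inj^c_\circ(S)$ to connectivity of complexes of the same kind but with $S$ replaced by a strictly smaller set, so one can induct on $\#S$.

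\emph{Step 1: Reduction to the full-charge case and a coloured structure.} First I would observe that the vertices of $Inj^c_\circ(S)$ are non-empty charge-$\leq c$ subsets $I\subset S$, and that the complex is a union of subcomplexes indexed by the ``partition type'' of the chosen subsets; the key numerical input is that any simplex uses at most $\lfloor \#S/c\rfloor$ vertices when all charges are maximal, which is why the predicted range is $\lfloor \#S/c\rfloor$. One should first handle the base cases $\#S < c$ and small $\#S$ directly ($Inj^c_\circ(S)$ is then a simplex on the single vertex $S$, or otherwise contractible/non-empty as required), and note that the link of a $p$-simplex $\{I_0,\dots,I_p\}$ is itself isomorphic to $Inj^c_\circ(S\setminus (I_0\cup\dots\cup I_p))$: removing the already-used elements leaves exactly the same kind of complex on the complement. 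This isomorphism of links with smaller instances of the same complex is what makes the weak Cohen-Macaulay property amenable to induction — once connectivity is known for all smaller $S$, the link condition for $S$ is automatic.

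\emph{Step 2: Connectivity by induction on $\#S$.} For the connectivity statement itself, I would run the standard argument: pick an element $s_0\in S$, and stratify or filter $Inj^c_\circ(S)$ by whether a simplex ``sees'' $s_0$. More precisely, following Randal-Williams, let $X = Inj^c_\circ(S)$ and consider the full subcomplex $Z$ on vertices $I$ not containing $s_0$; then $Z = Inj^c_\circ(S\setminus\{s_0\})$ (again the same complex on a smaller set), and one argues that $X$ is obtained from $Z$ by attaching cones along links, or that the inclusion $Z\hookrightarrow X$ is highly connected, using that any vertex $I\ni s_0$ can be joined to a large chunk of $Z$ (namely all $J$ disjoint from $I$), which by the inductive link computation is itself $Inj^c_\circ(S\setminus I)$ and hence $(\lfloor (\#S-c)/c\rfloor - 2) = (\lfloor \#S/c\rfloor - 3)$-connected. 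Combining $Z$ being $(\lfloor \#S/c\rfloor - 3)$-connected with this bonus connectivity of the ``new'' part and a standard gluing/Mayer--Vietoris-type or coning argument (e.g. the colour-lemma of Hatcher--Wahl) upgrades the connectivity of $X$ by one, to $\lfloor \#S/c\rfloor - 2$. One subtlety: a vertex $I$ may contain $s_0$ together with other elements of charge filling it up to $c$, so the ``link in $X$ of a vertex containing $s_0$'' is $Inj^c_\circ(S\setminus I)$ with $\#(S\setminus I) \geq \#S - c$; the worst case $\#I = c$ gives exactly the bound used above, so the induction closes.

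\emph{Expected main obstacle.} The delicate part is bookkeeping the floor functions: unlike the injective-words case where removing a vertex removes exactly one element, here removing a vertex removes between $1$ and $c$ elements, so one must check that the induction hypothesis applied to $S\setminus I$ always yields at least the connectivity $\lfloor \#S/c\rfloor - 3$ needed to feed the gluing step — and this requires the inequality $\lfloor (\#S - |I|)/c\rfloor \geq \lfloor \#S/c\rfloor - 1$ for all $1\leq |I|\leq c$, which is exactly true and is where the constant $c$ (rather than something smaller) in the denominator is forced. The other care-point is making the gluing/coning argument rigorous for a simplicial complex that is not a manifold; here one invokes the weak Cohen-Macaulay machinery (as in \cite{RW} or \cite{sorenoscarstability}) rather than a naive Mayer--Vietoris, which is why the statement is phrased in the weak Cohen-Macaulay language in the first place. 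Once these are in hand, Lemma \ref{lemcompsimps} and the link isomorphisms give the full statement.
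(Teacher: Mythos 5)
Your Step 1 (the isomorphism $Link(\{I_0,\dots,I_p\}) \cong Inj^c_\circ(S\setminus\bigcup I_j)$ and the resulting reduction of the link condition to the connectivity statement for smaller $S$) is exactly what the paper does. Your Step 2 has the right starting point — induct on $\#S$, pick $s_0$, set $Z = Inj^c_\circ(S\setminus\{s_0\})$ — but the mechanism that actually upgrades connectivity is missing, and your gestures at it (``a standard gluing/Mayer--Vietoris-type or coning argument,'' ``the colour-lemma of Hatcher--Wahl'') do not supply it.

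Here is the gap concretely. In the critical case $c \mid \#S$, the inductive hypothesis only gives that $Z$ is $(\lfloor\#S/c\rfloor - 3)$-connected, and you need $X = Inj^c_\circ(S)$ to be one better. Attaching $Star(I)$ to $Z$ along $Link(I) \cong Inj^c_\circ(S\setminus I)$ — which is also only $(\lfloor\#S/c\rfloor - 3)$-connected — does \emph{not} by itself raise the connectivity: the mapping cone of a $(k-1)$-connected map into a $(k-1)$-connected space is not in general $k$-connected. The ingredient you are missing is that $\{s_0\}$ is always a vertex (since $f(s_0)\leq c$) and that $Star(\{s_0\})$ is exactly the cone on $Z$, so $Z\subset Star(\{s_0\})$ and the inclusion $Z\hookrightarrow X$ is \emph{nullhomotopic}. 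This is the observation that does all the work. In the paper this appears at the level of the ordered semisimplicial set: the map $Inj^{or,c}_\bullet(S\setminus\{s\})\to Inj^{or,c}_\bullet(S)$ extends over the cone with apex $s$, is therefore nullhomotopic, hence its mapping cone splits off the codomain as a retract; then the paper identifies the levelwise homotopy cofiber explicitly as a wedge $\bigvee_J \bigvee_{j=0}^\bullet Inj^{or,c}_{\bullet-1}(S\setminus J)_+$ indexed over charge-$\leq c$ sets $J\ni s$, bounds its connectivity via the augmented semisimplicial space $Inj^{or,c}_{\bullet+i}(S\setminus J)$, the realization lemma, and Proposition 3.1 of \cite{RW}, and finishes with the retract argument. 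Your cone-attaching picture \emph{can} be made to work as an alternative: start with $X_1 = Star(\{s_0\}) \supset Z$, which is contractible, and then attach the remaining $Star(I_j)$ for $I_j\ni s_0$ (noting each such star intersects the previously built part in exactly $Link(I_j)$, because two distinct vertices both containing $s_0$ can never span an edge). Each attachment is a mapping cone along a $(\lfloor\#S/c\rfloor-3)$-connected link, so via relative Hurewicz one maintains $(\lfloor\#S/c\rfloor - 2)$-connectivity. But as written you did not identify the special role of $\{s_0\}$ or the nullhomotopy, and without it the ``upgrade by one'' is unjustified. Finally, the weak Cohen--Macaulay framing is not needed to run the connectivity argument (you have this backwards): the proposition is stated in that language because the \emph{downstream} application to the disk complex requires the simplexwise-injective lifting lemma (Lemma \ref{lemwchinj}), which takes weak Cohen--Macaulayness as a hypothesis.
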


\begin{proof}
We will prove that $|\mr{Inj}^c_\circ(S)|$ is $(\lfloor \# S/c \rfloor - 2)$-connected by proving that the associated ordered semisimplicial set $\mr{Inj}^{c,\mr{or}}_\bullet(S)$ has $(\lfloor \# S/c \rfloor - 2)$-connected realization, which suffices by Lemma \ref{lemcompsimps}. We will perform a proof by induction over $\#S$.

The initial case is $\# S = 1$, in which case the statement is trivially true. Now suppose $\# S > 1$ and pick a $s \in S$ and consider $\mr{Inj}^{c,\mr{or}}_\bullet(S\backslash \{s\}) \to \mr{Inj}^{c,\mr{or}}_\bullet(S)$. This map is null homotopic since it extends over the cone with $s$, implying that
\[\mr{hocofib}\left(||\mr{Inj}^{c,\mr{or}}_\bullet(S\backslash \{s\})|| \to ||\mr{Inj}^{c,\mr{or}}_\bullet(S)||\right) \simeq ||\mr{Inj}^{c,\mr{or}}_\bullet(S)|| \vee \Sigma ||\mr{Inj}^{c,\mr{or}}_\bullet(S\backslash \{s\})||.\]

We claim that level wise homotopy cofiber of $\mr{Inj}^{c,\mr{or}}_\bullet(S\backslash \{s\}) \to \mr{Inj}^{c,\mr{or}}_\bullet(S)$ is equal to
\[\bigvee_{\stackrel{J \in \mr{Inj}^c_0(S)}{\text{such that }s \in J}} \left(\bigvee_{j=0}^\bullet \mr{Inj}^{c,\mr{or}}_{\bullet-1}(S\backslash J)_+\right)\] 
where the semisimplicial set $\bigvee_{j=0}^\bullet (X_{\bullet -1})_+$ was defined in Section 3 of \cite{RW}. It has $p$-simplices equal to $(X_{p-1} \times \{0,\ldots,p\})_+$ and face maps $\bar{d}_i$ given by
\[\bar{d}_i(x,j) = \begin{cases} (d_i(x),j-1) & \text{if $i < j$} \\
* & \text{if $i = j$} \\
(d_{i-1}(x),j) & \text{if $i > j$} \end{cases}\]
where the $d_i$ are the original face maps of semisimplicial set $X_\bullet$. 

This identification of the level wise homotopy cofiber goes as follows. The $p$-simplices of $\mr{Inj}^{c,\mr{or}}_\bullet(S)$ can be divided into disjoint groups; the first group consists of those not containing $s$ in any of its vertices, and the other groups are those containing $s$ in one of its vertices, and this vertex is equal to $J$ for $J$ some set of charge $\leq c$. The first group is exactly the image of $\mr{Inj}^{c,\mr{or}}_p(S\backslash \{s\})$ and gets identified to the unique basepoint. For every $J$, each element of the set of simplices containing $J$ as a vertex can be uniquely specified by its remaining $p$ vertices and the position in $\{0,\ldots,p\}$ of $J$. This gives the identification of each of these sets of $p$-simplices of the level wise homotopy cofiber. Under this identification the face maps are given by the formula above. 

We will now prove that there is a highly connected map from a highly connected space to each $\bigvee_{j=0}^\bullet \mr{Inj}^{c,\mr{or}}_{\bullet-1}(S\backslash J)_+$ for a subset $J \subset S$ of bounded charge and containing $s$. To do this consider the augmented semisimplicial space $\mr{Inj}^{c,\mr{or}}_{\bullet+i}(S\backslash J) \to \mr{Inj}^{c,\mr{or}}_{i-1}(S\backslash J)$, which has $p$-simplices given by the $(p+i)$-simplices of $\mr{Inj}^{c,\mr{or}}_\bullet(S \backslash J)$ and face maps acting on the first $p+1$ subsets. This has the property that the map $d_i: ||\mr{Inj}^{c,\mr{or}}_{\bullet+i}(S\backslash J)|| \to \mr{Inj}^{c,\mr{or}}_{i-1}(S\backslash J)$ has fiber $||\mr{Inj}^{c,\mr{or}}_\bullet(S\backslash (J \cup \bigcup_{j=0}^{i-1} I_j))||$ over $(I_0,\ldots,I_{i-1})$ in $\mr{Inj}^{c,\mr{or}}_{i-1}(S\backslash J)$. Since $\#S-1-i \geq \#(S\backslash (J \cup \bigcup_{j=0}^{i-1} I_j)) \geq \#S-(1+i)c$, this is $(\lfloor \#S/c\rfloor - i-3)$-connected by the inductive hypothesis. By the realization lemma for semisimplicial spaces (e.g. Proposition 2.6 of \cite{sorenoscarstability}) the map $d_i$ is $(\lfloor \#S/c\rfloor - i-2)$-connected. Using Proposition 3.1 of \cite{RW}, this implies that the map 
\[\Sigma ||\mr{Inj}^{c,\mr{or}}_{\bullet-1}(S\backslash J)|| \to \left|\left|\bigvee_{j=0}^\bullet \mr{Inj}^{c,\mr{or}}_{\bullet-1}(S\backslash J)_+\right|\right|\]
is $(\lfloor \#S/c\rfloor - 2)$-connected.

From this we conclude that the map 
\[\bigvee_{\stackrel{J \in \mr{Inj}^c_0(S)}{\text{such that }s \in J}} \Sigma \left|\left|\bigvee_{j=0}^\bullet \mr{Inj}^{c,\mr{or}}_{\bullet-1}(S\backslash J)_+\right| \right| \to \mr{hocofib}\left(||\mr{Inj}^{c,\mr{or}}_\bullet(S\backslash \{s\})|| \to ||\mr{Inj}^{c,\mr{or}}_\bullet(S)||\right)\]
is $(\lfloor \#S/c\rfloor -2)$-connected. Since the domain is a wedge sum of spaces that are by induction at least $(\lfloor \#S/c \rfloor-2)$-connected, we have that the homotopy cofiber $\mr{hocofib}(||\mr{Inj}^{c,\mr{or}}_\bullet(S\backslash \{s\})|| \to ||\mr{Inj}^{c,\mr{or}}_\bullet(S)||)$ is $(\lfloor \#S/c\rfloor -2)$-connected and since the domain $||\mr{Inj}^{c,\mr{or}}_\bullet(S\backslash \{s\})||$ in the homotopy cofiber is at least $(\lfloor \#S/c\rfloor -3)$-connected by induction we get that $||\mr{Inj}^{c,\mr{or}}_\bullet(S)||$ is $(\lfloor \#S/c\rfloor -2)$-connected.

Thus, by induction and Lemma \ref{lemcompsimps}, we see that  $|\mr{Inj}^c_\circ(S)|$ is $(\lfloor \# S/c \rfloor - 2)$-connected.  It follows that $\mr{Inj}^c_\circ(S)$ is weakly Cohen-Macaulay of dimension $\geq \lfloor \#S/c \rfloor-1$ since the link of a $k$-simplex $(I_0,\ldots,I_k)$ is isomorphic to $\mr{Inj}^c_\circ(S \backslash \cup_{j=0}^k I_j)$ and \[\#S-(k+1) \geq \#(S \backslash \cup_{j=0}^k I_j) \geq \#S-(k+1)c\]
and hence the link of $k$-simplex is $(\lfloor \#S /c \rfloor - k-3)$-connected.

\end{proof}

\subsection{Discrete bounded charge disks in dimension at least three} In this section we will prove Theorem \ref{thmconndiskdim3} about the connectivity of the complex of bounded charge disks. This will use a combination of a transversality argument and the connectivity result for the bounded charge injective words complex established in the previous subsection.

We will give the definition of a simplicial complex corresponding to the semisimplicial sets $K^{\delta,c}_\bullet(x)$ and some small variations of it useful for the induction. For $j \geq 0$ let 
\[\dot{D}^{n-1}_{(j)} := \left(\bigcup_{i=0}^j \left(\frac{i}{j+1},\frac{i+1}{j+1}\right)\right) \times (0,1)^{n-2}\]
which is equal to a disjoint union of $j+1$ open disks, but we call an \emph{$(n-1)$-dimensional disk with $j$ gaps}.

\begin{definition}\label{defnnestedemb} Let $M$ be a connected manifold of dimension $n \geq 3$ that is the interior of a manifold $\bar{M}$ with boundary $\partial \bar{M}$ and fix a $j \geq 0$ and an embedding $\psi: \dot{D}^{n-1}_{(j)}  \to \partial \bar{M}$.  Let $x \in \mr{Sym}_k^{\leq c}(M)$, which is a configuration of points in $M$ labeled by $\{1,\ldots,c\}$ such that the total sum of these labels is $k$. \begin{enumerate}[(i)]
\item  We define $K_\circ^{\delta,c}(x)$ to be the simplicial complex with $0$-simplices bounded charge embeddings $\varphi: (D^n_+,D^{n-1}) \to (\bar{M},\im(\psi))$. A $p$-simplex is a $(p+1)$-tuple of disjoint bounded charge embeddings.
\item We define $NK_\circ^{\delta,c}(x)$ to be the simplicial complex with $0$-simplices bounded charge embeddings $\varphi: (D^n_+,D^{n-1}) \to (\bar{M},\im(\psi))$. A $p$-simplex is a $(p+1)$-tuple of bounded charge embeddings $\{\varphi_0,\ldots,\varphi_p\}$ such that for at most one pair $i \neq j$ we have $\im(\varphi_i) \subset \im(\varphi_j)$ or $\im(\varphi_i) \subset \im(\varphi_j)$ and for all other pairs $i' \neq j'$ we have $\im(\varphi_{i'}) \cap \im(\varphi_{j'}) = \emptyset$.\end{enumerate}\end{definition}

Note that $NK^{\delta,c}_\circ(x)$ contains $K^{\delta,c}_\circ(x)$ as a subcomplex, and $NK^{\delta,c}_c(x)$ differs from $K^{\delta,c}_\circ(x)$ in that a single occurrence of nesting is allowed. We will use this flexibility to replace embedded disks with smaller ones. The following lemma will imply that this flexibility does not come at a loss of connectivity. More precisely, it will be used to compare these two complexes in the induction step in the proof of Theorem \ref{thmconndiskdim3}. 

\begin{lemma}\label{lemkkbarconn} Suppose we are in the situation of Definition \ref{defnnestedemb}, except that $x \in \mr{Sym}_{k+1}^{\leq c}(M)$ instead of $\mr{Sym}_k^{\leq c}(M)$. Further suppose that for all $M'$ that are interiors of connected $n$-dimensional manifolds with boundary, $j' > j$, embeddings of a disk with $j'$ gaps into the boundary and $x' \in \mr{Sym}_{k'}^{\leq c}(M')$ with $k' < k+1$, we have that $|K^{\delta,c}_\circ(x')|$ is $(\lfloor k'/c \rfloor-2)$-connected. 

Then the map $|K^{\delta,c}_\circ(x)| \to |NK^{\delta,c}_\circ(x)|$ is injective on $\pi_i$ for $i \leq \lfloor (k+1)/c \rfloor -2$ with any base point.
\end{lemma}

\begin{proof}This is a badness argument. Let $i \leq \lfloor(k+1)/c \rfloor - 2$ and consider a diagram as follows
\[\xymatrix{S^i \ar[r]^(.4){f} \ar[d] & |K_\circ^{\delta,c}(x)| \ar[d] \\
D^{i+1} \ar[r]_(.4){F} & |NK_\circ^{\delta,c}(x)| }\]
where we can assume that the maps are simplicial with respect to some PL triangulations of $S^i$ and $D^{i+1}$ by simplicial approximation. To prove the lemma it suffices to modify the map $F : D^{i+1} \to |NK_\circ^{\delta,c}(x)| $ to a map $D^{i+1} \to |K_\circ^{\delta,c}(x)| $ making the top triangle commute. This is done inductively, and we keep denoting the intermediate steps by $F$.

We say a simplex $\sigma$ of $D^{i+1}$ is bad if there is a pair $i \neq j$ such that $\im(\varphi_i) \cap \im(\varphi_j) \neq \emptyset$. Note that a simplex of $D^{i+1}$ maps to $K^{\delta,c}_\circ(x) \subset NK^{\delta,c}_\circ(x)$ if it is not bad. Note that if any bad simplices occur, necessarily they must be bad 1-simplices. Suppose that $\sigma$ is a bad 1-simplex. Name them $\varphi_M$ and $\varphi_m$ so that $\im(\varphi_m) \subset \im(\varphi_M)$. By definition of the embeddings we consider, we get a $t_M$ and $\epsilon_M$ such that the image of $\varphi_M$ in the boundary corresponds to an $\epsilon_M$-ball around $(t_M,\frac{1}{2},\ldots,\frac{1}{2})$ under $\psi$.

By restriction to the link we claim we obtain from this a simplicial map 
\[h: \mr{Link}(\sigma) \to K^{\delta,c}_\circ(x \backslash \im(\varphi_M))\]
where the latter denotes the complex for the manifold $M' = M \backslash \im(\varphi_M)$, $j' = j+1$, the disk with gaps $\dot{D}^{n-1}_{(j+1)} \cong (\dot{D}^{n-1}_{(j)}) \backslash (t_M-\epsilon_M,t_M+\epsilon_M) \times (0,1)^{n-2}$ in the boundary and $x' = x \backslash (\im(\varphi_M) \cap x)$. Firstly, the link maps to $K^{\delta,c}_\circ(x \backslash \im(\varphi_M))$ instead of $NK^{\delta,c}_\circ(x \backslash \im(\varphi_M))$ because if a $\tau \in \mr{Link}(\sigma)$ has a 1-simplex $\{\varphi,\varphi'\}$ as a face where the embeddings $\varphi$ and $\varphi'$ intersect, then $\{\varphi,\varphi'\} * \sigma$ has two pairs of vertices intersecting, which is not allowed. Secondly, we can remove the image of $\varphi_M$; suppose that $\tau \in \mr{Link}(\sigma)$ has a $0$-simplex $\varphi$ whose image intersects a $\varphi_M$, then since $\{\varphi,\varphi_M\}$ must be a 1-simplex we have that either $\im(\varphi) \subset \im(\varphi_M)$ or $\im(\varphi_M) \subset \im(\varphi)$. In either of the two cases $\{\varphi\} * \sigma$ would have three vertices intersecting, which is not allowed.

The map $h: \mr{Link}(\sigma) \to K^{\delta,c}_\circ(x \backslash \im(\varphi_M))$ is a map from a sphere $\mr{Link})(\sigma) \cong S^{i-1}$ with $i-1 \leq \lfloor (k+1)/c \rfloor - 3$ to a space that is at least $(\lfloor (k+1)/c \rfloor - 3)$-connected.  This means that we can extend $h$  to a map $H: K \cong D^{i} \to K^{\delta,c}_\circ(x \backslash \im(\varphi_M))$. Now we can modify the original map $F$ by replacing the restriction \[F|_{\mr{Star}(\sigma)}: \mr{Star}(\sigma) = \sigma * \mr{Link}(\sigma) \to |NK_\circ^{\delta,c}(x)|\] with the map
\[F|_{\partial \sigma} * H: \partial \sigma * K \to |NK_\circ^{\delta,c}(x)|\]
Note that both $\mr{Star}(\sigma)$ and $\partial \sigma * K$ have the same boundary, and $F|_{\mr{Star}(\sigma)}$ and $F|_{\partial \sigma} * H$ coincide there. Furthermore, we did not modify the map $F$ on $S^i$, since no bad simplices occur there, so that $S^i \cap (\sigma * \mr{Link}(\sigma)) \subset \partial \sigma * \mr{Link}(\sigma)$.

We claim that this new map has one less bad 1-simplex. Since we removed $\sigma$, it suffices to show we did not introduce any new bad 1-simplices. To see this, note that all changed 1-simplices are of the form $\{\alpha,\beta\}$ where $\alpha$ is a proper face of $\sigma$, hence equal to or contained in $\varphi_M$, and $\beta$ is a 0-simplex in $K^{\delta,c}_\circ(x \backslash \im(\varphi_M))$.
\end{proof}

%\begin{remark}In fact the map $|K^{\delta,c}_\circ(x)| \to |NK^{\delta,c}_\circ(x)|$ is $(\lfloor (k+1)/c \rfloor -1)$-connected. To prove this, note that when we modify $f$ to remove a bad simplex, the modified map is homotopic to the original map by what boils down to a linear interpolation.\end{remark}

The next step of the proof of Theorem \ref{thmconndiskdim3} we use that the bounded charge injective word complex is weakly Cohen-Macaulay. We will define this notion next, followed by an important consequence.

\begin{definition}A simplicial map $f: X_\circ \to Y_\circ$ between simplicial complexes is said to be simplex wise injective if for any simplex $\sigma = (x_0,\ldots,x_p)$ in $X_\circ$ with $x_i \neq x_j$ for $i \neq j$ we have that $f(x_i) \neq f(x_j)$ for $i \neq j$.\end{definition}

The following is part of Theorem 2.4 of \cite{sorenoscarstability}. Its proof is a generalization of the Lemma 3.1 of \cite{hatcherwahl}.

\begin{lemma}[Galatius-Randal-Williams] \label{lemwchinj} Let $X_\circ$ be a simplicial complex and $f : S^i \to |X_\circ|$ be a map which is simplicial with respect to some PL triangulation of $S^i$. Then, if $X_\circ$ is weakly Cohen-Macaulay of dimension $n$ and $i 
\leq n-1$, $f$ extends to a simplicial map $g: D^{i+1} \to |X_\circ|$ which is simplex wise injective on the interior of $D^{i+1}$.\end{lemma}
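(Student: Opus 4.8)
The plan is to follow the argument of Lemma 3.1 of \cite{hatcherwahl} and Theorem 2.4 of \cite{sorenoscarstability}, by induction on $i$, simultaneously over all complexes that are weakly Cohen--Macaulay (Definition \ref{defwcm}). Since $X_\circ$ is weakly Cohen--Macaulay of dimension $n$, the space $|X_\circ|$ is $(n-1)$-connected, so for $i \leq n-1$ the map $f\colon S^i \to |X_\circ|$ is nullhomotopic and extends to a continuous map $D^{i+1} \to |X_\circ|$; by the relative simplicial approximation theorem this extension can be taken simplicial with respect to a PL triangulation $T$ of $D^{i+1}$ restricting to the given triangulation on $S^i$, without subdividing $S^i$. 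Call this simplicial extension $g$. It remains to modify $g$ rel $S^i$ until it is simplexwise injective on every simplex of $T$ contained in the interior.

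The structural fact that makes the induction work is that links of vertices in a weakly Cohen--Macaulay complex are again weakly Cohen--Macaulay of one lower dimension: for a vertex $x_0$ of $X_\circ$, the identity $Link_{Link(x_0)}(\rho) = Link(\{x_0\} \cup \rho)$ together with the connectivity hypotheses shows that $Link(x_0)$ is weakly Cohen--Macaulay of dimension $n-1$. To organise the modification I would use the complexity $c(g)$ defined as the number of simplices of $T$ of dimension $\geq 1$ which lie in $int(D^{i+1})$ and on which $g$ is constant. This is finite, and $c(g)=0$ precisely when $g$ is simplexwise injective on the interior, since a non-injective interior simplex has a constant interior edge as a face and conversely.

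Suppose $c(g)>0$. Choose a constant interior edge $\{a,b\}$, let $x_0 = g(a) = g(b)$, and let $\tau$ be a maximal simplex, of dimension $q \geq 1$, of the full subcomplex of $T$ spanned by the vertices mapping to $x_0$ that contains $\{a,b\}$; note $\tau$ meets $int(D^{i+1})$. By maximality no vertex of $Link_T(\tau)$ maps to $x_0$, so $g$ restricts to a simplicial map $Link_T(\tau) \to Link_{X_\circ}(x_0)$, and (in the case $\tau \subseteq int(D^{i+1})$) $Link_T(\tau)$ is a PL $(i-q)$-sphere. Since $q \geq 1$ we have $i-q \leq n-2 = (n-1)-1$, so by the inductive hypothesis applied to the weakly Cohen--Macaulay complex $Link_{X_\circ}(x_0)$ of dimension $n-1$, the map $g|_{Link_T(\tau)}$ extends to a simplicial map $\bar h \colon \bar B \to |Link_{X_\circ}(x_0)|$, with $\bar B$ a PL $(i-q+1)$-ball, $\partial \bar B = Link_T(\tau)$, which is simplexwise injective on $int(\bar B)$. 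Now replace $g$ on the closed star $\tau * Link_T(\tau)$ by the map on $\partial\tau * \bar B$ --- a PL $(i+1)$-ball with the same boundary $\partial\tau * Link_T(\tau)$ --- that sends $\partial\tau$ to $x_0$, equals $\bar h$ on $\bar B$, and is the join of these; this takes values in $x_0 * Link_{X_\circ}(x_0) = Star_{X_\circ}(x_0) \subseteq |X_\circ|$, agrees with $g$ on the boundary of the star, and so defines a new simplicial map $g'$ extending $f$. One then checks $c(g') < c(g)$: the constant interior simplex $\tau$ is no longer a simplex of the new triangulation, while no new interior simplex of dimension $\geq 1$ on which $g'$ is constant has been created --- any new simplex lies in $\partial\tau$ (hence was present before) or has a vertex whose image lies in $Link_{X_\circ}(x_0)$, and in the latter case $g'$ is non-constant on it because $\bar h$ is simplexwise injective on $int(\bar B)$. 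Iterating drives $c$ to $0$, and the base case $i \leq 0$ is elementary.

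The main obstacle is the dimension accounting that makes the hypothesis $i \leq n-1$ exactly enough, and it is precisely why the reduction must be performed on a maximal constant simplex $\tau$ rather than on a single bad vertex or edge: only then does one reduce to filling an $(i-q)$-sphere with $i-q \leq n-2$ inside the $(n-2)$-connected complex $Link_{X_\circ}(x_0)$, whereas coning off a single bad vertex $v$ would require $Link_{X_\circ}(g(v))$ to be $(n-1)$-connected, which fails when $i = n-1$. Further care is needed to keep the simplicial approximations from subdividing the boundaries they are taken relative to (so that $c$ is not inflated), to treat the case where $\tau$ meets $\partial D^{i+1}$ --- where $Link_T(\tau)$ is a PL ball and one uses the evident relative version of the statement, as in \cite{sorenoscarstability} --- and to set up the simultaneous induction cleanly.
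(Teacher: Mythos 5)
The paper does not give a proof of this lemma: it cites it as (part of) Theorem~2.4 of Galatius--Randal-Williams \cite{sorenoscarstability}, itself a generalisation of Lemma~3.1 of Hatcher--Wahl \cite{hatcherwahl}. Your proposal is a faithful sketch of that cited argument: extend $f$ using $(n-1)$-connectivity of $|X_\circ|$, locate a maximal constant simplex $\tau$, observe that $g$ restricts to $Link_T(\tau)\to Link_{X_\circ}(g(\tau))$ (using maximality so no vertex of $Link_T(\tau)$ hits $g(\tau)$), invoke the inductive hypothesis for the weakly Cohen--Macaulay complex $Link_{X_\circ}(g(\tau))$ of dimension $\geq n-1$, and perform the join surgery on the closed star. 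The identity $Link_{Link(x_0)}(\rho)=Link(\{x_0\}\cup\rho)$ and the dimension count $i-q\leq n-2$ (which is exactly why one must surger a maximal constant simplex of dimension $q\geq1$ rather than cone off a single vertex) are correct and are the heart of the matter. So the approach matches the source the paper relies on.

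The one place I would press you on is the monotonicity of $c(g)$. With $c(g)$ counting constant simplices of dimension $\geq 1$ ``in the interior,'' you need to commit to a meaning of ``interior.'' If ``interior'' means all vertices in $int(D^{i+1})$ (which is the reading that makes $c(g)=0$ genuinely equivalent to simplexwise injectivity on interior simplices, and which is what the final statement can actually promise, since $g$ must agree with $f$ on boundary vertices), then the surgery need not reduce $c(g)$: the maximal constant simplex $\tau$ through your interior edge $\{a,b\}$ can have a vertex $w\in\partial D^{i+1}$ with $g(w)=x_0$, in which case $\tau$ is not counted in $c(g)$, and the proper faces of $\tau$ (including $\{a,b\}$) that \emph{are} counted all survive in $\partial\tau\ast\bar B$. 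You flag that ``further care'' is needed when $\tau$ meets $\partial D^{i+1}$, but the fix you name (a relative version of the extension statement, so that $Link_T(\tau)$ being a ball is handled) addresses the filling step, not the failure of $c$ to drop. In Hatcher--Wahl and Galatius--Randal-Williams this is handled by choosing the measure more carefully: one works with simplices that are ``bad'' in the sense that every vertex shares its image with another vertex of the same simplex, orders them lexicographically by (dimension, count), and checks that new bad simplices created by the surgery necessarily have strictly smaller bad ``core.'' A two-step complexity (first eliminate constant simplices meeting $\partial D^{i+1}$, then purely interior ones) also works. Either way, this is the one spot where the proposal as written has a step that would not terminate without a sharper accounting than ``$c(g')<c(g)$.''
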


Let $M$ be a manifold of dimension $n \geq 3$ that is the interior of a manifold $\bar{M}$ with boundary $\partial M$ and fix a $j \geq 0$ and an embedding $\psi: \dot{D}^{n-1}_{(j)} \to \partial M$. We will see that  Theorem \ref{thmconndiskdim3} is equivalent to the statement that $|K_\circ^{\delta,c}(x)|$ is at least $(\lfloor k/c \rfloor-2)$-connected in the case $j=0$. We will now give the proof of this in the slightly more general case of arbitrary $j \geq 0$.

\begin{proposition}\label{propconndiskdim3gen} Suppose we are in the situation of Definition \ref{defnnestedemb}, then we have that $|K_\circ^{\delta,c}(x)|$ is at least $(\lfloor k/c \rfloor-2)$-connected.\end{proposition}

\begin{proof}This is a lifting argument. We proceed by induction over $k$, where the inductive hypothesis is that the conclusion of Theorem \ref{thmconndiskdim3} holds for all $M'$, $j' \geq 0$, $\psi'$, and $x'$ of charge $k'$ in $M'$ for $k'<k+1$. In particular note that this inductive hypothesis puts us in a position to apply Lemma \ref{lemkkbarconn}. 

In the case $k=1$ we have that $(\lfloor \frac{k}{c} \rfloor-2) = -2$ or $-1$. In the latter case it is easy to check that $|K_\circ^{\delta,c}(x)|$ is non-empty. Now suppose that we have proven the statement for all $M'$, $j' \geq 0$, $\psi'$ and $x' \in \mr{Sym}^{\leq c}_{k'}(M)$ with $k' <k+1$. We will prove the claim for $k+1$.

We can also think of $x$ as a configuration with each element labeled by a charge in $\{1,\ldots,c\}$. Sending a bounded charge disk to the set it contains gives us a map of simplicial complexes 
\[K_\circ^{\delta,c}(x) \to \mr{Inj}^c_\circ(x)\]

For $i \leq (\lfloor (k+1)/c \rfloor - 2)$ let $f: S^i \to |K_\circ^{\delta,c}(x)|$ be a map that is simplicial with respect to a PL triangulation of $S^i$. The complex $\mr{Inj}^c_\circ(x)$ is weakly Cohen-Macaulay of dimension at least $(\lfloor (k+1)/c \rfloor-1)$ by Proposition \ref{propboundedchargewcm}. Lemma \ref{lemwchinj} implies that there exist a simplicial extension $F: D^{i+1} \to |\mr{Inj}^c_\circ(x)|$ that is simplex wise injective on the interior. Thus consider the following diagram:
\[\xymatrix{S^i \ar[r]^(.42)f \ar[dd] & |K_\circ^{\delta,c}(x)| \ar[dd] \ar[rd] & \\
& & |NK_\circ^{\delta,c}(x)| \\
D^{i+1} \ar[r]_(.45)F \ar@{.>}[uur]^{G} \ar@{.>}[urr]^(.3){G'} & |\mr{Inj}^c_\circ(x)| &}\]
By Lemma \ref{lemkkbarconn} and our inductive hypothesis the map $|K_\circ^{\delta,c}(x)| \to |NK_\circ^{\delta,c}(x)|$ is injective on $\pi_i$ for $i \leq \lfloor (k+1)/c \rfloor -2$, so to produce $G$ it suffices to produce a map $G'$ such that the following square commutes
\[\xymatrix{S^i \ar[r]^(.45)f \ar[d] & |K^{\delta,c}_\circ(x)| \ar[d] \\
D^{i+1} \ar@{.>}[r]_(.4){G'} & |NK^{\delta,c}_\circ(x)|}\]

In producing $G'$ we need to pick the additional data of embedded disks, but since we can not a priori say how small the disks have to be, we need use the trick of at first picking an infinite sequence of increasingly small embedded disks, only to specify to particular ones at the end of the argument. We next give the required definitions to implement this strategy.

For each vertex $v \in S^{i}$ corresponding to an embedding $\varphi^v = h(v)$, pick an embedded rooted tree $\gamma_{v}$ in $\im(\varphi^v)$ having the points of $x$ in $\im(\varphi^v)$ as vertices and a point in the image of $\{(t,\frac{1}{2},\ldots,\frac{1}{2})|\,t \in (0,1)\} \cap D^{n-1}_{(j)}$ under $\im(\psi)$. Next pick a collection of embeddings $\varphi^v_m: (D^n_+,D^{n-1}) \to (\bar{M},\partial \bar{M})$ for $m \in \N$ having the correct behavior near $D^{n-1}$, such that
\begin{enumerate}[(i)]
\item $\varphi^v_0 = \phi_v$,
\item $\im(\varphi^v_{m+1}) \subset \im(\varphi^v_{m}) \subset \im(h(v))$,
\item the $\varphi^v_m$ are cofinal in the images of regular neighborhoods of $\gamma_{v}$.
\end{enumerate}
We call such a collection a \textit{cofinal refinement of an embedding}. Two such cofinal refinements $\{\varphi^v_m\}$ and $\{\psi^v_m\}$ are said to be disjoint if they are eventually disjoint, i.e. there exists an $N$ such that $\im(\varphi^v_m) \cap \im(\psi^{v'}_m) = \emptyset$ for $m \geq N$. This is true if and only if the trees $\gamma_{v}$ and $\gamma_{v'}$ are disjoint , and one can recover the tree from the collection as the intersection of all the images.

By putting a total order $v_0 \prec v_1 \prec \ldots \prec v_N$ on the vertices in the interior of $D^{i+1}$ it suffices to inductively pick lifts for vertices in the interior of $D^{i+1}$. We will denote this lift by $\tilde{G}$ and instead of lifting vertices to embedded disks, we pick cofinal refinements of such embeddings.

Suppose we have already lifted $v_0$ up to $v_{q-1}$, and let $X \subset D^{i+1}$ be the subcomplex spanned by $S^i$ and the $v_{q'}$ for $0 \leq q' \leq q-1$. We want to lift $v_q$ to a cofinal refinement of embeddings that contains the points of $x$ picked out by $v_{q}$ and is disjoint from the cofinal refinement of $\tilde{G}(\mr{Link}(v_q) \cap X)$. This always exists by picking a generic embedded rooted tree $\gamma_{v_q}$ and taking a collection of embedded disks cofinal in the regular neighborhoods. This works because firstly, none of cofinal refinements $\tilde{G}(\mr{Link}(v_q) \cap X)$ contains the points picked by $v_q$ (this is where simplex wise injectivity is used) and secondly, the union of the embedded trees $\gamma$ is a union of codimension $1$ submanifolds, so for a generic embedded tree for the points picked out by $v_q$ any sufficiently small regular neighborhood will avoid all sufficiently small regular neighborhoods for the $\gamma$.

After lifting all vertices in the interior we end with a choice of cofinal refinement for all $v_q$ in $\mr{int}(D^{i+1})$ and $v$ in $S^i = \partial D^{i+1}$. Since there are finitely many of these, we can find a $L \in \N$ such that if we take the $L$th embeddings of the cofinal refinements, we get a map $\tilde{G}: D^{i+1} \to |K^{\delta,c}_\circ(x)|$. However, the restriction to the boundary no longer agrees with $f$. To fix this, we replace $D^{i+1}$ with $S^i \times [0,1] \cup_{S^i} D^{i+1}$, with the given PL triangulations on $S^{i}$ and $D^{i+1}$ and the minimal one on $[0,1]$. This admits a map to $|NK^{\delta,c}_\circ(x)|$ by putting $f$ on the first term and $\tilde{G}$ on the second. Compose with a PL homeomorphism $D^{i+1} \cong S^i \times [0,1] \cup_{S^i} D^{i+1}$ to get the map $G'$.
\end{proof}

We can now finish the proof of Theorem \ref{thmconndiskdim3}.

\begin{proof}[Proof of Theorem \ref{thmconndiskdim3}] This follows from Proposition \ref{propconndiskdim3gen} by taking $j = 0$ and noting that $||K^{\delta,c}_\bullet(x)|| \cong |K^{\delta,c}_\circ(x)|$. For the latter, note that every simplex of $K^{\delta,c}_\circ(x)$ has a canonical ordering coming from the $t$'s determined by image of the embedding in $\im(\psi)$, so that there is a bijection between the $k$-simplices of $K^{\delta,c}_\bullet(x)$ and $K^{\delta,c}_\circ(x)$, compatible with the face relations. It follows that their geometric realizations are homeomorphic.\end{proof}

\subsection{Bounded charge isotopy classes of arcs in dimension two}Our next goal is to prove that a certain arc complex is highly connected. In the next section this will be used to prove Theorem \ref{thmconndiskdim2}. As most arguments on the connectivity of arc complexes, our argument starts with the contractibility (or, in a finite number of exceptional cases, high connectivity) of full arc complexes. After that it is a number of reductions using badness arguments, which can be summarized as follows:
\[\xymatrix{\text{arcs } \mathcal A_\circ(\Sigma,V) \ar[d] \\
 \text{disk-like arcs } \mathcal A_\circ^D(\Sigma,V,\lambda) \ar[d] \\
\text{non-empty arcs } \mathcal A_\circ^{D,\emptyset}(\Sigma,V,\lambda) \ar[d] \\
\text{bounded charge arcs } \mathcal B_\circ(\Sigma,V,c) }\]

Examples of these types of arcs are given in Figure \ref{figtypesofarcs}. The techniques we use are similar to those used to prove connectivity of arc complexes described in \cite{wahlmcg}.

\begin{figure}[t]
\begin{center}
\includegraphics[width=13.5cm]{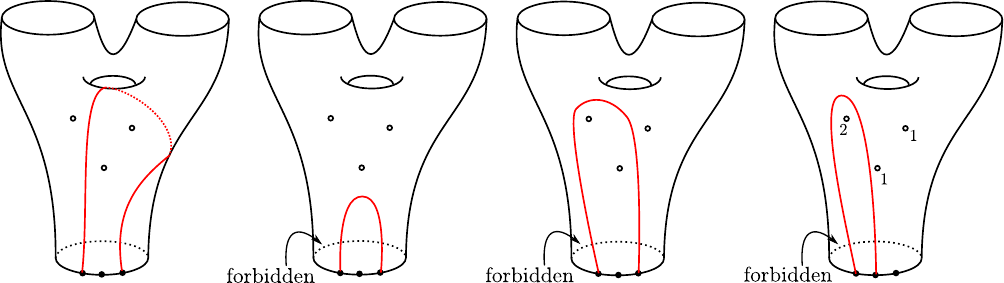}
\end{center}
\caption{All of these arcs are non-trivial, the rightmost three are disk-like (which required picking at least one forbidden interval in the lower boundary for its definition), the rightmost two are non-empty and the rightmost one is of bounded charge $\leq 2$ (which required endowing each puncture with a non-zero charge $\leq 2$).}
\label{figtypesofarcs}
\end{figure}

\subsubsection{Arcs} We start by discussing the arc complex and its connectivity. Let $\Sigma$ be a connected oriented surface with boundary $\partial \Sigma$ and let $V$ be a finite non-empty subset of $\partial \Sigma$. The surface $\Sigma$ has genus $g$, $k$ punctures and $r$ boundary components. An arc $\gamma$ in $\Sigma$ starting and ending at possibly different elements of $V$ is said to be \emph{trivial} if one of the components of $\Sigma \backslash \gamma$ is a disk such that the intersection of its boundary with $V$ is equal to the endpoint(s) of $\gamma$. An isotopy class of arcs is non-trivial if its representatives are non-trivial. A $(p+1)$-tuple of distinct isotopy classes of arcs $\{[\gamma_0],\ldots,[\gamma_p]\}$ is disjoint if there are representatives $\gamma_0,\ldots,\gamma_p$ that are disjoint on the interior.

\begin{definition} The \emph{arc complex} $\mathcal A_\circ(\Sigma,V)$ is the simplicial complex with $p$-simplices collections of $p+1$ disjoint non-trivial isotopy classes of arcs in $\Sigma$ starting and ending at possibly but not necessarily different elements of $V$.
\end{definition}

The following is a well-known theorem, see e.g. \cite{hatcherarc} or the exposition in \cite{wahlmcg}:

\begin{theorem}[Hatcher] \label{thmarccomplex} The complex $\mathcal A_\circ(\Sigma,V)$ is contractible, unless $\Sigma$ is either a disk or an annulus with $V$ contained in a single boundary component. In those cases, it is $(|V|+2r-7)$-connected, where $r$ is the number of boundary components of $\Sigma$.\end{theorem}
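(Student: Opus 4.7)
The plan is to apply the classical surgery argument due to Hatcher \cite{hatcherarc}, which establishes contractibility of arc complexes on surfaces by deformation retracting onto the star of a distinguished arc. The exceptional cases listed in the statement are precisely those where this argument fails because no distinguished arc of the right type exists.

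First I would fix a point $v_0 \in \Delta$ and a non-trivial arc $\alpha$ with both endpoints at $v_0$; such an arc exists unless $(\Sigma, \Delta)$ is a disk or an annulus with $\Delta$ contained in one boundary component. The goal is to show that $|\mathcal{A}_\circ(\Sigma,\Delta)|$ deformation retracts onto $Star(\alpha) \subset |\mathcal{A}_\circ(\Sigma,\Delta)|$, which is contractible since it is a cone over $Link(\alpha)$. The core technical step is the surgery construction: given a simplex $\sigma = \{[\gamma_0],\ldots,[\gamma_p]\}$, choose representatives in minimal position with $\alpha$ and with one another. If some $\gamma_i$ crosses $\alpha$, pick an intersection point $x$ closest to $v_0$ along $\alpha$, cut $\gamma_i$ at $x$, and push the two resulting half-arcs along $\alpha$ to $v_0$. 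This replaces $[\gamma_i]$ by up to two arc classes at $v_0$ (discarding any that become trivial), each with strictly fewer intersections with $\alpha$, while preserving disjointness from the other $[\gamma_j]$. Iterating produces a simplex in $Star(\alpha)$, and the procedure is canonical at the level of isotopy classes since minimal position representatives are unique up to ambient isotopy.

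The main obstacle is promoting this combinatorial reduction to a continuous deformation retraction of geometric realizations. I would handle this in the standard way by defining a simplicial retraction $r \colon \mathcal{A}_\circ(\Sigma,\Delta) \to Star(\alpha)$ inductively on the total geometric intersection number of simplices with $\alpha$; the base case is the identity on $Star(\alpha)$, and the inductive step uses the surgery above together with the star-like structure of the image to produce a canonical straight-line homotopy from the identity to $r$. The delicate point is checking compatibility with face maps, which follows because the surgery commutes with forgetting a $\gamma_i$.

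Finally, the exceptional cases can be computed directly. For the disk with $|\Delta|$ boundary marked points, $\mathcal{A}_\circ(\Sigma,\Delta)$ is the complex of diagonals of an $|\Delta|$-gon, well known to be homeomorphic to $S^{|\Delta|-4}$ (the boundary of the Stasheff associahedron); this is $(|\Delta|-5)$-connected, matching $|\Delta| + 2r - 7$ for $r=1$. For the annulus with $\Delta$ on a single boundary component, an analogous but slightly more involved combinatorial description of $\mathcal{A}_\circ(\Sigma,\Delta)$ as a join of circle-indexed copies of associahedra boundaries gives a sphere of dimension $|\Delta|-3$, matching $|\Delta|+2r-7$ for $r=2$. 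In both cases the bound is sharp precisely because no essential two-ended arc at $v_0$ is available to run the surgery argument.
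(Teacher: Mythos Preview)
The paper does not prove this theorem; it simply cites \cite{hatcherarc} and \cite{wahlmcg}. Your approach via Hatcher's surgery/flow argument is indeed the standard one, and the overall strategy is correct.

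However, there are genuine gaps in the execution. The most serious is your characterization of the exceptional cases. You claim that a non-trivial arc $\alpha$ with both endpoints at $v_0$ fails to exist precisely when $\Sigma$ is a disk or an annulus with $\Delta$ in one component. This is false: in a disk (no punctures, one boundary component) with $|\Delta| \geq 3$, such arcs exist in abundance---any loop at $v_0$ separating the remaining points of $\Delta$ into two non-empty groups---yet the complex is not contractible. The actual obstruction is subtler. Hatcher's flow passes from an arc $\gamma$ to its surgered pieces through the simplex they jointly span, which requires at least one surgered piece to be non-trivial. What one needs is an $\alpha$ such that one side of $\Sigma \setminus \alpha$ is not merely a disk with boundary marked only by points of $\Delta$: for instance, $\alpha$ should encircle a puncture, a handle, or another boundary component. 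In the exceptional cases no such $\alpha$ exists; for a generic $\alpha$ in a disk, surgering the diagonal that crosses it produces only trivial arcs and the flow breaks. You should isolate this property of $\alpha$ explicitly and verify it holds exactly outside the listed exceptions.

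Your treatment of the exceptional cases is also problematic. The disk case is fine: the arc complex of an $n$-gon is indeed PL-homeomorphic to $\partial K_n \cong S^{n-4}$, which is $(n-5)$-connected and matches $|\Delta|+2r-7$ for $r=1$. But the annulus case is wrong on two counts. First, for $|\Delta|=1$ the arc complex is an infinite discrete set (one vertex per winding number around the core, and no edges since any two such arcs intersect), certainly not a sphere. Second, even granting a sphere description, $S^{|\Delta|-3}$ is $(|\Delta|-4)$-connected, not $(|\Delta|-3)$-connected, so it would not match the formula for $r=2$; you have an off-by-one. The annulus bound in the statement is in fact not sharp for small $|\Delta|$, and is typically obtained by a different route (for example, comparing with a punctured disk or arguing via a suitable subcomplex) rather than by identifying the homeomorphism type directly.
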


\subsubsection{Disk-like arcs} We have that $\partial \Sigma \backslash V$ is a disjoint union of open intervals and circles. We will decorate the intervals by a labeling $\lambda$ of the following form: for each boundary component $\partial_i \Sigma$ containing at least one element of $V$, we pick a non-empty subset of the intervals of $\partial_i \Sigma \backslash V$, which we call \emph{forbidden}. The goal of these forbidden intervals is to encode the complement of $\im(\psi)$.

\begin{definition}A vertex $[\gamma]$ in $\mathcal A_\circ(\Sigma,V)$ is \emph{disk-like with respect to $\lambda$} if for each representative $\gamma$ we have that $\Sigma \backslash \gamma$ consists of two components, at least one of which is a disk with a possibly non-empty set of punctures whose boundary contains no forbidden boundary segments.\end{definition}

\begin{definition} The \emph{disk-like arc complex} $\mathcal A_\circ^D(\Sigma,V,\lambda)$ is the subcomplex of $\mathcal A_\circ(\Sigma,V)$ spanned by the disk-like arcs.\end{definition}

\begin{proposition}The complex of disk-like arcs $\mathcal A_\circ^D(\Sigma,V,\lambda)$ is at least $(k-2)$-connected, where $k$ is the number of punctures, except when $g=0$, $r=1$, $k=1$ and $|V|=1$, in which case it is empty.\end{proposition}

\begin{proof}This is a badness argument. We do an induction over genus $g$, number of boundary components $r$, the number of punctures $k$ and $|V|$, in lexicographic order. The induction starts with the cases $g=0$, $r=1$, $k=1$ and $|V| = 1$ or $2$. To see that the complex is non-empty in the case $g=0$, $r=1$, $k=1$ and $|V| = 2$, pick a point $\delta$ in $V$ and take the arc starting and ending at $\delta$ and circling around the unique puncture, then both components of its complement are non-trivial, see Figure \ref{fignonemptydisklike}.

\begin{figure}[t]
\begin{center}
\includegraphics[width=3.7cm]{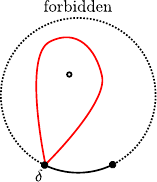}
\end{center}
\caption{A disk-like arc, showing that $\mathcal A_\circ^D(\Sigma,V,\lambda)$ is non-empty in the case $g=0$, $r=1$, $k=1$ and $|V| =2$.}
\label{fignonemptydisklike}
\end{figure}

%The cases $g = 0$, $r=1$, $n \geq 1$ and $|\Delta| \geq 1$ are given in Proposition \ref{propconnnonempty}, since in that case disk-like is equivalent to non-empty.

To prove $|\mathcal A^D_\circ(\Sigma,V,\lambda)|$ is $(k-2)$-connected, we let $-1 \leq i \leq k-2$, consider a map $S^i \to |\mathcal A^D_\circ(\Sigma,V,\lambda)|$ and show that it can be extended to a map from the disk $D^{i+1}$. By simplicial approximation, we can assume that it is simplicial with respect to some PL triangulation of $S^i$. Since $|\mathcal A_\circ(\Sigma,V)|$ is contractible, we can find a PL triangulation of $D^{i+1}$ and a simplicial map $D^{i+1} \to \mathcal A_\circ(\Sigma,V)$ making the following diagram commute:
\[\xymatrix{S^i \ar[r]^(.4)f \ar[d] & |\mathcal A^D_\circ(\Sigma,V,\lambda)| \ar[d] \\
D^{i+1} \ar[r]_(.4)F & |\mathcal A_\circ(\Sigma,V)|.}\] We say that a simplex $\sigma$ of $D^{i+1}$ is bad if all vertices of $F(\sigma)$ are not disk-like. Thus if $F$ has no bad simplices it lifts to $\mathcal A^D_\circ(\Sigma,V,\lambda)$. We will inductively modify $F$ by removing bad simplices of maximal dimension, denoting the intermediate steps by $F$ again. Let $\sigma$ be a bad simplex of maximal dimension $p'$ and suppose $f(\sigma)$ has dimension $p \leq p'$, represented by a $(p+1)$-tuple $\{\gamma_0,\ldots,\gamma_p\}$ of disjoint arcs. 

The surface $\Sigma \backslash \{\gamma_0,\ldots,\gamma_p\}$ consists of $s+1$ components $\Sigma_0,\ldots,\Sigma_s$ and we claim that all of the components $\Sigma_i$ have $(g_i,r_i,k_i,|V_i|) \prec (g,r,k,|V|)$ with respect to lexicographical order. To see this, first note that by induction over the number of arcs it suffices to prove this for $p = 0$, i.e. when there is one arc. Consider the various cases for $\gamma_0$: \begin{enumerate}[(i)]
\item If $\gamma_0$ is non-separating we get a single component whose genus or number of boundary components is smaller, depending on whether $\gamma_0$ has endpoints on different or the same boundary component.
\item If $\gamma_0$ is separating we get two components, each of which has smaller genus, a smaller number of boundary components, a smaller number of punctures or a smaller number of elements of $V$.  This is true because these numbers are essentially additive under glueing along arcs in the boundaries of two different components: if $g_1,r_1,k_1,|V_1|$ and $g_2,r_2,k_2,|V_2|$ denote the relevant numbers for the two components, then for the original surface we must have had $g = g_1+g_2$, $r = r_1 + r_2 -1$, $k = k_1+k_2$ and $|V| = |V_1|+|V_2|-2$. The only exceptional case could be $g_1=0$, $r_1=1$, $k_1=0$ and $|V_1|=1,2$. If this occurs then $\gamma_0$ would have been trivial, which proves the claim. 
\end{enumerate}

We associate to each of the components $\Sigma_j$ a new labeling $\lambda_j$ of the boundary intervals as follows. The intervals in $\partial \Sigma_j$ are a disjoint union of a subset of the intervals of $\partial \Sigma$ and at most two copies of the arcs $\gamma_i$. In the new labeling, all the intervals of $\partial \Sigma$ retain their original decoration, while all intervals coming from the arcs $\gamma_i$ are labeled forbidden. This choice of labeling will guarantee that being disk-like in $\Sigma_i$ implies being disk-like in $\Sigma$.

We then claim that restricting to the link of $\sigma$ gives a map: \[S^{i-p'} \cong \mr{Link}(\sigma) \to \mathcal A_\circ^D(\Sigma_0,V_0,\lambda_0) * \ldots * \mathcal A_\circ^D(\Sigma_s,V_s,\lambda_s).\] If this is not the case, then $\sigma$ was not of maximal dimension. By virtue of being in the link any arc $\gamma$ in the image of $\sigma$ must lie in $\mathcal A_\circ(\Sigma_i,V_i)$ for some $i$. If $\gamma$ is not disk-like with respect to the set of forbidden intervals $\lambda_i$ in $\Sigma_i$, then it must satisfy one of the following conditions:
\begin{enumerate}[(i')]
\item It is non-separating in $\Sigma_i$ and thus must have been non-separating in $\Sigma$ and hence not disk-like in $\Sigma$.
\item Its complement has two components that both are not punctured disks and hence $\gamma$ cannot have been disk-like in $\Sigma$.
\item Its complement has two components, one or both of which are punctured disks with a forbidden interval from $\lambda_i$. In this case, one must distinguish the reasons that the interval is forbidden by $\lambda_i$. If the forbidden interval came from $\lambda$ then that component of the complement of $\gamma$ in $\Sigma$ had a forbidden interval in its boundary. If on the other hand the forbidden interval came from one of the arcs $\gamma_i$ for $0 \leq i \leq p$, that means that $\gamma$ in $\Sigma$ must either have been non-separating, have the wrong complement or have a forbidden interval in its boundary, depending on the reason $\gamma_i$ was not disk-like. 
\end{enumerate}

Note that the exceptional case $g_i = 0$, $r_i=1$, $k_i=1$, $|V_i|=1$ can not occur, as then one of the arcs would have been disk-like. Thus the codomain has connectivity $\left(\sum_{j=0}^s k_j\right)-2 = k-2$ using the inductive hypothesis and since $i-p' \leq i-p \leq i \leq k-2$, we can extend the restriction of $F$ to a map $H$ from the disk $D^{i-p'+1} \cong K\to \mathcal A_\circ^D(\Sigma_0,V_0,\lambda_0) * \ldots * \mathcal A_\circ^D(\Sigma_s,V_s,\lambda_s)$, where $K$ is some PL triangulation of the disk $D^{i-p'+1}$ that extends the original PL triangulation of $S^{i-p'} \cong \mr{Link}(\sigma)$.

Now we can modify the original map $f$ by replacing
\[F|_{\mr{Star}(\sigma)}: \mr{Star}(\sigma) = \sigma * \mr{Link}(\sigma) \to |\mathcal A_\circ(\Sigma,V,\lambda)|\]
with the map
\[F|_{\partial \sigma} * H: \partial \sigma * K \to |\mathcal A_\circ(\Sigma,V,\lambda)|\]
Note that both $\mr{Star}(\sigma)$ and $\partial \sigma * K$ have the same boundary, and $F|_{\mr{Star}(\sigma)}$ and $F|_{\partial \sigma} * H$ coincide there. Furthermore, we did not modify the map on $S^i$, since no bad simplices occur there, so that $S^i \cap (\sigma * \mr{Link}(\sigma)) \subset \partial \sigma * \mr{Link}(\sigma)$.

We claim this modification reduces the number of bad simplices of maximal dimension by one. For this we only need to remark that the labelings $\lambda_i$ guarantee that an isotopy class of arc in $\mathcal A_\circ^D(\Sigma_i,V_i,\lambda_i)$ is also disk-like for $\Sigma$ with labeling $\lambda$. This uses the same reasoning used to conclude that the link of a bad simplex of maximal dimension maps into a join of complexes of disk-like arcs; analyze the possible ways how an arc in $\mathcal A_\circ^D(\Sigma_i,V_i,\lambda_i)$ could no longer be disk-like when considered as an arc in $\Sigma$ and conclude this never occurs.\end{proof}

\subsubsection{Non-empty arcs} Next we define the non-empty arc complex. It is the subcomplex of the disk-like arc complex consisting of those arcs that in a precise sense encircle some punctures.

\begin{figure}[t]
\begin{center}
\includegraphics[width=4.5cm]{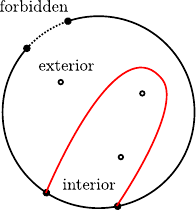}
\end{center}
\caption{An example of a non-empty disk-like arc with its interior and exterior.}
\label{figinteriorexterior}
\end{figure}

\begin{definition}A disk-like arc $\gamma$ cuts a connected surface $\Sigma$ into two components, one of which is a punctured disk that does not have any forbidden intervals in its boundary apart from $\gamma$ itself. This component is called the \textit{interior} of $\gamma$, the other component the \textit{exterior}, see Figure \ref{figinteriorexterior}. 

A disk-like arc $\gamma$ is said to be \emph{non-empty} if the interior contains at least one puncture. A vertex $[\gamma]$ in $\mathcal A_\circ^D(\Sigma,V,\lambda)$ is said to be non-empty if its representatives are. \end{definition}

\begin{definition} The \emph{non-empty arc complex} $\mathcal A_\circ^{D,\emptyset}(\Sigma,V,\lambda)$ is the subcomplex of $\mathcal A_\circ^D(\Sigma,V,\lambda)$ spanned by the non-empty arcs. 
\end{definition}

\begin{proposition}The complex $\mathcal A_\circ^{D,\emptyset}(\Sigma,V,\lambda)$ is $(k-2)$-connected, except when $g=0$, $r=1$, $k=1$ and $|V| = 1$, in which case it is empty.\end{proposition}

\begin{proof}This is neither a lifting argument nor a badness argument, but a parametrized surgery argument particular to arc complexes as in Section 4.2 of \cite{wahlmcg}. If all the intervals are forbidden, $\mathcal A_\circ^{D,\emptyset}(\Sigma,V,\lambda)$ is actually equal to $\mathcal A_\circ^D(\Sigma,V,\lambda)$, because in this case to be non-trivial a disk-like arc must contain a puncture on its interior and thus be non-empty. We conclude from the previous proposition that in this case $\mathcal A_\circ^{D,\emptyset}(\Sigma,V,\lambda)$ is $(k-2)$-connected. We will prove that $\mathcal A_\circ^{D,\emptyset}(\Sigma,V,\lambda)$ has the same connectivity as $\mathcal A_\circ^{D,\emptyset}(\Sigma,V,\lambda')$ for $\lambda'$ containing all intervals, so that $\mathcal A_\circ^{D,\emptyset}(\Sigma,V,\lambda)$ is $(k-2)$-connected as well. 

We will now describe the procedure for checking this connectivity statement. There are two cases: 
\begin{enumerate}[(i)]
\item $\Sigma$ is a punctured disk with $|V| = 2$,
\item $\Sigma$ is not a punctured disk with $|V|=2$.
\end{enumerate}

We do the second case first, as it is easier. In that case $\mathcal A_\circ^{D,\emptyset}(\Sigma,V,\lambda)$ deformation retracts onto $\mathcal A_\circ^{D,\emptyset}(\Sigma,V,\lambda')$. It suffices to show how to increase by one the number of forbidden intervals of one of the boundary components. Pick an orientation on the boundary component we are considering and let $\delta_0$ be a point bordering the forbidden interval, and $\delta_1$ a point in $V$ adjacent to it and not separated from it by the forbidden interval. We will give a deformation retraction of $\mathcal A_\circ^{D,\emptyset}(\Sigma,V,\lambda)$ onto the subcomplex of isotopy classes of arcs not having the interval from $\delta_0$ to $\delta_1$ in their interior, so we can label it forbidden. 

The idea is that given a $p$-simplex in $\mathcal A_\circ^{D,\emptyset}(\Sigma,V,\lambda)$ we look at the germs of arcs going in $\delta_0$ and move these one at a time to $\delta_1$. See Figure \ref{figarcgermflow} for a figure; a formula in simplicial coordinates can be found in the proof of Lemma 4.2 of \cite{wahlmcg}.

\begin{figure}[h]
\begin{center}
\includegraphics[width=13.5cm]{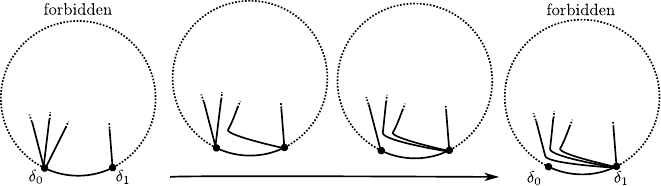}
\end{center}
\caption{The deformation of arcs away from $\delta_0$ to $\delta_1$. Note that the modifications are made locally near the interval connecting $\delta_0$ and $\delta_1$, and hence we only drew the germs.}
\label{figarcgermflow}
\end{figure}

We need to check that this procedure does not create trivial arcs. This is true since by construction they will be non-empty and disk-like. This uses that $\Sigma$ is not a punctured disk with $|V| = 2$. Indeed, only in the latter case is there a non-empty arc that leads to trivial arcs during the procedure. It is given by the arc with both endpoints at $\delta_0$ containing all punctures in its interior. 

To see this, first note that the interior of a non-empty arc must always contain some punctures and hence the same is true for the interiors of the arcs involved in the procedure. Secondly, note that the exterior $\Sigma_\mr{ext}$ of the complement of a non-empty arc always has genus or additional boundary components if $\Sigma$ had $g \geq 1$ or $r \geq 2$, so in that case can't be trivial, and the same is true for all arcs that arise from it during the procedure. The same is true if $\Sigma_\mr{ext}$ has at least one puncture. This means that the only case to consider are arcs in a surface satisfying $g=0$ and $r=1$, that contain all punctures in their interior. This means that if a trivial arc arises during the procedure, it must be not the interior but the exterior which becomes trivial. If $|V| \geq 3$, then any arc modified by the procedure will have at least three points of $V$ in $\partial \Sigma_\mr{ext}$, something which is unchanged by the procedure. So the only case to consider is $|V| = 2$. In that case arcs containing all the punctures in their interior must have both endpoints at the some point of $V$ to be non-trivial; there are exactly two of these. The only one of these arcs that is modified during the procedure is the one starting and ending at $\delta_0$, and one can easily check this leads to a trivial arc.

We will now consider the first case, i.e. $\Sigma$ is a punctured disk with $|V|=2$. As mentioned before, the previous argument fails because we might encounter trivial arcs. This happens when $\gamma$ is the arc starting and ending at $\delta_0$ containing all punctures in its interior. Instead, note that \[\mathcal A_\circ^{D,\emptyset}(\Sigma,V,\lambda) \cong (\mathcal A_\circ^{D,\emptyset}(\Sigma,V,\lambda) \backslash [\gamma]) \cup_{\mr{Link}(\gamma)} \mr{Star}(\gamma)\]
and it suffices to prove that $\mathcal A_\circ^{D,\emptyset}(\Sigma,V,\lambda) \backslash [\gamma]$ and $\mr{Link}(\gamma)$ are highly connected, because $\mr{Star}(\gamma)$ is contractible. Firstly note that $\mr{Link}(\gamma)$ is isomorphic to the subcomplex of non-empty arcs starting and ending at $\delta_0$ that are not equal to $\gamma$. This is isomorphic to the non-empty arc complex for the $k$-fold punctured disk with $|V'|=1$ and thus is $(k-2)$-connected. Secondly note that the argument of the previous case does apply to $\mathcal A_\circ^{D,\emptyset}(\Sigma,V,\lambda) \backslash [\gamma]$ since $[\gamma]$ is the only isotopy class made trivial by the procedure outlined there. Thus we can prove that $\mathcal A_\circ^{D,\emptyset}(\Sigma,V,\lambda) \backslash [\gamma]$ deformation retracts onto the non-empty arc complex for the $k$-fold punctured disk with $|V'|=1$ and thus also is $(k-2)$-connected.
\end{proof}

\subsubsection{Bounded charge arcs} In the following argument we restrict to the case that $V$ is contained in a single boundary component and there is a single bad interval in $\lambda$. Next we suppose that the $k$ punctures $\{p_1,\ldots,p_k\}$ are labeled by a map $c_p: \{p_1,\ldots,p_k\} \to \{1,\ldots,c\}$, called the charge. We also pick a second (possibly empty) subset $\lambda_c$ of the intervals in the boundary disjoint from the forbidden intervals $\lambda$, which are called the overcharged intervals. The set of overcharged intervals can be empty, and in the end will be, since they are just a tool in the induction. We denote this triple $(\lambda,c_p,\lambda_c)$ of labelings, i.e. the collection of forbidden boundary segments $\lambda$, the charge map $c_p$ and the collection of overcharged boundary segments $\lambda_c$, by $c$.

\begin{definition}A non-empty arc is said to be \emph{of bounded charge} if the total sum of the charges of the punctures in its interior is $\leq c$ and none of the intervals $\{I_1,\ldots,I_m\}$ in its interior are overcharged.\end{definition}

\begin{definition}The \emph{bounded charge arc complex} $\mathcal B_\circ(\Sigma,V,c)$ is the subcomplex of $\mathcal A^{D,\emptyset}_\circ(\Sigma,V,\lambda)$ spanned by the non-empty arcs of bounded charge.\end{definition}

\begin{proposition}\label{propboundedchargeconn} Suppose that $V$ is contained in a single boundary component and $\lambda$ consists of a single forbidden interval in that boundary component. Then the bounded charge arc complex $\mathcal B_\circ(\Sigma,V,c)$ is $(k-2)$-connected, except when $g = 0$, $r=1$, $k=1$ and $|V| = 1$ in which case it is empty.\end{proposition}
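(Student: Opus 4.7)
The plan is to adapt the bad-simplex argument used for the disk-like arc complex to the bounded-charge setting, inducting lexicographically on $(g,r,k,|\Delta|)$; base cases including the exceptional empty case will be handled directly. For $-1 \leq i \leq k-2$, I will start with a PL-simplicial map $S^i \to |\mathcal B^0_\circ(\Sigma,\Delta,c)|$ and, using the preceding proposition on the non-empty arc complex, extend it to a simplicial map $f : D^{i+1} \to |\mathcal A^0_\circ(\Sigma,\Delta,\lambda)|$. I will call a simplex of $D^{i+1}$ \emph{bad} if every vertex of its image is a non-empty disk-like arc failing the bounded-charge condition, either because its interior charge exceeds $c$ or because an overcharged boundary interval lies on its interior side. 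I will then pick a bad simplex $\sigma = \{\gamma_0,\ldots,\gamma_p\}$ of maximal dimension and modify $f$ on $\sigma * \mathrm{Link}(\sigma)$ to eliminate it.

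Cutting $\Sigma$ along $\gamma_0,\ldots,\gamma_p$ will yield components $\Sigma_0,\ldots,\Sigma_s$, each strictly smaller in the lexicographic order on $(g,r,k,|\Delta|)$ by the same analysis as in the disk-like proof. I plan to endow each $\Sigma_j$ with induced labelings as follows: the puncture-charge map restricts naturally; the single forbidden interval will be the original forbidden interval of $\Sigma$ if it lies in $\partial \Sigma_j$ and otherwise an arbitrary copy of a cut arc on the boundary component containing $\Delta_j$; the overcharged intervals will be the original overcharged intervals of $\Sigma$ lying in $\partial \Sigma_j$ together with every remaining copy of a cut arc in $\partial \Sigma_j$. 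With these labelings each $\Sigma_j$ will satisfy the hypotheses of the proposition, and an arc $\gamma'$ in $\Sigma_j$ will be bounded-charge in $\Sigma_j$ if and only if it is bounded-charge in $\Sigma$: declaring every cut-arc copy either forbidden or overcharged ensures that whenever the interior of $\gamma'$ would extend across a cut arc when viewed in $\Sigma$, the arc automatically fails the bounded-charge condition in $\Sigma_j$, while otherwise the two interiors agree.

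Restricting $f$ to $\mathrm{Link}(\sigma) \cong S^{i-p}$ will then yield a map into the join $\mathcal B^0_\circ(\Sigma_0,\Delta_0,c_0) * \cdots * \mathcal B^0_\circ(\Sigma_s,\Delta_s,c_s)$. By the inductive hypothesis each nonempty factor is $(k_j-2)$-connected, and since $\sum_j k_j = k$ the join will be $(k-2)$-connected, hence in particular $(i-p)$-connected. I can therefore extend the link map to a simplicial map from $D^{i-p+1}$ into this join, and replacing $\sigma * \mathrm{Link}(\sigma)$ in $D^{i+1}$ by $\partial \sigma * D^{i-p+1}$ will strictly decrease the number of maximal bad simplices, since every new vertex represents a bounded-charge arc in some $\Sigma_j$ and hence in $\Sigma$. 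The hard part will be the labeling bookkeeping above: one must check that converting cut arcs into \emph{overcharged} rather than \emph{forbidden} intervals preserves the hypothesis of exactly one forbidden interval per boundary component containing $\Delta$ while still preserving the bounded-charge equivalence, and one must verify a handful of small-case technicalities --- in particular, that a bad arc cannot be a loop enclosing a single puncture, so no interior disk falls into the exceptional $g = 0$, $r = 1$, $k = 1$, $|\Delta| = 1$ case that would spoil the join-connectivity estimate.
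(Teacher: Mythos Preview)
Your proposal is correct and follows the same bad-simplex argument as the paper. The one place where the paper is more specific than you is the labeling: rather than choosing an ``arbitrary'' cut arc as the forbidden interval in each $\Sigma_j$, the paper declares the copy of $\gamma_i$ in $\partial \Sigma_j$ to be \emph{forbidden} precisely when $\Sigma_j$ lies in the interior of $\gamma_i$, and \emph{overcharged} when $\Sigma_j$ lies in the exterior. Because the disk-like arcs $\gamma_0,\ldots,\gamma_p$ are nested, each $\Sigma_j$ has exactly one ``outer'' bounding cut arc (or the original forbidden interval, for the outermost piece), so this rule gives exactly one forbidden interval per piece and makes the interior of any arc $\gamma'$ in $\Sigma_j$ coincide literally with its interior in $\Sigma$. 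With this choice your ``hard part'' about the labeling bookkeeping becomes routine: an arc that is bounded-charge in $\Sigma_j$ is bounded-charge in $\Sigma$ because its $\Sigma_j$-interior equals its $\Sigma$-interior, and an arc in the link that fails bounded-charge in $\Sigma_j$ either carries too much charge or has an overcharged cut arc in its interior, which in $\Sigma$ means its interior swallows the interior of some bad $\gamma_i$ --- so it was already bad in $\Sigma$, contradicting maximality. Your observation about the exceptional case is also what the paper uses: a bad arc whose interior contains a single puncture must have an overcharged interval on its interior boundary, which forces $|\Delta_j|\geq 2$.
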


\begin{proof}This is a badness argument. We first do some initial cases. If $g=0$, $r=1$ and $k=1$ it is empty if $|V|=1$ and non-empty if $|V| \geq 2$, again using the arc obtained by picking a point $\delta$ in $V$ and taking the arc starting and ending at $\delta$ and circling around the unique puncture. If $k=1$, $|V|\geq 1$ and we have $g >0$ or $r>1$, then it is non-empty by a similar construction of an arc starting and ending at some $\delta$ in $V$ and circling around the unique puncture.

We then do an induction over $g$, $r$, $k$ and $|V|$ in lexicographic ordering. The previous preliminary work implies it suffices to only consider the cases $k \geq 2$.

Let $i \leq k-2$ and consider a simplicial map $f: S^i \to \mathcal B_\circ(\Sigma,V,c)$, then since $\mathcal A^{D,\emptyset}_\circ(\Sigma,V,\lambda)$ is $(k-2)$-connected, using simplicial approximation we can find a PL triangulation of $D^{i+1}$ and a simplicial map $F: D^{i+1} \to \mathcal A^{D,\emptyset}_\circ(\Sigma,V,\lambda)$ making the following diagram commute:
\[\xymatrix{S^i \ar[r]^(.4){f} \ar[d] & \mathcal B_\circ(\Sigma,V,c) \ar[d] \\
D^{i+1} \ar[r]_(.4)F & \mathcal A^{D,\emptyset}_\circ(\Sigma,V,\lambda).}\]

A simplex $\sigma$ of $D^{i+1}$ is bad if all of the vertices of $F(\sigma)$ are not of bounded charge. Thus if there are no bad simplices then $F$ lifts to $\mathcal B_\circ(\Sigma,V,c)$. We inductively modify $F$ by removing bad simplices of maximal dimension, denoting the intermediate steps by $F$ again. Let $\sigma$ be a bad simplex of maximal dimension $p'$ and suppose that $F(\sigma)$ is of dimension $p \leq p'$, represented by a collection $\gamma_0,\ldots,\gamma_p$.

Consider the surface with boundary obtained by taking $\Sigma \backslash \{\gamma_0,\ldots,\gamma_p\}$. This has $p+2$ components, which we denote 
$\Sigma_0,\ldots,\Sigma_{p+1}$. Each component $\Sigma_i$ has $k_i \geq 0$ punctures (where the $k_i$ satisfy $\sum k_i = k$) each of which is labeled by their original charge. Each component $\Sigma_i$ has a set $V_i$ of marked points on its boundary, with $|V_i| \geq 1$. The interior segments of the boundary of $\Sigma_i$ are a union of a subset of the interior segments of the boundary of $\partial \Sigma \backslash V$ and a subset of the arcs $\gamma_j$. We label the former by their original labeling and for the latter we do the following: if $\Sigma_i$ is in the interior of $\gamma_j$ we label it forbidden, if $\Sigma_i$ is not we label it overcharged. Note that this again leads to surfaces with $V_i$ contained in a single boundary component and a single forbidden interval. Combined with the labeling of the punctures we get labelings $c_i$ for $\Sigma_i$. 

Next we claim that every $\Sigma_i$ precedes $\Sigma$ in the lexicographic ordering and the exceptional case does not occur. By induction over the number of arcs it suffices to prove this for $p=0$, i.e. when there is one arc. If $\Sigma$ had $g>0$ or $r>1$ this is clear: the interior has no genus and one boundary component, and the other component has at least one less puncture. So it suffices to restrict to the case of $\Sigma$ being a disk. In that case both components have less punctures, unless the arc contains all punctures in its interior. But in that case if the boundary of the interior contained all points of $V$ it would be trivial. We also claim that the components with a single puncture always satisfy $|V| \geq 2$. Since $\sigma$ is bad, all of the $\gamma_i$'s are not of bounded charge. If a disk-like curve contains exactly one puncture, the only way it can fail to be of bounded charge is if there is an overcharged interval. However, this requires that $|V|$ is at least $2$. This rules out the exceptional case of $g=0$, $r=1$, $k=1$ and $|V| = 1$ from occurring among the $\Sigma_i$.

By restriction to the link we get a map:
\[S^{i-p'} \cong \mr{Link}(\sigma) \to \mathcal B_\circ(\Sigma_0,V_0,c_0) * \ldots * \mathcal B_\circ(\Sigma_{p+1},V_{p+1},c_{p+1}).\]
To see this, first note that a vertex of the link goes to a non-empty arc in some $\Sigma_i$, because it must be of bounded charge in $\Sigma$ and thus can't contain any $\gamma_i$ in its interior. Secondly, we note that if a vertex of the link goes to an arc that is not of bounded charge $\sigma$ would not be of maximal dimension; this is true because either that arc would contain punctures of total charge $>c$ or it has an overcharged interval in the boundary of its interior, in which case in $\Sigma$ its interior containing one of the bad arcs $\gamma_i$ and thus in $\Sigma$ contains punctures of total charge $>c$ or has an overcharged interval as its boundary. In both cases the arc was bad in $\Sigma$. By the inductive hypothesis the codomain has connectivity $\sum_{i \geq 0} k_i-2 = k-2$, with $k_i$ the number of punctures in $\Sigma_i$. We are mapping in a sphere of dimension at most $k-2$ and thus this means we can extend our map to a map $H: D^{i-p'+1} \cong K \to \mathcal B_\circ(\Sigma_0,V_0,c_0) * \ldots * \mathcal B_\circ(\Sigma_{p+1},V_{p+1},c_{p+1})$. 

Now we can modify the original map $f$ by replacing
\[F|_{\mr{Star}(\sigma)}:  \mr{Star}(\sigma) = \sigma * \mr{Link}(\sigma) \to |\mathcal A^{D,\emptyset}_\circ(\Sigma,V,\lambda)|\]
with the map
\[F|_{\partial \sigma} * H: \partial \sigma * K \to |\mathcal A^{D,\emptyset}_\circ(\Sigma,V,\lambda)|\]
Note that both $\mr{Star}(\sigma)$ and $\partial \sigma * K$ have the same boundary, and $F|_{\mr{Star}(\sigma)}$ and $F|_{\partial \sigma} * H$ coincide there. Furthermore, we did not modify the map on $S^i$, since no bad simplices occur there, so that $S^i \cap (\sigma * \mr{Link}(\sigma)) \subset \partial \sigma * \mr{Link}(\sigma)$.

We claim this reduces the number of bad simplices of maximal dimension by one. To see this, note that all changed simplexes are of the form $\alpha * \beta$ where $\alpha$ is a proper face of $\sigma$ and $\beta$ is a simplex of $K$. By construction all vertices of $K$ are not bad, which uses similar reasoning about the labelings as before, and in $\alpha * \beta$ at least one vertex of $\sigma$ does not occur.\end{proof}

%\begin{remark}If $\Sigma$ is a punctured disk with $k$ punctures, then the bounded charge disk-like arc complex is actually $(k-1)$-connected. One can instead work with the initial case $k=2$ and $|\Delta| = 1$ and then $\mathcal A^0_\circ(\Sigma,\Delta,\lambda)$ is not just connected but contractible. To see this, note that each isotopy class of arc is uniquely determined by its ``winding number'' and the only vertices connected by 1-simplices to an isotopy class of arc with winding number $\omega$ are those with winding numbers $\omega-1$ and $\omega+1$. One now continues with the proof as given, being more careful at the places where connectivity estimates are done. This high connectivity propagates through the subsequent proofs.\end{remark}

\subsection{Topologized bounded charge disks in dimension two}

In the previous section we discussed the complex $\mathcal B_\circ(\Sigma,V,c)$. If we pick an orientation on the boundary component containing $V$ there is a canonical ordering on the elements of each $p$-simplex: they are lexicographically ordered by the first point of $V$ they contain, starting at the one to the right of the forbidden interval, and then by the ordering of the germs at each point of $V$ in counterclockwise direction. This also gives each arc a canonical orientation.

\begin{definition}Let $\mathcal D_\bullet(\Sigma,V,c)$ be the semisimplicial space with space of $p$-simplices equal to the space of $(p+1)$-tuples of disjoint embeddings of $(I,\partial I) \to (\Sigma,V)$ such that \begin{enumerate}[(i)]
\item each embedding is of bounded charge and disk-like,
\item each embedding has the same orientation as the canonical one described above,
\item the elements within the $(p+1)$-tuple are ordered as described above.
\end{enumerate}\end{definition}

\begin{definition} Let $\mathcal D^0_\bullet(\Sigma,V,c)$ be the semisimplicial set obtained by taking $\pi_0$ of each of the spaces of $p$-simplices of $\mathcal D_\bullet(\Sigma,V,c)$.
\end{definition}

There is a semisimplicial map $\mathcal D_\bullet(\Sigma,V,c) \to \mathcal D^0_\bullet(\Sigma,V,c)$ sending a $(p+1)$-tuple to the $(p+1)$-tuple of their isotopy classes. The ordering discussed above gives a bijection between the $p$-simplices of $\mathcal D^0_\bullet(\Sigma,V,c)$ and those of $\mathcal B_\circ(\Sigma,V,c)$. Because this is compatible with the face relations we get the following lemma:

\begin{lemma}There is a homeomorphism $||\mathcal D^0_\bullet(\Sigma,V,c)|| \cong |\mathcal B_\circ(\Sigma,V,c)|$.\end{lemma}

Let us next compare  $\mathcal D_\bullet(\Sigma,\Delta,c)$ and $\mathcal D^0_\bullet(\Sigma,\Delta,c)$.

\begin{lemma}The map $\mathcal D_\bullet(\Sigma,\Delta,c) \to \mathcal D^0_\bullet(\Sigma,\Delta,c)$ induces a weak equivalence $||\mathcal D_\bullet(\Sigma,\Delta,c)|| \simeq ||\mathcal D^0_\bullet(\Sigma,\Delta,c)||$.\end{lemma}

\begin{proof}The realization lemma (e.g. Proposition 2.6 of \cite{sorenoscarstability}) says that a map of semisimplicial spaces that is a level wise weak equivalence, induces a weak equivalence on geometric realization. It thus suffices to prove that the map $\mathcal D_\bullet(\Sigma,\Delta,c) \to \mathcal D^0_\bullet(\Sigma,\Delta,c)$ is a level wise homotopy equivalence. This follows from the fact that the space of representatives of an isotopy class of arcs with fixed endpoints in the $C^\infty$-topology is contractible. A reference for this is part (III) of Appendix B of \cite{hatchermw}. Alternatively, one can deduce this by a doubling argument from curve-shortening \cite{grayson} (which says that the space of curves in a given isotopy class is contractible), or the techniques of \cite{gramain}. \end{proof}

Recall that the semisimplicial space $K^c_\bullet(x)$ has as space of $p$-simplices the space of $(p+1)$-tuples of disjoint bounded charge embeddings $(D^2_+,D^1) \to (\Sigma,\im(\psi))$ relative to $V$ or $(D^2_\vee,D^0) \to (\Sigma,\im(\psi))$ relative to $V$. There is a map $K_\bullet^c(x) \to \mathcal D_\bullet(\Sigma,V,c)$ by restricting the embeddings of disks to their boundary. Here $\psi$ provides the orientation on the boundary component containing $V$ that we used before. Furthermore, the part of that boundary component containing the complement of the image of $\psi$ gives the forbidden boundary segment in $\lambda$.

\begin{lemma}The map $K_\bullet^c(x) \to \mathcal D_\bullet(\Sigma,V,c)$ induce a weak equivalence $||K_\bullet^c(x)|| \simeq ||\mathcal D_\bullet(\Sigma,V,c)||$.\end{lemma}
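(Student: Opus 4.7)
The plan is to show that for each $p \geq 0$, the restriction-to-boundary map on $p$-simplices is a weak equivalence by identifying the homotopy fiber over each point and showing it is contractible. First I would verify well-definedness: from a tuple of disjoint bounded charge embeddings of $(D^2_+,D^1)$ or $(D^2_\vee,D^0)$ into $(\Sigma,\mathrm{im}(\psi))$, one extracts the image of $\partial D^2_+ \setminus D^1$ (respectively $\partial D^2_\vee \setminus D^0$) as a proper arc in $\Sigma$ with endpoints in $\Delta$, and these arcs are disk-like, non-empty of bounded charge, and carry the correct orientation and ordering by construction.

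Next I would identify the fiber of the map above a fixed $p$-simplex $(\gamma_0,\ldots,\gamma_p)$ of $\mathcal B_p(\Sigma,\Delta,c)$. For each $\gamma_i$, the disk-like condition together with the ordering conventions specifies uniquely which side of $\gamma_i$ is the ``interior,'' and hence pins down the image of the embedding $\varphi_i$ as a subset of $\Sigma$: either the interior region bounded by $\gamma_i$ and an interval in $\mathrm{im}(\psi)$ (the $D^2_+$ case) or the interior region bounded by a loop based at a point of $\Delta$ (the $D^2_\vee$ case). Thus the fiber is a product over $i$ of spaces of embeddings of $(D^2_+,D^1)$ or $(D^2_\vee,D^0)$ with fixed image and fixed boundary parametrization. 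Each such space of parametrizations is contractible by Smale's theorem on $\mathrm{Diff}(D^2,\partial D^2)$, with a small modification for the pinched model $D^2_\vee$ that handles the corner by first trivializing a neighborhood of the corner and then applying Smale's theorem on the remainder.

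To upgrade fiberwise contractibility to a homotopy equivalence I would show the map is a locally trivial fibration via parametrized isotopy extension: given a continuous family of arc configurations, one can locally choose a continuous family of compatible tubular neighborhoods (adapted to the boundary behavior at $\Delta$ and to the corners of $D^2_\vee$), and then parametrize these neighborhoods continuously. Disjointness, bounded charge, and the disk-like property are open conditions, so they persist in such a neighborhood. A Serre fibration with contractible fibers between spaces of CW type is a weak equivalence, giving the lemma.

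The main obstacle I anticipate is the careful treatment of the pinched model $D^2_\vee$: at the single corner $D^0$ the germ data of the arc at the point of $\Delta$ and the germ data of the embedding must match, and one needs to show both that the space of extensions of a given germ is contractible and that families can be lifted continuously across this corner. This is handled by working in a collar of $\partial \Sigma$ where everything is standardized by $\psi$, reducing to the contractibility of the space of embeddings of a model neighborhood of a corner with prescribed germ, and then gluing with the Smale-type result on the interior. Once this local step is in place, the global argument assembles routinely.
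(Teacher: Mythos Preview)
Your proposal is correct and follows essentially the same approach as the paper: the fiber of the restriction map over a $(p+1)$-tuple of arcs is a product of $p+1$ copies of (something equivalent to) $\mathrm{Diff}(D^2;\partial D^2)$, which is contractible by Smale's theorem. The paper states this in one line by asserting the map is a principal $\mathrm{Diff}(D^2;\partial D^2)^{p+1}$-bundle; your version is more explicit about the fibration property (via isotopy extension) and about handling the corner in the $D^2_\vee$ case, both of which the paper leaves implicit.
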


\begin{proof}The second claim follows from the first by the realization lemma. The map of $p$-simplices from topologized disks to topologized arcs is a principal $\mr{Diff}(D^2;\partial D^2)^{p+1}$-bundle, but this fiber is contractible by Smale's theorem \cite{smaledisk}.\end{proof}

We can now finish the proof of Theorem \ref{thmconndiskdim2}.

\begin{proof}[Proof of Theorem \ref{thmconndiskdim2}] By the previous lemma's we have that
\[||K_\bullet^c(x)|| \simeq ||\mathcal D_\bullet(\Sigma,V,c)|| \simeq ||\mathcal D^0_\bullet(\Sigma,V,c)|| \cong |\mathcal B_\circ(\Sigma,V,c)|\]
and the right-most term is $(k-2)$-connected by Proposition \ref{propboundedchargeconn}.
\end{proof}

\begin{remark}\label{remarcconnectedcomponents} Note that these arguments imply that $K^c_p(x)$ has contractible connected components and that these components are in bijection with the $p$-simplices of $\mathcal B^0_\circ(\Sigma,V,c)$.\end{remark}

\section{Homological stability for completions} \label{sechomstab} We now prove homological stability for the spaces $\int^k_M P$ (Theorem \ref{thmmain}). We will prove our result for the cases of manifolds of dimension $\geq 3$ and manifolds of dimension $2$ separately. The reason for the independent proofs is that the case of surfaces requires a slightly different semisimplicial resolution of $\smash{\int_M^k P}$. %By the term resolution of $\int_M^k P$, we mean an augmented semisimplicial space with highly-connected augmentation map to $\int_M^k P$ whose spaces of $p$-simplices we understand well.

Both arguments follow the standard outline for homological stability arguments: perform an induction over $k$ by 
\begin{enumerate}[(i)]
\item resolving the $k$th space by a semisimplicial space whose spaces of $p$-simplices are related to spaces earlier in induction,
\item simplifying the $E^1$-page of the spectral sequence for the geometric realization of this resolution using the inductive hypothesis,
\item using knowledge of the augmentation map to the original space to get control over the edge map or final differential.
\end{enumerate}

\subsection{The bounded charge resolution in dimension at least three} Recall that we fixed a connected manifold $M$ of dimension $n \geq 3$ that is the interior of a manifold $\bar{M}$ with boundary $\partial \bar{M}$. We also fixed an embedding $\psi: (0,1)^{n-1} \to \partial \bar{M}$ disjoint from but isotopic to $\chi$ and a collar $\tilde{\psi}$. Then we can define a semisimplicial space $K^{\delta,c}_\bullet$ as the semisimplicial space with $0$-simplices embeddings $\varphi: (D^n_+,D^{n-1}) \to (\bar{M},\im(\psi))$. These embeddings are \emph{not} topologized, hence the $\delta$. A $p$-simplex is a $(p+1)$-tuple of disjoint embeddings, with ordering induced by $\psi$. Recall the macroscopic location map 
\[L: \int_{M,\leq c}^k P \to \mr{Sym}^{\leq c}_k(M)\] defined in Subsection \ref{subsectchdef}: for each $x \in \int_{M,\leq c}^k P$, $L(x) \in \mr{Sym}^{\leq c}_k(M)$ is given by recording the macroscopic locations of the particles and their charge. Then the complex $K^{\delta}_\bullet(L(x))$ is a subcomplex of $K^{\delta,c}_\bullet$.

\begin{definition}Let $\mathcal T_\bullet[k]$ be the augmented semisimplicial space with space of $P$-simplices $\mathcal T_p[k]$ consisting of pairs of an $x \in \int^k_{M,\leq c} P$ and an element $(\phi_0,\ldots,\phi_p)$ of $K^{\delta}_p$ such that $(\phi_0,\ldots,\phi_p) \in K^{\delta,c}_p(L(x)) \subset K^{\delta}_p$. The augmentation is the forgetful map to $\int^k_M P$.\end{definition}

Propositions \ref{propd3resconn} and \ref{propd3respsimplices} are direct consequences of this definition. The first will use a property of microfibrations.

\begin{definition}A map $f: E \to B$ is a (Serre) microfibration if for $k \geq 0$ and each commutative diagram 
\[\xymatrix{\{0\} \times D^k \ar[rr] \ar[dd] \ar[rd] & & E \ar[dd] \\
& [0,\epsilon) \times D^k \ar[ld] \ar@{.>}[ur]  & \\
[0,1] \times D^k \ar[rr] & & B}\]
there exists an $\epsilon \in (0,1]$ and a dotted map making the diagram commute.\end{definition}

Serre fibrations are characterized by a lifting property involving lifting a $D^n$-parametrized family of maps over the unit interval, and similarly Serre microfibrations are characterized by a weaker lifting property where we only require that we are able to find a lift for some small portion of the interval. We recall the following lemma, which is Proposition 2.5 of \cite{sorenoscarstability}. This was a generalization of Lemma 2.2 of \cite{weissclassify}.

\begin{proposition}[Galatius-Randal-Williams]\label{lemmicrofibconn} If $f: E \to B$ is a microfibration such that for all $b \in B$ the fiber $f^{-1}(b)$ is $n$-connected, then the map $f$ is $(n+1)$-connected.\end{proposition}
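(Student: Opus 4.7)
The plan is to verify $(n+1)$-connectivity via its lifting characterization and produce the required lifts by combining the microfibration property with the $n$-connectivity of fibers. Recall from the paper's conventions that $f$ is $(n+1)$-connected iff for every $-1 \le i \le n$ and every commutative square
\[\xymatrix{S^i \ar[r]^\alpha \ar[d] & E \ar[d]^f \\ D^{i+1} \ar[r]^\beta & B}\]
there exists a dotted map $\tilde\beta : D^{i+1} \to E$ with $\tilde\beta|_{S^i} = \alpha$ and $f\tilde\beta$ homotopic to $\beta$ rel $S^i$. Pulling $f$ back along $\beta$ gives a microfibration $f' : \beta^*E \to D^{i+1}$ whose fibers agree with those of $f$, so the problem reduces to extending a section of $f'$ defined on $\partial D^{i+1}$ to one defined on all of $D^{i+1}$.

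I would then attack this extension by a cell-by-cell induction with a fine triangulation of $D^{i+1}$ whose $(-1)$-skeleton is $\partial D^{i+1}$. For each interior $k$-cell $\sigma$ one has a section $s : \partial\sigma \to \beta^*E$, and extending over $\sigma$ is a local instance of the original problem with a microfibration over $D^k$ and a section over $\partial D^k$, where $k \le i+1 \le n+1$. Pick an interior point $x_0 \in \sigma$ and a deformation retraction $r_t : \sigma \to \sigma$ onto $x_0$, and apply the microfibration property (in the compact form for $X = \partial\sigma$, which follows from the disk form by finite covering and gluing) to lift the homotopy $r_t|_{\partial\sigma}$ through $f$ starting from $s$. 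Provided the lifting reaches $t=1$, the outcome is a map $\partial\sigma \to f^{-1}(x_0)$; since $\dim\partial\sigma = k-1 \le n$ and $f^{-1}(x_0)$ is $n$-connected, this map extends to $\sigma \to f^{-1}(x_0)$, and transporting back along the reverse of the lifted homotopy yields a section of $f'$ over $\sigma$ extending $s$.

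The main obstacle is precisely that microfibrations only guarantee short-time lifts with no uniform lower bound on $\epsilon$, so a priori the radial contraction of $\sigma$ cannot be lifted globally. This is overcome by a compactness/subdivision argument: for each cell one chooses a partition $0 = t_0 < t_1 < \cdots < t_N = 1$ and applies the microfibration property to extend the lift across each $[t_j,t_{j+1}]$, where the compactness of $\sigma$ and of $[0,1]$ ensures that finitely many steps suffice. The technical delicacy is making these local extensions compatible across faces of the triangulation, which is again handled by Whitehead-style obstruction using the $n$-connectivity of fibers: whenever two partial lifts disagree on a face of dimension $\le n$, the difference is a map into a fiber that can be deformed away. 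A cleaner alternative packaging of the same inputs proves directly that for a microfibration the inclusion $f^{-1}(b) \hookrightarrow \mathrm{hofib}_b(f)$ is $(n+1)$-connected, whence the $n$-connectivity of $f^{-1}(b)$ propagates to $\mathrm{hofib}_b(f)$, and thus $f$ is $(n+1)$-connected.
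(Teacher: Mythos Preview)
The paper does not prove this proposition at all; it simply records it as Proposition~2.5 of \cite{sorenoscarstability} (generalizing Lemma~2.2 of \cite{weissclassify}) and uses it as a black box. So there is no proof in the paper to compare against, and you are attempting more than the authors do.

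That said, your sketch has a real gap at exactly the point you flag as ``the main obstacle.'' You write that the failure of uniform $\epsilon$'s is ``overcome by a compactness/subdivision argument: for each cell one chooses a partition $0=t_0<\cdots<t_N=1$ and applies the microfibration property to extend the lift across each $[t_j,t_{j+1}]$, where the compactness of $\sigma$ and of $[0,1]$ ensures that finitely many steps suffice.'' This does not work as stated. The $\epsilon$ produced by the microfibration axiom at time $t_j$ depends on the lift you happen to have constructed there; you cannot choose the partition in advance, and if you choose it adaptively there is nothing preventing the $\epsilon$'s from shrinking geometrically so that the $t_j$ accumulate at some $T<1$. Compactness of $[0,1]$ would help if for every $t$ you already possessed \emph{some} lift at $t$ together with an $\epsilon_t$ valid for it, so that the open intervals $(t-\epsilon_t,t+\epsilon_t)$ covered $[0,1]$; but producing such lifts at arbitrary $t$ is essentially the problem you are trying to solve. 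Your follow-up remark that incompatible partial lifts on a face ``differ by a map into a fiber that can be deformed away'' is also not right as stated: two sections over a face do not differ by a map into a single fiber, so the $n$-connectivity hypothesis does not apply so directly.

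The alternative you mention at the end is the correct route: for any Serre microfibration the inclusion $f^{-1}(b)\hookrightarrow\mathrm{hofib}_b(f)$ is a weak homotopy equivalence (this is Weiss's lemma, needing no connectivity hypothesis on fibers), whence $n$-connected fibers force $n$-connected homotopy fibers and hence an $(n+1)$-connected map. But you have only named this fact, not proved it, and its proof again hinges on exactly the step you have not justified---getting a \emph{full} lift of a homotopy from the microfibration axiom. If you want a self-contained argument, consult the cited references; the trick involves applying the microfibration property in both time directions and choosing the subdivision after producing local lifts near every $(x,t)$, which is where fiber non-emptiness and a genuine Lebesgue-number argument on $D^k\times[0,1]$ enter.
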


Using this we prove the first property of $\mathcal T_\bullet[k]$.

\begin{proposition}\label{propd3resconn} The augmentation map $||\mathcal T_\bullet[k]|| \to \int_M^k P$ is $(\lfloor k/c\rfloor -1)$-connected.
\end{proposition}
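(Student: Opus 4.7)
The plan is to apply the Galatius--Randal-Williams microfibration criterion (Proposition \ref{lemmicrofibconn}) to the augmentation $a: ||\mathcal T_\bullet[k]|| \to \int_M^k P$. This reduces the statement to two ingredients: identifying the fibers of $a$ and checking their connectivity, and verifying that $a$ is a microfibration.

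For the first ingredient, note that $K^\delta_\bullet$ carries the discrete topology, so the fiber of the projection $\mathcal T_p[k] \to \int^k_{M,\leq c} P$ over a point $x$ is simply the set of ordered $(p{+}1)$-tuples $(\phi_0,\ldots,\phi_p)$ which are of bounded charge with respect to $L(x)$, i.e.\ it is $K^{\delta,c}_p(L(x))$. The fiber of the realized augmentation $a$ over $x$ is therefore homeomorphic to $||K^{\delta,c}_\bullet(L(x))||$. Since $L(x)$ has total charge $k$, Theorem \ref{thmconndiskdim3} implies this fiber is $(\lfloor k/c\rfloor -2)$-connected (noting that $\int^k_{M,\leq c} P \simeq \int^k_M P$, so this is indeed the fiber of $a$).

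For the microfibration property, the essential point is that the bounded charge condition on a fixed ordered tuple $(\phi_0,\ldots,\phi_p)$ is open in the $\int^k_M P$-coordinate. Explicitly, the condition that no particle of $L(x)$ lies on $\phi_i(\partial D^n_+)$ for any $i$ is manifestly an open condition on $x$, and given this, the integer-valued charge inside each $\phi_i(\mathrm{int}(D^n_+))$ is locally constant because it can only jump when a particle crosses a boundary. Hence the subset $U_{(\phi_0,\ldots,\phi_p)} \subset \int^k_M P$ on which a given simplex remains a bounded charge simplex is open, and the composition $\mathcal T_p[k] \to \int^k_M P$ restricts on the clopen piece indexed by $(\phi_0,\ldots,\phi_p)$ to a homeomorphism onto $U_{(\phi_0,\ldots,\phi_p)}$. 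Given a lifting problem
\[\xymatrix{\{0\}\times D^j \ar[r]^(.55){\tilde H_0} \ar[d] & ||\mathcal T_\bullet[k]|| \ar[d]^a \\
[0,1]\times D^j \ar[r]_(.55)H & \int_M^k P,}\]
I pick a simplicial subdivision of $D^j$ fine enough that $\tilde H_0$ on each cell factors through a single product cell $\mathcal T_p[k]\times \mathrm{int}(\Delta^p)$; on such a cell, the $K^\delta_p$-coordinate is locally constant (by continuity into a discrete factor), so a single simplex $(\phi_0,\ldots,\phi_p)$ is used throughout. By compactness of $D^j$ and openness of each $U_{(\phi_0,\ldots,\phi_p)}$, one can find a uniform $\epsilon>0$ so that the same simplices work over $[0,\epsilon)\times D^j$, giving the required partial lift. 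Compatibility along cell boundaries follows because both the subdivision and the face relations of $\mathcal T_\bullet[k]$ are compatible with forgetting embeddings.

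The main obstacle is justifying the openness of the bounded charge condition in the topology of $\int^k_M P$, where particles are permitted to collide; once one observes that this condition is preserved under small perturbations of the configuration (the boundaries $\phi_i(\partial D^n_+)$ being disjoint from the support of $L(x)$), everything else is formal. Applying Proposition \ref{lemmicrofibconn} with fiber connectivity $n = \lfloor k/c\rfloor - 2$ then yields that $a$ is $(\lfloor k/c\rfloor -1)$-connected, as required.
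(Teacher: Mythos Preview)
Your proof is correct and takes essentially the same approach as the paper: apply Proposition \ref{lemmicrofibconn} after identifying the fibers as $||K^{\delta,c}_\bullet(L(x))||$ (with connectivity supplied by Theorem \ref{thmconndiskdim3}) and verifying the microfibration property via openness of the bounded-charge condition on a fixed tuple of embeddings. The paper's microfibration check is marginally slicker---it defines the partial lift globally as $(t,d') \mapsto (F(t,d'),\, \pi\circ \tilde H_0(0,d'))$ using the projection $\pi: ||\mathcal T_\bullet[k]|| \to ||K^\delta_\bullet||$, avoiding your subdivision of $D^j$---but the underlying idea is identical.
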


\begin{proof}We first note that map $\int^k_{M,\leq c} P \to \int^k_M P$ is a homotopy equivalence by Corollary \ref{corleqcweq}, so it suffices to prove that the forgetful map to $\int^k_{M,\leq c} P$ is $(\lfloor k/c\rfloor -1)$-connected. To see this, we prove that the map $||\mathcal T_\bullet[k]|| \to \int^k_{M,\leq c} P$ is a microfibration with highly-connected fibers. To check it is a microfibration, consider a commutative diagram 
\[\xymatrix{\{0\} \times D^n \ar[r]^f \ar[d] & ||\mathcal T_\bullet[k]||  \ar[d] \\
[0,1] \times D^n \ar[r]_F & \int^k_{M,\leq c} P}\] 
then we need to find a diagonal lift for some initial segment $[0,\epsilon) \times D^n$. For this it suffices to note that if a collection of disks is bounded charge with respect to some configuration $x \in \int_{M,\leq c}^k P$, it also is bounded charge with respect to nearby points in $ \int_{M,\leq c}^k P$. Thus for each $d \in D^k$ we can find an $\epsilon_d>0$ and open neighborhood $U_d$ of $d$ such that on $[0,\epsilon_d) \times U_d$ we can define a lift as follows: let $\pi: ||\mathcal T_\bullet[k]|| \to ||K_\bullet^{\delta,c}||$ be the map that forgets the $x \in \int^k_{M,\leq c}P$, then our lift will be given by sending $(t,d') \in [0,\epsilon_d) \times U_d$ to the pair $(F(t,d'),\pi \circ f(0,d')) \in ||\mathcal T_\bullet[k]|| \subset \int_{M,\leq c}^k P \times ||K_\bullet^{\delta,c}||$. Compactness of $D^k$ now gives the existence of a finite collection $\{U_{d_i}\}$ of $U_d$'s that cover $D^n$ and taking $\epsilon = \min_i\{\epsilon_{d_i}\}$ these lifts glue together to produce a lift on $[0,\epsilon) \times D^n$. This concludes the proof that the map $||\mathcal T_\bullet[k]|| \to \int^k_{M,\leq c} P$ is a microfibration.

The fiber of the map $||\mathcal T_\bullet[k]|| \to \int^k_{M,\leq c} P$ over $x$ is homeomorphic to $K^{\delta,c}_\bullet(L(x))$ and thus is at least $(\lfloor k/c\rfloor -2)$-connected by Theorem \ref{thmconndiskdim3}. Proposition \ref{lemmicrofibconn} now gives the result.\end{proof}

The next proposition identifies the space of $p$-simplices of $\mathcal T_\bullet[k]$.

\begin{proposition}\label{propd3respsimplices} The space $\mathcal T_p[k]$ is homotopy equivalent to \[\bigsqcup_{(\varphi_i)} \bigsqcup_{(k_i)} \left[ \left(\int^{k - \sum k_i}_M P\right) \times \prod_{i=0}^p P_{k_i} \right]\]
where $P_{k_i}$ denotes the component of $P$ corresponding to $k_i \in \{1,\ldots,c\}$ and the disjoint union is over the set of all $(p+1)$-tuples $(\varphi_0,\ldots,\varphi_p)$ of disjoint embeddings $(D^n_+,D^{n-1}) \to (\bar{M},\im(\psi))$ ordered by $\psi$ and over all ordered $(p+1)$-tuples $(k_0,\ldots,k_p)$ of integers $1 \leq k_i \leq c$. 
\end{proposition}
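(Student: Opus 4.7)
The plan is to exploit the discreteness of $K^\delta_p$ (as indicated by the superscript $\delta$), which causes $\mathcal T_p[k]$ to split canonically as a topological disjoint union over all $(\varphi_0,\ldots,\varphi_p) \in K^\delta_p$ of the subspaces
\[\mathcal T_p[k;(\varphi_i)] = \{x \in \int^k_{M,\leq c} P : (\varphi_0,\ldots,\varphi_p) \in K^{\delta,c}_p(L(x))\}.\]
For each fixed tuple of disks the total charge inside each $\varphi_i(D^n_+)$ is locally constant in $x$ (as it takes values in $\{1,\ldots,c\}$), so $\mathcal T_p[k;(\varphi_i)]$ further splits as a disjoint union over tuples $(k_0,\ldots,k_p)$ with $1 \leq k_i \leq c$ and $\sum k_i \leq k$.

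For each stratum I then observe that any compatible $x$ decomposes uniquely: the bounded charge condition $\varphi_i(\partial D^n_+) \cap L(x) = \emptyset$ forces each macroscopic location of $x$ to lie either strictly inside some $\varphi_i(D^n_+)$ or in the exterior $M \setminus \bigcup_i \varphi_i(D^n_+)$. Working in the Fulton-MacPherson model described in Subsection \ref{subsecalttch}, so that microscopic configurations at disjoint macroscopic locations are independent, this produces a homeomorphism of the stratum with
\[\left(\int^{k - \sum k_i}_{M \setminus \bigcup_i \varphi_i(D^n_+)} P\right) \times \prod_{i=0}^p \int^{k_i}_{\mathrm{int}(\varphi_i(D^n_+))} P.\]
To identify the factors: each open half-disk $\mathrm{int}(\varphi_i(D^n_+))$ is diffeomorphic to $D^n$, so $\int^{k_i}_{\mathrm{int}(\varphi_i(D^n_+))} P \simeq \int^{k_i}_{D^n} P = \bar P_{k_i} = P_{k_i}$. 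For the exterior factor, since each flat face $\varphi_i(D^{n-1})$ lies in $\psi((0,1)^{n-1}) \subset \partial \bar M$, the half-disks $\varphi_i(D^n_+)$ may be simultaneously isotoped inside $\bar M$ into an arbitrarily thin slab of $\psi((0,1)^{n-1})$ using the collar $\tilde\psi$. This exhibits the inclusion $M \setminus \bigcup_i \varphi_i(D^n_+) \hookrightarrow M$ as isotopic to a diffeomorphism; functoriality of topological chiral homology for embeddings then yields $\int^{k - \sum k_i}_{M \setminus \bigcup_i \varphi_i(D^n_+)} P \simeq \int^{k - \sum k_i}_M P$.

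The main obstacle will be rigorously verifying that the charge-distribution decomposition really is a homeomorphism onto a product. The essential input is that $\varphi_i(\partial D^n_+) \cap L(x) = \emptyset$ is an open condition on $x$, so varying $x$ continuously within a stratum keeps the macroscopic locations uniformly separated from the disk boundaries; one then checks in the Fulton-MacPherson description of $\int_{M,\leq c}P$ that the microscopic data at each macroscopic location is independent across disjoint open regions, giving the factorization continuously in moduli.
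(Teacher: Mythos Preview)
Your argument is correct and follows the same strategy as the paper: split over the discrete data $(\varphi_i)$ and $(k_i)$, then identify each stratum using that the complement of the half-disks is diffeomorphic to $M$. The only point to tighten is the step $\bar P_{k_i} = P_{k_i}$: this is a homotopy equivalence rather than an equality, and it is exactly where the hypothesis $k_i \leq c$ is used---one contracts the disk to its center and applies the partial $fF_n$-action (defined because the total charge is $\leq c$) to collapse the configuration to a single labeled point, which is precisely the retraction the paper writes down directly rather than passing through $\int^{k_i}_{D^n} P$.
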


\begin{proof}We can write $\mathcal T_p[k]$ as a disjoint union over $\{\varphi_i\}$ and $\{k_i\}$ (as in the statement) of the space of elements of $\int^k_{M,\leq c} P$ consisting of labeled configurations which do not intersect the boundary of any embedded $\varphi_i$ and such that there is charge $k_i$ in the interior of the image of $\varphi_i$. By contracting all particles in the image of $\varphi_i$ to the center, which is possible because $k_i \leq c$ and $\varphi_i$ provides us with a parametrization of $\im(\varphi_i)$ by a disk, we retract onto $\int^{k-\sum k_i}_{M',\leq c} P \times \prod_{i=0}^p P_{k_i}$ with $M' = M \backslash \left(\bigsqcup_{i=0}^p \im(\varphi_i)\right)$. Next note that the closure $\bar{M}'$ of $M'$ is diffeomorphic to $M$ such that the inclusion $\iota$ of $\bar{M}'$ into $M$ is isotopic to this diffeomorphism by an isotopy $\iota_t$. This implies that $\int^{k-\sum k_i}_{M', \leq c} P$ is homotopy equivalent to $\int^{k-\sum k_i}_{M, \leq c} P$. More precisely, the homotopy inverse is given by pulling back an element of $\int^{k-\sum k_i}_{M, \leq c} P$ along $\iota_1$. Finally $\int^{k-\sum k_i}_{M, \leq c} P$ is homotopy equivalent to $\int^{k - \sum k_i}_{M} P$ by Corollary \ref{corleqcweq}.\end{proof}

Because we picked the embedding $\psi$ to have image disjoint from the embedding $\chi$ as used in Definition \ref{defstabilizationmap} of the stabilization map, the stabilization map $t: \int_M^k P \to \int_M^{k+1} P$ extends to a semisimplicial map:
\[t_\bullet: \mathcal T_\bullet[k] \to \mathcal T_\bullet[k+1]. \]

\subsection{A spectral sequence argument for dimension at least three}

Next we construct a relative resolution $C\mathcal T_\bullet[k]$. This is a pointed augmented semisimplicial space, given by $C\mathcal T_p[k] = \mr{Cone}(t: \mathcal T_p[k] \to \mathcal T_{p}[k+1])$.

\begin{proposition}\label{propd3resrelative} \begin{enumerate}[(i)]
\item The augmentation map 
\[||C\mathcal T_\bullet[k]|| \to \mr{Cone}\left(\int_M^k P \to \int_M^{k+1} P\right)\]
is homologically $(\lfloor (k+1)/c\rfloor-1)$-connected.
\item We have that $C\mathcal T_p[k]$ is homotopy equivalent as a based space to \[\bigvee_{(\phi_i)} \bigvee_{(k_i)} \left[ \mr{Cone} \left(\int^{k - \sum k_i}_M P \to \int^{k+1 - \sum k_i}_M P\right) \wedge \bigwedge_{i=0}^p (P_{k_i})_+ \right]\]
where the wedge is over the set of all $(p+1)$-tuples $(\varphi_0,\ldots,\varphi_p)$ of disjoint embeddings $(D^n_+,D^{n-1}) \to (\bar{M},\im(\psi))$ ordered by $\psi$ and over all ordered $(p+1)$-tuples $(k_0,\ldots,k_p)$ of integers $1 \leq k_i \leq c$. 
\end{enumerate}\end{proposition}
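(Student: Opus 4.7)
The plan is to derive both parts directly from Propositions \ref{propd3resconn} and \ref{propd3respsimplices}, using that the based mapping cone interacts well with disjoint unions, products, and with geometric realization of semisimplicial spaces.

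For part (ii), I would apply Proposition \ref{propd3respsimplices} to $\mathcal T_p[k]$ and $\mathcal T_p[k+1]$ in parallel. The stabilization map $t\colon \mathcal T_p[k] \to \mathcal T_p[k+1]$ preserves the indexing by $(\varphi_i, k_i)$, because $t$ adds its new particle near $\chi$, which was chosen disjoint from $\im(\psi)$, so it leaves the disks $\varphi_i$ and the charges $k_i$ they contain unchanged. Under the identifications of Proposition \ref{propd3respsimplices}, the map on each summand is therefore of the form $t \times \mathrm{id}\colon \int^{k-\sum k_i}_M P \times \prod_i P_{k_i} \to \int^{k+1-\sum k_i}_M P \times \prod_i P_{k_i}$. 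The based mapping cone converts disjoint unions into wedges, and sends a product map $f \times \mathrm{id}_B\colon A \times B \to A' \times B$ to $Cone(f) \wedge B_+$. Combined with the identification $(\prod_i P_{k_i})_+ \cong \bigwedge_i (P_{k_i})_+$, this gives the wedge decomposition of (ii).

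For part (i), the starting point is that geometric realization of semisimplicial spaces commutes with levelwise mapping cones, which yields
\[||C\mathcal T_\bullet[k]|| \cong Cone\bigl(||\mathcal T_\bullet[k]|| \xrightarrow{t} ||\mathcal T_\bullet[k+1]||\bigr).\]
The augmentations fit into a commutative square whose vertical maps are $(\lfloor k/c\rfloor - 1)$- and $(\lfloor (k+1)/c\rfloor - 1)$-connected by Proposition \ref{propd3resconn}. Since $\tilde H_*$ of a mapping cone computes the relative homology of the associated pair, the induced map between cones fits into a morphism of long exact sequences of pairs; a standard five-lemma comparison, combining the bound at level $k$ with the (generally stronger) bound at level $k+1$, then delivers the claimed homological connectivity.

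The main place needing care is the bookkeeping in the five-lemma in part (i), where one must extract the sharpest connectivity estimate from the two vertical maps — in particular, the surjectivity in the top relevant degree relies on having both bounds available simultaneously. A minor secondary subtlety is checking that $t\colon \mathcal T_p[k] \to \mathcal T_p[k+1]$ is sufficiently cofibrant at each simplicial level for the thick realization to commute with the based mapping cone, but this is automatic from the explicit product description in Proposition \ref{propd3respsimplices}.
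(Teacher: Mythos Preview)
Your argument is correct and matches the paper's approach exactly: the paper's proof is the single sentence ``Parts (i) and (ii) follow directly from Propositions \ref{propd3resconn} and \ref{propd3respsimplices} respectively,'' and you have simply spelled out how. One minor caution on part (i): the bound at level $k+1$ is $\lfloor (k+1)/c\rfloor-1$, which equals $\lfloor k/c\rfloor-1$ unless $c\mid k+1$, so the five-lemma/iterated cofiber argument literally yields homological $\min(\lfloor k/c\rfloor,\lfloor(k+1)/c\rfloor-1)$-connectivity rather than always $\lfloor k/c\rfloor$; this is harmless for the application in Theorem \ref{thmmaind3}, whose spectral sequence argument only uses a much smaller range.
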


\begin{proof}Part (i) follows from the five lemma, which gives that the map is an isomorphism for $*<\min(\lfloor k/c\rfloor,\lfloor (k+1)/c\rfloor-1) = \lfloor (k+1)/c\rfloor-1$ and a surjection if $*=\lfloor (k+1)/c\rfloor-1$. Part (ii) follows directly from Propositions \ref{propd3resconn} and \ref{propd3respsimplices} respectively.\end{proof}

Associated to each pointed augmented semisimplicial space there is a reduced geometric realization spectral sequence (for example see Section 2.3 of \cite{RW}). We will apply this construction to $C\mathcal  T_\bullet[k]$.

\begin{theorem}\label{thmmaind3} Let $M$ be a connected manifold of dimension $\geq 3$ that is the interior of a manifold with boundary. The map $t_*: H_*(\int^k_M P) \to H_*(\int^{k+1}_M P)$ is an isomorphism for $* < \lfloor \frac{k}{2c} \rfloor$ and a surjection for $* = \lfloor \frac{k}{2c} \rfloor$.\end{theorem}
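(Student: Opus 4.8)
The plan is to run the standard homological stability induction, using the relative resolution $C\mathcal{T}_\bullet[k]$ and its reduced geometric realization spectral sequence, with the inductive hypothesis that $t_*$ is an isomorphism in degrees $<\lfloor j/2c\rfloor$ and a surjection in degree $\lfloor j/2c\rfloor$ for all $j<k$. First I would set up the spectral sequence: for a pointed augmented semisimplicial space there is a spectral sequence with $E^1_{p,q} = \tilde{H}_q(C\mathcal{T}_p[k])$ for $p\geq 0$ (and the $p=-1$ column given by $\tilde{H}_q$ of the augmentation target) converging to $\tilde H_{p+q}$ of the cofiber of the map $\|C\mathcal{T}_\bullet[k]\|\to \mathrm{Cone}(\int_M^k P\to\int_M^{k+1}P)$. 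By Proposition \ref{propd3resrelative}(i) that cofiber is $\lfloor k/c\rfloor$-connected, so in total degree $\leq \lfloor k/c\rfloor - 1$ the spectral sequence converges to zero; equivalently, proving $\tilde H_*(\mathrm{Cone}(\int_M^k P\to\int_M^{k+1}P))=0$ for $*\leq \lfloor k/2c\rfloor$ — which is exactly the statement of the theorem — reduces to controlling the $E^1$-page in that range.

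Next I would compute the $E^1$-page using Proposition \ref{propd3resrelative}(ii): $\tilde H_q(C\mathcal{T}_p[k])$ is a direct sum, over tuples $(\varphi_i)$ and $(k_i)$ with $1\leq k_i\leq c$, of $\tilde H_q$ of $\mathrm{Cone}(\int_M^{k-\sum k_i}P\to\int_M^{k+1-\sum k_i}P)\wedge \bigwedge_i (P_{k_i})_+$. By a Künneth argument, each such summand's reduced homology in degree $q$ is built from $\tilde H_{q'}(\mathrm{Cone}(\int_M^{k-\sum k_i}P\to\int_M^{k+1-\sum k_i}P))$ for $q'\leq q$ tensored with homology of the $P_{k_i}$'s (which sit in non-negative degrees). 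Since $\sum k_i\geq p+1$ (each $k_i\geq 1$), we have $k-\sum k_i\leq k-p-1$, so the inductive hypothesis applies to the cone $\mathrm{Cone}(\int_M^{k-\sum k_i}P\to\int_M^{k+1-\sum k_i}P)$: it is homologically $\lfloor(k-\sum k_i)/2c\rfloor$-connected, hence at least $\lfloor(k-p-1)/2c\rfloor$-connected. Therefore $E^1_{p,q}=0$ for $q\leq \lfloor(k-p-1)/2c\rfloor$. A short arithmetic check shows that this vanishing, summed over the resolution, kills everything on the relevant diagonal: in total degree $p+q\leq\lfloor k/2c\rfloor$ one needs $q\leq \lfloor k/2c\rfloor - p$, and one verifies $\lfloor k/2c\rfloor - p \leq \lfloor (k-p-1)/2c\rfloor$ fails to hold only outside the claimed range — more precisely, for $p\geq 1$ the factor-of-two slack in the slope gives the needed inequality, while the $p=0$ column requires separate attention.

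The $p=0$ column is where I expect the real work, and it is the standard subtlety in these arguments: $E^1_{0,q}=\bigoplus \tilde H_q(\mathrm{Cone}(\int_M^{k-k_0}P\to\int_M^{k+1-k_0}P)\wedge (P_{k_0})_+)$ with $1\leq k_0\leq c$, and the worst case $k_0 = 1$ only gives connectivity $\lfloor (k-1)/2c\rfloor$, which is one short of what is needed to conclude surjectivity of $t_*$ in degree $\lfloor k/2c\rfloor$ purely by connectivity. The resolution: one must examine the differential $d^1: E^1_{1,q}\to E^1_{0,q}$ (and possibly $d^1: E^1_{0,q}\to E^1_{-1,q}$, the edge map measuring the failure of $t_*$ to be iso/onto) and show that the relevant classes in the $p=0$ column are hit. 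This is done exactly as in classical homological stability (McDuff, Segal, Randal-Williams, Galatius–Randal-Williams): the $d^1$ differentials are alternating sums of face maps, and the face maps $d_i$ on $\mathcal T_\bullet[k]$ correspond under Proposition \ref{propd3respsimplices} to forgetting an embedded disk, which after the identification becomes a stabilization-type map; one checks that a class arising from $(P_1)_+$ (a single charge-one particle in a disk) in the $E^1_{0,\bullet}$ column is the image under $d^1$ of a class in $E^1_{1,\bullet}$ coming from the same data with an extra disk. Equivalently, one invokes the general principle that a resolution whose $p$-simplices stabilize and whose augmentation is sufficiently connected forces homological stability with the resulting slope; the factor $\tfrac{1}{2c}$ (rather than $\tfrac{1}{c}$) is precisely the price paid for needing the $E^1_{1,q}$ column to cover the deficiency in the $E^1_{0,q}$ column. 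I would conclude by assembling: all $E^\infty_{p,q}$ with $p+q\leq\lfloor k/2c\rfloor$ vanish, hence $\tilde H_*(\mathrm{Cone}(\int_M^k P\to\int_M^{k+1}P))=0$ in that range, which gives that $t_*$ is an isomorphism for $*<\lfloor k/2c\rfloor$ and surjective for $*=\lfloor k/2c\rfloor$.
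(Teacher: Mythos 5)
Your overall framework—induction, the relative resolution $C\mathcal T_\bullet[k]$, the reduced geometric realization spectral sequence, and the isolation of the $p=0$ column as the crux—matches the paper. However, there are two issues worth flagging, the second of which is a genuine gap.

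First, a direction error in the $E^1$-estimate. You argue that since $\sum k_i\geq p+1$, we get $k-\sum k_i\leq k-p-1$ and hence the cone is ``at least $\lfloor(k-p-1)/2c\rfloor$-connected.'' This inequality runs the wrong way: $k-\sum k_i\leq k-p-1$ gives $\lfloor(k-\sum k_i)/2c\rfloor\leq\lfloor(k-p-1)/2c\rfloor$, so the inductive bound is \emph{weaker}, not stronger. The correct worst-case estimate uses $\sum k_i\leq (p+1)c$, giving that $E^1_{p,q}=0$ for $q\leq\lfloor(k-(p+1)c)/2c\rfloor$, as in the paper. Fortunately this weaker (correct) bound still supports the diagonal-vanishing check for $p\geq 1$, so the overall argument survives this slip, but as written your estimate overclaims.

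Second, and more seriously, your treatment of the $p=0$ column is the wrong shape. The point is not to show that classes in $E^1_{0,K}$ are hit by $d^1\colon E^1_{1,K}\to E^1_{0,K}$; the paper instead shows directly that the edge differential $d^1\colon E^1_{0,K}\to E^1_{-1,K}$ is \emph{zero} (for $K=\lfloor k/2c\rfloor$, in the case $\lfloor k/2c\rfloor=\lfloor(k-c)/2c\rfloor+1$), which combined with its surjectivity from $E^\infty_{-1,K}=0$ forces $E^1_{-1,K}=\tilde H_K(\mathrm{Cone})=0$. The actual mechanism is missing from your sketch: one uses the isotopy of $\phi_0$ to $\chi$ to identify each summand of the edge map with the map on cones induced by the square in which the horizontal arrows are $t$ and the vertical ones are $t^{k_0}$; a Puppe-sequence argument shows this factors through $H_{K-1}(\int^{k-k_0}_M P)$; and then the commutative diagram
\[
\xymatrix{H_{K-1}(\int_M^{k-k_0-1} P) \ar[r] \ar[d] & H_K(\int_M^{k} P) \ar[rd]^{0} \ar[d] & \\
H_{K-1}(\int_M^{k-k_0} P) \ar[r] & H_K(\int_M^{k+1} P) \ar[r] & H_K(\int_M^{k+1} P,\int_M^{k} P)}
\]
kills the composite, because the diagonal is zero by construction and the left vertical arrow is surjective by the inductive hypothesis. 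Your appeal to a ``general principle that the resolution forces stability'' elides exactly this step, which is where the theorem is actually proved; showing surjectivity of $d^1\colon E^1_{1,K}\to E^1_{0,K}$ would also suffice (via $d^1\circ d^1=0$), but it is neither what the paper does nor something your proposal establishes.
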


\begin{proof}The statement is equivalent to $\tilde{H}_*(\mr{Cone}(\int^k_M P \to \int^{k+1}_M P)) = 0$ if $* \leq \lfloor \frac{k}{2c} \rfloor$. We will prove this by induction on $k$, the case $k < 2c$ being trivial as both $\int^k_M P$ and $\int^{k+1}_M P$ are connected. Suppose we have proven it for $k' < k$, then we will prove it for $k$.

We will use the spectral sequence for the reduced geometric realization of a pointed augmented semisimplicial space, which by part (ii) of Proposition \ref{propd3resrelative} has an $E^1$-page given by
\[E^1_{pq} = \bigoplus_{(\phi_i)} \bigoplus_{(k_i)} \tilde{H}_q\left[ \mr{Cone} \left(\int^{k - \sum k_i}_M P \to \int^{k+1 - \sum k_i}_M P\right) \wedge \bigwedge_{i=0}^p \left( P_{k_i} \right)_+ \right]\]
and converges to
\[H_{p+q+1}\left(\mr{Cone}\left(||C\mathcal T_\bullet[k]|| \to \mr{Cone}\left(\int_M^k P \to \int_M^{k+1} P\right)\right)\right)\]
and thus to $0$ in the range $p+q \leq \lfloor (k+1)/c\rfloor-2$ by part (i) of Proposition \ref{propd3resrelative}.

If $k-(p+1)c \geq 0$ the stabilization maps induces a bijection on the indexing set of the direct sum in the $p$th column. For these $p$ the inductive hypothesis tells us that the group $E^1_{pq}$ is $0$ for $p \geq 0$ and $q \leq \lfloor \frac{k-(p+1)c}{2c} \rfloor$. On the other hand, for $p=-1$ we get $\tilde{H}_*(\mr{Cone}(\int^k_M P \to \int^{k+1}_M P))$. Since the spectral sequence converges to zero in the range $p+q \leq \lfloor \frac{k+1}{c} \rfloor-2$, this implies that $\tilde{H}_*(\mr{Cone}(\int^k_M P \to \int^{k+1}_M P)) = 0$ for $* \leq  \lfloor \frac{k-c}{2c} \rfloor$.

This completes the proof, unless $\lfloor \frac{k}{2c} \rfloor = \lfloor \frac{k-c}{2c} \rfloor + 1$. Let $K=\lfloor \frac{k}{2c} \rfloor$ In that case it suffices to prove that the $d^1$-differential $E^1_{0K} \to E^1_{-1K}$ is zero, because all the higher differentials that can hit $E^1_{-1K}$ start from groups that are zero. 

The relevant differential can be viewed as a map: \[\xymatrix{\bigoplus_{\phi_0} \bigoplus_{1 \leq k_0 \leq c}\tilde{H}_K\left(\mr{Cone}\left(\int^{k-k_0}_M P \to \int^{k+1-k_0}_M P\right)\right) \ar[d]_{d^1_{0q}} \\
 \tilde{H}_K\left(\mr{Cone}\left(\int^{k}_M P \to \int^{k+1}_M P\right)\right)}\]
It is induced by undoing the removal of the points in the interior of $\phi_0$. To make this precise, recall that the inclusion $\iota$ of closure of $M \backslash \im(\phi_0)$ into $M$ is isotopic to a diffeomorphism by an isotopy $\iota_t$. Then the map is induced by sending an element of $\int^{k-k_0}_M P$ to the element of $\int^{k}_M P$ by pulling back along $\iota_1$, and adding an element of $P_{k_0}$ at a point in the interior of $\im(\phi_0)$. We will see that this is essentially an iterated stabilization map $t^{k_0}$. We remark that one does not see the homology of $P_{k_0}$ here, because by vanishing of the reduced homology of the cone only $H_0(P_{k_0}) \cong \Z$ shows up and using K\"unneth we can ignore this.

Let us restrict our attention to one summand. By construction there is an isotopy from $\phi_0$ to $\chi$. Indeed, by assumption $\psi$ is isotopic to $\chi$ and by the boundary behavior of $\phi_0$ it is isotopic to $\psi$. This isotopy gives a homotopy from the map undoing the removal of the points in the interior of $\phi_0$ to the $k_0$-fold suspension. Up to homotopy $d^1_{0q}$ is the map induced on homology between cones of the horizontal maps in the diagram
\[\xymatrix{\int^{k-k_0}_M P \ar[r]^t \ar[d]_{t^{k_0}} & \int^{k+1-k_0}_M P \ar@{.>}[ld]_{t^{k_0-1}} \ar[d]^{t^{k_0}} \\
\int^k_M P \ar[r]_t & \int^{k+1}_M P}\]
where the square commutes, but the two triangles commute up to homotopy. Puppe sequence considerations imply that the map $\tilde{H}_K(\mr{Cone}(\int^{k-k_0}_M P \to \int^{k-k_0+1}_M P)) \to \tilde{H}_K(\mr{Cone}(\int^{k}_M P \to \int^{k+1}_M P))$ factors over $\tilde{H}_K(\Sigma (\int_M^{k-k_0} P)_+) = H_{K-1}(\int_M^{k-k_0})$, with maps determined by the Puppe sequence and the homotopies for the triangles respectively. Remark that the reduced homology group $\tilde{H}_K(\mr{Cone}(\int^{k}_M P \to \int^{k+1}_M P))$ is equal to the relative homology group $H_k(\int^{k+1}_M P,\int^k_M P)$. Then if we write the domain as $H_{K-1}(\int_M^{k-k_0} P)$ the map is the bottom composite of the following commutative diagram: \[\xymatrix{H_{K-1}(\int_M^{k-k_0-1} P) \ar[r] \ar[d] & H_K(\int_M^{k} P) \ar[rd]^0 \ar[d] & \\
H_{K-1}(\int_M^{k-k_0} P) \ar[r] & H_K(\int_M^{k+1} P) \ar[r] & H_K(\int_M^{k+1} P,\int_M^{k} P).}\] 

The diagonal map is always zero by construction and by induction the left vertical map is a surjection. This means that the bottom composite is zero and hence we are done.
\end{proof}

\subsection{The bounded charge resolution and spectral argument in dimension two} We will repeat the arguments of the previous two sections for a surface $\Sigma$ with boundary $\partial \Sigma$. Apart from a few technical details the proof is essentially the same.

Fix a connected surface $\Sigma$ with boundary $\partial \Sigma$, an embedding $\psi: (0,1) \to (\Sigma,\partial \Sigma)$ and a finite subset $V$ of $(0,1)$. Using this data we can define a simplicial space $K^c_\bullet$ with space of $p$-simplices the $(p+1)$-tuples of disjoint embeddings $(D^2_+,D^1) \to (\Sigma,\im(\psi))$ relative to $V$ or $(D^2_\vee,D^0) \to (\Sigma,\im(\psi))$ relative to $V$. For each $x \in \int_{\Sigma,\leq c}^k P$, the simplicial space $K^c_\bullet(L(x))$ is a subspace of the semisimplicial space of $K^c_\bullet$.

\begin{definition}Let $\mathcal T^2_\bullet[k]$ be the augmented semisimplicial space with space of $P$-simplices $\mathcal T^2_p[k]$ consisting of pairs of an $x \in \int^k_{\Sigma,\leq c} P$ and an element $(\phi_0,\ldots,\phi_p)$ of $K^{c}_p$ such that $(\phi_0,\ldots,\phi_p) \in K^c_p(L(x)) \subset K^c_p$. The augmentation is the forgetful map to $\int^k_\Sigma P$.\end{definition}

Exactly the same proof as Proposition \ref{propd3resconn} gives us the connectivity of the map induced by the augmentation:
\begin{proposition}The augmentation map $||\mathcal T^2_\bullet[k]|| \to \int_\Sigma^k P$ is $(k -1)$-connected.\end{proposition}

The analogue of Proposition \ref{propd3respsimplices} is slightly different, because we are now dealing with topologized instead of discrete embeddings.

\begin{proposition}We have that $\mathcal T^2_p[k]$ is homotopy equivalent to \[\bigsqcup_{([\gamma_i])} \bigsqcup_{(k_i)} \left[ \left(\int^{k - \sum k_i}_\Sigma P\right) \times \prod_{i=0}^p P_{k_i} \right]\]
where $P_{k_i}$ denotes the component of $P$ corresponding to $k_i \in \{0,1,\ldots,c\}$ and the disjoint union is over the elements of $\mathcal B_p(\Sigma,V,\emptyset)$ and over all ordered $(p+1)$-tuples $(k_0,\ldots,k_p)$ of integers $0 \leq k_i \leq c$ such that each arc $\gamma_i$ contains non-zero charge.
\end{proposition}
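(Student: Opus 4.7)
The plan is to follow the blueprint of Proposition \ref{propd3respsimplices} while using the topologized-to-discrete comparison for arcs supplied by Remark \ref{remarcconnectedcomponents}. Concretely, I would first decompose $\mathcal T^2_p[k]$ over the discrete combinatorial data (isotopy class of embeddings together with charge distribution), then show each summand deformation retracts onto a copy of $\int^{k-\sum k_i}_\Sigma P \times \prod_i P_{k_i}$.

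First, consider the forgetful map $\mathcal T^2_p[k] \to \int^k_{\Sigma,\leq c} P$, whose fiber over $x$ is the subspace $K^c_p(L(x)) \subset K^c_p$ of topologized tuples of bounded charge embeddings compatible with $x$. By Remark \ref{remarcconnectedcomponents}, the connected components of $K^c_p(L(x))$ are contractible and in bijection with the $p$-simplices of the discrete arc complex $\mathcal B^0_\circ(\Sigma,\Delta,c)$ that satisfy the charge condition imposed by $x$. Varying $x$, this decomposes $\mathcal T^2_p[k]$ as a disjoint union indexed by pairs $([\gamma_i],(k_i))$, where $([\gamma_i])$ is a $p$-simplex of $\mathcal B^0_p(\Sigma,\Delta,\emptyset)$ and $(k_i)$ is a tuple with $1 \leq k_i \leq c$, and the summand consists of pairs $(x,(\phi_i))$ such that $(\phi_i)$ realizes $([\gamma_i])$ and each $\phi_i$ encloses total charge exactly $k_i$.

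Next, within each summand I would fix a reference tuple $(\phi_i^0)$ realizing $([\gamma_i])$. The space of representatives of a fixed isotopy class of disjoint bounded charge embeddings is contractible: this is exactly the content of the two lemmas preceding Remark \ref{remarcconnectedcomponents}, where the contractibility of the space of arc representatives (from \cite{hatchermw} or \cite{gramain}) combines with Smale's theorem \cite{smaledisk} on $\textrm{Diff}(D^2;\partial D^2)$, together with the analogous argument for the corner model $D^2_\vee$. Consequently, the summand deformation retracts onto the subspace where $\phi_i = \phi_i^0$ for all $i$; this is the space of configurations $x \in \int^k_{\Sigma,\leq c} P$ avoiding $\partial\, \textrm{im}(\phi_i^0)$ and containing charge $k_i$ inside each $\textrm{im}(\phi_i^0)$.

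With the embeddings rigidified, I would then copy the retraction of Proposition \ref{propd3respsimplices}: contract the configuration inside each $\textrm{im}(\phi_i^0)$ to its center, absorbing the microscopic data as an element of $P_{k_i}$, and push the remaining particles out of $\bigsqcup \textrm{im}(\phi_i^0)$. Using that the closure of $\Sigma \setminus \bigsqcup \textrm{im}(\phi_i^0)$ is diffeomorphic to $\Sigma$ (the $\phi_i^0$ are disks attached to the boundary either along $D^1$ or at a single corner $D^0$), this yields a homotopy equivalence onto $\int^{k-\sum k_i}_\Sigma P \times \prod_i P_{k_i}$, where we also invoke the retraction $\int_{\Sigma,\leq c} P \simeq \int_\Sigma P$ of Proposition \ref{prophomotopyfmcleqc}. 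Assembling the identifications across summands gives the claim. The main technical obstacle is justifying that the components of $K^c_p(L(x))$ are contractible uniformly enough in $x$ to carry out step one fiberwise, which is exactly what Remark \ref{remarcconnectedcomponents} and the lemmas preceding it were designed to provide, so no new argument is required beyond what has already been developed.
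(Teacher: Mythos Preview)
Your proposal is correct in outline and reaches the same conclusion as the paper, but the paper organizes the argument more directly. Rather than first projecting onto the configuration $x$ and then rigidifying the embeddings within each summand, the paper projects the other way: it sends $(x,(\phi_i)) \mapsto ((\phi_i),(k_i))$, forgetting the configuration but retaining the embeddings together with the charge in each disk. This map is a fibration whose base has contractible connected components (by the same arc-contractibility and Smale arguments you cite via Remark \ref{remarcconnectedcomponents}), and whose fiber over any fixed $((\phi_i),(k_i))$ is identified, exactly as in Proposition \ref{propd3respsimplices}, with $\int^{k-\sum k_i}_\Sigma P \times \prod_i P_{k_i}$. Since a fibration over a space with contractible components is homotopy equivalent to the disjoint union of its fibers, the result follows at once.

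The soft spot in your version is the ``consequently'' preceding your retraction step. Knowing that each fiber $K^c_p(L(x))$ over a fixed $x$ has contractible components does not by itself let you deformation retract the whole summand onto the locus $\{(\phi_i) = (\phi_i^0)\}$: a fixed reference $(\phi_i^0)$ is not compatible with every $x$ in the summand (for instance $x$ may meet $\partial\,\mathrm{im}(\phi_i^0)$), so there is no global section to contract onto, and the fiberwise contractions over different $x$ need not assemble. What makes the retraction legitimate is precisely the paper's fibration over the embedding side: contract the contractible base component to the single point $(\phi_i^0)$ and lift via the fibration. Once you phrase it that way, your argument and the paper's coincide.
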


\begin{proof}The map that forgets everything about the configuration except how much charge is in each disk gives us a fibration with total space $\mathcal T^2_p[k]$ and base homotopy equivalent to the indexing set given in the proposition, by Remark \ref{remarcconnectedcomponents}. The fiber above a point representing an element of the indexing set is homeomorphic to $\left(\int^{k - \sum k_i}_{\Sigma,\leq c} P\right) \times \prod_{i=0}^p P_{k_i}$ by the same argument as in the proof of Proposition \ref{propd3respsimplices}; leave the configurations outside of the disks untouched and contract the configurations in the disks down to a fixed point in the disk.\end{proof}

After this modification, one just follows the proof for the case of dimension $\geq 3$. The definition of the relative resolution $C\mathcal T^2_\bullet[k]$ is again $C\mathcal T^2_p[k] = \mr{Cone}(t: \mathcal T^2_p[k] \to \mathcal T_p^2[k+1])$ and the proof of its properties is the same as in Proposition \ref{propd3resrelative}:

\begin{proposition} \begin{enumerate}[(i)]
\item The augmentation map 
\[||C\mathcal T^2_\bullet[k]|| \to \mr{Cone}\left(\int_\Sigma^k P \to \int_\Sigma^{k+1} P\right)\] is $(k-1)$-connected.
\item We have that $C\mathcal T^2_p[k]$ is homotopy equivalent as a based space to \[\bigvee_{([\gamma_i])} \bigvee_{(k_i)} \left[ \mr{Cone} \left(\int^{k - \sum k_i}_\Sigma P \to \int^{k+1 - \sum k_i}_\Sigma P\right) \wedge \bigwedge_{i=0}^p (P_{k_i})_+ \right]\]
where the wedge sum is over the set of $(p+1)$-simplices $([\gamma_0,\ldots,\gamma_p]) \in \mathcal B_p(\Sigma,\Delta,\emptyset)$ and over all ordered $(p+1)$-tuples $(k_0,\ldots,k_p)$ of integers $0 \leq k_i \leq c$ such that each arc $\gamma_i$ contains non-zero charge.
\end{enumerate}\end{proposition}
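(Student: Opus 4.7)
The plan is to mirror the proof of Proposition \ref{propd3resrelative} step-by-step, substituting the two-dimensional preliminaries for the higher-dimensional ones; both parts should follow directly from the two preceding propositions together with standard formal manipulations of mapping cones.

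For part (ii), I would start from the identification of $\mathcal T^2_p[k]$ as a disjoint union indexed by the $p$-simplices of $\mathcal B^0_\bullet(\Sigma,\Delta,\emptyset)$ together with a tuple of charges $(k_i)$. The stabilization map $t$ inserts a particle near the embedding $\chi$, which by construction has image disjoint from $\mathrm{im}(\psi)$ and hence from every arc $\gamma_i$; thus $t$ preserves the indexing data $([\gamma_i],(k_i))$ and acts on each summand as the product $t \times \mathrm{id}$ on $\left(\int_\Sigma^{k-\sum k_i} P\right) \times \prod_i P_{k_i}$. Taking the mapping cone of a disjoint union of maps is naturally a wedge of mapping cones (since the target components are all collapsed to a single basepoint), and the standard identification $\mathrm{Cone}(f \times \mathrm{id}_Z) \simeq \mathrm{Cone}(f) \wedge Z_+$ combined with $\left(\prod_i P_{k_i}\right)_+ \cong \bigwedge_i (P_{k_i})_+$ recovers the claimed wedge formula.

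For part (i), geometric realization commutes with levelwise mapping cones, so $||C\mathcal T^2_\bullet[k]||$ is homotopy equivalent to $\mathrm{Cone}(||\mathcal T^2_\bullet[k]|| \to ||\mathcal T^2_\bullet[k+1]||)$, and the augmentation in the statement is obtained by applying $\mathrm{Cone}$ to the commuting square of the two augmentations. The previous proposition gives that both of these augmentations are at least $(\lfloor k/c\rfloor - 1)$-connected (with the one for $k+1$ possibly one more connected), and a comparison of the associated cofiber sequences via the five-lemma on long exact sequences in homology then yields the stated homological connectivity of the induced map on cones.

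The one point requiring slight care, in contrast to the argument in dimension at least three, is that $\mathcal T^2_p[k]$ has a topologized rather than a discrete space of $p$-simplices, so one must check that the homotopy-equivalent description from the preceding proposition actually respects the stabilization map. This is immediate: the indexing set $\mathcal B^0_p(\Sigma,\Delta,\emptyset)$ is discrete (it consists of isotopy classes of arcs), and the fibration structure on $\mathcal T^2_p[k]$ over this set given by the charge distribution is visibly preserved by the stabilization, since adding a particle near $\chi$ takes place outside every $\gamma_i$ and so changes neither the isotopy-class data nor the charge contained in each arc. This is the only genuinely new input beyond transcribing the earlier proof.
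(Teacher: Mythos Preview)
Your proposal is correct and takes essentially the same approach as the paper: the paper's proof here (as for Proposition \ref{propd3resrelative}) is a one-liner asserting that both parts follow directly from the two preceding propositions, and your explicit cone/wedge manipulations together with the five-lemma argument are exactly what one fills in. Your remark about the topologized $p$-simplices versus the discrete isotopy-class indexing is apt and is the only genuine adaptation needed beyond transcribing the higher-dimensional argument.
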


The relative spectral sequence argument is similarly modified and we obtain: 

\begin{theorem}\label{thmmaind2} Let $M$ be the interior of a connected orientable surface $\Sigma$ with boundary. The map $t_*: H_*(\int^k_M P) \to H_*(\int^{k+1}_M P)$ is an isomorphism for $* < \lfloor \frac{k}{2c} \rfloor$ and a surjection for $* = \lfloor \frac{k}{2c} \rfloor$.\end{theorem}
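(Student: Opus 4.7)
The plan is to follow the same spectral sequence argument as in the proof of Theorem \ref{thmmaind3}, but using the two-dimensional resolution $C\mathcal T^2_\bullet[k]$ in place of $C\mathcal T_\bullet[k]$. The required input propositions for $C\mathcal T^2_\bullet[k]$ have just been established, so the mechanics should transfer cleanly. I will proceed by induction on $k$. The base case $k=1$ is trivial since both $\int_M^1 P$ and $\int_M^2 P$ are connected and the relevant vanishing range is empty.

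For the inductive step, I will reformulate the claim as $\tilde H_*(Cone(\int^k_M P \to \int^{k+1}_M P)) = 0$ for $* \le \lfloor k/(2c) \rfloor$ and apply the spectral sequence associated to the reduced geometric realization of the pointed augmented semisimplicial space $C\mathcal T^2_\bullet[k]$. The $E^1$-page reads
\[
E^1_{pq} = \bigoplus_{([\gamma_i])} \bigoplus_{(k_i)} \tilde H_q\!\left[ Cone\!\left(\textstyle\int^{k-\sum k_i}_M P \to \int^{k+1-\sum k_i}_M P\right) \wedge \bigwedge_{i=0}^p (P_{k_i})_+ \right],
\]
using the wedge decomposition of $C\mathcal T^2_p[k]$ from the last proposition, and it converges to $0$ in the range $p+q \le \lfloor k/c \rfloor$ by the connectivity of the augmentation. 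Combined with a K\"unneth argument (which allows one to ignore the $H_0$-factors from the $P_{k_i}$'s since the reduced homology of the cone vanishes in degree zero), the inductive hypothesis gives $E^1_{pq} = 0$ for $p \geq 0$ and $q \leq \lfloor (k-(p+1)c)/(2c) \rfloor$. Reading off $E^1_{-1,q}$ then yields $\tilde H_*(Cone(\int^k_M P \to \int^{k+1}_M P)) = 0$ for $* \leq \lfloor (k-c)/(2c) \rfloor$.

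The main obstacle, just as in dimension $\geq 3$, is closing the gap when $\lfloor k/(2c) \rfloor = \lfloor (k-c)/(2c) \rfloor + 1$. For this I need to check that the $d^1$-differential $E^1_{0q} \to E^1_{-1,q}$ vanishes in degree $q = \lfloor k/(2c) \rfloor$. The differential is induced by undoing the contraction of charge inside a single disk $\phi_0$ (the case $D^2_+$) or boundary-adjacent disk (the case $D^2_\vee$), so the point is to identify this, up to homotopy, with an iterate $t^{k_0}$ of the stabilization map. In dimension two this requires an isotopy from $\phi_0$ to $\chi$ in the surface; such an isotopy exists since $\psi$ is isotopic to $\chi$ by assumption and both the two-sided disks $(D^2_+, D^1)$ and the corner disks $(D^2_\vee, D^0)$ have prescribed boundary behaviour making them isotopic into $\psi$. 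Using this isotopy, exactly the diagram chase from the proof of Theorem \ref{thmmaind3} applies: the differential factors through $\tilde H_K(\Sigma (\int_M^{k-k_0} P)_+)$ via the Puppe sequence and the commutative-up-to-homotopy triangles
\[
\xymatrix@R=1.2em{\int^{k-k_0}_M P \ar[r]^t \ar[d]_{t^{k_0}} & \int^{k-k_0+1}_M P \ar@{.>}[ld]|{t^{k_0-1}} \ar[d]^{t^{k_0}} \\ \int^k_M P \ar[r]_t & \int^{k+1}_M P,}
\]
and induction (surjectivity of the left vertical map on $H_{K-1}$) forces the composite to be zero. This completes the induction.

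Potential subtlety: the corner-disk embeddings $(D^2_\vee, D^0)$ need to be isotoped to a standard model before the contraction-of-charge argument goes through, but because any two such germs at a point of $\Delta$ differ only by a local rotation in $Diff(D^2)$, Smale's theorem guarantees this adds no homotopical obstruction. Apart from this one bookkeeping check, the proof is a verbatim rerun of the dimension $\geq 3$ argument with $\mathcal T^2_\bullet[k]$ replacing $\mathcal T_\bullet[k]$.
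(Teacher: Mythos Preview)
Your proposal is correct and follows essentially the same approach as the paper, which simply states that ``the relative spectral sequence argument is similarly modified'' and then records the theorem. You have spelled out the details faithfully: the induction on $k$, the $E^1$-page now indexed by isotopy classes $([\gamma_i]) \in \mathcal B^0_p(\Sigma,\Delta,\emptyset)$ rather than discrete embeddings, the same vanishing range from the inductive hypothesis, and the identical Puppe-sequence factorization to kill the $d^1$-differential in the borderline degree.
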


We can deduce a similar result for non-orientable surfaces from this. It will have a slightly worse range. Alternatively, one could try to redo our analysis of arc complexes in the non-orientable case.

\begin{corollary}Let $M$ be the interior of a connected non-orientable surface $\Sigma$ with boundary. The map $t_*: H_*(\int^k_M P) \to H_*(\int^{k+1}_M P)$ is an isomorphism for $* < \lfloor \frac{k}{4c} \rfloor$ and a surjection for an isomorphism for $* = \lfloor \frac{k}{4c} \rfloor$. \label{corunorientedstability} \end{corollary}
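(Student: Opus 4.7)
The plan is to deduce the non-orientable case from Theorem \ref{thmmaind2} via an orientable subsurface comparison, with the factor-of-two loss in the stability range arising from combining two separate $\lfloor \cdot/(2c) \rfloor$-level statements.

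Let $\Sigma$ be a connected non-orientable surface with boundary and interior $M$. The first step would be to choose a properly embedded M\"obius band $B \subset \bar\Sigma$ with $B \cap \partial \bar\Sigma$ a single embedded arc disjoint from both $\chi$ and $\psi$, and to set $\bar{M}' := \bar\Sigma \setminus \nu(B)$ where $\nu(B)$ is an open tubular neighborhood. Iterating the removal if the non-orientable genus of $\Sigma$ exceeds $1$, I may assume $\bar{M}'$ is orientable. The interior $M' = \operatorname{int} \bar{M}'$ inherits the stabilization arc $\chi$, and the inclusion $M' \hookrightarrow M$ induces a natural map $\iota : \int^k_{M'} P \to \int^k_M P$ compatible with the stabilization maps, giving a commutative square with $t$ on top and bottom.

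The key step is to show that $\iota$ is a homology isomorphism in a range of the form $* < \lfloor k/(2c) \rfloor$. I would establish this by the same semisimplicial resolution strategy used in Section \ref{sechomstab}, applied now to resolve $\int^k_M P$ by configurations together with a simplex of disjoint bounded-charge disks concentrated in $\nu(B)$. The connectivity of such a resolution reduces to a high-connectivity result for a bounded-charge arc complex on the M\"obius band, which in turn can be proven by methods parallel to those of Section \ref{seccomplexes}. Combined with Theorem \ref{thmmaind2} applied to $M'$, a standard diagram chase in the commutative square yields the stability for $\int^k_M P$, but the range is halved to $\lfloor k/(4c) \rfloor$ because the argument requires applying both a stability statement and a comparison statement, each valid only up to roughly $\lfloor k/(2c) \rfloor$, and the intermediate charge values appearing in the resolution's $p$-simplices eat into the usable degrees.

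The main obstacle is establishing the connectivity of the comparison map $\iota$: one must adapt the arc complex arguments of Section \ref{seccomplexes} to a M\"obius band setting and carefully track how the resolution interacts with intermediate charge values. The factor-of-two loss should emerge naturally from this tracking, reflecting the cost of the two-stage reduction as opposed to the single resolution used for orientable surfaces.
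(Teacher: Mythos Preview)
Your proposed resolution has a genuine gap. You want to resolve $\int^k_M P$ by simplices of disjoint bounded-charge disks \emph{concentrated in} $\nu(B)$, but ``bounded charge'' in this paper means charge between $1$ and $c$: each disk must contain at least one particle. A generic configuration, however, need have no particles in $\nu(B)$ at all. Over such a configuration the fiber of your augmentation is empty, so the augmentation map $||\,\text{resolution}\,|| \to \int^k_M P$ is not even $0$-connected, let alone $(\lfloor k/c\rfloor - 1)$-connected. This is exactly why the resolutions in Section~\ref{sechomstab} use disks attached along $\partial\bar M$ that are free to roam throughout the whole manifold: there are always enough such disks because the particles are somewhere. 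Restricting the disks to a fixed subregion destroys this. Relatedly, your underlying claim that $\iota: \int^k_{M'} P \to \int^k_M P$ is a homology isomorphism for $* < \lfloor k/(2c)\rfloor$ is suspect on its face: $M'$ and $M$ are not homotopy equivalent, and by Corollary~\ref{stabrange} their topological chiral homologies have different stable homology in general (the section spaces $\Gamma_{M'}(B^{TM'}P)$ and $\Gamma_M(B^{TM}P)$ differ).

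The paper's argument avoids both problems by cutting along a one-sided \emph{arc} $\eta$ rather than excising a two-dimensional band. Because $\eta$ has codimension one, a transversality argument shows that the subspace $X_k \subset \int^k_{M,\leq c} P$ of configurations with charge at most $\lfloor k/(4c)\rfloor$ on $\eta$ is already $\lfloor k/(4c)\rfloor$-connected in $\int^k_{M,\leq c} P$; no arc-complex argument on a M\"obius band is needed. One then resolves $X_k$ by tuples of tubular-neighborhood widths $0 < a_0 < \cdots < a_p < 1$ with bounded charge in each strip, so that every fiber is non-empty (width can always be chosen small enough) and the augmentation is a weak equivalence. Each level decomposes as a product of $\int^{\kappa}_{\Sigma\setminus U} P$ for an orientable $\Sigma\setminus U$ with $\kappa$ close to $k$, times factors on which the stabilization map acts trivially; Theorem~\ref{thmmaind2} then gives an isomorphism on the $E^1$-pages in the required range. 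The halving of the range comes from the transversality step, not from composing two independent stability statements as you suggest.
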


\begin{proof}[Sketch of proof] The proof will be a variation on handle induction. By classification of surfaces, any non-orientable surface with boundary is diffeomorphic to $\R P^2 \# (\#_g S^1 \times S^1)$ or $(\R P^2 \# \R P^2) \# (\#_g S^1 \times S^1)$ with a number of disks removed. We will give the proof in the first case, the second case being essentially the same except for the use of two one-sided arcs instead of one one-sided arc. 

Fix a point $x \in \partial \Sigma$, then using the $\R P^1$ in $\R P^2$ one can find a one-sided arc $\eta$ in $\Sigma$ starting and ending at $x$, such that removing $\eta$ from $\Sigma$ gives an orientable surface. Fix such an $\eta$, and fix a tubular neighborhood $U \cong \eta \times (-1,1)$.

Consider the subspace $X_k$ of $\int^k_{\Sigma,\leq c} P$ such that the total charge of points lying on the arc $\eta$ is $\leq \lfloor\frac{k}{4c}\rfloor$. Using transversality in the manifold $\mr{Sym}_k^{\leq c}(\Sigma)$ and the fact that perturbations in the macroscopic locations of points in $\int^k_{\Sigma, \leq c} P$ lift to $\int^k_{\Sigma, \leq c} P$, one can check that the inclusion 
\[X_k \hookrightarrow \int^k_{\Sigma, \leq c} P\]
is at least $\lfloor\frac{k}{4c}\rfloor$-connected. 

Next one resolves $X_k$ by a semisimplicial space $\mathcal X_\bullet[k]$ with $p$-simplices equal to pairs of a point in $X_k$ and a $(p+1)$-tuple of real numbers $a_i \in (0,1)$ (in discrete topology) such that $0 < a_0 < \ldots < a_p < 1$ and the strip $\eta \times (a_i,-a_i)$ contains at most charge $\lfloor\frac{k}{4c}\rfloor$. This has an augmentation to $X_k$ by forgetting everything except the points in $X_k$ and the induced map $||\mathcal X_\bullet[k]|| \to X_k$ will be a weak equivalence.

If one does the same for $\int^{k+1}_{\Sigma,\leq c} P$, one will obtain a map of augmented semisimplicial spaces $\mathcal X_\bullet[k] \to \mathcal X_\bullet[k+1]$, with  $\lfloor\frac{k}{4c}\rfloor$-connected augmentations to $\int^k_\Sigma P$ and $\int^{k+1}_\Sigma P$ respectively. Running for both an augmented realization spectral sequence gives a map of spectral sequences computing $t_*: \int^k_\Sigma P \to \int^{k+1}_\Sigma P$ for $* \leq \lfloor\frac{k}{4c}\rfloor$. On the $E^1$-page one will see the homology of a disjoint union of terms of the following form: a product of $\int^{\kappa}_{\Sigma \backslash U} P$ for some number $\kappa \geq \lfloor\frac{k}{4c}\rfloor$ with $p$ copies of topological chiral homology of a pair of strips and a copy of topological chiral homology of a single strip. The induced map between $E^1$-pages will be the stabilization map on the homology of the term $\int^{\kappa}_{\Sigma \backslash U}$ and the identity on the other terms. Since $\Sigma \backslash U$ is an orientable connected surface with boundary, the map is an isomorphism on the $E^1$-page in the range $q \leq \lfloor\frac{k}{4c}\rfloor$. This proves the corollary.\end{proof}

\subsection{Generalization to bounded exceptional points}One can generalize theorems \ref{thmmaind3} and \ref{thmmaind2} to spaces of configurations that have an additional set of allowed collections of exceptional particles of bounded charge. This will imply some cases of Conjecture \ref{conjf}, and Theorem \ref{thmmain} is the case where this set consists of the empty collection. More precisely, we consider a partial framed $E_n$-algebra $P$ with $\pi_0(P) = \{1,\ldots,c,c+1,\ldots, C\}$. 

\begin{definition}
An \emph{exceptional collection} $\Lambda$ is a collection of subsets of $\mathcal P(\{c+1,\ldots,C\})$ where $\mathcal P$ denotes the power set.\end{definition} 

The space $\int^{k,\Lambda}_M P$ is the subspace of $\int_M^k P$ such that the set of particles of charge $> c$ has set of charges lying in $\Lambda$. When $M$ admits boundary, there is again a stabilization map and the induced map $t_*: H_*(\int^k_M P) \to H_*(\int^{k+1}_M P)$ will be an isomorphism in a range.

Fix an exceptional collection $\Lambda$ with charge $|\Lambda| := \max_{\lambda \in \Lambda}(\sum_{c' \in \lambda} c')$. Then one can generalize the argument of Subsection \ref{subsecalttch} and the previous two arguments to the following theorem:

\begin{theorem}\label{thmmainexcep}  Let $M$ be a connected manifold of dimension $\geq 2$ admitting a boundary, oriented if it is of dimension $2$. Suppose $P$ is a partial framed $fF_n$-algebra with $\pi_0(P) = \{1,\ldots,c,c+1,\ldots, C\}$ and $\Lambda$ an exceptional collection. Then the map 
\[t_*: H_*\left(\int^{k,\Lambda}_M P\right) \to H_*\left(\int^{k+1,\Lambda}_M P\right)\]
is an isomorphism for $* < \lfloor \frac{k-|\Lambda|}{2c} \rfloor$ and a surjection for $* = \lfloor \frac{k-|\Lambda|}{2c} \rfloor$.\end{theorem}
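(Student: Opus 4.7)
The plan is to run the spectral sequence arguments of Theorems \ref{thmmaind3} and \ref{thmmaind2}, after adapting the preliminaries of Subsection \ref{subsecalttch} and the semisimplicial resolutions of Sections \ref{seccomplexes} and \ref{sechomstab} to the exceptional setting. Throughout, a point of $\int_M^{k,\Lambda} P$ is a labeled configuration whose exceptional particles (of charges in $\{c+1,\ldots,C\}$) have set of charges lying in $\Lambda$; they contribute total charge at most $|\Lambda|$, so the non-exceptional charge is at least $k-|\Lambda|$.

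First I would generalize Subsection \ref{subsecalttch} to produce a deformation retract $\int_{M,\leq c}^{k,\Lambda} P$ of $\int_M^{k,\Lambda} P$ in which every macroscopic location is either a singleton exceptional particle or has non-exceptional charge $\leq c$. The homotopy $H_t$ of Proposition \ref{prophomotopyfmcleqc} generalizes verbatim: one separates non-root vertices of the Fulton-MacPherson tree that carry non-exceptional charge $>c$ above them, while fixing microscopic configurations of non-exceptional charge $\leq c$ (and in particular the ones containing exceptional particles). Compatibility with the coequalizer relation holds as before, since partial compositions either take place among non-exceptional particles of total charge $\leq c$ or produce a larger exceptional charge, and the relevant microscopic configurations are left fixed in either case.

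Next I would build a resolution $\mathcal T^\Lambda_\bullet[k]$ whose $p$-simplices are pairs $(x,(\varphi_0,\ldots,\varphi_p))$ with $x \in \int_{M,\leq c}^{k,\Lambda} P$ and $(\varphi_0,\ldots,\varphi_p)$ a tuple of disjoint bounded charge embeddings as in Sections \ref{seccomplexes} and \ref{sechomstab} whose images avoid the macroscopic locations $E$ of the exceptional particles of $x$ and contain only non-exceptional charge. For fixed $x$, this fiber is the bounded charge complex of Section \ref{seccomplexes} applied to $M\backslash E$ and the non-exceptional part of $x$, which has charge at least $k-|\Lambda|$; in dimension $\geq 3$ it is $(\lfloor (k-|\Lambda|)/c \rfloor -2)$-connected by Theorem \ref{thmconndiskdim3} applied to $M\backslash E$, and in dimension $2$ the same bound follows from the arc complex arguments of Section \ref{seccomplexes} with exceptional particles treated as additional forbidden punctures that no disk-like arc may enclose. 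The microfibration argument of Proposition \ref{propd3resconn} then shows $||\mathcal T^\Lambda_\bullet[k]|| \to \int_M^{k,\Lambda} P$ is $(\lfloor (k-|\Lambda|)/c \rfloor -1)$-connected, and the decomposition of $\mathcal T^\Lambda_p[k]$ matches Proposition \ref{propd3respsimplices} but with $\int^{k-\sum k_i}_M P$ replaced by $\int^{k-\sum k_i,\Lambda}_M P$, since the exceptional particles live in the complement of the disks.

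With these preparations the spectral sequence arguments of Theorems \ref{thmmaind3} and \ref{thmmaind2} run with $k$ replaced by $k-|\Lambda|$: the inductive hypothesis on smaller values of $k$ controls the $E^1$-entries, each involving $\tilde H_*$ of the cone of a stabilization map between spaces of the form $\int^{\cdot,\Lambda}_M P$, and the Puppe-sequence argument identifying the edge $d^1$-differential at $K=\lfloor (k-|\Lambda|)/(2c) \rfloor$ as zero is unchanged. The main obstacle is verifying that the connectivity bounds for the bounded charge complexes survive, with only the expected drop from $k$ to $k-|\Lambda|$, in the presence of fixed exceptional particles: in dimension $\geq 3$ this is automatic by transversality since the exceptional macroscopic locations are codimension $n\geq 3$, while in dimension $2$ one must check that the step-by-step reductions of Section \ref{seccomplexes} through disk-like, non-empty and bounded charge arcs still go through with additional forbidden punctures, which is a straightforward extension of the labelings $\lambda$ and $c$ used there.
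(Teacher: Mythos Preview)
Your proposal is correct and follows the same strategy as the paper's (very terse) proof sketch: adapt the deformation retract of Subsection \ref{subsecalttch}, resolve by bounded charge disks relative to the non-exceptional part of the configuration, and rerun the spectral sequence arguments of Theorems \ref{thmmaind3} and \ref{thmmaind2} with $k$ replaced by $k-|\Lambda|$.

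There is one minor organizational difference worth pointing out. You require the bounded charge disks to avoid the exceptional macroscopic locations $E$ and then work with the complexes for $M\backslash E$, which forces you to re-verify their connectivity (via transversality in dimension $\geq 3$, and by extending the arc complex labelings to allow forbidden punctures in dimension $2$). The paper instead simply defines $L'(x)\in Sym^{\leq c}_{k'}(M)$ to be the macroscopic locations of charge $\leq c$, with $k'\geq k-|\Lambda|$, and uses the \emph{same} complexes $K^{\delta,c}_\bullet(L'(x))$ and $K^c_\bullet(L'(x))$ on $M$ itself; their connectivity then follows directly from Theorems \ref{thmconndiskdim3} and \ref{thmconndiskdim2} with no modification. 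This is a bit cleaner on the connectivity side, though it leaves implicit the step of pushing any exceptional particles out of the chosen disks when identifying the $p$-simplices as in Proposition \ref{propd3respsimplices}. Your version makes that step explicit at the cost of the extra connectivity checks; both routes lead to the same spectral sequence input and the same conclusion.
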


\begin{proof}[Sketch of proof]Using arguments similar to those of Subsection \ref{subsecalttch}, one proves that there is a model of topological chiral homology that maps to the subspace of the symmetric power of points of charge $\leq c$ and some additional higher charge points lying in the exceptional collection $\Lambda$. We want to use a similar resolution and spectral sequence argument. To do so, we need to make sure we do not try to remove particles labeled in the exceptional collection. This uses the same complexes studied above, denoted $K^{\delta,c}_\bullet(L(x))$ in dimension $\geq 3$ and $K^{c}_\bullet(L(x))$ in dimension $2$, consisting of disk containing charge $\leq c$. The only difference is that there now possibly some particles of larger charge around, but the same arguments show that the complexes are still highly connected. This gives the desired resolution and the spectral sequence argument can be run as before.\end{proof}

We will finally give the proof of Corollary \ref{corravimelanie}.

\begin{proof}[Proof of Corollary \ref{corravimelanie}]Take $P = \{1,\ldots,\ldots,(m+1)(c+1)-1\}$ with exceptional collection $\Lambda \subset \mathcal P(\{c+1,\ldots,(m+1)(c+1)-1\})$ consisting of all sets of charge $< (m+1)(c+1)$. This satisfies $|\Lambda| = (m+1)(c+1)-1$. Note that \[\int^{k+(c+1)(m+1),\Lambda}_M P =W_{1^k \lambda}(M)\]
so that the result follows by Theorem \ref{thmmainexcep}, except for injectivity in the highest degree. To prove this, note that it is possible to define a transfer map $\tau: H_*(\int_M^{k,\Lambda} P) \to H_*(\int_M^{k-1,\Lambda} P)$ by summing over all ways of decreasing the charge by one at some particle in the configuration, see Section 5 of \cite{KMT}. An argument similar to that of \cite{Do} or \cite{N}, shows that the stabilization map is injective on homology.\end{proof}

\section{The scanning map and closed manifolds} \label{secscanningequiv} The goal of this section is to prove that the scanning map $s: \int^k_M P \to \Gamma^k_{M}(B^{TM} P)$ is a homology equivalence in a range tending to infinity with $k$, both for open and closed manifolds $M$. Before we can prove this, we discuss some technical details regarding homology fibrations.

\subsection{Homology fibrations} Homology fibrations were introduced in \cite{Mc1} and are a homological version of a quasi-fibration. They are useful because they are more general than fibrations but one can still use the Serre spectral sequence to study the homology of the total space of a homology fibration. 

\begin{definition}
A map $r : E \to B$ is a \emph{homology fibration} if the inclusions of every fiber of $r$ into the corresponding homotopy fiber are homology equivalences. 

We call $r$ a \emph{homology fibration in a range $* \leq L$} if the inclusion of every fiber of $r$ into the corresponding homotopy fiber induces an isomorphism on homology groups $H_*$ for $* < L$ and surjection for $*=L$. In other words, the relative homology vanishes for $* \leq L$. \label{homfib}
\end{definition}

McDuff and Segal gave the following useful criterion for proving that a map is a homology fibration.

\begin{theorem}[McDuff, Segal] \label{fibcr}

A map $r:E \to B$ is a homology fibration if we have a filtration $B=\bigcup B_j$ with each $B_i$ closed such that:
\begin{enumerate}[(i)]
\item Each $b\in B$ has a basis of contractible neighborhoods $U$ such that any deformation retraction of $U$ onto $b$ lifts to a deformation retraction of $r^{-1}(U)$ into $r^{-1}(b)$.
\item Each $r:r^{-1}(B_j\backslash B_{j-1}) \to B_j\backslash B_{j-1}$ is a fibration.
\item For each $j$, there is an open subset $U_j$ of $B_j$ such that $B_{j-1} \subset U_j$, and there are homotopies $h_t : U_j \to U_j$ and $H_t : r^{-1}(U_j) \to r^{-1}(U_j)$ satisfying:
\begin{enumerate}[(a)]
\item $h_0= \mr{id}$, $h_t(B_{j-1}) \subset B_{j-1}$ and $h_1(U_j) \subset B_{j-1}$,
\item $H_0 = \mr{id}$ and $r \circ H_t =h_t \circ r$,
\item $H_1:r^{-1}(b) \to r^{-1}(h_1(b))$ induces an isomorphism on homology for all $b \in U_j$.
\end{enumerate} 
\end{enumerate}

\label{DoldMcDuff}

\end{theorem}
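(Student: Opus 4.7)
The plan is to prove the statement by induction on the filtration degree $j$, establishing that $r_j := r|_{r^{-1}(B_j)}$ is a homology fibration for every $j$, and then to pass to the colimit $B = \bigcup B_j$ using condition (i) to ensure that fibers and homotopy fibers are well behaved under the limit. The base case $j=0$ is immediate because $B_0 = B_0 \setminus B_{-1}$, so condition (iv) makes $r_0$ an honest Serre fibration.

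For the inductive step I would decompose $B_j$ as the union of two open subsets $V := B_j \setminus B_{j-1}$ and $U_j$. On $V$, condition (iv) gives that $r|_V$ is a fibration; on $V \cap U_j \subset V$ a fibration restricted to an open subset is again a fibration. The more delicate step is to show that $r|_{U_j}$ is a homology fibration. Here I would use the data in condition (v): the homotopies $h_t$ and $H_t$ cover each other by (v.b) and terminate in maps into $B_{j-1}$ and $r^{-1}(B_{j-1})$ respectively, with $H_1$ inducing a homology equivalence on each fiber by (v.c). Together with the inductive hypothesis applied to $r|_{B_{j-1} \cap U_j}$, this fiberwise deformation should let me compute the homotopy fiber of $r|_{U_j}$ over any point $b \in U_j$ as the homotopy fiber of $r|_{B_{j-1}}$ over $h_1(b)$, so that the comparison with the actual fiber $r^{-1}(b)$ follows from $H_1$ being a homology equivalence.

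With the three restrictions $r|_{V}$, $r|_{U_j}$ and $r|_{V \cap U_j}$ each being homology fibrations, I would glue them via a Mayer--Vietoris argument: compare the Mayer--Vietoris long exact sequence for the homologies of the preimages with the analogous sequence for the homotopy fibers over a chosen $b \in B_j$, and conclude by the five-lemma. Conditions (ii) and (iii) are needed precisely to make the local computation of homotopy fibers well behaved; condition (iii) in particular yields a neighborhood basis over which $r^{-1}(U)$ deformation retracts onto $r^{-1}(b)$, which is essential for identifying the local homotopy fiber with the actual fiber.

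The step I expect to be the main obstacle is showing $r|_{U_j}$ is a homology fibration: upgrading the pointwise information (fiberwise homology equivalence from $H_1$) to a statement about the homotopy fiber of $r|_{U_j}$ compatible with the inductive hypothesis on $B_{j-1}$. Condition (v) is tailored to this, but verifying carefully that property (v.a) makes $h_t$ respect the stratification, and that $H_t$ realizes the deformation fiberwise without destroying the homology-fibration property on $B_{j-1} \cap U_j$, is where care is required. This is also the step where all of conditions (i)--(iii) interact, since one needs the homotopy fibers on either side to fit into a five-lemma or spectral sequence comparison whose hypotheses are exactly the CW-homotopy type and uniform local connectedness guaranteed by (i) and (ii).
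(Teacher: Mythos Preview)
The paper does not actually prove this theorem: it is stated as Proposition~5.1 of McDuff's paper \cite{Mc1} and simply cited, with the subsequent remarks noting that conditions (i) and (ii) can be dropped and that the argument works ``in a range.'' So there is no proof in the paper to compare your proposal against.

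That said, your outline is essentially the standard argument (induction on $j$, fibration on the open strata via (iv), deformation into $B_{j-1}$ via (v), and a Mayer--Vietoris/gluing step to patch $U_j$ and $B_j\setminus B_{j-1}$), which is indeed how McDuff's original proof proceeds. Your identification of the delicate point---upgrading the fiberwise homology equivalence $H_1$ to a statement about homotopy fibers over $U_j$---is accurate, and your remarks about the roles of (i)--(iii) are consistent with the paper's Remark~\ref{notneeded}, which explains that (i) and (ii) were used in \cite{Mc1} for the gluing step but are in fact unnecessary once one adopts the local definition of homology fibration from \cite{MSe}.
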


\begin{remark} \label{notneeded} The original version of this result, Proposition 5.1 of \cite{Mc1}, had the following additional conditions:
\begin{enumerate}[(i)]
\item  All spaces $B_j$, $B_j\backslash B_{j-1}$, $r^{-1}(B_j)$ and $r^{-1}(B_j\backslash B_{j-1})$ have the homotopy type of CW-complexes.
\item Each $B_j$ is uniformly locally connected.
\end{enumerate}

These are unnecessary. They were used in \cite{Mc1} to prove that homology fibrations can be glued together. This is obvious from the more local alternate definition of homology fibration given in \cite{MSe}. The proof that these two definitions are equivalent given in \cite{MSe} uses an additional paracompactness assumption, but does not use that any spaces are homotopic to CW-complexes or uniformly locally connected. Additionally, on page 56 of \cite{Se}, Segal notes that this paracompactness assumption is also unnecessary. This is also discussed in the proof of Lemma 3.1 of \cite{Mc3}. However, condition (i) is needed, as it is used to prove that maps satisfying Definition \ref{homfib} are also homology fibrations in the local sense given in  \cite{MSe}. See also \cite{jeremymartinhomfib}.
\end{remark}

\begin{remark}\label{inrange}
The proof can be modified to give a criterion for a map being a homology fibration in a range by replacing (iii)(c) with homology equivalence in a range, see page 51 of \cite{Se}.
\end{remark}

\subsection{Scanning for closed manifolds}

Let $M$ be a connected $n$-dimensional manifold. Let $P$ be a partial $fF_n$-algebra with $\pi_0(P)=\{1, \ldots, c\}$ as a partial monoid under addition. Let $B^{TM} P$ be the $B^n P$ bundle described in Definition \ref{BTM} and let $s:\int_M P \to \Gamma_{M}(B^{TM} P)$ be the scanning map. We will often restrict the scanning map to the subspace $\int_{M,\leq c} P$, and this restriction will also be denoted $s$.

Note that $\pi_0(\Gamma_{M}(B^{TM}P))=\Z$, because $\Omega^n B^n P$ is a group completion of $P$ and the Grothendieck group of the monoid completion of $\{1, \ldots, c\}$ is $\Z$. Let $\Gamma^k_{M}(B^{TM} P)$ denote the degree $k$ component. First we prove that the scanning map is a homology isomorphism in a range for manifolds admitting boundary.

\begin{proposition}\label{propscanningopen}
Let $M$ be a connected manifold of dimension $n \geq 2$ admitting boundary, oriented if it is of dimension $2$. The map $s_*: H_*(\int^k_M P) \to H_*(\Gamma^k_{M}(B^{TM} P))$ is an isomorphism for $* < \lfloor \frac{k}{2c} \rfloor$ and a surjection for $* = \lfloor \frac{k}{2c} \rfloor$.
\end{proposition}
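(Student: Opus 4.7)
The plan is to combine the homological stability result (Theorem \ref{thmmain}) with the stable scanning theorem (Theorem \ref{thmscanning}), using that the section-side stabilization maps $T_p$ are homotopy equivalences to relate a single component to its full mapping telescope on the section side.

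Concretely, I would consider the homotopy-commutative square
\[\xymatrix{
\int^k_M P \ar[r]^-{s} \ar[d] & \Gamma^k_M(B^{TM} P) \ar[d] \\
hocolim_{t_p} \int_M P \ar[r]^-{\bar s} & hocolim_{T_p} \Gamma_M(B^{TM} P)
}\]
in which the vertical maps are the canonical inclusions of the starting stage of each telescope and $\bar s$ is the map on telescopes induced by the scanning map. Commutativity (up to canonical homotopy) follows from the construction in Definition \ref{defstabilizationmapsections}: the local section $\sigma_p$ was defined as $s(t_p(\emptyset))$ precisely so that $T_p \circ s$ is homotopic to $s \circ t_p$, making $s$ into a map of telescope diagrams.

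Next I would analyze the other three arrows. The bottom arrow $\bar s$ is a homology equivalence by Theorem \ref{thmscanning}. The right vertical arrow is a homotopy equivalence, because each $T_p$ is a homotopy equivalence, so each component $\Gamma^\ell_M(B^{TM} P)$ maps by a homotopy equivalence into the telescope. The left vertical arrow is handled by Theorem \ref{thmmain}: the same induction proves that each stabilization $t_p$ in the sequence defining $hocolim_{t_p} \int_M P$ is a homology isomorphism on component $\int^\ell_M P$ in degrees $* < \lfloor \ell/(2c) \rfloor$ and a surjection in degree $\lfloor \ell/(2c) \rfloor$, independent of which $p \in \pi_0(P)$ is used. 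Since singular homology commutes with sequential homotopy colimits, the inclusion $\int^k_M P \hookrightarrow hocolim_{t_p} \int_M P$ is a homology isomorphism in degrees $* < \lfloor k/(2c) \rfloor$ and a surjection in degree $\lfloor k/(2c) \rfloor$.

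Assembling the square then gives the proposition. In degrees strictly below $\lfloor k/(2c) \rfloor$, three of the four arrows are homology isomorphisms, forcing $s$ to be one as well. In degree $\lfloor k/(2c) \rfloor$, the left vertical arrow is a surjection while the right vertical arrow and $\bar s$ are isomorphisms, so $s$ is still a surjection. I do not expect a substantive obstacle: the only real technicality is the compatibility of scanning with both families of stabilization maps, which is built into the definitions and only needs to hold up to homotopy for this argument.
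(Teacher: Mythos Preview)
Your proposal is correct and is essentially the same argument as the paper's own proof: both pass to the mapping telescopes, invoke Theorem~\ref{thmscanning} for the bottom map, use that the $T_p$ are homotopy equivalences to identify the right-hand side with a single component, and then apply Theorem~\ref{thmmain} (together with homology commuting with sequential colimits) to control the left vertical map in the stated range. You are slightly more explicit than the paper in noting that the stability range from Theorem~\ref{thmmain} must apply to each $t_p$ regardless of the charge of $p$; this is indeed needed since the telescope uses every component of $\pi_0(P)$, and it follows because the proof of Theorem~\ref{thmmain} does not depend on which element is brought in from infinity.
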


\begin{proof}Let $T: \Gamma^k_{M}(B^{TM} P) \to \Gamma^{k+1}_{M}(B^{TM} P)$ be the corresponding stabilization map which brings in a degree one section near the boundary as in Definition \ref{defstabilizationmapsections}. Note that $T$ is a homotopy equivalence. Consider the following commuting diagram: \[\xymatrix{\int^{k}_{M} P \ar[d]_{s} \ar[r]^{t} & \int^{k+1}_{M} P \ar[d]^{s}\\
\Gamma^k_{M}(B^{TM} P) \ar[r]_(.48){T} & \Gamma^{k+1}_{M}(B^{TM} P)}\] By Theorem \ref{thmscanning}, $s$ induces a homology equivalence on homotopy colimits
\[\mr{hocolim}\, s: \mr{hocolim}_t\, \int^{j}_M P \to \mr{hocolim}_T\, \Gamma_{M}^{j}(B^{TM} P).\]  Since each map $T$ is a homotopy equivalence, we have that $\mr{hocolim}_T\, \Gamma_{M}^{j}(B^{TM} P) \simeq \Gamma_{M}^k(B^{TM} P)$ for any choice of $k$. Since the maps $t$ are homology equivalences in ranges, the inclusion $\int_M^k P \to \mr{hocolim}_t \int^{j}_M P$ induces an isomorphism on homology for $* < \lfloor \frac{k}{2c} \rfloor$ and a surjection for $* = \lfloor \frac{k}{2c} \rfloor$. Thus the claim follows. 
\end{proof}

At this point we replace $\int_M P$ with $\int_{M,\leq c} P$. Pick a Riemannian metric on the manifold $M$ such that some point $m \in M$ has injectivity radius $2$. Let $B_s(m)$ denote the ball of radius $s$ around the point $m$.  Let $d_m: \int_{M,\leq c} P \to \R_{>0}$ be the function given by:
\[d_m = \min(1,\inf \{s > 0\,|\,\text{$B_s(m)$ contains charge $c+1$}\}).\]
This function is always positive since at most charge $c$ is macroscopically located at $m$. Let $E_k \subset \int_{M,\leq c}^{k} P$ be the subspace of configurations where at most charge $c$ is contained in $B_1(m)$. Using the function $d_m$ one can construct a deformation retraction of $\int_{M,\leq c}^k P$ onto $E_k$ as follows. Let $\Upsilon_t: M\to M$ for $t \in (0,1]$ be a family of diffeomorphisms such that $\Upsilon_t$ restricts to a diffeomorphism between $B_t(m)$ and $B_1(m)$. A retraction of $R:\int_{M,\leq c} P \to E_k$ is given by the formula $R(\xi) = (\Upsilon_{d_m(\xi)})_*(\xi)$ with $(\Upsilon_{t})_* :\int_{M,\leq c} P \to \int_{M,\leq c} P$ the map induced by $\Upsilon_{t}$. The interpolation $s \mapsto  (\Upsilon_{(1-s)+sd_m(\xi)})_*(\xi)$  gives a homotopy between $R$ and the identity.

Recall that we defined $B^n P$ to be $\smash{\int_{(D^n,S^{n-1})} P}$, see Theorem \ref{thmsalvgroupcompl}. Let $B \subset B^n P$ be the image of $\int_{D^n}^c P$. One can construct a deformation retraction of $B^n P$ onto $B$ by first using arguments similar to those in Corollary \ref{corleqcweq} and a function similar to $d_m$.  Let $r: E_k \to B$ be the function that sends a configuration $x$ to the portion of $x$ to $x \cap B_1(m)$, which we can identify with a point in $B$ after fixing a diffeomorphism between $B_1(m)$ and the standard unit ball.

\begin{lemma} \label{projhomfib}
Let $M$ be a connected manifold of dimension $n \geq 2$, oriented if it is of dimension $2$. Then the map $r: E_k \to B$ is a homology fibration in the range $* \leq \lfloor \frac{k-c}{2c} \rfloor$.
\end{lemma}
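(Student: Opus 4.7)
The plan is to apply McDuff's criterion (Theorem \ref{DoldMcDuff}) in the range version indicated by Remark \ref{inrange}. First I would filter $B$ by effective interior charge: set $B_j \subset B$ to be the image in $B^n P = \int_{(D^n, S^{n-1})} P$ of configurations in $D^n$ of total charge $\leq j$, for $j = 0, 1, \ldots, c$. Since particles macroscopically located on $S^{n-1}$ are identified with the basepoint, $B_j \setminus B_{j-1}$ is homotopy equivalent to the space of configurations of effective charge exactly $j$ supported in the open disk. Each $B_j$ is closed, and conditions (i)--(iii) of Theorem \ref{DoldMcDuff} hold by the standard CW-type and local-contractibility arguments for configuration and topological chiral homology spaces already used in \cite{Mc1} and \cite{Mi2}.

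For condition (iv), the fiber of $r$ over $b \in B_j \setminus B_{j-1}$ is homotopy equivalent to $\int_{M',\leq c}^{k-j} P$, where $M' = M \setminus \bar{B}_1(m)$: if $x \in E_k$ satisfies $r(x) = b$, then the portion of $x$ macroscopically inside $B_1(m)$ realizes $b$, while the remainder is an arbitrary charge-$(k-j)$, macroscopic-charge-$\leq c$ configuration in $M'$. By a parametrized-embedding argument — locally trivializing by fixing the interior configuration while varying the exterior — $r$ restricts to a genuine fibration over the smooth stratum $B_j \setminus B_{j-1}$.

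The heart of the proof is condition (v). I would take $U_j \subset B_j$ to be the open subset of configurations whose macroscopic locations in a thin collar of $S^{n-1}$ can be pushed out to the sphere. The homotopy $h_t$ radially pushes these locations outward; by $t = 1$ the particles in the collar have been absorbed into the basepoint and $h_1(U_j) \subset B_{j-1}$. The lift $H_t : r^{-1}(U_j) \to r^{-1}(U_j)$ performs the corresponding push in $E_k$: using the collar parametrization inherited from the scanning construction, particles in the corresponding collar of $\partial B_1(m)$ inside $M$ are translated across the boundary into $M'$. This is arranged so that $r \circ H_t = h_t \circ r$, with particles crossing $\partial B_1(m)$ synchronized with particles crossing $S^{n-1}$ in $B$.

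The main obstacle is verifying condition (v)(c), namely that $H_1$ induces a homology isomorphism on fibers in the required range. By construction, $H_1 : r^{-1}(b) \to r^{-1}(h_1(b))$ is homotopic to an iterated stabilization map
\[
t^{j-j'} : \int_{M',\leq c}^{k-j} P \longrightarrow \int_{M',\leq c}^{k-j'} P,
\]
where $j'$ is the effective charge of $h_1(b)$ and $j' < j$. Now $M'$ is connected of dimension $\geq 2$, orientable when $\dim M = 2$, and admits a boundary (namely $\partial B_1(m)$ after taking the obvious compactification). Theorem \ref{thmmain} therefore guarantees that $t^{j-j'}$ is an isomorphism on $H_*$ for $* < \lfloor (k-j)/(2c) \rfloor$. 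Since $j \leq c$, this gives the uniform range $* < \lfloor (k-c)/(2c) \rfloor$, and the range version of Theorem \ref{DoldMcDuff} delivers the desired conclusion.
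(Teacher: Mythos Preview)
Your proposal is correct and follows essentially the same route as the paper: filter $B$ by total interior charge, observe that $r$ is a fibration over each open stratum with fiber $\int^{k-j}_{M\setminus B_1(m),\leq c} P$, and verify condition (v)(c) by identifying $H_1$ on fibers with an iterated stabilization map, whose homology connectivity is supplied by Theorem~\ref{thmmain}. The paper makes your ``thin collar'' choice of $U_j$ precise by taking $U_j$ to be those configurations in $B_j$ with charge $\leq j-1$ inside $B_{1/2}(m)$, and realizes both $h_t$ and $H_t$ simultaneously as the maps induced by a single ambient isotopy $\phi_t$ of $M$ expanding $B_{1/2}(m)$ onto $B_1(m)$ while preserving $M\setminus B_1(m)$; this makes the compatibility $r\circ H_t = h_t\circ r$ automatic rather than something that needs to be ``synchronized.''
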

\begin{proof}
We will prove that $r$ is a homology fibration in the range $* < \lfloor \frac{k-c}{2c} \rfloor$ by using Theorems \ref{thmmain} and \ref{fibcr} along with Remark \ref{inrange}. Condition (i) follows by an argument similar to that used in the proof of Lemma 4.1 of \cite{Mc1}. For $j \leq c$, let $B_j$ be the image of $\int_{D^n}^j P \to B$. For $j>c$, let $B_j=B$. Condition (ii) follows because $r:r^{-1}(B_j \backslash B_{j-1}) \to B_j \backslash B_{j-1}$ is a trivial fiber bundle. 

Let $U_j$ be the subspace of $B_j$ consisting of configurations with charge at most $j-1$ in $B_{1/2}(m)$.  Let $\phi_t:M \to M$ be an isotopy through diffeomorphisms with the following properties: \begin{enumerate}[(i')]
\item $\phi_0=id$,
\item for each $t$, $\phi_t(M \backslash B_1(m)) \subset M \backslash B_1(m)$,
\item $\phi_1$ restricts to a diffeomorphism of $B_{1/2}(m)$ to $B_1(m)$.
\end{enumerate}

Let $h_t:B_j \to B_j$ and $H_t: r^{-1}(B_j) \to r^{-1}(B_j)$ be the maps induced by applying $\phi_t$ to the macroscopic locations of each point in the configuration. By construction, $h$ and $H$ satisfy conditions (iii)(a) and (iii)(b). They satisfy condition (iii)(c) in the range $* \leq \lfloor \frac{k-c}{2c} \rfloor$ since the maps induced between fibers are homotopic to compositions of stabilization maps 
\[\int_{M \backslash B_1(m),\leq c}^l P \to \int_{M \backslash B_1(m),\leq c}^{l+1} P\]
with $l \geq k-c$. Thus, $r: E_k \to B$ is a homology fibration in the range $* \leq \lfloor \frac{k-c}{2c} \rfloor$.\end{proof}

We now prove that the scanning map is a homology equivalence in a slightly lower range than the stabilization map for manifolds admitting boundary.

\begin{theorem}\label{thmscanisoclosed} Let $M$ be a closed connected manifold of dimension $n \geq 2$, oriented if it is of dimension $2$. The map $s_*: H_*(\int_M^k P) \to H_*(\Gamma^k_{M}(B^{TM} P))$ is an isomorphism for $*  < \lfloor \frac{k-c}{2c} \rfloor$ and surjective for $* = \lfloor \frac{k-c}{2c} \rfloor$.
\end{theorem}

\begin{proof} Again we can assume we are working with $\int_{M,\leq c}^k P$ instead of $\int_M^k P$ and it suffices to prove $s : E_k \to \Gamma^k_M(B^{TM} P)$ is a homology equivalence in the above range. Consider the following commuting diagram:  \[\xymatrix{E_k \ar[d]_{r} \ar[rr]^{s} & & \Gamma^k_M(B^{TM} P) \ar[d]^{R}\\
B \ar[rr]_(.3){s} & & \Gamma_{(M,M \backslash B_1(m))}(B^{TM} P)}\]  The fiber of the rightmost map over a configuration with charge $l$ macroscopically located in $B_1(m)$ is homeomorphic to $\int^{k-k'}_{M-B_1(m),\leq c} P$. Note that $k'$ is at most $c$. The fibers of the rightmost map are homotopic to $\Gamma_{M-B_1(m)}^{k'}(M)$. The right map is a fibration and the left map is a homology fibration in the range $* \leq \lfloor \frac{k-c}{2c} \rfloor$. Furthermore, the bottom map is a homotopy equivalence by Theorem \ref{pair}. Since the inclusion of the fibers of the left map into the fibers of the right map are homology equivalences in the range $* \leq \lfloor \frac{k-c}{2c} \rfloor$ by Proposition \ref{propscanningopen}, the claim follows by comparing the Serre spectral sequence for the fibration on the right with the Serre spectral sequence for a fibrant replacement of the map on the left.\end{proof}

\section{Rational homological stability for bounded symmetric powers of closed manifolds} \label{secrathomstabtrunc}
Recall that the bounded symmetric power $\mr{Sym}^{\leq c}_k(M)$ was defined to be the subspace of $\mr{Sym}_k(M) = M^k/\mathfrak S_k$ consisting of unordered $k$-tuples $\{m_1,\ldots,m_k\}$ of points in $M$ such that no more than $c$ points coincide. As noted in the introduction and Example \ref{examplecompletionEn}, homological stability for bounded symmetric powers $\mr{Sym}_k^{\leq c}(M)$ of manifolds admitting boundary is an immediate corollary of the main theorem.  This is because the space $\mr{Sym}_k^{\leq c}(M)$ is homotopy equivalent to $\int_M P$ for $P=\{1,\ldots,c\}$ (in fact homeomorphic to $\int_{M,\leq c} P$) and thus the stabilization map \[t_*: H_*(\mr{Sym}_k^{\leq c}(M)) \to H_*(\mr{Sym}_{k+1}^{\leq c}(M))\]
induces an isomorphism on homology groups for $* < \lfloor \frac{k}{2c} \rfloor$ and a surjection for $* = \lfloor \frac{k}{2c} \rfloor $. However, a transfer argument as in \cite{Do} or \cite{N} shows that the stabilization map is always injective, so that for bounded symmetric powers the stabilization map is an isomorphism for $* \leq \lfloor \frac{k}{2c} \rfloor$. By the universal coefficients theorem, the same is true for homology with rational coefficients. In this section, we will prove a similar statement for closed manifolds; their bounded symmetric powers satisfy rational homological stability in the range $* < \lfloor \frac{k-c}{2c} \rfloor$. This answers part of Conjecture F of \cite{VW}.

When $M$ is closed, one cannot define a stabilization map as there is no direction from which to bring in new particles. However, by the results of the previous section, the scanning map $s:\mr{Sym}_k^{\leq c}(M) \to \Gamma^k_{M}(B^{TM} \mr{Sym}^{\leq c}(\R^n))$ is a homology equivalence in a range tending to infinity with $k$. Here $\mr{Sym}^{\leq c}(\R^n)$ denotes the $fF_n$-algebra $\bigsqcup_{k \geq 0} \mr{Sym}_k^{\leq c}(\R^n)$. Thus, to prove rational homological stability, it suffices to compare the rational homotopy types of the components of $\Gamma_{M}(B^{TM} \mr{Sym}^{\leq c}(\R^n))$. We begin by analyzing the rational homotopy type of $B^n \mr{Sym}^{\leq c}(\R^n)$ and of the topological monoid of rational homotopy automorphisms of $B^n \mr{Sym}^{\leq c}(\R^n)$.

\subsection{The rational cohomology and homotopy type of $B^n \mr{Sym}^c(\R^n)$}
In this section we compute a minimal model for $B^{n} \mr{Sym}^{\leq c}(\R^n)$. We let $F(X,Y)$ denote the space of unbased continuous maps from $X$ to $Y$ in the compact-open topology and $F_1(X,X)$ is the identity component, which is a topological monoid under composition. We use this minimal model to compute the rational connectivity of $F_1(B^n \mr{Sym}^{\leq c}(\R^n),B^n \mr{Sym}^{\leq c}(\R^n))$.

\begin{lemma}We have that $B^n \mr{Sym}^{\leq c}(\R^n) \simeq \mr{Sym}_c(S^n) = (S^n)^c/{\mathfrak S}_c$.\end{lemma}

\begin{proof}
The subspace $B \subset B^n \mr{Sym}^{\leq c}(\R^n)$ of Lemma \ref{projhomfib} is homeomorphic to $\mr{Sym}_c(S^n)$. This is induced by a homeomorphism between $S^n$ and $D^n/S^{n-1}$.
\end{proof}

Let $\dot TM$ denote the fiberwise one-point compactification of the tangent bundle and let $\mr{Sym}_c(\dot TM)$ denote its fiberwise symmetric $c$-fold symmetric power. By the previous lemma, the bundle $B^{TM} \mr{Sym}^{\leq c}(\R^n)$ is fiberwise homotopy equivalent to $\mr{Sym}_c(\dot TM)$. 

Since $\N$ is commutative, there is a natural stabilization map $t: \mr{Sym}_c(S^n) \to \mr{Sym}_{c+1}(S^n)$. More concretely, this is given by choosing a base point $S^n$ and adding an extra point to configurations at this base point. If there already are points at that location, simply increase the label by one. These maps are injective on homology (see Theorem 1 and Lemma 2 of \cite{Do}, or \cite{N}). Also note that $\mr{colim}_t\, \mr{Sym}_{c}(S^n) \simeq K(\Z,n)$ by the Dold-Thom Theorem \cite{DT} and $\mr{Sym}_{1}(S^n) \cong S^n$. 

Note that for any $k \geq 1$ we have that $\mr{Sym}_k(S^1) \simeq S^1$. Thus, we may assume that $n>1$ and that the spaces $B^{n} \mr{Sym}^{\leq c}(\R^n)$ are nilpotent. Hence their localizations are well behaved. We denote the rationalization of a space $X$ by $X_\Q$.

\begin{theorem}
If $n$ is odd, $B^n \mr{Sym}^{\leq c}(\R^n)_{\Q} \simeq S^n_{\Q}$. If $n$ is even, \[H^*(B^n \mr{Sym}^{\leq c}(\R^n);\Q)=\Q[\iota_n]/(\iota_n^{c+1})\]
with $\iota_n$ in homological dimension $n$.
\end{theorem}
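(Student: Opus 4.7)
The plan is to identify $B^n Sym^{\leq c}(\R^n)$ with $Sym_c(S^n)$ via the preceding lemma and then compute the rational cohomology of $Sym_c(S^n)$ by combining the classical rational transfer for symmetric powers with a direct invariant-theoretic calculation.

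After the identification, the transfer gives a natural ring isomorphism
\[H^*(Sym_c(S^n); \Q) \;\cong\; \bigl(H^*((S^n)^c; \Q)\bigr)^{\mathfrak S_c} \;=\; \bigl(\Q[x_1, \ldots, x_c]/(x_i^2)\bigr)^{\mathfrak S_c},\]
where each $x_i$ has degree $n$ and $\mathfrak S_c$ acts on the tensor product with Koszul signs. For $n$ even the $x_i$ are of even degree, so the action has no signs and the invariants are the symmetric polynomials in the $x_i$ modulo $x_i^2$. Expanding $e_1^k = (x_1+\cdots+x_c)^k$ with $x_i^2 = 0$ gives $e_1^k = k!\, e_k$ for $k \leq c$, and $e_{c+1}=0$ by pigeonhole, so the invariant ring is $\Q[e_1]/(e_1^{c+1})$, which is the desired $\Q[\iota_n]/(\iota_n^{c+1})$ after setting $\iota_n = e_1$. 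For $n$ odd the Koszul signs intervene: writing a basis element $e_S$ for the tensor with $x$ in the positions $S \subseteq \{1,\ldots,c\}$ and $1$ elsewhere, a direct check shows that for any transposition $\tau = (a,b)$ with $a,b \in S$ one has $\tau \cdot e_S = -e_S$, since moving one $x$ past another contributes $(-1)^{n^2} = -1$ while the intermediate contributions cancel in pairs. Comparing coefficients in any candidate invariant forces $c_S = 0$ whenever $|S| \geq 2$, so only the invariants in degrees $0$ and $n$ survive, and the ring is $\Lambda(\iota_n)$ with $\iota_n = x_1+\cdots+x_c$.

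To promote this cohomological calculation to a rational homotopy equivalence in the odd case, I would consider the inclusion $j \colon S^n \hookrightarrow Sym_c(S^n)$, $x \mapsto \{x, *, \ldots, *\}$. Its composition with the stabilization $Sym_c(S^n) \to Sym^\infty(S^n) \simeq K(\Z, n)$ is the standard fundamental-class inclusion, which is nontrivial on $H_n(-; \Q)$; since the stabilizations $Sym_c(S^n) \to Sym_{c+1}(S^n)$ are injective on rational homology by the transfer argument cited earlier in the paper, $j_*$ is nonzero on $H_n(-;\Q)$ and hence $j^*$ is an isomorphism on rational cohomology rings. For $n \geq 2$ both spaces are simply connected of finite type, so $j$ is a rational equivalence and $B^n Sym^{\leq c}(\R^n)_\Q \simeq S^n_\Q$. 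The main technical step is the invariant calculation, and in particular pinning down the multiplicative (not merely additive) structure in the even case via the relation $e_1^k = k!\, e_k$; the borderline case $n=1$ is handled separately using the classical fact $Sym_c(S^1) \simeq S^1$.
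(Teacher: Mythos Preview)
Your proof is correct and takes a genuinely different route from the paper's. The paper argues via the stabilization maps $S^n \to Sym_c(S^n) \to Sym_\infty(S^n) \simeq K(\Z,n)$: for $n$ odd it observes that $K(\Z,n)_\Q \simeq S^n_\Q$, so the injective-on-homology stabilization maps are forced to be rational equivalences; for $n$ even it uses the $H$-space structure on $\bigsqcup_k Sym_k(S^n)$ to see that the image of $H_*(Sym_c(S^n);\Q)$ in $H_*(K(\Z,n);\Q) = \Q[x_n]$ contains $1, x_n, \ldots, x_n^c$, and then a dimension count ($Sym_c(S^n)$ is $cn$-dimensional) together with injectivity shows this is everything, with the ring structure read off from surjectivity of $H^*(K(\Z,n);\Q) \to H^*(Sym_c(S^n);\Q)$.

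Your argument instead goes through the rational transfer isomorphism $H^*(Sym_c(S^n);\Q) \cong H^*((S^n)^c;\Q)^{\mathfrak S_c}$ and computes the invariants by hand. This is more elementary and entirely self-contained: it does not appeal to Dold--Thom, the $H$-space structure, or the injectivity-of-stabilization results cited from \cite{Do} and \cite{N}. The identity $e_1^k = k!\,e_k$ modulo $x_i^2$ is the clean reason the ring is truncated polynomial rather than merely having the right additive dimensions, and your Koszul-sign argument for $n$ odd is a direct replacement for the paper's squeeze between $S^n$ and $K(\Z,n)$. The paper's approach has the advantage of situating the computation within the stabilization framework used throughout Section~\ref{secrathomstabtrunc}, but yours gets to the answer with fewer external inputs.
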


\begin{proof}
We may assume that $n>1$, so that all relevant spaces are simply connected. Thus, to prove that a map is a homotopy equivalence, it suffices to prove that it is a homology equivalence. Let $n$ be odd. The rational localization of odd-dimensional spheres are Eilenberg-MacLane spaces. Since the  maps 
\[S^n \to \mr{Sym}_k (S^n) \to \mr{Sym}_{\infty}(S^n) \simeq K(\Z,n) \simeq_{\Q} S^n\] 
are all injective on homology, they are all rational homotopy equivalences. 

Now let $n$ be even. Note that $K(\Z,n) \simeq_{\Q} \Omega S^{n+1}$ so its rational homology is a polynomial algebra (with respect to the Pontryagin product) on a class $x_n$ in homological dimension $n$. Since $\N$ is $E_{\infty}$, $\int_{S^n} \N=\bigsqcup \mr{Sym}_k(S^n)$ is an homotopy commutative $H$-space. More concretely this $H$-space structure is induced by viewing $\bigsqcup \mr{Sym}_k(S^n)$ as the free abelian monoid on $S^n$. The stabilization map is induced by multiplication with an element of $\int_{S^n}^1 \N$. The map $\mr{Sym}_c (S^n) \to K(\Z,n)$ is injective on homology and $\int_{S^n} \N \to K(\Z,n)$ is a map of $H$-spaces. We can assume that the fundamental class of $S^n$ is mapped to $x_n$ of $K(\Z,n)$ so the image in homology of $\mr{Sym}_c (S^n) \to K(\Z,n)$ contains the first $c$ powers of the $x_n$. Since $\mr{Sym}_c (S^n)$ is $cn$-dimensional, this is the entire image in rational homology. This proves the claim additively. Note that $H^*(K(\Z,n);\Q)=\Q[\iota_n]$ with the class $\iota_n$ a degree $n$ class called the fundamental class. The cohomology class $\iota_n$ is dual to the fundamental class, which was mapped to $x_n$. Since $\mr{Sym}_c (S^n) \to K(\Z,n)$ is injective in homology, the map in cohomology is surjective. This forces $\Q[\iota_n]/(\iota_n^{c+1})$ to be the ring structure of the rational cohomology of $\mr{Sym}_c (S^n)$.\end{proof}

Recall that $F(X,Y)$ denotes the unbased mapping space and let $[X,Y]$ denote the set of homotopy classes of unbased maps. For spaces $X,Y$ with a natural identification $H_n(X;\Q)=H_n(Y;\Q)=\Q$, let $F_{\lambda}(X,Y)$ denote union of connected components of $F(X,Y)$ consisting of maps $f$ such that $H_n(f)$ is multiplication by $\lambda \in \Q$. By Example 2.5 of M{\o}ller-Rausen \cite{MR}, for $n$ odd and $\lambda \in \Q$, \[F_\lambda(S^n_{\Q},S^n_{\Q}) \simeq S^n_\Q.\] Thus, these spaces are highly connected and this connectivity increases with $n$.

We will next investigate the even case. For $n=2$ we have that $B^n \mr{Sym}^{\leq c}(\R^n) = \C P^c$ \cite{K2} and Example 3.4 of M{\o}ller-Rausen shows that $F_\lambda(\C P^c_{\Q},\C P^c_\Q)$ is always connected and simply-connected if $\lambda \neq 0$. We want to generalize this to higher dimensions. Let $n$ be even and for convenience set $X_n = B^n \mr{Sym}^{\leq c}(\R^n)_{\Q}$. Our goal is to prove that the components of $F(X_n,X_n)$ are highly connected. When $M$ is orientable, the fiberwise rational localization of $B^{TM} \mr{Sym}^{\leq c} (\R^n)$ is classified by a map to $BF_1(X_n,X_n)$, as the latter is the classifying space for oriented fibrations with fiber $X_n$. Here $BF_1(X_n,X_n)$ is the classifying space of the component $F_1(X_n,X_n)$ of the mapping space containing the identity component. Thus, proving that this space is highly connected will in particular imply that the fiberwise rational localization of $B^{TM} \mr{Sym}^{\leq c} (\R^n)$  is a trivial bundle. Let CDGA stand for commutative differential graded algebra.

\begin{lemma}A minimal model of $X_n$ is given by the CDGA $(\Q[a_n,b_{(c+1)n-1}],d)$ with $d(a_n) = 0$ and $d(b_{(c+1)n-1}) = a_n^{c+1}$.\end{lemma}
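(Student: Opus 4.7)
The plan is to exhibit $(\Q[a_n,b_{(c+1)n-1}],d)$ as a minimal Sullivan algebra with the correct cohomology, then observe that $X_n$ is formal enough that this suffices to identify it as the minimal model. Since $n \geq 2$ (the case $n=1$ being vacuous as the theorem already reduced that case rationally), the space $X_n$ is simply connected, so standard Sullivan theory applies.

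I would proceed by the usual inductive construction of the minimal Sullivan model, using that $H^*(X_n;\Q) = \Q[\iota_n]/(\iota_n^{c+1})$ with $|\iota_n| = n$ even. In degree $n$ one introduces a generator $a_n$ with $d(a_n) = 0$ mapping to a chosen cocycle representative of $\iota_n$; since $n$ is even, $\Lambda(a_n) = \Q[a_n]$, and the powers $a_n^k$ for $1 \leq k \leq c$ correctly account for $H^{kn}(X_n;\Q)$. In the free CDGA $\Q[a_n]$ the class $a_n^{c+1}$ is nonzero, but it must become zero in cohomology, so I would add a single odd generator $b_{(c+1)n-1}$ in degree $(c+1)n - 1$ with $d(b_{(c+1)n-1}) = a_n^{c+1}$. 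The underlying graded algebra is then $\Q[a_n] \otimes \Lambda(b_{(c+1)n-1})$.

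A direct check on the chain complex $\Q[a_n] \oplus \Q[a_n]\cdot b$, where $d$ acts by $a_n^i \cdot b \mapsto a_n^{i+c+1}$, shows that its cohomology is $\Q[a_n]/(a_n^{c+1})$, agreeing with $H^*(X_n;\Q)$ as graded rings. Minimality is immediate: the underlying algebra is free on two generators in positive degrees (giving a length-two Sullivan filtration), and the differential is decomposable because $d(a_n) = 0$ and $d(b_{(c+1)n-1}) = a_n^{c+1}$ lies in $\Lambda^{\geq 2}$ as $c+1 \geq 2$. To realize this as the minimal model of $X_n$ rather than an abstract CDGA with matching cohomology, I would construct a quasi-isomorphism $\phi : (\Q[a_n,b_{(c+1)n-1}],d) \to A_{PL}(X_n)$: pick a cocycle $\alpha \in A_{PL}^n(X_n)$ representing $\iota_n$ and set $\phi(a_n) = \alpha$; since $[\alpha^{c+1}] = \iota_n^{c+1} = 0$ in $H^*(X_n;\Q)$, there is $\gamma$ with $d\gamma = \alpha^{c+1}$, and $\phi(b_{(c+1)n-1}) := \gamma$ extends $\phi$ to a CDGA map that by construction induces an isomorphism on cohomology.

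The main thing to verify, and the only step with any subtlety, is that the construction terminates after these two generators, i.e.\ that no further generators in higher degrees are needed. This reduces to the cohomology comparison above, since once the CDGA we have built already has cohomology isomorphic to $H^*(X_n;\Q)$ and maps quasi-isomorphically to $A_{PL}(X_n)$, the usual uniqueness of minimal models identifies it with the minimal model of $X_n$. The construction of $\gamma$ above is the only nonconstructive step, but it is harmless: formality of $X_n$ (which follows from its cohomology being a quotient of a polynomial ring on a single generator) guarantees the existence of such a choice and the absence of obstructions in the form of higher Massey products that would force additional generators.
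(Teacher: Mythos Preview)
Your proof is correct and follows essentially the same route as the paper: build the map to cochains by picking a cocycle representative $\alpha$ of $\iota_n$ and a primitive $\gamma$ for $\alpha^{c+1}$, then verify that the resulting CDGA map is a quasi-isomorphism and the source is minimal. Your version is slightly more careful in two respects---you work with $A_{PL}$ rather than singular cochains (which are not strictly commutative) and you spell out the cohomology computation and minimality check---while the closing appeal to formality is unnecessary, since the direct cohomology comparison already settles termination.
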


\begin{proof}Since $H^*(X_n;\Q) = \Q[\iota_n]/(\iota_n^{c+1})$ there is a representative cochain $a_n$ of the cohomology class $\iota_n$. This gives a map of CDGAs $(\Q[\iota_n],d=0) \to C^*(X_n;\Q)$. Because $\iota_n^{c+1}$ is zero, we have that $a_n^{c+1} = db_{(c+1)n-1}$ for some cochain $b_{(c+1)n-1}$. Thus we can extend our map of CDGAs to a map $(\Q[a_n,b_{(c+1)n-1}],d) \to C^*(X_n;\Q)$ with the differential in the former given by $d(a_n) = 0$ and $d(b_{(c+1)n-1}) = a_n^{c+1}$. Here $\Q[a_n,b_{(c+1)n-1}]$ denotes the free graded commutative algebra on the two classes and the subscripts indicate homological dimension. It is easy to check that this map is a quasi-isomorphism and that the former is minimal.\end{proof}

\begin{proposition} Let $n$ be even. All components $F_\lambda(X_n,X_n)$ are $(n-2)$-connected and if $\lambda \neq 0$ are $(2n-2)$-connected. \label{maphighlyconnected}

\end{proposition}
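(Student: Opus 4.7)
The plan is to exploit the minimal Sullivan model from the preceding lemma to present $X_n$ as the homotopy fibre of a power map between Eilenberg--MacLane spaces, and then feed the induced fibration of mapping spaces into a long exact sequence of homotopy groups.

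Since the model $(\Q[a_n,b_{(c+1)n-1}],d)$ with $db=a^{c+1}$ is the Sullivan model of the homotopy fibre of the map $p\colon K(\Q,n)\to K(\Q,(c+1)n)$ representing the cohomology operation $\iota_n\mapsto\iota_n^{c+1}$, there is a fibration
\[X_n\longrightarrow K(\Q,n)\xrightarrow{\;p\;}K(\Q,(c+1)n).\]
Applying $F(X_n,-)$ gives a fibration of mapping spaces, and because the composition $X_n\xrightarrow{f_\lambda}X_n\to K(\Q,n)\xrightarrow{p}K(\Q,(c+1)n)$ is null (its classifying cohomology class lies in $H^{(c+1)n}(X_n;\Q)=0$), restricting to the appropriate components yields the fibre sequence
\[F_\lambda(X_n,X_n)\;\longrightarrow\;F_\lambda(X_n,K(\Q,n))\;\longrightarrow\;F_0(X_n,K(\Q,(c+1)n)).\]

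Using the standard identification $\pi_k F_f(Y,K(\Q,m))\cong H^{m-k}(Y;\Q)$ and the calculation $H^*(X_n;\Q)=\Q[\iota_n]/(\iota_n^{c+1})$ from the preceding theorem, I would next read off the homotopy groups of the total space and base. The total space $F_\lambda(X_n,K(\Q,n))$ has $\pi_n=\Q$ and all other positive rational homotopy vanishing, so each of its components is a $K(\Q,n)$. The base $F_0(X_n,K(\Q,(c+1)n))$ has $\pi_k=H^{(c+1)n-k}(X_n;\Q)$, which equals $\Q$ exactly for $k\in\{n,2n,\ldots,(c+1)n\}$ and vanishes otherwise; in particular it is $(n-1)$-connected with $\pi_n=\Q$ spanned by $\iota_n^c$ and $\pi_k=0$ for $n<k\leq 2n-1$.

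The decisive step is to identify the induced map $\pi_n F_\lambda(X_n,K(\Q,n))\to\pi_n F_0(X_n,K(\Q,(c+1)n))$ with multiplication by the scalar $(c+1)\lambda^c$: up to the natural identifications it is the derivative at $\mu=\lambda$ of the power map $\mu\mapsto\mu^{c+1}$ on cohomology classes. I would verify this either geometrically, by pulling back $\iota_{(c+1)n}$ along the adjoint $S^n\times X_n\to K(\Q,n)$ of a generator of $\pi_n$ of the total space, or algebraically by recognising it as the Sullivan derivation identity $D\theta_n=-(c+1)\lambda^c\,\theta_c$ in the derivation complex of the minimal model, where $\theta_n$ and $\theta_c$ are the degree $-n$ and $-(n-1)$ derivations sending $(a,b)\mapsto(1,0)$ and $(a,b)\mapsto(0,a^c)$ respectively. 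This identification is the main obstacle: it is routine in rational homotopy theory but demands some care with signs and normalisations. Feeding the result into the long exact sequence of the fibration then finishes the proof. In the range $1\leq k\leq n-2$ both $\pi_k$ of the total space and $\pi_{k+1}$ of the base vanish for every $\lambda$, so $\pi_k F_\lambda(X_n,X_n)=0$ and $F_\lambda(X_n,X_n)$ is $(n-2)$-connected. When $\lambda\neq 0$ the degree-$n$ connecting map is the isomorphism $\Q\xrightarrow{\cdot(c+1)\lambda^c}\Q$, which forces $\pi_{n-1}=\pi_n=0$; the vanishing of $\pi_k$ of both total space and base for $n<k\leq 2n-1$ then extends this to $\pi_k F_\lambda(X_n,X_n)=0$ throughout $n<k\leq 2n-2$, yielding the stronger $(2n-2)$-connectivity.
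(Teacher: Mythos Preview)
Your proof is correct and follows essentially the same strategy as the paper: both present $X_n$ as the homotopy fibre of the power map $K(\Q,n)\to K(\Q,(c+1)n)$, take the induced fibration of mapping spaces out of $X_n$, identify the homotopy groups of total space and base with the truncated polynomial cohomology of $X_n$, and then show that the induced map in degree $n$ is nonzero precisely when $\lambda\neq 0$. The paper carries out the last step via the evaluation map $F_\lambda(X_n,K(\Q,n))\times X_n\to K(\Q,n)$ and a Haefliger-style slant product computation, which is exactly your ``geometric'' option; your alternative suggestion via the derivation complex of the Sullivan model is a legitimate equivalent and would give the same constant $(c+1)\lambda^c$. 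One small wording issue: what you call the ``degree-$n$ connecting map'' is really the map $\pi_n(\text{total})\to\pi_n(\text{base})$ induced by $p_*$, not the boundary homomorphism of the long exact sequence, but this does not affect the argument.
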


\begin{proof}The minimal model implies that $X_n$ is the homotopy fiber of the map $\phi: K(\Q,n) \to K(\Q,(c+1)n)$ representing the element $\iota_n^{c+1}$ with $\iota_n \in H^n(K(\Q,n))$ a generator. That is, $\phi^*(\iota_{(c+1)n}) = \iota_n^{c+1}$. This implies that $F_\lambda(X_n,X_n)$ is the homotopy fiber of $F_\lambda(X_n,K(\Q,n)) \to F(X_n,K(\Q,(c+1),n))$.

For $i \geq 1$ the $i$th homotopy groups of $F_\lambda(X_n,K(\Q,n))$ and $F(X_n,K(\Q,(c+1)n))$ are $H^{n-i}(X_n;\Q)$ and $H^{(c+1)n-i}(X_n;\Q)$ respectively. The long exact sequence of homotopy groups for a fibration and our calculation of the rational cohomology of $X_n$ immediately proves the first claim, i.e. that $F_\lambda(X_n,X_n)$ is $(n-2)$-connected.

Let $\varphi: F_\lambda(X_n,K(\Q,n)) \to F(X_n,K(\Q,(c+1)n))$ be the map induced by postcomposition with $\phi$. To prove the second claim, it suffices to prove by Hurewicz that the map \[\varphi^*: H^n(F(X_n,K(\Q,(c+1)n));\Q) \to H^n(F_\lambda(X_n,K(\Q,n)))\]
is non-zero if $\lambda$ is non-zero. To do this consider first the evaluation map $e: F_\lambda(X_n,K(\Q,n)) \times X_n \to K(\Q,n)$. This satisfies $e^*(\iota_n) = b \otimes 1 + 1 \otimes \lambda$, where we have used that $F_\lambda(X_n,K(\Q,n)) = K(\Q,n)$ and $b$ is the generator in degree $n$ of the cohomology of this $K(\Q,n)$. The adjoint of the map is $\phi \circ e$. From this we conclude that 
\[(\phi \circ e)^*(\iota_{(c+1)n}) = (b \otimes 1 + 1 \otimes \lambda)^{c+1}.\] The arguments of Section 1.2 of \cite{haefliger} imply that $\varphi^*$ applied to the generator of $H^n(F(X_n,K(\Q,(c+1)n));\Q)$ is $((b \otimes 1 + 1 \otimes \lambda)^{c+1}) \cap \lambda^c = (c+1) b$. Note we have used that $\lambda \neq 0$.
\end{proof}

\subsection{Homological stability for bounded symmetric powers of closed manifolds} Our goal is prove homological stability for bounded symmetric powers of closed manifolds. Theorem \ref{thmscanisoclosed} implies that the scanning map 
\[s: \mr{Sym}_k^{\leq c}(M) \to \Gamma^k_{M}(\mr{Sym}_c(\dot{T}M))\]
is a homology equivalence for $* < \lfloor \frac{k-c}{2c} \rfloor$. Let $l:\mr{Sym}_c(\dot{T}M) \to \mr{Sym}_c(\dot{T}M)_{\Q}$ denote the fiberwise rational localization map (see \cite{Su} for a construction of fiberwise localization). By Theorem 5.3 \cite{Mo}, composition with $l$ induces an isomorphism on rational homology group $\Gamma^k_{M}(\mr{Sym}_c(\dot{T}M)) \to \Gamma^k_{M}(\mr{Sym}_c(\dot{T}M)_{\Q})$. Thus $\mr{Sym}_k^{\leq c}(M)$ has the same rational homology as $\Gamma^k_{M}(\mr{Sym}_c(\dot{T}M)_{\Q})$ through a range. Following the ideas of \cite{BMi}, we will prove rational homological stability for the spaces  $\mr{Sym}_k^{\leq c}(M)$ by proving that most components of $\Gamma_{M}(\mr{Sym}_c(\dot{T}M)_{\Q})$ have the same homotopy type. 

First we consider the case of odd-dimensional manifolds. The following is  Proposition 3.2 of \cite{BMi}.

\begin{proposition}[Bendersky-Miller] \label{oddComponents}
Let $M$ be an odd-dimensional manifold. The rational homotopy type of $\Gamma^k_{M}(\dot{T}M_{\Q})$ is independent of $k$.
\end{proposition}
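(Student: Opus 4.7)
The plan is to endow the bundle $\dot{T}M_\Q$ with a fiberwise topological abelian group structure, so that $\Gamma_M(\dot{T}M_\Q)$ becomes a topological abelian group and translation by a degree-$1$ section identifies all components homeomorphically.

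First, since $n = \dim M$ is odd, the computation earlier in the paper gives $S^n_\Q \simeq K(\Q, n)$, and this admits a functorial model as a topological abelian group. The $O(n)$-action on $S^n$ coming from its linear action on $\R^n$ (after extending to the one-point compactification) induces an action on $S^n_\Q$ determined up to homotopy by its effect on $\pi_n(S^n)_\Q \cong \Q$; this is the determinant representation, so $O(n)$ acts through the group automorphism $\pm \mathrm{id}$ of $K(\Q, n)$. Consequently, I will identify $\dot{T}M_\Q$ with the associated bundle $K(\omega_\Q, n)$, where $\omega_\Q$ is the orientation local system viewed as a rank-one local system of rational vector spaces, built fiberwise from a functorial topological-abelian-group model of $K(-, n)$. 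Under this identification, the fiberwise rationalization map $\dot{T}M \to \dot{T}M_\Q$ may be taken to be the fiberwise fundamental-class map.

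Next, $\dot{T}M_\Q$ is now fiberwise a topological abelian group, so pointwise addition makes $\Gamma_M(\dot{T}M_\Q)$ a topological abelian group; the compact-support condition is preserved because $\operatorname{supp}(\sigma + \tau) \subseteq \operatorname{supp}(\sigma) \cup \operatorname{supp}(\tau)$. The degree map $\pi_0(\Gamma_M(\dot{T}M_\Q)) \to \Q$ is then a group homomorphism, whose image contains $\Z$. Choosing any compactly supported section $\sigma_1$ of degree $+1$---for instance, by applying the scanning map of Definition \ref{defscan} to a single point of $M$ labeled by $1 \in \N$ and postcomposing with fiberwise rationalization---translation by $+\sigma_1$ and by $-\sigma_1$ give two-sided inverse continuous maps $\Gamma^k_M(\dot{T}M_\Q) \to \Gamma^{k+1}_M(\dot{T}M_\Q)$ and $\Gamma^{k+1}_M(\dot{T}M_\Q) \to \Gamma^k_M(\dot{T}M_\Q)$. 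These are therefore homeomorphisms, and in particular preserve rational homotopy type.

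The main obstacle is the first step: making precise that $\dot{T}M_\Q$ can be realized as a bundle of topological abelian groups, especially in the non-orientable case where the transition functions of $\dot{T}M$ act non-trivially on $\pi_n(S^n)_\Q$. The cleanest route is to construct the target bundle $K(\omega_\Q, n)$ directly from the orientation local system, using a functorial simplicial abelian group model of Eilenberg-MacLane spaces (natural for automorphisms of the coefficient as a $\Q$-vector space), and then to verify that the fundamental-class map is a fiberwise rational equivalence from $\dot{T}M$ onto it. Once this structural fact is in place, Steps 2 and 3 above are essentially formal.
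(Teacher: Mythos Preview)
The paper does not give a proof of this proposition; it merely cites it as Proposition~3.2 of \cite{BMi}. Your argument is correct and is essentially the natural one for odd $n$: since $S^n_\Q \simeq K(\Q,n)$ carries a topological abelian group structure, and the structure group of $\dot{T}M$ acts on $\pi_n(S^n_\Q)\cong\Q$ through $\det\colon O(n)\to\{\pm 1\}$---i.e.\ by the group automorphisms $\pm\mathrm{id}$---the fiberwise rationalization can be modeled as a bundle of topological abelian groups $K(\omega_\Q,n)$. The section space then inherits a topological abelian group structure, and translation by any fixed section is a homeomorphism between components. The only point requiring care is the one you already flag: producing the fiberwise abelian-group model together with a fiberwise rational equivalence from $\dot{T}M$. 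Your route via a functorial simplicial-abelian-group model of $K(-,n)$ applied to the orientation local system is standard and works; an equivalent phrasing is that $B\mathrm{haut}_*(K(\Q,n))\simeq B\Q^\times$, so the classifying map for $\dot{T}M_\Q$ factors through $w_1\colon M\to B\Z/2$, and one may take the associated bundle $\tilde{M}\times_{\Z/2}K(\Q,n)$ with $\Z/2$ acting by inversion.

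It is worth contrasting this with the technique the paper uses for the even-dimensional analogue (Lemma~\ref{mapexists} and Proposition~\ref{notorient}): there one post-composes with fiberwise degree-$d$ self-maps, sending a degree-$k$ section to one of degree $dk-(1-d)\chi(M)/2$. For $n$ odd and $M$ closed one has $\chi(M)=0$, so that method would give $k\mapsto dk$ and hence relate only the nonzero-degree components to one another. Your additive argument handles all components uniformly, including $k=0$, and in this sense is the cleaner approach in the odd-dimensional case.
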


This directly implies the following result.

\begin{theorem}\label{odd} Let $M$ be closed of odd dimension $n$, then we have isomorphisms for $* < \lfloor \frac{k-c}{2c} \rfloor$
\[H_*(\mr{Sym}_k^{\leq c}(M);\Q) \cong H_*(\mr{Sym}_{k+1}^{\leq c}(M);\Q)\]\end{theorem}

\begin{proof} Because $\mr{Sym}_c(S^n)_{\Q} \simeq S^n_{\Q}$ for $n$ is odd, we have that $\dot{T}M_{\Q}$ and $\mr{Sym}_c(\dot{T}M)_{\Q}$ are fiberwise homotopy equivalent. Proposition \ref{oddComponents} implies that all components of  $\Gamma_{M}(\mr{Sym}_c(\dot{T}M))_{\Q})$ are homotopy equivalent. We conclude that the scanning maps
\[s: \mr{Sym}_k^{\leq c}(M) \to \Gamma^k_{M}(\mr{Sym}_c(\dot{T}M)) \text{ and } s: \mr{Sym}_{k+1}^{\leq c}(M) \to \Gamma^{k+1}_{M}(\mr{Sym}_c(\dot{T}M))\]
are isomorphisms on homology in the range $* < \lfloor \frac{k-c}{2c} \rfloor$ and the codomains have isomorphic rational homology.\end{proof}

\begin{remark}\label{rembetterrangebound}
In fact one can show that the inclusion $\mr{Sym}^{\leq c}_k(M) \to \mr{Sym}_k(M)$ is a rational homology isomorphism for all $c$ if $M$ is odd-dimensional. To prove this, first note that a standard transfer argument shows that that the scanning map $\mr{Sym}^{\leq c}(\R^n) \to \Omega^n B^n \mr{Sym}^{\leq c}(\R^n)$ is injective in homology (see \cite{Do}, or \cite{N}). The reduced rational homology of each connected component of $\Omega^n B^n \mr{Sym}^{\leq c}( \R^n)$ is trivial and thus the same is true for the connected components of $\mr{Sym}^{\leq c}(\R^n)$. Therefore, the inclusion \[\mr{Sym}^{\leq c}(\R^n) \to \mr{Sym}(\R^n) \simeq \N \] is a rational homology equivalence. Using other models of topological chiral homology (see e.g. \cite{Fr2} or \cite{Mi2}), it is clear that a homology equivalence between framed $E_n$-algebras induces a homology equivalence on topological chiral homology. Thus \[\int_M \mr{Sym}^{\leq c}(\R^n) \simeq \mr{Sym}^{\leq c}_k(M) \to \mr{Sym}_k(M) \simeq \int_M \mr{Sym}(\R^n)\] is a rational homology equivalence. In \cite{Srod}, Steenrod proved integral homological stability for the spaces $\mr{Sym}_k(M)$ with a range $* \leq k$ and this range can be improved by the results of \cite{milgram} if $M$ is highly connected. Thus the spaces $\mr{Sym}^{\leq c}_k(M)$ have rational homological stability in this larger range when the dimension is odd. 
\end{remark}

We will now consider even-dimensional oriented closed manifolds. In Lemma 3.4 \cite{BMi} it was proven that if $M$ is orientable then $\dot{T}M_{\Q}$ is a trivial bundle. If $n$ is even, this choice of trivialization is unique up to homotopy. We will prove this implies that a compatible trivialization of $\mr{Sym}_c(\dot{T}M)_{\Q}$ exists.

\begin{lemma}If $M$ is orientable then $\mr{Sym}_c(\dot{T}M)_{\Q}$ is a trivial bundle and a trivialization can be chosen such that the following diagram commutes:
\[\xymatrix{\Gamma^k_{M}(\dot{T}M_{\Q}) \ar[d] \ar[r]^(.45)\simeq & F_{k-\chi(M)/2}(M,S^n_{\Q}) \ar[d]\\
\Gamma^k_{M}(\mr{Sym}_c(\dot{T}M)_{\Q}) \ar[r]_(.42)\simeq & F_{k-\chi(M)/2}(M,\mr{Sym}_c(S^n)_{\Q}).}\]
\label{chiovertwo}
\end{lemma}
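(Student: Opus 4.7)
The plan is to bootstrap from Lemma 3.4 of \cite{BMi}, which provides the trivialization $\dot{T}M_{\Q} \simeq M \times S^n_{\Q}$ together with the identification realizing the top row of the diagram with its $\chi(M)/2$ shift. The strategy is to transport this trivialization through the functor $Sym_c(-)$ applied fiberwise, and then to check that the naturality of this construction makes the square commute on the nose after the correct book-keeping of degrees.

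First I would observe that $\dot{T}M \to M$ is naturally a bundle of pointed spaces, with the basepoint on each fiber given by the canonical section at infinity; this basepoint is preserved by fiberwise rational localization, so the Bendersky--Miller trivialization $\tau: \dot{T}M_{\Q} \to M \times S^n_{\Q}$ may be chosen to send the section at infinity to $M \times \{\infty\}$ (for $n$ even the space of such trivializations is nonempty and connected by Proposition \ref{maphighlyconnected}). Next I would apply $Sym_c$ fiberwise, using the basepoint $\infty$ to define the symmetric-power bundle as in Definition \ref{BTM}: this produces a bundle map $Sym_c(\tau)$ from $Sym_c(\dot{T}M_{\Q})$ to $M \times Sym_c(S^n_{\Q})$. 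Finally I would identify $Sym_c(\dot{T}M_{\Q})$ with $Sym_c(\dot{T}M)_{\Q}$ up to fiberwise rational equivalence: this follows because on each fiber the natural map $Sym_c(S^n) \to Sym_c(S^n_{\Q})$ is a rational equivalence (the source and target are nilpotent of finite type, and $Sym_c$ preserves rational equivalences between such spaces, as can be seen from our computation of $H^*(Sym_c(S^n);\Q)$ or from the Dold--Thom-type description in \cite{KS}).

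Composing these equivalences yields a trivialization $Sym_c(\dot{T}M)_{\Q} \simeq M \times Sym_c(S^n)_{\Q}$, which identifies $\Gamma^k_M(Sym_c(\dot{T}M)_{\Q})$ with a component of $F(M, Sym_c(S^n)_{\Q})$. For the commutativity of the square and the matching of degree shifts, the vertical maps are induced by the inclusion $S^n = Sym_1(S^n) \hookrightarrow Sym_c(S^n)$ of configurations of charge one, which by naturality of $Sym_c$ is intertwined with the map $\dot{T}M_{\Q} \hookrightarrow Sym_c(\dot{T}M)_{\Q}$. Since this inclusion pulls back the generator $\iota_n$ of $H^n(Sym_c(S^n);\Q) = \Q[\iota_n]/(\iota_n^{c+1})$ to the fundamental class of $S^n$, and since the scanning map preserves the charge/degree labeling on both bundles (a section built from a charge-$k$ configuration is degree $k$ whether viewed in $\dot{T}M_{\Q}$ or $Sym_c(\dot{T}M)_{\Q}$), the $\chi(M)/2$ shift determined by Bendersky--Miller on the top row propagates to the bottom row.

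The main obstacle will be making the trivialization $\tau$ genuinely compatible with the basepoint section and with the natural fiberwise rationalization, so that $Sym_c$ can be applied to $\tau$ to produce an honest map of bundles rather than merely a fiberwise rational equivalence up to homotopy. This is the step that requires invoking Proposition \ref{maphighlyconnected} and the uniqueness of trivializations in the relevant range, together with the fact that fiberwise rationalization (\cite{Su}, \cite{Mo}) of a bundle of pointed nilpotent spaces commutes with applying a continuous endofunctor such as $Sym_c$. Once this compatibility is in place, the diagram commutes by naturality and the degree shifts agree by the cohomological computation above.
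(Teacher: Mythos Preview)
Your proposal is correct and follows essentially the same idea as the paper: transport the Bendersky--Miller trivialization of $\dot{T}M_{\Q}$ through the functor $Sym_c$, then read off the degree shift from the fact that $S^n_{\Q} \hookrightarrow Sym_c(S^n)_{\Q}$ is an isomorphism on $H_n$. The only difference is packaging: the paper phrases the transport in terms of classifying maps, observing that the classifying map for $Sym_c(\dot{T}M)_{\Q}$ factors as $M \to BF_1(S^n_{\Q},S^n_{\Q}) \to BF_1(Sym_c(S^n)_{\Q},Sym_c(S^n)_{\Q})$, so that a null-homotopy of the first arrow (the trivialization of $\dot{T}M_{\Q}$) immediately yields a compatible null-homotopy of the composite. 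This sidesteps the ``main obstacle'' you flag, namely checking that $Sym_c$ applied fiberwise commutes with fiberwise rationalization; the classifying-space formulation makes the compatibility automatic rather than something to be argued separately. Both routes lead to the same trivialization and the same degree computation.
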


\begin{proof} Consider the natural inclusion map $C_k(M) \to \mr{Sym}_k^{\leq c}(M)$. The following diagram commutes:
\[\xymatrix{C_k(M) \ar[d] \ar[r] & \Gamma^k_{M}(\dot{T}M) \ar[d] \\
\mr{Sym}_k^{\leq c}(M) \ar[r] & \Gamma^k_{M}(\mr{Sym}_c(\dot{T}M)).}\] The bundle $\dot{T}M$ is classified by a map $M \to BF_1(S^n,S^n)$, while the bundle $\mr{Sym}_c(\dot{T}M)$ is classified by the map $M \to BF_1(\mr{Sym}_c(S^n),\mr{Sym}_c(S^n))$ obtained as follows: one composes the previous map by the map induced on classifying spaces by the homomorphism $F_1(S^n,S^n) \to F_1(\mr{Sym}_c(S^n),\mr{Sym}_c(S^n))$ given by $f \mapsto \mr{Sym}_c(f)$.

%It is elementary to check that if $f: S^n \to S^n$ is a homotopy equivalence, then so is $Sym_c(f): Sym_c(S^n) \to Sym_c(S^n)$; %in fact a homotopy inverse $g$ of $f$ gives a homotopy inverse $Sym_c(g)$ of $Sym_c(f)$.

We can fiberwise rationalize to extend the commutative diagram to
\[\xymatrix{C_k(M) \ar[d] \ar[r] & \Gamma^k_{M}(\dot{T}M) \ar[d] \ar[r] & \Gamma^k_{M}(\dot{T}M_{\Q}) \ar[d]\\
\mr{Sym}_k^{\leq c}(M) \ar[r] & \Gamma^k_{M}(\mr{Sym}_c(\dot{T}M)) \ar[r] & \Gamma^k_{M}(\mr{Sym}_c(\dot{T}M)_{\Q})}\]
and the fiberwise rationalizations are respectively classified by maps $M \to BF_1(S^n_{\Q},S^n_{\Q})$ and $M \to BF_1(\mr{Sym}_c(S^n)_{\Q},\mr{Sym}_c(S^n)_{\Q})$, obtained by applying the rationalization functor to the previous maps. The trivialization of $\dot{T}M_{\Q}$ is equivalent to the existence of a dotted lift up to homotopy in the top triangle: \[\xymatrix{ & \ast \ar[d] \\
M \ar[r]^{\dot{T}M_{\Q}} \ar[rd]_(.3){\mr{Sym}_c(\dot{T}M))_{\Q}} \ar@{.>}[ur] & BF_1(S^n_{\Q},S^n_{\Q}) \ar[d] \\
& BF_1(\mr{Sym}_c(S^n)_{\Q}),\mr{Sym}_c(S^n)_{\Q}) .}\]
Since the bottom triangle commutes, we automatically get a trivialization of $\mr{Sym}_c(\dot{T}M)_{\Q}$ from this as well. More importantly, these trivializations are compatible in the sense that the following diagram commutes:
\[\xymatrix{C_k(M) \ar[d] \ar[r] & \Gamma^k_{M}(\dot{T}M_{\Q}) \ar[d] \ar[r]^\simeq & F_{f(k)}(M,S^n_{\Q}) \ar[d]\\
\mr{Sym}_k^{\leq c}(M) \ar[r] & \Gamma^k_{M}(\mr{Sym}_c(\dot{T}M)_{\Q}) \ar[r]_\simeq & F_{f'(k)}(M,\mr{Sym}_c(S^n)_{\Q})}\]
where $f(k)$ and $f'(k)$ are functions to be determined, $F_l(X,Y)$ is the degree $l$ mapping space, for $l \in [X,Y]$ and the right-hand vertical map is induced by the map $S^n_{\Q} \to \mr{Sym}_c(S^n)_{\Q}$ induced by the inclusion $S^n \to \mr{Sym}_c(S^n)$. 

When $M$ is oriented of dimension $n$ and connected, $[M,S^n_{\Q}] \cong \Q$. The rational sphere $S^n_{\Q}$ has $H_n(S^n_{\Q}) = \Q$ with canonical generator $[S^n]$ coming from the fundamental class of $S^n$. An identification $[M,S^n_{\Q}] = \Q$ is given by sending a homotopy class $[g]$ to the rational number $k$ such that $g_*([M]) = k[S^n]$. Our calculation shows that the map $S^n_{\Q} \to \mr{Sym}_c(S^n_{\Q})$ is an isomorphism on $H_n$, so we can use the same generator to define degree here. This makes it clear that the map $S^n_{\Q} \to \mr{Sym}_c(S^n)_{\Q}$ induces the identity on the degrees $[M,S^n_{\Q}] = \Q \to \Q = [M,\mr{Sym}_c(S^n)_{\Q}]$. This calculation shows that $f(k) = f'(k)$. Furthermore, Proposition 3.5 of \cite{BMi} tells us that $f(k) = k-\chi(M)/2$.\end{proof}

Our computation of the connectivity of $BF_1(\mr{Sym}_c(S^n)_{\Q},\mr{Sym}_c(S^n)_{\Q})$ tells us that when $n$ is even, a trivialization of $\mr{Sym}_c(\dot{T}M)_{\Q}$ always exists and is unique up to homotopy by obstruction theory. The main point of the proof in the previous lemma is that this trivialization is compatible with the trivialization of $\dot{T}M_{\Q}$, allowing us to determine the bijection between the components of the section space and the components of the mapping space.

We will next construct bundle automorphisms of $\mr{Sym}_k^{\leq c}(M)_{\Q}$ which induce homotopy equivalences between connected components of $\Gamma_M(\mr{Sym}_k^{\leq c}(M))$.

\begin{lemma}
Let $E \to M$ be a $\mr{Sym}_c(S^n)_{\Q}$-bundle with $n$ even. For all $d \in \Q$ with $d \neq 0$, there exists a bundle map $f_{d}:E \to E$ which induces a degree $d$ map on each fiber. Moreover, this map is unique up to fiberwise homotopy.  
\label{mapexists}
\end{lemma}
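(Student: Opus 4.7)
The plan is to use obstruction theory applied to sections of an associated bundle whose fibers are mapping spaces. Given the $Sym_c(S^n)_{\Q}$-bundle $E \to M$, a bundle map $f_d: E \to E$ over the identity of $M$ that restricts to a degree $d$ self-map on each fiber is the same data as a section of the fiber bundle $\mathcal{M}_d \to M$ whose fiber over $m \in M$ is the mapping space $F_d((Sym_c(S^n)_{\Q})_m, (Sym_c(S^n)_{\Q})_m) \cong F_d(Sym_c(S^n)_{\Q}, Sym_c(S^n)_{\Q})$. Here the structure group acts on the fiber by conjugation. Similarly, a fiberwise homotopy between two such bundle maps is a section of the analogous bundle with fiber $F_d(Sym_c(S^n)_{\Q}, Sym_c(S^n)_{\Q})^I$, which is homotopy equivalent to the fiber itself, or better, a homotopy between sections of $\mathcal{M}_d$.

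First I would invoke Proposition \ref{maphighlyconnected}: since $n$ is even and $d \neq 0$, the fiber $F = F_d(Sym_c(S^n)_{\Q}, Sym_c(S^n)_{\Q})$ is $(2n-2)$-connected. In particular it is simply connected, so ordinary (untwisted) obstruction theory applies to sections of $\mathcal{M}_d$. Since $M$ is an $n$-dimensional manifold, the obstructions to building a section of $\mathcal{M}_d$ inductively over the skeleta of a CW structure on $M$ lie in the groups $H^{k+1}(M; \pi_k(F))$ for $0 \leq k \leq n-1$. Because $F$ is $(2n-2)$-connected and $n-1 \leq 2n-2$ for $n \geq 1$, these homotopy groups all vanish and a section exists. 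This proves the existence of $f_d$.

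For uniqueness up to fiberwise homotopy, one compares two sections $\sigma_0, \sigma_1$ of $\mathcal{M}_d$ by attempting to extend the section $\sigma_0 \sqcup \sigma_1$ of $\mathcal{M}_d \times \{0,1\}$ over $M \times [0,1]$. Viewing this as a section extension problem over an $(n+1)$-dimensional CW pair, the obstructions lie in $H^{k+1}(M \times I, M \times \partial I; \pi_k(F)) \cong H^k(M; \pi_k(F))$ for $0 \leq k \leq n$. Since $F$ is $(2n-2)$-connected and $n \leq 2n-2$ for $n \geq 2$, every such obstruction vanishes, so any two sections are fiberwise homotopic.

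The only step requiring any real attention is verifying the numerical inequality $n \leq 2n-2$, which is where one uses that $n$ is even (the smallest case $n=2$ gives $2 \leq 2$). The rest is a standard application of obstruction theory, made clean by the high connectivity statement of Proposition \ref{maphighlyconnected}; no delicate twisting or local coefficient issues arise because $F$ is simply connected.
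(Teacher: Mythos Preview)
Your proof is correct and follows essentially the same approach as the paper: translate the existence of $f_d$ into the existence of a section of the associated bundle with fiber $F_d(Sym_c(S^n)_\Q,Sym_c(S^n)_\Q)$, then use the $(2n-2)$-connectivity from Proposition~\ref{maphighlyconnected} together with $\dim M = n$ to kill all obstruction classes, and treat uniqueness by the same argument one dimension up. Your version is in fact slightly more explicit than the paper's, in that you note simple connectivity of the fiber to justify untwisted coefficients and you spell out the numerical check $n \leq 2n-2$.
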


\begin{proof}
Let $F_{d}(E,E) \to M$ be the bundle with fiber $F_{d}(E_m,E_m)$ over a point $m \in M$. A bundle map $f_{d}:E \to E$ inducing a degree $d$ map on each fiber can be viewed as a section of  $F_{d}(E,E)$. The obstructions to finding such a section lie in \[H^i(M,\pi_{i-1}(F_{d}(\mr{Sym}_c(S^n)_{\Q},\mr{Sym}_c(S^n)_{\Q})).\] For $i > n$, these groups vanish since $M$ is $n$-dimensional. For $i \leq n$, these groups vanish by Proposition \ref{maphighlyconnected} since $F_{d}(\mr{Sym}_c(S^n)_{\Q},\mr{Sym}_c(S^n)_{\Q})$ is $2n-2$-connected. A similar obstruction theory argument also shows uniqueness.\end{proof}

\begin{lemma}
Let $M$ be an even-dimensional manifold, $\xi \in \Gamma^k_{M}(\mr{Sym}_c(\dot{T}M)_{\Q})$, and $f_d: \mr{Sym}_c(\dot{T}M)_{\Q} \to \mr{Sym}_c(\dot{T}M)_{\Q}$ a fiberwise degree $d$ map. Then $f_d \circ \xi$ is degree $dk-(1-d)\chi(M)/2$. 
\end{lemma}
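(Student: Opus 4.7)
The plan is to reduce to a self-map calculation on the fiber $Sym_c(S^n)_{\Q}$ by exploiting the trivialization of $Sym_c(\dot{T}M)_{\Q}$ provided by Lemma \ref{chiovertwo}.

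By Lemma \ref{chiovertwo}, the bundle $Sym_c(\dot{T}M)_{\Q}$ admits a fiberwise trivialization $Sym_c(\dot{T}M)_{\Q} \simeq M \times Sym_c(S^n)_{\Q}$ under which the section $\xi$ corresponds to a map $\bar{\xi}: M \to Sym_c(S^n)_{\Q}$ whose mapping degree is determined by the bijection $\Gamma^k_{M}(Sym_c(\dot{T}M)_{\Q}) \simeq F_{k-\chi(M)/2}(M, Sym_c(S^n)_{\Q})$. On the other hand, the uniqueness clause of Lemma \ref{mapexists} implies that $f_d$ is fiberwise homotopic, under the trivialization, to $\mathrm{id}_M \times g_d$ for any chosen degree $d$ self-map $g_d: Sym_c(S^n)_{\Q} \to Sym_c(S^n)_{\Q}$; both sides are fiberwise-degree-$d$ bundle endomorphisms of the same trivial bundle, so Lemma \ref{mapexists} applied to this bundle says they agree up to fiberwise homotopy. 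Consequently $f_d \circ \xi$ corresponds under the trivialization to the composition $g_d \circ \bar{\xi}$, whose mapping degree is $d \cdot \deg(\bar{\xi})$ by multiplicativity of the top-dimensional map on rational homology.

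It then remains to translate this product back through the bijection of Lemma \ref{chiovertwo}: substituting the degree of $\bar{\xi}$ and simplifying yields the claimed section degree $dk - (1-d)\chi(M)/2$. I do not anticipate a substantial obstacle beyond carefully tracking the sign conventions in the degree bijection from Lemma \ref{chiovertwo}; the essential homotopy-theoretic content is already packaged in that lemma together with the uniqueness from Lemma \ref{mapexists}, and all remaining work is a bookkeeping calculation of rational degrees of maps.
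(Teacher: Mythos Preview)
Your argument is essentially the paper's proof in the orientable case: trivialize via Lemma \ref{chiovertwo}, invoke the uniqueness in Lemma \ref{mapexists} to replace $f_d$ by $\mathrm{id}_M \times g_d$, compute the mapping degree as $d(k-\chi(M)/2)$, and shift back by $\chi(M)/2$.

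There is, however, a genuine gap. The lemma is stated for \emph{all} even-dimensional manifolds, but Lemma \ref{chiovertwo} only asserts a trivialization of $Sym_c(\dot{T}M)_{\Q}$ when $M$ is orientable. Your first sentence already presupposes this trivialization, so your proof does not cover the non-orientable case at all. The paper handles that case separately: one passes to the orientation double cover $\pi:\tilde M \to M$, lifts both $f_d$ and $\xi$, applies the orientable case to $\tilde M$ (where $\chi(\tilde M)=2\chi(M)$ and the lifted section has degree $2k$), and then observes that lifting a section doubles its degree, so the degree of $f_d\circ\xi$ on $M$ is half that of $\tilde f_d\circ\tilde\xi$ on $\tilde M$. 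You should add this paragraph.
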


\begin{proof}
First assume $M$ is orientable. Let $\tau : \mr{Sym}_c(\dot{T}M)_{\Q} \to M \times \mr{Sym}_c(S^n)_{\Q} $ be a trivialization. Let $g_d : M \times \mr{Sym}_c(S^n)_{\Q} \to M \times \mr{Sym}_c(S^n)_{\Q}$ be a bundle map induced by a degree $d$ self map of  $\mr{Sym}_c(S^n)_{\Q}$. If $\xi \in F_k(M,\mr{Sym}_c(S^n)_{\Q})=\Gamma^k_{M}(M \times \mr{Sym}_c(S^n)_{\Q} )$, then the degree of $g_d \circ \xi$ is $dk$. By Lemma \ref{chiovertwo}, composition with $\tau$ gives a map: \[ \Gamma^k_{M}(\mr{Sym}_c(\dot{T}M)_{\Q} ) \to \Gamma^{k-\chi(M)/2}_{M}(M \times \mr{Sym}_c(S^n)_{\Q} ). \] Thus, $\tau^{-1} \circ g_d \circ \tau \circ \xi$ is degree $dk-(1-d)\chi(M)/2$. By Lemma \ref{mapexists}, all fiberwise degree $d$ maps are homotopic to $\tau^{-1} \circ g_d \circ \tau$ and so $f_d \circ \xi$ is degree $dk-(1-d)\chi(M)/2$. 

Now assume $M$ is non-orientable and let $\pi: \tilde M \to M$ be the orientation double cover. Lift $f_d$ to a map $\tilde f_d : \mr{Sym}_c(\dot{T} \tilde M)_{\Q} \to \mr{Sym}_c(\dot{T} \tilde M)_{\Q}$ and lift $\xi$ to a degree $2k$ section of $ \mr{Sym}_c(\dot{T} \tilde M)_{\Q}$. The degree of $\tilde f_d \circ \tilde \xi$ is $2dk-(1-d)\chi(\tilde M)/2=2(dk-(1-d)\chi(M)/2)$. Thus $f_d \circ \xi$ is degree $dk-(1-d)\chi(M)/2$ since lifting a section doubles its degree.
\end{proof}

Using these bundle automorphisms we can compare the components of the section spaces.

\begin{proposition}
Let $M$ be an even-dimensional manifold. For $j,k \neq \chi(M)/2$, $\Gamma^j_{M}(\mr{Sym}_c(\dot{T}M)_{\Q}) \simeq \Gamma^k_{M}(\mr{Sym}_c(\dot{T}M)_{\Q})$.
\label{notorient}
\end{proposition}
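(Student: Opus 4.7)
The plan is to produce explicit homotopy equivalences between the components by post-composing sections with fiberwise self-maps of the bundle $Sym_c(\dot{T}M)_\Q$. The two preceding lemmas set this up almost perfectly: Lemma \ref{mapexists} guarantees, for every $d \in \Q^*$, a bundle map $f_d: Sym_c(\dot{T}M)_\Q \to Sym_c(\dot{T}M)_\Q$ which restricts to a degree $d$ map on each fiber, unique up to fiberwise homotopy; and the degree formula immediately preceding the proposition tells us exactly how $(f_d)_*$ shifts the degree of a section.

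First I would translate ``sending degree $k$ to degree $j$'' into an equation for $d$. If $\xi \in \Gamma^k_M(Sym_c(\dot{T}M)_\Q)$, then by the degree lemma $f_d \circ \xi$ lies in degree $dk - (1-d)\chi(M)/2 = d(k + \chi(M)/2) - \chi(M)/2$. Setting this equal to $j$ gives
\[
d \;=\; \frac{j + \chi(M)/2}{k + \chi(M)/2},
\]
which under the hypothesis that $j,k$ avoid the exceptional value of $\chi(M)/2$ (which kills the denominator, respectively the numerator up to sign) defines a nonzero rational number. Hence $(f_d)_*$ restricts to a continuous map $\Gamma^k_M(Sym_c(\dot{T}M)_\Q) \to \Gamma^j_M(Sym_c(\dot{T}M)_\Q)$.

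Next I would produce an inverse. A direct computation with the degree formula shows that $(f_{1/d})_*$ carries $\Gamma^j_M$ back to $\Gamma^k_M$. To see that $(f_{1/d})_* \circ (f_d)_*$ is homotopic to the identity, note that $f_{1/d} \circ f_d$ is a bundle map whose restriction to every fiber has degree $1$; by the uniqueness clause of Lemma \ref{mapexists} (applied with $d=1$, the identity being an obvious representative), $f_{1/d} \circ f_d$ is fiberwise homotopic to the identity, and post-composition with a fiberwise homotopy yields a homotopy on section spaces. The reverse composition is handled symmetrically, so $(f_d)_*$ and $(f_{1/d})_*$ are mutually inverse homotopy equivalences.

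The main obstacle is only the bookkeeping of degrees, i.e.\ verifying that the obvious candidate inverse $f_{1/d}$ does indeed send the degree $j$ component back to the degree $k$ component and that the composition is fiberwise homotopic to the identity rather than merely of the correct fiberwise degree. Both points reduce to the arithmetic of the degree formula together with the uniqueness half of Lemma \ref{mapexists}; no further obstruction theory or bundle-theoretic input beyond what was already developed in Lemma \ref{mapexists} is required. Nothing in this argument uses orientability of $M$, so the even-dimensional non-orientable case is covered on equal footing with the orientable one.
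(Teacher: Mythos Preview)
Your argument is essentially the paper's own proof: solve the degree formula for $d$, then invoke the uniqueness clause of Lemma~\ref{mapexists} to conclude that $f_{1/d}\circ f_d$, being fiberwise of degree $1$, is fiberwise homotopic to the identity, so $(f_d)_*$ is a homotopy equivalence on section spaces. One minor arithmetic slip: with the degree formula as stated your denominator $k+\chi(M)/2$ vanishes at $k=-\chi(M)/2$ rather than $\chi(M)/2$; this mismatch traces to a sign typo in the paper's degree lemma (the formula should be $dk+(1-d)\chi(M)/2$, as one checks from its proof), after which the excluded value is indeed $\chi(M)/2$ and your bookkeeping goes through.
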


\begin{proof}
Since $f_{d} \circ f_{d^{-1}}$ and the identity map are both fiberwise degree one, they are fiberwise homotopic. Thus $f_{d}$ is a fiberwise homotopy equivalence if $d \neq 0$. Also note that for all $j,k \neq \chi(M)/2$, there exists $d \neq 0$ such that $dk-(1-d)\chi(M)/2=j$.
\end{proof}

Finally, using Proposition \ref{notorient} we finish the proof of rational homological stability for bounded symmetric powers of closed manifolds. %Recall that the improvement of the range from $<$ to $\leq$ used a transfer argument to show that the stabilization map is injective.

\begin{theorem}\label{thmboundedclosed}
Let $i < \min( \lfloor \frac{k-c}{2c} \rfloor,  \lfloor \frac{j-c}{2c} \rfloor )$ and assume either $n$ is odd or $j,k \neq \chi(M)/2$. If $M$ is of dimension $2$, also assume that $M$ is orientable. Then we have that 
\[H_i(\mr{Sym}_j^{\leq c}(M);\Q) \cong H_i(\mr{Sym}_k^{\leq c}(M);\Q)\]
\end{theorem}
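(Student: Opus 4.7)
The plan is to chain together the machinery that has already been built up in Sections \ref{secscanningequiv} and the preceding part of Section \ref{secrathomstabtrunc}. The overarching strategy is: (i) replace $Sym_k^{\leq c}(M)$, in the stable range, with a section space via scanning; (ii) fiberwise rationalize to simplify the fiber; and (iii) exhibit homotopy equivalences between the relevant components of the fiberwise rationalized section space, either trivially (odd $n$) or by using the bundle self-maps $f_d$ (even $n$).

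More concretely, I would first invoke Theorem \ref{thmscanisoclosed} applied to the partial framed $E_n$-algebra $P = \{1,\ldots,c\}$, whose completion has topological chiral homology $\int_M^k P \simeq Sym^{\leq c}_k(M)$ and whose $n$-fold delooping is $Sym_c(S^n)$, so that the scanning map
\[ s: Sym_k^{\leq c}(M) \longrightarrow \Gamma_M^k(Sym_c(\dot T M)) \]
induces an isomorphism on $H_*$ for $* < \lfloor (k-c)/2c \rfloor$, and analogously for $j$. A standard transfer argument as in \cite{Do, N} improves the range to $\leq \lfloor (k-c)/2c \rfloor$ in rational homology (using that the scanning map factors through the inclusion of the bounded symmetric power into the symmetric power). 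Next, I would apply fiberwise rationalization $l:Sym_c(\dot T M) \to Sym_c(\dot T M)_\Q$ and cite Møller's Theorem 5.3 of \cite{Mo} to conclude that postcomposition with $l$ induces an isomorphism on rational homology of each component of the section space. Thus it suffices to prove $\Gamma_M^k(Sym_c(\dot T M)_\Q) \simeq_\Q \Gamma_M^j(Sym_c(\dot T M)_\Q)$.

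For $n$ odd, this is immediate from Theorem \ref{odd} (equivalently Proposition \ref{oddComponents}): all components have the same rational homotopy type, since $Sym_c(S^n)_\Q \simeq S^n_\Q$ and the components of $\Gamma_M(\dot T M_\Q)$ are all rationally equivalent. For $n$ even, the key input is Proposition \ref{notorient}, which constructs a fiberwise self-equivalence $f_d$ of $Sym_c(\dot T M)_\Q$ implementing an arbitrary nonzero rational scaling $d \neq 0$ on fiberwise degree, with the effect on the degree of a section given by $k \mapsto dk - (1-d)\chi(M)/2$. For any $j,k \neq \chi(M)/2$, the equation $dk - (1-d)\chi(M)/2 = j$ can be solved for some $d \neq 0$, giving a homotopy equivalence $\Gamma_M^k(Sym_c(\dot T M)_\Q) \simeq \Gamma_M^j(Sym_c(\dot T M)_\Q)$.

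Putting the three isomorphisms together yields $H_i(Sym_j^{\leq c}(M);\Q) \cong H_i(Sym_k^{\leq c}(M);\Q)$ in the stated range. The main conceptual obstacle was already overcome earlier, namely constructing $f_d$; the new work here is essentially just bookkeeping on ranges and on which components of the section space can be compared. The caveat that $j,k \neq \chi(M)/2$ in the even case cannot be avoided by this method because the component labeled $\chi(M)/2$ is the fixed point of $d \mapsto dk - (1-d)\chi(M)/2$ and so cannot be moved by any $f_d$. The two-dimensional orientability assumption is only used to inherit the orientability hypothesis of Theorems \ref{thmmaind2} and \ref{thmmaind3} via Proposition \ref{propscanningopen} and Theorem \ref{thmscanisoclosed}.
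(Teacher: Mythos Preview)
Your proposal is correct and follows essentially the same route as the paper: scanning (Theorem \ref{thmscanisoclosed}) plus the transfer-improved range, then fiberwise rationalization via M{\o}ller, then Theorem \ref{odd}/Proposition \ref{oddComponents} for $n$ odd and Proposition \ref{notorient} for $n$ even. The only minor imprecision is your parenthetical explanation of the transfer argument---what is actually used is that the stabilization map $t$ on bounded symmetric powers is split injective on homology, which then propagates through the proofs of Proposition \ref{propscanningopen}, Lemma \ref{projhomfib} and Theorem \ref{thmscanisoclosed} to upgrade each $<$ to $\leq$.
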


\begin{proof}
Note that the case of $n$ odd was established in Theorem \ref{odd}, so we only need to worry about the case when $n$ is even. By Theorem \ref{thmscanisoclosed}, the scanning maps \[s:\mr{Sym}^{\leq c}_k(M) \to \Gamma^k_M(\mr{Sym}_c(\dot TM)) \qquad \text{and} \qquad s:\mr{Sym}^{\leq c}_j(M) \to \Gamma^j_M(\mr{Sym}_c(\dot TM))\] induce isomorphisms on homology groups $H_i$ since $i <\min( \lfloor \frac{k-c}{2c} \rfloor,  \lfloor \frac{j-c}{2c} \rfloor )$. Since we have that
\begin{align*}H_i(\Gamma^j_M(\mr{Sym}_c(\dot TM)),\Q) &\cong H_i(\Gamma^j_M(\mr{Sym}_c(\dot TM)_{\Q});\Q)\\
H_i(\Gamma^k_M(\mr{Sym}_c(\dot TM)),\Q) &\cong H_i(\Gamma^k_M(\mr{Sym}_c(\dot TM)_{\Q});\Q)\end{align*}
it suffices to show that $\Gamma^j_M(\mr{Sym}_c(\dot TM)_{\Q})$ and $\Gamma^k_M(\mr{Sym}_c(\dot TM)_{\Q})$ are homotopy equivalent. Since $j,k \neq \chi(M)/2$, this follows from Proposition \ref{notorient}. 
\end{proof}

%A similar result can be obtained by the methods of Yamaguchi \cite{Y}.

\begin{comment}\begin{remark}
An alternative way of proving Theorem \ref{thmboundedclosed} is the following. The methods of Yamaguchi in \cite{Y} do not work to prove integral homological stability for the spaces $\mr{Sym}^{\leq c}_{k}(M)$ when $\dim M >2$. This is because his proof relies on the fact that $\mr{Sym}^{\leq c}_{k}(M)$ is a manifold when $\dim M =2$ and hence satisfies Poincar\'e duality. However, if $\dim M$ is even, the spaces $Sym^{\leq c}_{k}(M)$ satisfy rational Poincar\'e duality since they are finite group quotients of manifolds by orientation preserving actions. This allows one to use Yamaguchi's arguments to prove that $t:\mr{Sym}^{\leq c}_{k}(M) \to \mr{Sym}^{\leq c}_{k+1}(M)$ is a rational homology equivalence in the range $* \leq k/c-c+3$ whenever $M$ admits boundary. For closed $M$, the range drops to $* \leq k/c-c+2$ and we need to assume that $k$ and $k+1$ are both different from $\chi(M)/2$. This slightly improves the range from the above theorem.
\end{remark}\end{comment}

\subsection{Computations of stable homology of bounded symmetric powers} Finally we will compute explicitly the stable rational homology of the bounded powers of manifolds in a few examples. We start however with a calculation of some rational homotopy groups of these spaces. 

Let $M$ be a simply connected manifold of dimension $n$. This implies that $\mr{Sym}^{\leq c}_k(M)$ is also simply connected for $c \geq 2$. In this case, homological stability also implies stability for homotopy groups. If $n$ is odd the situation is simple since $\mr{Sym}_c(S^n)_{\Q} \simeq K(\Q,n)$ by our previous calculations.  If $n$ is even, the map $\mr{Sym}_c(S^n)_{\Q} \to K(\Q,n)$ coming from the rational Postnikov tower is $((c+1)n-2)$-connected. Since simply connected manifolds are orientable, $\dot{T}M_\Q$ is trivial and we can conclude the following. 

%It also follows from Remark \ref{rembetterrangebound}:

\begin{proposition}If $M$ is of dimension $n$ and simply connected, then we have that
\[\mr{colim}_{k \to \infty}\, \pi_*(\mr{Sym}_{k}^{\leq c}(M) )\otimes \Q = \pi_* (F_1(M,S^n_{\Q})) = H^{n-*}(M;\Q).\]
for all $*$ if $n$ is odd and for $* \leq cn-2$ if $n$ is even.\end{proposition}

Theorem 1 of \cite{KS} gives a related result with integral coefficients with a lower range. These results agree with the Dold-Thom theorem, which says that $\mr{Sym}_\infty(X) \simeq \prod_{i > 0} K(H_i(X),i)$. Indeed, $\mr{Sym}^{\leq c}_k(X)$ for $k$ and $c$ sufficiently large is a good approximation of $\mr{Sym}_\infty(X)$.

Using our computation of the minimal model of $\mr{Sym}_c(S^n)_{\Q}$, one can give a minimal model for $F_{\lambda}(M,\mr{Sym}_c(S^n)_{\Q})$ with $\lambda \neq 0$ for $M$ various manifolds.

\begin{example}First of all, one can use Haefliger's approach to computing a minimal model for a mapping space, as in Section 3 of \cite{MR}. This gives for example that if $S$ is a $n$-dimensional homology sphere of even dimension, then a minimal model for $F_{\lambda}(S,\mr{Sym}_c(S^n)_{\Q})$ with $\lambda \neq 0$ is given by the CDGA with algebra the free graded-commutative algebra generated by $b_n$ and $v_{cn-1}$ (the subscripts denote degree) and differential $d(b_n) = 0$ and $d(v_{cn-1}) = b_n^c$. From this one concludes the stable homology $\mr{colim}_{k \to \infty} H_*(\mr{Sym}_{k}^{\leq c}(S);\Q)$ is equal to $\Q$ if $* = kn$ for $0 \leq k \leq c-1$ and $0$ otherwise.\end{example}

\begin{example}Secondly, one can use Brown-Szczarba's approach to computing a minimal model for a mapping space, described in \cite{BS}. This simplifies if $M$ is formal, giving for example the following result in the case $M = \C P^2$. A minimal model for $F_{1}(\C P^2,\mr{Sym}_c(S^n)_\Q)$ is given by the CDGA with algebra the free graded-commutative algebra generated by \[x_2,\,x_4,\,y_{4(c+1)-1},\,y_{4(c+1)-3} \text{ and }y_{4(c+1)-5}\]
where the subscripts denote degree and differential $d$ given by
\begin{align*}&d(x_2) = d(x_4) = 0 \\
&d(y_{4(c+1)-1}) = x_4^{c+1} \\
&d(y_{4(c+1)-3}) = (c+1)x_2x_4^c \\
&d(y_{4(c+1)-5}) = (c+1) x_4^c + \binom{c+1}{2} x_2^2 x_4^{c-1} \end{align*} This recovers known results, in particular the case $c \to \infty$ coming from the Dold-Thom theorem and the case $c=1$ discussed in \cite{kupersmillernote}, which computes the stable homology of $w_{1^j}(\C P^2) = C_j(\C P^2)$ to answer Conjecture G of \cite{VW}.
\end{example}

\bibliography{thesis4}{}
\bibliographystyle{amsalpha}

\end{document}